\newtheorem{theorem}{Theorem}
\newtheorem{corollary}[theorem]{Corollary}
\newtheorem{definition}{Definition}
\newtheorem{lemma}{Lemma}
\newtheorem{proposition}[theorem]{Proposition}
\newtheorem{remark}{Remark}
\let\a=\alpha
\let\e=\varepsilon
\let\p=\partial
\let\O=\Omega
\numberwithin{equation}{section}
\let\hide\iffalse
\newcommand{\R}{\mathbb{R}}
\newcommand{\be}{\begin{equation}}
\newcommand{\bm}{\begin{multline}}
\newcommand{\ee}{\end{equation}}
\newcommand{\dd}{\mathrm{d}}
\newcommand{\xb}{x_{\mathbf{b}}}
\newcommand{\tb}{t_{\mathbf{b}}}
\newcommand{\Bes}{\begin{eqnarray*}}
\newcommand{\Ees}{\end{eqnarray*}}
\newcommand{\Be}{\begin{equation} }
\newcommand{\Ee}{\end{equation}}
\def\p{\partial}
\def\O{\Omega}
\def\R{\mathbb{R}}
\def\B{\begin{equation}}
\def\E{\end{equation}}
\def\BN{\begin{eqnarray*}}
\def\EN{\end{eqnarray*}}
\begin{document}
\title{Regularity of Boltzmann equation with Cercignani-Lampis boundary in convex domain.}

\author{Hongxu Chen}
\address{Mathematics Department, University of Wisconsin-Madison, 480 Lincoln Dr., Madison, WI 53705 USA.}
\email{hchen463@wisc.edu}

 \begin{abstract}
The Boltzmann equation is a fundamental kinetic equation that describes the dynamics of dilute gas. In this paper we study the regularity of both dynamical and steady Boltzmann equation in strictly convex domain with the Cercignani-Lampis (C-L) boundary condition. The C-L boundary condition  describes the intermediate reflection law between diffuse reflection and specular reflection via two accommodation coefficients.
We construct local weighted $C^1$ dynamical solution using repeated interaction through the characteristic. When we assume small fluctuation to the wall temperature and accommodation coefficients, we construct weighted $C^1$ steady solution.
 \end{abstract}

\maketitle

\section{Introduction}
In this paper we consider the classical Boltzmann equation, which describes the dynamics of dilute particles. Denoting $F(t,x,v)$ the phase-space-distribution function of particles at time $t$, location $x\in\Omega$ moving with velocity $v\in\mathbb{R}^3$, the equation writes:
\begin{equation}\label{eqn: VPB equation}
\partial_t F + v\cdot \nabla_x F = Q(F,F)\,.
\end{equation}
The collision operator $Q$ describes the binary collisions between particles:
\begin{equation}\label{eqn: Q}
\begin{split}
   & Q(F_1,F_2)(v)=Q_{\text{gain}}-Q_{\text{loss}} = Q_{\text{gain}}(F_1,F_2)-\nu(F_1)F_2 \\
    & :=\iint_{\mathbb{R}^3\times \mathbb{S}^2} B(v-u,\omega)F_1(u')F_2(v') \dd \omega \dd u-F_2(v)\left(\iint_{\mathbb{R}^3\times \mathbb{S}^2} B(v-u,\omega)F_1(u) \dd \omega \dd u\right)\,.
\end{split}
\end{equation}
In the collision process, we assume the energy and momentum are conserved. We denote the post-velocities:
\begin{equation}\label{eqn: u' v'}
u'=u-[(u-v)\cdot \omega]\omega ,\quad \quad v'=v+[(u-v)\cdot \omega]\omega\,,
\end{equation}
then they satisfy:
\begin{equation}\label{eqn: conservation}
  u'+v'=u+v\,,\quad |u'|^2+|v'|^2=|u|^2+|v|^2\,.
\end{equation}
In equation~\eqref{eqn: Q}, $B$ is called the collision kernel. In this paper we only consider the hard sphere case, which is given by
\[
B(v-u,\omega)=|v-u|^{\mathcal{K}}q_0(\frac{v-u}{|v-u|}\cdot \omega)\,,\quad\text{with }\mathcal{K}= 1\,,\quad 0\leq q_0(\frac{v-u}{|v-u|}\cdot \omega)\leq C\Big|\frac{v-u}{|v-u|}\cdot \omega\Big|\,.
\]

To describe the boundary condition for $F$, we denote the collection of coordinates on phase space at the boundary:
\[\gamma:=\{(x,v)\in \partial \Omega\times \mathbb{R}^3\}.\]
And we denote $n=n(x)$ as the outward normal vector at $x\in \Omega$. We split the boundary coordinates $\gamma$ into the incoming ($\gamma_-$) and the outgoing ($\gamma_+$) set:
\[\gamma_\mp:=\{(x,v)\in \partial \Omega\times \mathbb{R}^3 :n(x)\cdot v\lessgtr 0\}\,.\]
The boundary condition determines the distribution on $\gamma_-$, and shows how particles back-scattered into the domain. In our model, we use the scattering kernel $R(u \rightarrow v;x,t)$:
\begin{equation}\begin{split}\label{eqn:BC}
&F(t,x,v) |n(x) \cdot v|= \int_{n(x) \cdot u>0}
R(u \rightarrow v;x,t) F(t,x,u)
\{n(x) \cdot u\} \dd u, \quad \text{ on }\gamma_-\,.
\end{split}
\end{equation}
$R(u\to v;x,t)$ represents the probability of a molecule striking in the boundary at $x\in\partial\Omega$ with velocity $u$, and to be sent back to the domain with velocity $v$ at the same location $x$ and time $t$.
In this paper we consider a scattering kernel proposed by Cercignani and Lampis in~\cite{CIP,CL}:
\begin{equation}\label{eqn: Formula for R}
\begin{split}
&R(u \rightarrow v;x,t)\\
:=& \frac{1}{r_\perp r_\parallel (2- r_\parallel)\pi/2} \frac{|n(x) \cdot v|}{(2T_w(x))^2}
\exp\left(- \frac{1}{2T_w(x)}\left[
\frac{|v_\perp|^2 + (1- r_\perp) |u_\perp|^2}{r_\perp}
+ \frac{|v_\parallel - (1- r_\parallel ) u_\parallel|^2}{r_\parallel (2- r_\parallel)}
\right]\right)\\
& \times  I_0 \left(
 \frac{1}{2T_w(x)}\frac{2 (1-r_\perp)^{1/2} v_\perp u_\perp}{r_\perp}
\right),
\end{split}
\end{equation}
where $T_w(x)$ is the wall temperature for $x\in \partial \Omega$ and
\begin{equation}\label{I0}
I_0 (y) := \pi^{-1} \int^{\pi}_0e^{y \cos \phi } \dd \phi\,.
\end{equation}
In the formula, $v_\perp$ and $v_\parallel$ denote the normal and tangential components of the velocity respectively:
   \begin{equation}\label{eqn: def of vperppara}
   v_\perp= v\cdot n(x)\,,\quad v_\parallel = v- v_\perp n(x)\,.
\end{equation}
Similarly $u_\perp= u\cdot n(x)$ and $u_\parallel = u- u_\perp n(x)$. There are other derivations of C-L model besides the original one, and we refer interested readers to~\cite{C,CIP,CC}.

The Cercignani-Lampis(C-L) model satisfies the following properties:
\begin{itemize}
\item the reciprocity property:
  \begin{equation}\label{eqn: reciprocity}
    R(u\to v;x,t)=R(-v\to -u;x,t) \frac{e^{-|v|^2/(2T_w(x))}}{e^{-|u|^2/(2T_w(x))}}\frac{|n(x)\cdot v|}{|n(x)\cdot u|}\,,
  \end{equation}
\item the normalization property(see Lemma \ref{Lemma: Prob measure})
\begin{equation}\label{eqn: normalization}
\int_{n(x)\cdot v<0} R(u\to v;x,t) \dd v=1\,.
\end{equation}
\end{itemize}

The normalization~\eqref{eqn: normalization} property immediately leads to null-flux condition for $F$:
\begin{equation}\label{eqn: Null flux condition}
  \int_{\mathbb{R}^3}F(t,x,v)\{n(x)\cdot v\}\dd v=0\,,\quad \text{for }x\in \partial\Omega.
\end{equation}
This condition guarantees the conservation of total mass:
\begin{equation}\label{eqn: Mass conservation}
  \int_{\Omega\times \mathbb{R}^3}F(t,x,v)\dd v \dd x=\int_{\Omega\times \mathbb{R}^3}F(0,x,v)\dd v\dd x \text{ for all }t\geq 0\,.
\end{equation}

\begin{remark}The C-L model encompasses pure diffusion and pure reflection.

The pure diffuse boundary condition is given by
\begin{equation}\label{eqn: diffuse}
 F(t,x,v)= \frac{2}{\pi (2T_w(x))^2}e^{-\frac{|v|^2}{2T_w(x)}}\int_{n(x)\cdot u>0} F(t,x,u)\{n(x)\cdot u\}\dd u \text{ on }(x,v)\in\gamma_-,
\end{equation}
\[R(u\to v;x,t)=\frac{2}{\pi (2T_w(x))^2}e^{-\frac{|v|^2}{2T_w(x)}} |n(x)\cdot v|.\]
It corresponds to the scattering kernel in~\eqref{eqn: Formula for R} with $r_\perp=1,r_\parallel=1$.

Other basic boundary conditions can be considered as a special case with singular $R$: specular reflection boundary condition:
\[F(t,x,v)=F(t,x,\mathfrak{R}_xv) \text{ on }(x,v)\in \gamma_-,  \quad \mathfrak{R}_xv=v-2n(x)(n(x)\cdot v),\]
\[R(u\to v;x,t)=\delta(u-\mathfrak{R}_xv),\]
where $r_\perp=0,r_\parallel=0$.

Bounce-back reflection boundary condition:
\[ F(t,x,v)=F(t,x,-v) \text{ on } (x,v)\in \gamma_-,\]
\[R(u\to v;x,t)=\delta(u+v),\]
where $r_\perp=0,r_\parallel=2$.

\end{remark}

Due to the generality of the C-L model, it has been vastly used in many field, on the rarefied gas flow in~\cite{KB,KB2,SF1,SF2,SF3}; extension to the gas surface interaction model in fluid dynamics~\cite{L,L2,WR}; on the linearized Boltzmann equation in~\cite{Gar,SI,LS,CS}; on S-model kinetic equation in~\cite{CES} etc.

In this paper we will study the regularity of both the dynamical and steady Boltzmann equation with C-L boundary. The Boltzmann equation with scattering type boundary condition~\eqref{eqn:BC} has been studied in many aspects. \cite{G,G2,KL,KL2,KY} studied the dynamical solution with diffuse, specular and bounce back boundary condition. With such boundary condition, \cite{CJ1,CJ2,EGKM3,CKJ} studied the fluid limit of the Boltzmann equation. Moreover, a unique stationary solution has been constructed in~\cite{EGKM,EGKM2,Duan}. Inspired by these studies, in~\cite{chen} the author constructed a unique local dynamical solution and a unique steady solution with C-L boundary in bounded domain.

In non-convex domain the Boltzmann equation possess a boundary singularity \cite{K}, and BV is the best estimate we can expect~\cite{GKTT2}. In convex domain \cite{GKTT} proposed a kinetic weight to construct a unique $C^1$ and $W^{1,p}$ dynamical solution. With convex domain the kinetic weight can be further applied to study the Vlasov-Poisson-Boltzmann system~\cite{CKL,CKQ,CaoK,Cao,Cao2}. In terms of the steady solution, \cite{Ikun} studied the regularity of the stationary linearized Boltzmann equation. Recently a unique weighted $C^1$ steady solution in convex domain has been constructed in~\cite{CK}. Our work in this paper originate from these studies and focus on both the dynamical and steady solution.

Throughout this paper we assume the domain is $C^2$ and defined as $\O = \{x \in \R^3: \xi(x) <0\}$ via a $C^2$ function $\xi : \R^3 \rightarrow \R$. We further assume that the domain is strictly convex in the following sense:
 \Be\label{convex}
\sum_{i,j=1}^3\zeta_i \zeta_j\p_i\p_j \xi(x) \gtrsim |\zeta|^2  \  \text{ for   all }   x \in \bar{\O}  \text{ and }  \zeta \in \R^3.
 \Ee
Without loss of generality we may assume that $\nabla \xi \neq 0$ near $\p\O$.

Denote the maximum and minimum wall temperature as:
\begin{equation}\label{eqn: T}
T_M:=\max\{T_w(x)\}<\infty\, \quad T_m:=\min\{T_w(x)\}>0.
\end{equation}

It is well known that singularity propagates for the derivative in the boundary value problem~\cite{K}. In order to control the generic singularity at the boundary we adopt the following weight of \cite{GKTT}:
\begin{definition} For sufficiently small $0<\e\ll \Vert \xi\Vert_{C^2}$, we define a \text{kinetic distance}:
\Be\label{kinetic_distance}
\begin{split}
\alpha(x,v)   : = \chi_\e ( \tilde{\alpha}(x,v)  ) ,  \ \
\tilde{\alpha}(x,v)   : = \sqrt{ |v \cdot \nabla_x \xi(x)|^2 - 2 \xi(x) (v \cdot \nabla_x^2 \xi(x) \cdot v)}, \ \ (x,v) \in \bar{\O} \times \R^3,
\end{split}
\Ee
where $\chi_a: [0,\infty) \rightarrow [0,\infty)$ stands for a non-decreasing smooth function such that
\Be\label{chi}
\chi_\a (s) = s \ for \ s \in [0, a ],  \ \chi_a (s) = 2a \ for \ s \in [ 4 a, \infty ], \ and \  | \chi^\prime_a(s) | \leq 1 \ for  \  \tau  \in [0,\infty).
\Ee
\end{definition}
The definition of $\xi$ in~\eqref{convex} implies that $\xi(x)=0, x\in \partial \Omega$,
\begin{equation}\label{cancel n dot v}
\text{when } x\in \partial \Omega \text{ and } |n(x)\cdot v| \ll 1, \alpha(x,v) \sim n(x) \cdot v.
\end{equation}
We will use this kinetic weight to cancel the singularity on the boundary. Lemma \ref{Lemma: velocity lemma} indicates that such weight is almost invariant along the trajectory.

Denote
\begin{equation}\label{Def: w_theta}
  w_{\theta}:=e^{\theta |v|^2},
\end{equation}
\begin{equation}\label{eqn: langle v rangle}
  \langle v\rangle:=\sqrt{|v|^2+1}.
\end{equation}
\subsection{Result of dynamical Boltzmann equation}

Define the global Maxwellian using the maximum wall temperature:
\begin{equation}\label{eqn: def for weight}
\mu:=e^{-\frac{|v|^2}{2T_M}}\,,
\end{equation}
and weight $F$ in~\eqref{eqn: VPB equation} with it: $F=\sqrt{\mu}f$. Then $f$ satisfies
\begin{equation}\label{equation for f}
  \partial_t f+v\cdot \nabla_x f=\Gamma(f,f)\,,
\end{equation}
where the collision operator becomes:
\begin{equation}\label{Def: Gamma}
\Gamma(f_1,f_2)=\Gamma_{\text{gain}}(f_1,f_2)-\nu(F_1)F_2/\mu=\frac{1}{\sqrt{\mu}}Q_{\text{gain}}(\sqrt{\mu}f_1,\sqrt{\mu}f_2)-\nu(F_1)f_2\,.
\end{equation}

The weighted $C^1$ estimate is given in the following theorem.
\begin{theorem}\label{Thm: dynamic C1}
Assume $\Omega \subset \mathbb{R}^3$ is bounded, convex and $C^2$. Let $0< \theta <\frac{1}{4T_M}$. Assume
\begin{equation}\label{eqn: r condition}
  0<r_\perp\leq 1,\quad 0<r_\parallel<2\, ,
\end{equation}
\begin{equation}\label{eqn: Constrain on T}
\frac{T_m}{T_M}>\max\Big(\frac{1-r_\parallel}{2-r_\parallel},\frac{\sqrt{1-r_\perp}-(1-r_\perp)}{r_\perp}\Big),
\end{equation}
where $T_M,T_m$ are defined in~\eqref{eqn: T}.

Also assume the initial condition has bound
\begin{equation}\label{initial C1}
  \Vert \alpha \nabla_{x,v} f_0\Vert_\infty<\infty.
\end{equation}
Then for some
\[t_\infty = t_\infty(\Vert w_{\theta}f\Vert_\infty,r_\perp,r_\parallel,\theta,T_M,T_m,\Omega )\ll 1,\]
we can construct a unique solution $F=\sqrt{\mu} f$ satisfies
\begin{equation}\label{C1 estimate}
\sup_{0\leq t\leq t_\infty}\Vert e^{-\lambda \langle v\rangle t}\alpha \nabla_{x,v} f\Vert_\infty \lesssim \Vert \alpha\nabla_{x,v} f_0\Vert_\infty.
\end{equation}
Here $\alpha$ is the kinetic weight defined in~\eqref{kinetic_distance}, $\lambda\geq 1$ is a constant specified in~\eqref{nu m geq}, $\Vert w_{\theta}f\Vert_\infty$ is the $L^\infty$ estimate given in Theorem \ref{Thm: local existence} with $w_\theta(v)=e^{\theta|v|^2}$.
\end{theorem}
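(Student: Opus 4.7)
The plan is to differentiate the weighted equation (\ref{equation for f}) in $x$ and $v$, multiply by the kinetic distance $\alpha$, run the resulting transport equation backward along characteristics, and close a Gronwall estimate on a short interval $[0,t_\infty]$. Because Theorem \ref{Thm: local existence} already supplies an $L^\infty$ bound on $w_\theta f$, the collision term $\Gamma(f,f)$ will enter the $C^1$ bound only through its derivatives, which split as $\nabla\Gamma(f,f)\sim \Gamma(\nabla f,f)+\Gamma(f,\nabla f)$ plus lower-order commutators involving $\nu(F)f$; these can be absorbed with the velocity prefactor $e^{-\lambda\langle v\rangle t}$ in (\ref{C1 estimate}) after choosing $\lambda$ as in (\ref{nu m geq}). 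I would set up an iteration $f^{n+1}$ satisfying $(\partial_t+v\cdot\nabla_x+\nu(F^n))f^{n+1}=\Gamma_{\text{gain}}(f^n,f^n)$ with C-L boundary data produced from $f^n$, and close a uniform $\|\alpha\nabla_{x,v}f^{n+1}\|_\infty$ bound. Invoking the velocity lemma (Lemma \ref{Lemma: velocity lemma}), $\alpha$ is almost invariant along trajectories, so upon applying $\alpha\partial$ to each transport equation one obtains a Duhamel representation where the boundary contribution is the delicate piece.

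The central technical step is the analysis of the differentiated C-L boundary condition. Taking $\partial_x$ or $\partial_v$ in (\ref{eqn:BC}) transfers derivatives onto $R(u\to v;x,t)$, onto $n(x)$, and onto the incoming measure; at the backward bounce point $\xb$ the factor $\alpha(\xb,v)\sim n(\xb)\cdot v$ from (\ref{cancel n dot v}) absorbs the $1/|n\cdot v|$ on the left-hand side of (\ref{eqn:BC}), so that $\alpha\nabla f$ on $\gamma_-$ is controlled by weighted integrals of $\alpha\nabla f$ on $\gamma_+$ together with the $L^\infty$ term $\|w_\theta f\|_\infty$. To quantify these weighted integrals I would exploit the explicit Gaussian structure of $R$: derivatives in $v$ of the exponent produce factors $|v_\perp|/(T_wr_\perp)$ and $|v_\parallel-(1-r_\parallel)u_\parallel|/(T_wr_\parallel(2-r_\parallel))$ which are neutralized by the Maxwellian in $R$ itself, while the Bessel factor $I_0$ is handled via $|I_0^\prime(y)|\le I_0(y)$. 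Integrating the resulting $e^{\theta|v|^2}R(u\to v;x,t)\,\dd v$ in $v\in\{n\cdot v<0\}$ and using the normalization (\ref{eqn: normalization}) together with a change of variables $v\mapsto(v_\perp,v_\parallel)$ yields a finite constant precisely when the Gaussian in $R$ dominates $w_\theta$; this is the role of the bound $\theta<1/(4T_M)$ combined with (\ref{eqn: Constrain on T}), which ensures that both the normal coefficient $(1-r_\perp)/r_\perp$ and the tangential coefficient $(1-r_\parallel)^2/(r_\parallel(2-r_\parallel))$ sit on the correct side of $T_m/T_M$.

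With the boundary operator thus bounded in the $\alpha$-weighted norm, I would iterate the boundary representation through the (countably many) bounces produced by the strictly convex domain, as in \cite{GKTT}. The exponential prefactor $e^{-\lambda\langle v\rangle t}$ provides decay in large $|v|$, smallness of $t_\infty$ provides a small contraction constant in $t$, and the multi-bounce terms are estimated by the kinetic-weight invariance together with the fact that on a convex domain all bounces in $[0,t_\infty]$ occur at essentially the same angle $\alpha$. This gives a closed a priori bound of the form $\|e^{-\lambda\langle v\rangle t}\alpha\nabla_{x,v}f^{n+1}\|_\infty\le C\|\alpha\nabla_{x,v}f_0\|_\infty+o(1)\|e^{-\lambda\langle v\rangle t}\alpha\nabla_{x,v}f^{n}\|_\infty$ for $t_\infty$ small, and the limit $n\to\infty$ yields (\ref{C1 estimate}); uniqueness follows from the same estimate applied to the difference of two solutions.

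\textbf{Main obstacle.} I expect the hardest part to be the derivative estimate of the C-L kernel at the boundary: unlike the pure-diffuse case, the coupling $(1-r_\perp)^{1/2}v_\perp u_\perp$ inside $I_0$ and the shifted tangential Gaussian prevent a clean duality between $\partial_v$ and $\partial_u$ via a change of variables, so that each derivative produces factors that are critical with respect to the Maxwellian weight. The explicit accommodation--temperature condition (\ref{eqn: Constrain on T}) is exactly what is needed to make the resulting weighted integral finite and small enough to be absorbed.
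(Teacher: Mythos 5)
Your overall skeleton (iterate \eqref{eqn:formula of f^(m+1)}, differentiate, weight with $e^{-\lambda\langle v\rangle t}\alpha$, use Lemma \ref{Lemma: velocity lemma} and the differentiated C-L boundary condition, absorb the collision derivative via Lemma \ref{Lemma: NLN}) matches the paper's setup, but there is a genuine gap in how you handle the repeated boundary interactions, and this is where the actual difficulty of the theorem lies. Your claim that the multi-bounce terms can be closed because ``on a convex domain all bounces in $[0,t_\infty]$ occur at essentially the same angle $\alpha$'' is not a valid mechanism: the C-L reflection is stochastic, so each bounce introduces a new independent integration variable $v_k$ over $\mathcal{V}_k$, and the backward characteristic need not reach the initial datum after any fixed number of bounces. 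Moreover, the differentiated boundary condition (Lemma \ref{Lemma: bc for partial f}) carries an extra factor $\langle v\rangle^2 e^{[\frac{1}{4T_M}-\frac{1}{2T_w(x)}]|v|^2}$ per bounce, so the $k$-fold iterated integral is \emph{not} small in $t$ and is not even obviously bounded: the polynomial must be traded for an exponential $\langle v\rangle^2\lesssim \e^{-1}e^{\e|v|^2}$, and the accumulated $e^{\e|v|^2}$ shifts the Gaussian exponents at every fold. The paper's proof handles this with the inductive temperature bookkeeping $T_{l,i}$ and the quantities $\mathcal{A}_{l,p}$ in Lemma \ref{lemma: boundedness}, the choice $\e=t^c$ with $c=1/15$ (Lemma \ref{Lemma: extra term}), and, crucially, the grazing-set decomposition $\mathcal{V}_j^{\delta}$ with $\delta=t^{1/3}\delta'$, which gives at most $N=O(t/\delta^3)$ non-grazing bounces and hence the decay $(\tfrac12)^{k_0}$ for the remainder after $k_0=N^3$ bounces (Lemma \ref{lemma: t^k}). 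None of this is replaced by a Gronwall/contraction-in-$t$ argument: the boundary operator has an $O(1)$ norm per fold, and smallness of $t_\infty$ only enters through the count $N$ and through conditions like \eqref{condition for tinfty}.

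A secondary, related misattribution: you use the condition \eqref{eqn: Constrain on T} to guarantee finiteness of a single weighted $\dd v$ integral of the kernel. In the paper that single-fold integral is controlled already by the Gaussian structure and $\theta<\frac{1}{4T_M}$ (Lemmas \ref{Lemma: abc}--\ref{Lemma: integrate normal small}); the hypothesis \eqref{eqn: Constrain on T} is needed precisely in the multi-bounce analysis, to ensure $\eta_{i,\parallel},\eta_{i,\perp}<1$ in \eqref{eqn: eta i para}, \eqref{eqn: eta i perp}, which is what forces large tangential/normal speeds to drop by at least $\delta^{-1}$ across consecutive bounces (Lemma \ref{Lemma:  (a)(c)}) and makes the counting argument of Lemma \ref{Lemma: Step3} work. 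Without the stochastic-cycle expansion of Lemma \ref{lemma: the tracjectory formula for f^(m+1)} and the two key estimates (boundedness of the $k$-fold measure, Lemma \ref{lemma: boundedness}, and smallness of the residual cycle, Lemma \ref{lemma: t^k}), your a priori bound does not close, so the proposal as written does not prove \eqref{C1 estimate}.
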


\begin{remark}
The well-posedness of the solution $F=\sqrt{\mu} f$ and $L^\infty$ estimate $\Vert w_{\theta} f\Vert_\infty$ are proved in~\cite{chen}, in this paper we will focus on the weighted $C^1$ estimate~\eqref{C1 estimate}. We record the well-posedness and $L^\infty$ estimate in Theorem \ref{Thm: local existence} in section 2.
\end{remark}

\begin{remark}
In Theorem \ref{Thm: dynamic C1} the accommodation coefficient can be any number except $r_\perp=0,r_\parallel=0,2$, which corresponds to pure reflection or bounce back reflection.
For wall temperature we have a relaxed condition~\eqref{eqn: Constrain on T} rather than the small fluctuation. In particular, for the pure diffuse reflection, i.e, $r_\parallel=r_\perp=1$, there is no constraint to the temperature(except $T_M<\infty,T_m>0$).
\end{remark}

\subsection{Result of steady Boltzmann equation}
We also establish the weighted $C^1$-estimate for the steady problem. The steady Boltzmann equation is given as
\begin{equation}\label{eqn: Steady Boltzmann}
  v\cdot \nabla_x F_s=Q(F_s,F_s),\quad (x,v)\in \Omega \times \mathbb{R}^3,
\end{equation}
with $F_s$ satisfying the C-L boundary condition. Here we note that we use $F_s$ to represent the steady solution.

We use the short notation $\mu_0$ to denote the global Maxwellian with temperature $T_0$,
\[\mu_0:=\frac{1}{2\pi (T_0)^2}\exp\Big(-\frac{|v|^2}{2T_0} \Big).\]
Here we mark that $\mu_0$ is the global Maxwellian for the steady problem while the $\mu$ defined in~\eqref{eqn: def for weight} is the global Maxwellian for the dynamical problem.

Let $F_s=\mu_0+\sqrt{\mu_0} f_s$. The equation of $f_s$ reads
\begin{equation}\label{equation of f_s}
v\cdot \nabla_x f_s + Lf_s = \Gamma(f_s,f_s).
\end{equation}
Here $L$ is the standard linearized Boltzmann operator
\begin{equation}\label{eqn: L operator}
\begin{split}
   & Lf_s:=-\frac{1}{\sqrt{\mu_0}}\big[ Q(\mu_0,\sqrt{\mu_0}f_s)+Q(\sqrt{\mu_0}f_s,\mu_0)\big]=\nu(v)f_s-Kf_s
\end{split}
\end{equation}
with the collision frequency $\nu(v)\equiv \iint_{\mathbb{R}^3\times \mathbb{S}^2}B(v-v_*,w)\mu_0(v_*)\dd w \dd v_*\sim \{1+|v|\}$. When we assume small fluctuation of the wall temperature and the accommodation coefficient, the steady problem is well-posed~\cite{chen}.

In this paper we also derive the weighted-$C^1$ regularity of the steady solution in the following theorem.
\begin{theorem}\label{thm: C1 steady}
For given $T_0>0$, there exists $\delta_0>0$ such that if
\begin{equation}\label{eqn: small pert condition}
\sup_{x\in \partial \Omega}|T_w(x)-T_0|<\delta_0,\quad \max\{|1-r_\perp|,|1-r_\parallel|\}<\delta_0,
\end{equation}
then we can construct a unique steady solution $F_s=\mu_0+\sqrt{\mu_0}f_s$ satisfies:
\begin{equation}\label{C1 steady estimate}
\Vert \alpha \nabla_x f_s\Vert_\infty \lesssim \Vert w_\vartheta f_s\Vert_\infty  \lesssim 1.
\end{equation}
Here $w_\vartheta=e^{\vartheta|v|^2}$ for some $\vartheta>0$.

\end{theorem}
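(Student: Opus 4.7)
The existence of $f_s$ with $\Vert w_\vartheta f_s\Vert_\infty \lesssim 1$ under~\eqref{eqn: small pert condition} is established in~\cite{chen}, so what is left is the weighted gradient bound $\Vert \alpha\nabla_x f_s\Vert_\infty \lesssim \Vert w_\vartheta f_s\Vert_\infty$. Since $L$ and $\Gamma$ act only in $v$, applying $\partial_{x_i}$ to~\eqref{equation of f_s} gives the linear transport equation
\begin{equation*}
v\cdot\nabla_x(\partial_{x_i} f_s) + \nu(v)\partial_{x_i} f_s = K\partial_{x_i} f_s + \Gamma(\partial_{x_i} f_s,f_s) + \Gamma(f_s,\partial_{x_i} f_s).
\end{equation*}
I set $g_i := \alpha\,\partial_{x_i} f_s$ and use the velocity lemma (Lemma~\ref{Lemma: velocity lemma}) to rewrite the transport for $\alpha\partial_{x_i}f_s$ as one for $g_i$ plus a commutator $\tfrac{v\cdot\nabla_x\alpha}{\alpha}g_i$ that grows at worst like $\langle v\rangle g_i$ and is absorbed by the exponential factor $e^{-\nu(v)s}$ in Duhamel.

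I then solve in mild form along backward characteristics: for $(x,v)\in\bar\Omega\times\R^3$, let $x_b(x,v)\in\partial\Omega$ be the first backward boundary contact at parameter $t_b$, so that
\begin{equation*}
g_i(x,v) = e^{-\nu(v)t_b} g_i(x_b,v) + \int_0^{t_b} e^{-\nu(v)s}\bigl[\alpha K\partial_{x_i}f_s + \alpha\Gamma(\partial_{x_i}f_s,f_s) + \alpha\Gamma(f_s,\partial_{x_i}f_s)\bigr](x-sv,v)\,\dd s.
\end{equation*}
The two $\Gamma$ contributions are controlled by $\Vert w_\vartheta f_s\Vert_\infty\,\Vert \alpha\nabla_x f_s\Vert_\infty$ and are therefore small once $\delta_0$ in~\eqref{eqn: small pert condition} (and hence $\Vert w_\vartheta f_s\Vert_\infty$) is small; the $K$ contribution is handled by the standard small/large velocity decomposition together with one further iteration through the characteristic, as in~\cite{GKTT,CK}, producing a prefactor strictly below one.

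The heart of the matter is the boundary trace $g_i(x_b,v)$ for $(x_b,v)\in\gamma_-$. Differentiating the C-L relation~\eqref{eqn:BC}--\eqref{eqn: Formula for R} in a tangential direction at $x_b$ yields an integral operator acting on $\nabla_x f_s|_{\gamma_+}$, whose kernel is the tangential derivative of $R(u\to v;x_b)$, plus lower-order terms coming from $\partial_x T_w$, $\partial_x n(x)$, $\partial_x r_\perp$, $\partial_x r_\parallel$ and from the Jacobian $|n(x)\cdot u|$. The grazing singularity $1/|n(x_b)\cdot v|$ produced by this differentiation is exactly absorbed by $\alpha(x_b,v)\sim n(x_b)\cdot v$ (cf.~\eqref{cancel n dot v}), while the lower-order pieces are small by~\eqref{eqn: small pert condition}. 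Under the same smallness the leading boundary operator is an $O(\delta_0)$-perturbation of the pure-diffuse reflection operator with constant temperature $T_0$, which projects outgoing traces onto a one-dimensional Maxwellian subspace. Iterating the mild formulation one further time, so that two boundary bounces appear, yields a strict contraction in the $\alpha$-weighted $L^\infty$ norm, exactly as in the diffuse steady argument of~\cite{CK}. Combining the interior and boundary estimates I arrive at
\begin{equation*}
\Vert \alpha\nabla_x f_s\Vert_\infty \leq C\Vert w_\vartheta f_s\Vert_\infty + \bigl(\tfrac{1}{2} + C\delta_0\bigr)\Vert \alpha\nabla_x f_s\Vert_\infty,
\end{equation*}
and choosing $\delta_0$ small enough absorbs the second term.

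\textbf{Main obstacle.} The hardest step is the tangential differentiation of the C-L kernel: it depends on $x$ through $n(x)$, $T_w(x)$ and the perpendicular/parallel decomposition of $u,v$, and it carries the Bessel factor $I_0$, so the resulting expression is long. Organising this derivative so that (i) the grazing singularity is cleanly cancelled by $\alpha$ and (ii) the leading part remains a C-L-type reflection operator compatible with the contraction argument is the central bookkeeping task, and it is precisely for this that the small-fluctuation hypothesis~\eqref{eqn: small pert condition} is essential.
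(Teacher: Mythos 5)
Your skeleton (Duhamel along backward characteristics for $\partial_x f_s$, $\alpha$-weighted norm, smallness of the $\Gamma$ terms via $\Vert w_\vartheta f_s\Vert_\infty\lesssim\delta_0$, differentiating the boundary relation and absorbing) matches the paper's general setup, but the step on which everything hinges is not the one you propose, and as stated it would fail. After one tangential differentiation of the boundary condition, the singular term is an integral of $\nabla_x f_s(x_1,v_1)$ over outgoing $v_1$ against the C-L kernel (the term \eqref{bc3}, or \eqref{intro: bdr sing} in the introduction). Bounding this by $\Vert\alpha\nabla_x f_s\Vert_\infty$ and iterating boundary bounces does not produce a contraction: once you insert $\nabla_x f_s = (\alpha\nabla_x f_s)/\alpha$, the one-bounce operator on the weighted gradient has $O(1)$ norm (the $1/\alpha(x_1,v_1)$ is only cancelled, not beaten, by $|n(x_1)\cdot v_1|$, cf.\ \eqref{integrate back / alpha}), and neither the near-diffuse structure nor its approximate rank-one character gives an operator norm strictly below one in the $\alpha$-weighted $L^\infty$ space. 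This is also not what the diffuse steady argument of \cite{CK} does. The mechanism actually used (Lemma \ref{Lemma: prior estimate} together with \eqref{deri bdr twice 1}--\eqref{deri bdr twice 3}) is to expand $\nabla_{x_1}f_s(x_1,v_1)$ once more along the characteristic and then \emph{remove} the remaining derivative entirely: for the collision contribution one exchanges $\nabla_{x_1}$ for $-\nabla_{v_1}/(t_1-s)$ as in \eqref{change x1 to v1} (after an $\e$-split of the time integral) and integrates by parts in $v_1$ against the kernel \eqref{back term}, the polynomial factors being controlled by Lemma \ref{Lemma: integrable}; for the boundary-to-boundary contribution one changes variables $v_1\mapsto(\mathbf{x}^2_{p^2},\tb^1)$ with the Jacobian of Lemma \ref{Lemma: change of variable} and integrates by parts in the boundary chart, using Lemma \ref{Lemma: nv<v2} to tame the $|\tb^1|^{-k}$ singularities. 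The surviving $\Vert\alpha\nabla_x f_s\Vert_\infty$ terms then all carry genuinely small prefactors ($e^{-\nu t}$ with $t\gg1$, $O(\e)$, or $\Vert w_\vartheta f_s\Vert_\infty\lesssim\delta_0$), which is what allows absorption; no iteration of the boundary operator is ever performed.

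The same gap appears in your treatment of the $K$ term: a "small/large velocity decomposition plus one further iteration" does not yield a prefactor below one for the gradient, since the double-$\mathbf{k}$ operator acting on $\partial_x f_s$ weighted by $1/\alpha$ is only $O(1)$ after Lemma \ref{Lemma: NLN} (there is no small time and no $L^2$ estimate for $\partial_x f_s$ to fall back on). The paper again removes the derivative, exchanging $\partial_{[x]_j}$ for $\partial_{u_j}/(s'-s)$ in \eqref{kk2} and integrating by parts in $u$, using \eqref{nabla u k} and Lemma \ref{Lemma: k tilde}. So the central missing idea in your proposal is precisely this derivative-removal by integration by parts (in $v_1$, in $u$, and in the boundary coordinates after the change of variables), which is where the smallness hypothesis \eqref{eqn: small pert condition} and the integrability estimates of Lemma \ref{Lemma: integrable} are really consumed; the "two-bounce contraction" it is meant to replace is not available in this weighted setting.
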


\begin{remark}
The well-posedness of $f_s$ and the $L^\infty$ bound $\Vert w_\vartheta f_s\Vert_\infty$ of the steady solution $F_s=\mu_0+\sqrt{\mu_0}f_s$ is proved in~\cite{chen}, we record the result in Corollary \ref{Thm: steady solution} in section 2. In this paper we focus on proving the regularity estimate~\eqref{C1 steady estimate}.
\end{remark}

\begin{remark}
In Theorem \ref{thm: C1 steady}, different to Theorem \ref{Thm: dynamic C1}, we need to restrict these two coefficients to be close to $1$ in~\eqref{eqn: small pert condition}. To be more specific, we require the C-L boundary to be close to the diffuse boundary condition.
\end{remark}

\subsection{Difficulty and proof strategy}

\textbf{Dynamical solution.}
First we illustrate the difficulty and strategy for the dynamical solution in Theorem \ref{Thm: dynamic C1}. A common approach for the boundary value problem is to iterate along the backward characteristic until hitting the boundary or the initial datum.
In order to clearly state and address the difficulty, we briefly recall the strategy for the well-posedness of the dynamical solution as stated in~\cite{chen}. We define the stochastic cycle $v_k,v_{k-1},\cdots,v_1$ in Definition \ref{Def:Back time cycle}. The backward characteristic may hit the boundary for $k$-times before reaching the initial datum. The boundary condition~\eqref{eqn:C-L boundary condition in pro measure} will generate a $k$-fold integration. Due to the probability measure $\dd \sigma(v_k,v_{k-1})$ ( see~\eqref{eqn:probability measure}),
the integral of $v_k$ is roughly
\begin{equation}\label{eqn: proof strat}
 \int_{n(x)\cdot v_k>0}   e^{-[\frac{1}{4T_M}-\frac{1}{2T_w(x)}]|v_k|^2}  \dd \sigma(v_k,v_{k-1}).
\end{equation}
Indeed the integrand is of the form of exponential, we can explicitly compute the above integration as a function of $v_{k-1}$ and adapt the result to the integration over $v_{k-1}$. In such way
we can derive an induction formula to compute the $k$-fold integration.

For the rest stochastic cycle, i.e, $v_{k+1},v_{k+2},\cdots$, for large $k$, physically it means the characteristic does not reach the initial datum after a large number of interaction with boundary.
We follow the idea in~\cite{GKTT} to introduce the grazing set
\[\gamma_+^\delta = \{u\in \gamma_+: |n\cdot u|>\delta, |u|\leq \delta^{-1}\}.\]
In such subspace characteristic need to take certain time to reach the boundary. One can derive the lower bound of the time as $O(\frac{1}{\delta^3})$. For bounded $t$ there can be at most $N=O(\frac{1}{\delta^3})$ many $v_i$ belong to such subspace.
For the rest $v_i \in \gamma_+\backslash \gamma_+^\delta$, the integration over such subspace results in a small magnitude number $O(\delta)$. Thus for large $k$, we get a large power of $O(\delta)$ and thus derive that the measure of
the rest cycle $v_{k+1},v_{k+2},\cdots$ is small. Hence we will choose a proper $k$ depend on $N=O(\frac{1}{\delta^3})$.

When it comes to the regularity, it is well-known that singularity occurs at the backward exit position $\xb(x,v):= x-  \tb(x,v) v$ which is defined through a backward exit time $\tb$:
\Be\label{BET}
 \tb(x,v) :=   \sup \{ s>0:x-sv \in \O\}.
\Ee
Thus while estimating the regularity, the singularity occurs at the boundary. In Theorem \ref{Thm: dynamic C1} we include the kinetic weight~\eqref{kinetic_distance} since such weight can cancel the singularity as stated in Lemma \ref{Lemma: velocity lemma}. Besides the singularity the boundary condition for $\partial f$ actually has a nice form as stated in Lemma~\ref{Lemma: bc for partial f}, which looks similar to the boundary condition of $f$ in~\eqref{eqn:C-L boundary condition in pro measure}.

Even though the boundary condition in our case is similar to the case of $f$, the extra term $\langle v\rangle^2$ in Lemma~\ref{Lemma: bc for partial f}, brings difficulty to our analysis. Since the computation involves various integration with exponential, it is natural to bound polynomial term $\langle v\rangle$ by exponential and adapt it into the computation. For a single integration such upper bound does not have big effect.
However, as stated above, we trace back along the characteristic for large $k$ times.
Thus in order to follow the induction formula for the $k$-fold integration, we need to bound
\begin{equation}\label{intro: poly}
\langle v\rangle^2 \lesssim \frac{1}{\e} e^{\e|v|^2}
\end{equation}
with small enough coefficient $\e$. Such extra exponential term $e^{\e|v|^2}$ will slightly increase the coefficient of the exponential after an integration. With a $k$-fold integration we need to impose the $k$-dependence on $\e$.
Since $k$ depends on $N=O(1/\delta^3)$, the term $\frac{1}{\e(k)}$ in~\eqref{intro: poly} depends on $\delta$ as well. It will be combined with the small magnitude number $O(\delta)$ for the nongrazing set $\gamma_+\backslash \gamma_+^\delta$.
Then in order to derive the smallness, we need to ensure $\delta\ll \e(\delta)$. Unfortunately, with such properties, the $k$-fold integration does not remain bounded.

To overcome such difficulty a key observation is: since we consider local-in-time $[0,t]$, we can obtain a better bound for $N$ as $O(\frac{t}{\delta^3})$.
Thus we can write $\delta=t^{1/3}\delta'$ for some $\delta'\ll 1$. Since we are considering local-in-time regularity, $t$ can be finally designed to be small and depend on all the other variables $k,\delta,\cdots$. In such setting $k=k(N)=k(\delta')$, which does not depend on $t$. With the extra $t^{1/3}$ we can choose proper $\e$ to satisfy the condition $\delta \ll \e$ as follow:
instead of imposing the $k$ dependence on $\e$, we directly impose the $t$ dependence as in Lemma~\ref{Lemma: extra term}. Then we assume $\e=t^{c}$ for some $\frac{1}{3}>c>0$ and incorporate $\frac{1}{t^c}$ with $\delta=t^{1/3}\delta'$ in the computation. Finally, we choose $t$ to be small to ensure the $k$-fold integration is bounded. In order to obtain the smallness for the rest cycle $v_{k+1},v_{k+2},\cdots$ in~\eqref{c}, we specify $c=1/15$.

\textbf{Steady solution.}
Then we come to the steady solution in Theorem~\ref{thm: C1 steady}. We express the steady solution as perturbation around a global Maxwellian $F=\mu_0+\sqrt{\mu_0}f, \mu_0=e^{-\frac{|v|^2}{2T_0}}$ and trace back along the characteristic as~\eqref{f1}-\eqref{f5}. The weighted $C^1$ regularity with pure diffuse boundary condition is established in~\cite{CK}, we use the same method to deal with the collision term(not related to the boundary). Then the new difficulty comes from the boundary term.
The boundary condition for $f$ can be computed as in Lemma \ref{Lemma: bc for fs}. Thus the most singular term from the boundary reads
\begin{equation}\label{intro: bdr sing}
\nabla_x \xb  e^{[\frac{1}{4T_0}-\frac{1}{2T_w(\xb)}]|v|^2} \int_{n(\xb)\cdot v_1>0} \nabla_{\xb} f(\xb,v_1) e^{-[\frac{1}{4T_0}-\frac{1}{2T_w(\xb)}]|v_1|^2} \dd \sigma(v_1,v).
\end{equation}

Using the characteristic once again for $f(\xb,v_1)$, the contribution of the collision operator (ignoring the singularity of $Q$ for simplicity) can be viewed as
\Be\label{intro_boundary1_int}
\nabla_x \xb  e^{[\frac{1}{4T_0}-\frac{1}{2T_w(\xb)}]|v|^2} \int_{n(\xb)\cdot v_1>0} \int_0^{\tb(\xb,v_1)}\nabla_x f(\xb-sv_1,v_1) e^{-[\frac{1}{4T_0}-\frac{1}{2T_w(\xb)}]|v_1|^2} \dd u \dd s\dd \sigma(v_1,v).\Ee
We can exchange the $x-$derivative into $v_1$-derivative as
\Be\label{x_to_v}
\nabla_x f(\xb-(t_1-s)v_1,u)= \frac{\nabla_{v_1} [F(\xb-(t_1-s)v_1,u) ]}{-(t_1-s)}.
\Ee
Since the accommodation coefficient and wall temperature are assumed to have a small fluctuation as in~\eqref{eqn: small pert condition}, such integration is ``close" to the integration of the pure diffuse boundary condition.
Then we can apply the change of variable to remove the $v^1$-derivative completely from $f$. Different to the pure diffuse boundary condition, the C-L boundary will generate more polynomial factors due to the
normal and tangential components in~\eqref{eqn: Formula for R}. Thanks to exponential decay term in the integrand, the polynomial factors will not affect the integrability. In Lemma \ref{Lemma: integrable} we compute these integration with extra polynomial terms, extra derivative in detail.
Thus the integration can be bounded by $\nabla_x \xb \Vert f\Vert_\infty$.

Another singular term is the boundary contribution of (\ref{intro: bdr sing}) along the characteristic:
\Be\label{intro: bdr sing2}
\nabla_x \xb  e^{[\frac{1}{4T_0}-\frac{1}{2T_w(\xb)}]|v|^2} \int_{n(\xb)\cdot v_1>0} \nabla_{\xb} f(\xb(x_1,v_1),v_1) e^{-[\frac{1}{4T_0}-\frac{1}{2T_w(\xb)}]|v_1|^2} \dd \sigma(v_1,v).
\Ee
The key idea is to convert $v_1$-integration to the integration in
$(x_2,  \tb(x_1,v_1) )=(\xb(x_1,v_1), \tb(x_1,v_1) )$, with Jacobian given in Lemma \ref{Lemma: change of variable}. Then we are able to remove $\nabla_{\xb}$-derivative from $f$ via the integration by parts.
Similar to the collision term~\eqref{intro_boundary1_int}, the integration by parts will generate more polynomial factors. These factors won't affect the integrability. Thus we can again remove the derivative and bound such contribution by $\nabla_x \xb \Vert f\Vert_\infty$.

\subsection{Outline}

In section 2 we list several lemmas as preparation. In section 3 we derive the weighted $C^1$ bound for the dynamical solution and conclude Theorem \ref{Thm: dynamic C1}. In section 4 we derive the weighted $C^1$ bound for the steady solution and conclude Theorem \ref{thm: C1 steady}.

\section{Preliminary}
\subsection{Basic setting}
Throughout this paper we will use the following notation:
\begin{equation}\label{lesssim}
f \lesssim  g \Leftrightarrow \text{there exists $0<C<\infty$ such that } f\leq Cg.
\end{equation}
\begin{equation}\label{big O}
f = O(g) \Leftrightarrow \text{there exists $0<C<\infty$ such that } f = Cg.
\end{equation}
\begin{equation}\label{little o}
f = o(g) \Leftrightarrow \text{there exists $c \ll 1$ such that } f = cg.
\end{equation}

First we record the local well-posedness of the dynamical Boltzmann equation with the C-L boundary.
\begin{theorem}\label{Thm: local existence}
Assume $\Omega \subset \mathbb{R}^3$ is bounded and $C^2$. Let $0< \theta <\frac{1}{4T_M}$. Assume wall temperature satisfies~\eqref{eqn: r condition} and~\eqref{eqn: Constrain on T}. If $F_0= \sqrt{\mu}f_0\geq 0$ and $f_0$ satisfies
\begin{equation}\label{eqn: w f_0}
\| w_\theta f_0 \|_\infty		< \infty,
\end{equation}
then there exists a unique solution $F(t,x,v) =  \sqrt{\mu} f(t,x,v)\geq 0$ to~\eqref{eqn: VPB equation} and~\eqref{eqn:BC} in $[0, t_{dym}] \times \O \times \R^3$ for some $t_{dym}\ll 1$.
Moreover, the solution $F=\sqrt{\mu}f$ satisfies
		\Be\begin{split}\label{infty_local_bound} \sup_{0 \leq t \leq t_{dym}}
			\| w_{\theta}e^{-|v|^2 t} f  (t) \|_{\infty}
			\lesssim \| w_\theta f_0 \|_\infty .
		\end{split}\Ee
\end{theorem}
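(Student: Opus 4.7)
The plan is a Picard iteration in $L^\infty$ with weight $w_\theta$ on a short time window. Set $F^0 \equiv F_0$ and define $F^{n+1}$ as the nonnegative solution of the linear problem $\p_t F^{n+1}+v\cdot\nabla_x F^{n+1}+\nu(F^n)F^{n+1}=Q_{\text{gain}}(F^n,F^n)$, with the C-L condition~\eqref{eqn:BC} imposed on $F^{n+1}$ at $\gamma_-$. Nonnegativity is preserved at each step because $R\geq 0$, $Q_{\text{gain}}\geq 0$, and $\nu\geq 0$. The linear step is then solved along the backward characteristic $s\mapsto x-(t-s)v$: either the trajectory reaches $s=0$, yielding a contribution bounded by $\|w_\theta f_0\|_\infty$ after absorbing the damping from $\nu$, or it hits $\p\O$ at $\tb$, in which case the boundary value $F^{n+1}(t-\tb,\xb,v)$ is rewritten via~\eqref{eqn:BC} as an integral of $F^{n+1}(t-\tb,\xb,v_1)$ over $\{n(\xb)\cdot v_1>0\}$. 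Iterating produces the stochastic-cycle representation in $(v_k,\dots,v_1)$ that is already used elsewhere in the paper.

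The crux is a uniform weighted bound on the resulting $k$-fold boundary integral. Pairing $w_\theta(v_j)=e^{\theta|v_j|^2}$ with the measure $|n\cdot v_j|\,\dd v_j$ and the Gaussian in~\eqref{eqn: Formula for R}, the perpendicular and tangential integrations in $v_j$ reduce to one-dimensional Gaussians that can be computed in closed form (the Bessel factor $I_0$ is absorbed through the integral representation~\eqref{I0}). The output is again an exponential in $v_{j-1}$ whose exponent is strictly negative precisely under $\theta<1/(4T_M)$ together with~\eqref{eqn: Constrain on T}: the first controls the loss from the initial weight, the second controls the loss generated at each reflection by the C-L mixing of $v_\perp$ and $v_\parallel$. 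This yields an inductive step uniform in $j$, combined with the normalization $\int R\,\dd v_j=1$ from~\eqref{eqn: normalization}.

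For large depth $k$, the grazing-set argument of~\cite{GKTT} handles the tail: on $\gamma_+\setminus\gamma_+^\delta$ a single reflection contributes a factor $O(\delta)$, while on $\gamma_+^\delta$ the strict convexity of $\O$ forces a positive minimum transit time, so that at most $N=O(t/\delta^3)$ reflections fit into time $t$. Choosing $\delta$ small and $k$ much larger than $N$ makes the residual cycle contribution arbitrarily small. The gain contribution along the characteristic is estimated by the standard bound $|w_\theta \nu^{-1}\Gamma_{\text{gain}}(f^n,f^n)|\lesssim \|w_\theta f^n\|_\infty^2$; the weight $e^{-|v|^2 t}$ appearing on the left in~\eqref{infty_local_bound} is precisely what compensates the $\langle v\rangle$-loss coming from $\nu(v)\sim\langle v\rangle$ after characteristic integration, allowing the quadratic estimate to close on a small time interval.

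Applying the same machinery to the difference $F^{n+1}-F^n$ yields a contraction in the same weighted $L^\infty$ norm on $[0,t_{dym}]$, after possibly shrinking $t_{dym}$, giving existence, uniqueness, and the bound~\eqref{infty_local_bound}; nonnegativity transfers to the limit. The principal obstacle is the explicit weighted integration of the Cercignani-Lampis kernel against $e^{\theta|v_j|^2}$ iterated along the cycle: the hypothesis~\eqref{eqn: Constrain on T} is sharp here, since without it the Gaussian exponent produced at each bounce would have the wrong sign and the inductive control of the $k$-fold boundary integral would fail, regardless of how small $t_{dym}$ is taken.
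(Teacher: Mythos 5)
Your proposal matches, in all essential respects, the strategy this paper attributes to \cite{chen} for Theorem \ref{Thm: local existence} (the paper itself only records the statement and refers to \cite{chen} for the proof): an iteration solved along backward characteristics, the stochastic-cycle representation with the probability measure $\dd\sigma$, explicit Gaussian/Bessel integration of the C-L kernel as in Lemmas \ref{Lemma: abc}--\ref{Lemma: integrate normal small} with the exponents controlled by $\theta<\tfrac{1}{4T_M}$ and \eqref{eqn: Constrain on T}, the grazing-set decomposition giving at most $N=O(t/\delta^3)$ non-grazing bounces and an $O(\delta)$ factor otherwise, and a contraction for differences; the only cosmetic deviation is that you impose \eqref{eqn:BC} on the same iterate rather than coupling $F^{m+1}$ on $\gamma_-$ to $F^m$ as in \eqref{eqn: Fm+1}, and your claim that \eqref{eqn: Constrain on T} is sharp is asserted rather than proved, but neither affects the argument.
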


Then we record the well-posedness of the steady Boltzmann equation with the C-L boundary.
\begin{corollary}\label{Thm: steady solution}
For given $T_0>0$, if the wall temperature and accommodation coefficient satisfies~\eqref{eqn: small pert condition}, then there exists a unique non-negative solution $F_s=\mu_0+\sqrt{\mu_0}f_s\geq 0$ with $\iint_{\Omega\times \mathbb{R}^3}f_s\sqrt{\mu_0}\dd x\dd v=0$ to the steady problem~\eqref{eqn: Steady Boltzmann}. And for some $\vartheta>0$,
\[\Vert e^{\vartheta |v|^2}f_s\Vert_\infty \lesssim \delta_0\ll 1.\]
\end{corollary}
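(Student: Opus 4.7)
The plan is to construct $f_s$ as a perturbation built by a fixed-point iteration around the global Maxwellian $\mu_0$ at temperature $T_0$. Because the wall temperature satisfies $|T_w(x)-T_0|<\delta_0$ and the accommodation coefficients lie within $\delta_0$ of $1$, the C-L kernel $R(u\to v;x)$ splits as the pure diffuse reflection kernel $R_0$ at temperature $T_0$ plus a remainder of size $O(\delta_0)$. Substituting this split into the boundary condition for $f_s=(F_s-\mu_0)/\sqrt{\mu_0}$ produces an inhomogeneous boundary source of order $\delta_0$ that drives the whole construction, so that both $\Vert e^{\vartheta|v|^2}f_s\Vert_\infty$ and the smallness $\delta_0$ appear linearly.

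The first step is the linear theory. Given a weighted-$L^\infty$ source $g$ and an inhomogeneous boundary piece $r$, one solves
\[
v\cdot\nabla_x f+Lf=g,\qquad f(x,v)|n\cdot v|=\int_{n\cdot u>0}R_0(u\to v;x)\,f(x,u)\,\{n\cdot u\}\,\mathrm{d}u+r,
\]
under the normalization $\iint f\sqrt{\mu_0}\,\mathrm{d}x\,\mathrm{d}v=0$, which quotients out the hydrodynamic kernel of $L$. The estimate $\Vert e^{\vartheta|v|^2}f\Vert_\infty\lesssim\Vert\nu^{-1}e^{\vartheta|v|^2}g\Vert_\infty+\Vert e^{\vartheta|v|^2}r\Vert_\infty$ is obtained through the $L^2\to L^\infty$ bootstrap: coercivity of $L$ on $(\ker L)^\perp$ and the mean-free condition produce an $L^2$ bound, and then the stochastic cycle along backward characteristics, together with the exponential gain from the wall Maxwellian available because $\vartheta<\frac{1}{4T_0}$, upgrades this to $L^\infty$ exactly as in the pure diffuse case.

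Next I would run the nonlinear iteration. Define $\Phi(f)$ to be the linear solution produced with $g=\Gamma(f,f)$ and boundary inhomogeneity $r$ coming from the deviation of the full C-L kernel from $R_0$, whose size is $O(\delta_0)\Vert e^{\vartheta|v|^2}f\Vert_\infty+O(\delta_0)$. Combining the bilinear bound $\Vert\nu^{-1}e^{\vartheta|v|^2}\Gamma(f,f)\Vert_\infty\lesssim\Vert e^{\vartheta|v|^2}f\Vert_\infty^{\,2}$ with the linear estimate yields, in the ball of radius $C\delta_0$,
\[
\Vert e^{\vartheta|v|^2}\Phi(f)\Vert_\infty\lesssim\delta_0+\Vert e^{\vartheta|v|^2}f\Vert_\infty^{\,2}\lesssim\delta_0,
\]
and a parallel Lipschitz estimate gives contraction for $\delta_0$ sufficiently small, which produces the unique fixed point $f_s$. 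Non-negativity $F_s\geq 0$ is recovered by running instead the positivity-preserving Duhamel iteration $v\cdot\nabla_xF^{n+1}+\nu(F^n)F^{n+1}=Q_{\mathrm{gain}}(F^n,F^n)$ from $F^0=\mu_0\geq 0$, using that the C-L kernel is a positive probability measure.

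The main obstacle is the linear $L^\infty$ estimate with the C-L rather than the pure diffuse reflection on the boundary. Unlike pure diffuse, the kernel in \eqref{eqn: Formula for R} does not factorize into a single wall Maxwellian, and the Bessel function $I_0$ together with the mixed perpendicular/parallel quadratic form obstructs the clean averaging gain exploited in the stochastic cycle. The resolution is to keep the $R_0$-part intact so that the normalization \eqref{eqn: normalization} remains visible after each of the $k$ bouncing iterates, while absorbing the remainder into the exponential gain $e^{-[\frac{1}{4T_0}-\frac{1}{2T_w}]|v_k|^2}$; it is precisely the preservation of this gain through all the iterates that forces the smallness condition \eqref{eqn: small pert condition}.
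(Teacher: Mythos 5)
There is nothing in this paper to compare your argument against: Corollary \ref{Thm: steady solution} is not proved here at all. It is explicitly recorded from the author's earlier work \cite{chen} (see the remark after Theorem \ref{thm: C1 steady} and the sentence introducing the corollary in Section 2), and the present paper only uses it as an input for the weighted $C^1$ estimate. So your proposal can only be judged on its own merits and against what the paper reveals of the cited construction. On that score your outline is broadly in the right family, but it departs from the imported machinery in one respect: Lemma \ref{Lemma: bc for fs} (quoted from \cite{chen}) keeps the full C-L kernel intact as the probability measure $\dd\sigma(u,v)$ in the boundary integral, and the smallness $\delta_0$ enters through the inhomogeneous remainder $r_s=(\mu_{x,r_\parallel,r_\perp}-\mu_0)/\sqrt{\mu_0}$ in \eqref{rs}--\eqref{mu_xr} together with the exponential weight comparison $e^{[\frac{1}{4T_0}-\frac{1}{2T_w}]|v|^2}$, rather than by splitting $R$ in \eqref{eqn: Formula for R} into the diffuse kernel $R_0$ plus an $O(\delta_0)$ remainder as you propose; the explicit Gaussian/Bessel integral lemmas imported from \cite{chen} (Lemmas \ref{Lemma: abc}--\ref{Lemma: integrate normal small}) are exactly the tools for running the stochastic cycle with the full C-L measure while preserving the normalization \eqref{eqn: normalization}.

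The one place where your sketch has a genuine mathematical gap is the linear $L^2$ step. You assert that ``coercivity of $L$ on $(\ker L)^\perp$ and the mean-free condition produce an $L^2$ bound,'' but coercivity only controls $(\mathbf{I}-\mathbf{P})f$, and the normalization $\iint f\sqrt{\mu_0}\,\dd x\,\dd v=0$ removes only the mass mode; the momentum and energy components of the hydrodynamic part $\mathbf{P}f$ are not controlled by these two facts in the steady problem. Closing the $L^2$ estimate for the stationary boundary-value problem requires the additional duality/test-function argument together with the boundary dissipation coming from the (near-)diffuse reflection, which is the genuinely hard part of the steady existence theory in the diffuse setting and, correspondingly, in \cite{chen} for C-L; it is also where the smallness \eqref{eqn: small pert condition} is really consumed, not merely in ``preserving the exponential gain through the bounces'' as your last paragraph suggests. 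The rest of your plan (weighted $L^\infty$ bootstrap along stochastic cycles, contraction in a ball of radius $O(\delta_0)$ for the nonlinear problem, positivity via the gain/loss-split monotone iteration) is standard and plausible, but without supplying the control of $\mathbf{P}f$ the proposed proof does not close.
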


\begin{definition}\label{Def:Back time cycle}
Let $\left(X^1(s;t,x,v),v\right)$ be the location and velocity along the backward trajectory before hitting the boundary,
\begin{equation}\label{eqn: trajectory for Xm}
  \frac{\dd}{\dd s}\left(
                \begin{array}{c}
                  X^1(s;t,x,v) \\
                  v \\
                \end{array}
              \right)=\left(
                        \begin{array}{c}
                          v \\
                          0\\
                        \end{array}
                      \right).
\end{equation}
Therefore, from~\eqref{eqn: trajectory for Xm}, we have
\[X^1(s;t,x,v)=x-(t-s)v.\]
Define the back-time cycle as
\[t_{1}(t,x,v)=\sup\{s<t:X^1(s;t,x,v)\in \partial \Omega\},\]
\[x_{1}(t,x,v)=X^1\left(t_1(t,x,v);t,x,v\right),\]
\[v_1\in \{v_1\in \mathbb{R}^3:n(x_1)\cdot v_1>0\}.\]
Also define
\[\mathcal{V}_1=\{v_1:n(x_1)\cdot v_1>0\},\quad x_1\in \partial \Omega.\]

Inductively, before hitting the boundary for the $k$-th time, define
\[t_k(t,x,v,v_1,\cdots,v_{k-1})=\sup\{s<t_{k-1}:X^k(s;t_{k-1},x_{k-1},v_{k-1})\in \partial \Omega\},\]
\[x_k(t,x,v,v_1,\cdots,v_{k-1})=X^k\left(t_k(t,x,v,v_{k-1});t_{k-1}(t,x,v),x_{k-1}(t,x,v),v_{k-1}\right),\]
\[v_k\in \{v_k\in \mathbb{R}^3:n(x_k)\cdot v_k>0\},\]
\[\mathcal{V}_k=\{v_k:n(x_k)\cdot v_k>0\},\]
\[X^k(s;t_{k-1},x_{k-1},v_{k-1})=x_{k-1}-(t_{k-1}-s)v_{k-1}.\]
Here we set
\[(t_0,x_0,v_0)=(t,x,v).\]
For simplicity, we denote
\[X^k(s):=X^k(s;t_{k-1},x_{k-1},v_{k-1})\]
for the rest lemmas and propositions.

\end{definition}

\subsection{Properties of the C-L scattering kernel}
In this subsection we list some basic properties of the scattering kernel~\eqref{eqn: Formula for R}.

\begin{lemma}\label{Lemma: Prob measure}(Lemma 10 in~\cite{chen})\\
For $R(u\to v;x,t)$ given by~\eqref{eqn: Formula for R} and any $u$ such that $n(x)\cdot u>0$, we have
\begin{equation}\label{eqn: integrate 1}
  \int_{n(x)\cdot v<0}R(u\to v;x,t)\dd v=1.
\end{equation}

\end{lemma}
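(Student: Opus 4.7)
The plan is to exploit the product structure of $R(u\to v;x,t)$ in~\eqref{eqn: Formula for R}. I would decompose $v = v_\perp n(x) + v_\parallel$ with $v_\parallel \perp n(x)$, and change variable to $w := |v_\perp|\in(0,\infty)$ on $\{n(x)\cdot v<0\}$, so that $\dd v = \dd w \, \dd v_\parallel$ with $v_\parallel \in \R^2$. The integrand then factors into a tangential piece (a Gaussian in $v_\parallel$ centered at $(1-r_\parallel)u_\parallel$) and a normal piece (a product of $w$, a Gaussian in $w$, and the modified Bessel factor), giving
\be
\int_{n(x)\cdot v<0} R(u\to v;x,t)\dd v = \frac{\mathcal{I}_\parallel \cdot \mathcal{I}_\perp}{r_\perp r_\parallel(2-r_\parallel)(\pi/2)(2T_w(x))^2},
\ee
where $\mathcal{I}_\parallel, \mathcal{I}_\perp$ are the tangential and normal integrals. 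The sign of $v_\perp$ enters only through the even function $I_0$, so no issue arises from this substitution.

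The tangential factor $\mathcal{I}_\parallel = \int_{\R^2}\exp(-|v_\parallel - (1-r_\parallel)u_\parallel|^2/(2T_w(x)r_\parallel(2-r_\parallel)))\dd v_\parallel$ reduces after translation to a centered two-dimensional Gaussian and evaluates to $2\pi T_w(x) r_\parallel(2-r_\parallel)$. For the normal factor
\be
\mathcal{I}_\perp = \int_0^\infty w \exp\!\Big(-\frac{w^2 + (1-r_\perp)u_\perp^2}{2T_w(x)r_\perp}\Big) I_0\!\Big(\frac{(1-r_\perp)^{1/2}\,w\,u_\perp}{T_w(x)r_\perp}\Big)\dd w,
\ee
I would invoke the identity $\int_0^\infty w\, e^{-aw^2} I_0(bw)\dd w = \frac{1}{2a}e^{b^2/(4a)}$ for $a>0$, derived on the spot from the series $I_0(y)=\sum_{k\geq 0}(y/2)^{2k}/(k!)^2$ together with $\int_0^\infty w^{2k+1}e^{-aw^2}\dd w = k!/(2a^{k+1})$ and term-by-term resummation into $e^{b^2/(4a)}$.

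Applying this identity with $a=1/(2T_w(x)r_\perp)$ and $b=(1-r_\perp)^{1/2}u_\perp/(T_w(x)r_\perp)$, one checks $b^2/(4a) = (1-r_\perp)u_\perp^2/(2T_w(x)r_\perp)$, so $e^{b^2/(4a)}$ cancels the $u_\perp$-dependent Gaussian in $\mathcal{I}_\perp$ and leaves $\mathcal{I}_\perp = T_w(x)r_\perp$. Multiplying out
\be
\frac{1}{r_\perp r_\parallel(2-r_\parallel)(\pi/2)(2T_w(x))^2} \cdot 2\pi T_w(x) r_\parallel(2-r_\parallel) \cdot T_w(x) r_\perp = 1
\ee
yields~\eqref{eqn: integrate 1}. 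The main obstacle is the Bessel integral and the verification that $e^{b^2/(4a)}$ cancels the remaining $u_\perp$-Gaussian exactly; once this is in hand the rest is purely algebraic and uses no further structure of the Cercignani-Lampis kernel.
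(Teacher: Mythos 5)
Your proof is correct: the splitting into normal and tangential factors, the evenness of $I_0$, the two-dimensional Gaussian evaluation of $\mathcal{I}_\parallel$, the identity $\int_0^\infty w\,e^{-aw^2}I_0(bw)\,\dd w=\tfrac{1}{2a}e^{b^2/(4a)}$ obtained from the series for $I_0$, and the final cancellation all check out, and this is exactly the standard normalization computation for the Cercignani--Lampis kernel. Note that this paper does not prove the lemma itself but cites Lemma 10 of~\cite{chen}, where the argument is of the same explicit Gaussian--Bessel type as the integrals recorded in Lemmas \ref{Lemma: abc}--\ref{Lemma: integrate normal small}, so your route coincides with the intended one.
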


\begin{lemma}\label{Lemma: abc}(Lemma 11 in~\cite{chen})\\
For any $a>0,b>0$, $\e>0$ such that $a+\e<b$, we have
\begin{equation}\label{eqn: coe abc}
\frac{b}{\pi}\int_{\mathbb{R}^2} e^{\e|v|^2}  e^{a|v|^2}e^{-b|v-w|^2}\dd v\leq \frac{b}{b-a-\e}e^{\frac{(a+\e)b}{b-a-\e}|w|^2}.
\end{equation}

And when $\delta\ll 1$,
\begin{align}
     \frac{b}{\pi}\int_{|v-\frac{b}{b-a-\e}w|>\delta^{-1}} e^{\e|v|^2} e^{a|v|^2}e^{-b|v-w|^2}\dd v  &\leq  e^{-(b-a-\e)\delta^{-2}} \frac{b}{b-a-\e} e^{\frac{(a+\e)b}{b-a-\e}|w|^2} \notag \\
   & \leq  \delta \frac{b}{b-a-\e}e^{\frac {(a+\e)b}{b-a-\e}|w|^2} \label{eqn: coe abc small}.
\end{align}

\end{lemma}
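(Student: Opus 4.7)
\textbf{Proof proposal for Lemma \ref{Lemma: abc}.}
The plan is a completely standard complete-the-square calculation; both bounds will fall out of the explicit evaluation of a two-dimensional shifted Gaussian. First I would merge the two exponential factors in $v$ and write the exponent in the integrand as
\[
(a+\e)|v|^2 - b|v-w|^2
= -(b-a-\e)\Bigl|v-\tfrac{b}{b-a-\e}\,w\Bigr|^2 + \tfrac{(a+\e)b}{b-a-\e}\,|w|^2,
\]
which is valid algebraically and uses nothing beyond $b-a-\e>0$ (guaranteed by the hypothesis $a+\e<b$) to make the quadratic form in $v$ negative definite. The $|w|^2$ term is exactly the exponential factor that appears on the right-hand side of~\eqref{eqn: coe abc} and~\eqref{eqn: coe abc small}, so after pulling it out of the integral one is left with a pure Gaussian in $v$.

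Next, I would change variables $u := v - \tfrac{b}{b-a-\e}w$. This is a translation with Jacobian one, and it transforms the integral in~\eqref{eqn: coe abc} into
\[
\frac{b}{\pi}\,e^{\frac{(a+\e)b}{b-a-\e}|w|^2}\int_{\R^2} e^{-(b-a-\e)|u|^2}\,\dd u
= \frac{b}{\pi}\,e^{\frac{(a+\e)b}{b-a-\e}|w|^2}\cdot\frac{\pi}{b-a-\e},
\]
using the standard two-dimensional Gaussian identity $\int_{\R^2}e^{-\alpha|u|^2}\dd u=\pi/\alpha$ with $\alpha=b-a-\e$. This gives~\eqref{eqn: coe abc} with equality, not merely inequality.

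For~\eqref{eqn: coe abc small}, the same translation sends the excluded region $\{|v-\tfrac{b}{b-a-\e}w|>\delta^{-1}\}$ to $\{|u|>\delta^{-1}\}$. In polar coordinates,
\[
\int_{|u|>\delta^{-1}} e^{-(b-a-\e)|u|^2}\dd u
= 2\pi\!\int_{\delta^{-1}}^\infty r e^{-(b-a-\e)r^2}\dd r
= \frac{\pi}{b-a-\e}\,e^{-(b-a-\e)\delta^{-2}},
\]
which yields the first line of~\eqref{eqn: coe abc small} after multiplying by $b/\pi$ and reinstating the $|w|^2$-exponential. The second inequality in~\eqref{eqn: coe abc small} reduces to the elementary observation $e^{-(b-a-\e)\delta^{-2}}\le \delta$, which holds as soon as $\delta$ is small enough that $(b-a-\e)\delta^{-2}\ge \log(\delta^{-1})$; this is what the hypothesis $\delta\ll 1$ encodes.

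There is no real obstacle here: the lemma is essentially the computation of a shifted two-dimensional Gaussian and its tail. The only item requiring a bit of attention is keeping the prefactor $b/\pi$, the shift $\tfrac{b}{b-a-\e}w$, and the exponent coefficient $\tfrac{(a+\e)b}{b-a-\e}$ consistent through the completion of squares, so that both the pointwise identity~\eqref{eqn: coe abc} and the tail identity underlying~\eqref{eqn: coe abc small} emerge with the exact constants stated.
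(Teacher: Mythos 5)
Your proof is correct: the completion of the square, the shifted two-dimensional Gaussian identity, and the polar-coordinate tail estimate give exactly the stated constants, and this is the same elementary computation the paper relies on (it only cites the lemma from \cite{chen} without reproducing the proof). The one point worth keeping in mind is that the threshold for $\delta$ in the last step, $e^{-(b-a-\e)\delta^{-2}}\leq \delta$, depends on $b-a-\e$, which is harmless here because in every application of the lemma the paper has a uniform lower bound such as $b-a-\e\geq \frac{1}{8T_M}$.
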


%
%
%
%
%
%
%
%
%
%

\begin{lemma}\label{Lemma: perp abc}(Lemma 12 in~\cite{chen})\\
For any $a>0,b>0,\e>0$ with $a+\e<b$,
\begin{equation}\label{eqn: coe abc perp}
2b\int_{\mathbb{R}^+} e^{\e v^2}e^{av^2} e^{-bv^2}e^{-bw^2}I_0(2bv w)\dd v\leq\frac{b}{b-a-\e}e^{\frac{(a+\e)b}{b-a-\e}w^2}.
\end{equation}
And when $\delta\ll 1$,
\begin{equation}\label{eqn: coe abc perp small}
2b\int_{0< v<\delta}e^{\e v^2}e^{av^2} e^{-bv^2}e^{-bw^2}I_0(2bv w)\dd v\leq \delta\frac{b}{b-a-\e}e^{\frac{(a+\e)b}{b-a-\e}w^2}.
\end{equation}

\end{lemma}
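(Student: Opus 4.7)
The plan is to reduce the bound to the two-dimensional Gaussian estimate of Lemma~\ref{Lemma: abc} by introducing polar coordinates on $\R^2$. Set $\mathbf{w}=(w,0)\in\R^2$ and write $\mathbf{v}=(r\cos\theta,r\sin\theta)$; then $|\mathbf{v}|^2=r^2$ and $|\mathbf{v}-\mathbf{w}|^2=r^2-2rw\cos\theta+w^2$. The angular integration produces the modified Bessel function via $\int_0^{2\pi} e^{2brw\cos\theta}\dd\theta=2\pi I_0(2brw)$, and combining with the polar Jacobian yields the identity
\[
\int_{\R^2}e^{\e|\mathbf{v}|^2}e^{a|\mathbf{v}|^2}e^{-b|\mathbf{v}-\mathbf{w}|^2}\dd\mathbf{v}
=2\pi\int_0^\infty r\,e^{-(b-a-\e)r^2}e^{-bw^2}I_0(2brw)\dd r.
\]
Multiplying both sides by $b/\pi$ and invoking Lemma~\ref{Lemma: abc} immediately delivers~\eqref{eqn: coe abc perp}; the radial factor $r$ arising from the polar Jacobian plays the role of the $v$-factor in the integrand on the left of~\eqref{eqn: coe abc perp}.

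For the truncated estimate~\eqref{eqn: coe abc perp small}, translating $\{v\in(0,\delta)\}$ back to $\R^2$ produces the disk $\{|\mathbf{v}|<\delta\}$. Completing the square in the two-dimensional Gaussian, its center is $\mathbf{v}_0=\frac{b\mathbf{w}}{b-a-\e}$, so the argument will split according to the position of $\mathbf{v}_0$ relative to this disk. When $|\mathbf{v}_0|\geq 2\delta$, the disk lies inside the Gaussian tail $\{|\mathbf{v}-\mathbf{v}_0|>\delta\}$, and a direct mirror of the derivation of~\eqref{eqn: coe abc small} produces a factor $e^{-(b-a-\e)\delta^{-2}}\leq\delta$ times the full bound. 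When $|\mathbf{v}_0|<2\delta$ (so $w$ is small), the bound $r\leq\delta$ on $(0,\delta)$ extracts one factor of $\delta$ from the polar Jacobian, and the remaining integral is then controlled by the full two-dimensional bound of Lemma~\ref{Lemma: abc}.

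The main obstacle will be the second estimate, because the region $\{v<\delta\}$ is not automatically a Gaussian tail (in contrast with the region $|v-c|>\delta^{-1}$ of Lemma~\ref{Lemma: abc}), so one cannot just copy the proof of~\eqref{eqn: coe abc small}. The case split on the location of $\mathbf{v}_0$ relative to the small disk is what makes the factor of $\delta$ visible in both regimes, and keeping the constants aligned with the right-hand side $\delta\,\frac{b}{b-a-\e}e^{\frac{(a+\e)b}{b-a-\e}w^2}$ requires that the polar reduction be done before, not after, any Gaussian completion. The first inequality~\eqref{eqn: coe abc perp}, by contrast, is essentially immediate once the polar identity producing $I_0$ is in place.
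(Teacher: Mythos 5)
Your reduction has a structural mismatch with the statement. The polar-coordinate identity is correct, but it necessarily carries the radial Jacobian: what it lets you bound via Lemma~\ref{Lemma: abc} is $2b\int_0^\infty v\, e^{\e v^2}e^{av^2}e^{-bv^2}e^{-bw^2}I_0(2bvw)\dd v$, i.e.\ an integral with an extra factor $v$. No such factor appears on the left of \eqref{eqn: coe abc perp}; your remark that the Jacobian ``plays the role of the $v$-factor in the integrand'' misreads the lemma (the $v$-weighted version is the one appearing in Lemma~\ref{Lemma: integrate normal small}, not here). The two integrals are not interchangeable up to a harmless constant: already at $w=0$ the stated left-hand side equals $b\sqrt{\pi/(b-a-\e)}$, while your argument controls a quantity of size $b/(b-a-\e)$, and these differ by an unbounded factor as $b-a-\e$ varies. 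So the 2D reduction does not prove \eqref{eqn: coe abc perp}. This paper does not reprove the lemma (it is quoted from Lemma 12 of \cite{chen}); the natural argument is purely one-dimensional: bound $I_0(2bvw)\le e^{2bvw}$ directly from \eqref{I0}, complete the square, $-(b-a-\e)v^2+2bvw-bw^2=-(b-a-\e)\big(v-\tfrac{bw}{b-a-\e}\big)^2+\tfrac{(a+\e)b}{b-a-\e}w^2$, and integrate the remaining 1D Gaussian. Note that this produces a constant of the form $\tfrac{2b}{\sqrt{b-a-\e}}$ (up to $\sqrt{\pi}$), which is exactly the constant the paper uses when it invokes the lemma for the perpendicular integral right after \eqref{eqn: coefficient a and b}; a two-dimensional reduction cannot reproduce this square-root scaling.

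The truncated estimate \eqref{eqn: coe abc perp small} has a second, independent error. In your case $|\mathbf{v}_0|\ge 2\delta$, the disk $\{|\mathbf{v}|<\delta\}$ lies only at distance of order $\delta$ from the Gaussian center, so ``mirroring'' the derivation of \eqref{eqn: coe abc small} yields a factor $e^{-(b-a-\e)\delta^{2}}\approx 1$, not $e^{-(b-a-\e)\delta^{-2}}\le\delta$; the gain in \eqref{eqn: coe abc small} comes from the excluded region being at distance $\delta^{-1}$, which is not the situation here. In fact no tail gain is needed at all: after $I_0\le e^{2bvw}$ and completing the square, the integrand is bounded pointwise by $e^{\frac{(a+\e)b}{b-a-\e}w^2}$, and integrating over the interval $0<v<\delta$ of length $\delta$ gives the factor $\delta$ in \eqref{eqn: coe abc perp small} immediately, with no case analysis on the location of the Gaussian center.
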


\begin{lemma}\label{Lemma: integrate normal small}(Lemma 13 in~\cite{chen})\\
For any $m,n>0$, when $\delta\ll 1$, we have
\begin{equation}\label{eqn: smallness for i0}
2m^2\int_{\frac{n}{m}u_\perp+\delta^{-1}}^\infty     v_\perp e^{-m^2v_\perp^2}I_0(2mnv_\perp u_\perp)e^{-n^2u_\perp^2}\dd v_\perp \lesssim e^{-\frac{m^2}{4\delta^{2}}}.
\end{equation}
In consequence, for any $a>0,b>0,\e>0$ with $a+\e<b$,
\begin{eqnarray}
 2b\int_{\frac{b}{b-a-\e}w+\delta^{-1}}^\infty v e^{\e v^2}e^{av^2} e^{-bv^2}e^{-bw^2}I_0(2bv w)\dd v  &\leq  & e^{\frac{-(b-a-\e)}{4\delta^2}}\frac{b}{b-a-\e}e^{\frac{(a+\e)b}{b-a-\e}w^2} \label{eqn: coe perp smaller 2}\\
   &\leq  & \delta\frac{b}{b-a-\e}e^{\frac{(a+\e)b}{b-a-\e}w^2}. \label{eqn: coe perp small 2}
\end{eqnarray}

\end{lemma}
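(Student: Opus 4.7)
The two main ingredients for both estimates are (a) the pointwise bound $I_0(y) \leq e^y$, which turns the Bessel weight into a pure exponential, and (b) completion of the square together with the lower integration bound $v \geq v^\ast + \delta^{-1}$ to extract the decay factor $e^{-X/(4\delta^2)}$ via the elementary split $(v-v^\ast)^2 = \tfrac34(v-v^\ast)^2 + \tfrac14(v-v^\ast)^2$ and the observation $\tfrac14(v-v^\ast)^2 \geq \tfrac{1}{4\delta^2}$ on the integration domain.

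For \eqref{eqn: smallness for i0}, I would apply $I_0(2mn v_\perp u_\perp) \leq e^{2mn v_\perp u_\perp}$ to collapse the three exponentials into a single perfect square, $-m^2v_\perp^2 + 2mnv_\perp u_\perp - n^2u_\perp^2 = -(mv_\perp - nu_\perp)^2$. The substitution $s = mv_\perp - nu_\perp$ then converts the integral into
\[
2\int_{m/\delta}^{\infty}(s + nu_\perp)e^{-s^2}\dd s = e^{-m^2/\delta^2} + 2nu_\perp\int_{m/\delta}^{\infty}e^{-s^2}\dd s,
\]
where the tail is controlled by $\int_{m/\delta}^\infty e^{-s^2}\dd s \leq \tfrac{\delta}{2m}e^{-m^2/\delta^2}$. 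The resulting bound $(1 + \tfrac{nu_\perp \delta}{m})e^{-m^2/\delta^2}$ is $\lesssim e^{-m^2/(4\delta^2)}$ since for $\delta\ll 1$ the excess factor $e^{-3m^2/(4\delta^2)}$ absorbs the polynomial prefactor.

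For \eqref{eqn: coe perp smaller 2}, I would run the same argument with $c := b - a - \e > 0$ playing the role of $m^2$. Combining $e^{\e v^2}e^{av^2}e^{-bv^2} = e^{-cv^2}$ and using $I_0(2bvw) \leq e^{2bvw}$, completion of the square gives $-cv^2 + 2bvw - bw^2 = -c(v - \tfrac{bw}{c})^2 + \tfrac{b(a+\e)}{c}w^2$. The factor $e^{\frac{b(a+\e)}{c}w^2}$ cleanly matches the right-hand side. The residual Gaussian $\int v\,e^{-c(v - bw/c)^2}\dd v$ over $v \geq \tfrac{bw}{c} + \delta^{-1}$, handled by the substitution $\tau = v - bw/c$ exactly as in the previous step, contributes the constant $\tfrac{b}{c} = \tfrac{b}{b-a-\e}$ and the decay $e^{-c/(4\delta^2)}$. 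Finally, \eqref{eqn: coe perp small 2} follows from \eqref{eqn: coe perp smaller 2} by the observation that $e^{-c/(4\delta^2)} \leq \delta$ for $\delta\ll 1$, since an exponential in $1/\delta^2$ decays faster than any polynomial in $\delta$.

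\textbf{Main obstacle.} The subtle point is the linear $w$- (resp.\ $u_\perp$-) contribution that appears after shifting to the Gaussian center: the integrand $(\tau + \tfrac{bw}{c})e^{-c\tau^2}$ produces a piece proportional to $\tfrac{bw\delta}{c}\, e^{-c/\delta^2}$ which threatens uniformity in $w$. The resolution is that the direct computation actually produces the stronger decay $e^{-c/\delta^2}$, and the excess $e^{-3c/(4\delta^2)}$ beyond the advertised $e^{-c/(4\delta^2)}$ is large enough to absorb $(1 + \tfrac{bw\delta}{c})$ once $\delta\ll 1$; alternatively one may use Young's inequality to hide $\tfrac{bw\delta}{c}$ inside a vanishingly small portion of the Gaussian weight $e^{\frac{b(a+\e)}{c}w^2}$ already present on the right-hand side.
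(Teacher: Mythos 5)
Your computation is correct up to the last step, and your reduction of \eqref{eqn: coe perp smaller 2} to \eqref{eqn: smallness for i0} (via $m^{2}=b-a-\e$, $nu_\perp = bw/\sqrt{b-a-\e}$, exact completion of the square) is the natural one; note the paper itself does not prove this lemma but quotes it from the reference, so I can only judge correctness. The genuine gap is exactly the point you flag as the ``main obstacle,'' and your proposed resolutions do not close it. After $I_0(y)\le e^{y}$ and the shift $s=mv_\perp-nu_\perp$ you are left with $e^{-m^{2}/\delta^{2}}+\frac{nu_\perp\delta}{m}e^{-m^{2}/\delta^{2}}$, and you claim the excess $e^{-3m^{2}/(4\delta^{2})}$ absorbs $1+\frac{nu_\perp\delta}{m}$ ``once $\delta\ll1$.'' But the quantifiers go the other way: $\delta$ is fixed first and $u_\perp$ is a free variable ranging over all of $(0,\infty)$ (in the application it is integrated over, playing the role of $|v_{l-1,\perp}|$), so $\frac{nu_\perp\delta}{m}e^{-3m^{2}/(4\delta^{2})}$ is unbounded in $u_\perp$ and no admissible choice of $\delta$ makes the bound uniform. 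Your fallback via Young's inequality fails for both displays: \eqref{eqn: smallness for i0} has no Gaussian weight in $u_\perp$ on its right-hand side in which to hide a linear factor, and for \eqref{eqn: coe perp smaller 2} the exponent $\frac{(a+\e)b}{b-a-\e}w^{2}$ is produced exactly by the completion of the square with nothing left over, so any absorption of $\frac{bw\delta}{c}$ strictly enlarges the coefficient of $w^{2}$ --- which the paper cannot tolerate, since the $k$-fold induction (the definitions of $T_{l,p}$ and $\mathcal{A}_{l,p}$ and the comparisons with \eqref{eqn: Vq perp}) tracks that coefficient exactly, with only the budgeted $\mathcal{C}_{n}t_*^{c}$ increments.

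The missing ingredient is a sharper Bessel estimate: from the large-argument asymptotics one has $I_0(y)\lesssim (1+y)^{-1/2}e^{y}$ for $y\ge 0$, and on the integration range $v_\perp\ge \frac{n}{m}u_\perp+\delta^{-1}$ the argument satisfies $2mnv_\perp u_\perp\ge 2n^{2}u_\perp^{2}$, so the prefactor contributes a factor of order $(1+nu_\perp)^{-1}$. This exactly cancels the linear growth $nu_\perp$ coming from the shifted integrand $(s+nu_\perp)e^{-s^{2}}$ (treat small and large $u_\perp$ separately if you prefer: your crude bound suffices when $nu_\perp\lesssim m/\delta$, and the refined bound handles the complementary range), after which the Gaussian tail estimate gives a bound uniform in $u_\perp$ and the advertised $e^{-m^{2}/(4\delta^{2})}$, whence \eqref{eqn: coe perp smaller 2} with the exact exponent and \eqref{eqn: coe perp small 2} as you stated. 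With that single repair your argument goes through; without it, the step from your intermediate bound to the statement is false as written.
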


To tackle the difficulty mentioned in~\eqref{intro: poly}, we bound the polynomial by exponential in the following lemma.
\begin{lemma}\label{Lemma: extra term}
For $0<c<1$ and $\lambda>1$ we have the following the upper bound:
\begin{equation}\label{bound extra term}
\langle v\rangle^4  e^{\lambda\langle v\rangle t} \leq 2t^{-c/2} e^{t^c|v|^2}\leq t^{-c} e^{t^c|v|^2}.
\end{equation}
\end{lemma}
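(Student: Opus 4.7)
The plan is to bound the two factors $\langle v\rangle^4$ and $e^{\lambda\langle v\rangle t}$ independently by Gaussian factors of the form $e^{\eta \langle v\rangle^2}$, then choose the two split parameters so their sum equals exactly $t^c$, so that the right-hand Gaussian fits into the claimed $e^{t^c|v|^2}$ with an explicit polynomial-in-$1/t$ prefactor. This is a standard two-parameter Young/AM-GM optimization.

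First, by Young's inequality, for any $\eta > 0$,
\[
\lambda \langle v\rangle t \;\le\; \eta \langle v\rangle^2 + \frac{\lambda^2 t^2}{4\eta},
\]
so that upon exponentiation,
\[
e^{\lambda \langle v\rangle t} \;\le\; \exp\!\Bigl(\tfrac{\lambda^2 t^2}{4\eta}\Bigr)\, e^{\eta \langle v\rangle^2}.
\]
Second, an elementary maximization of $s \mapsto s^2 e^{-\mu s}$ at $s = 2/\mu$ (equivalently of $r \mapsto r^4 e^{-\mu r^2}$ at $r=\sqrt{2/\mu}$) yields, for any $\mu > 0$,
\[
\langle v\rangle^4 \;\le\; \frac{4}{e^2\mu^2}\, e^{\mu \langle v\rangle^2}.
\]

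Multiplying these two inequalities and choosing $\eta + \mu = t^c$ so that $e^{(\eta+\mu)\langle v\rangle^2} = e^{t^c}\, e^{t^c|v|^2}$, we obtain
\[
\langle v\rangle^4\, e^{\lambda \langle v\rangle t}
\;\le\; \frac{4\, e^{t^c}}{e^2 \mu^2}\exp\!\Bigl(\tfrac{\lambda^2 t^2}{4\eta}\Bigr)\, e^{t^c |v|^2}.
\]
The natural symmetric choice $\eta = \mu = t^c/2$ makes the Young factor $\exp(\lambda^2 t^{2-c}/2)$ tend to $1$ as $t\to 0$ (since $2-c>0$), while the polynomial factor is $\mu^{-2} = 4 t^{-2c}$. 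Thus for $t$ sufficiently small (in terms of $\lambda$ and $c$), the full prefactor is dominated by $2 t^{-c/2}$, and in turn by $t^{-c}$ once $t \le (1/2)^{2/c}$. This proves \eqref{bound extra term}.

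The only real subtlety is the balance between $\eta$ and $\mu$: the Young-inequality prefactor $e^{\lambda^2 t^2/(4\eta)}$ penalizes small $\eta$, while the polynomial-absorption prefactor $\mu^{-2}$ penalizes small $\mu$, and the constraint $\eta+\mu = t^c$ is what ties the split to the target exponent. Beyond this the argument is bookkeeping; since the lemma is invoked inside the $k$-fold boundary integration where $t \ll 1$ and $c = 1/15$ will eventually be fixed (see the discussion around \eqref{intro: poly}), the smallness of $t$ needed to absorb the lower-order factors is harmless.
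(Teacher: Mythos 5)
Your two-parameter splitting is correct as far as it goes: Young's inequality and the maximization of $s\mapsto s^2e^{-\mu s}$ are both fine, and with $\eta=\mu=t^c/2$ they give
\[
\langle v\rangle^4\, e^{\lambda\langle v\rangle t}\;\le\; \frac{16}{e^{2}}\,e^{t^c}\,e^{\lambda^{2}t^{2-c}/2}\;t^{-2c}\,e^{t^c|v|^{2}}.
\]
The gap is the concluding absorption step: you claim this prefactor, which is of order $t^{-2c}$, is dominated by $2t^{-c/2}$ for small $t$. The comparison is backwards: since $2c>c/2$, one has $t^{-2c}/t^{-c/2}=t^{-3c/2}\to\infty$ as $t\to 0$, so what you have proved is strictly weaker than \eqref{bound extra term}, not stronger. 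Rebalancing the split $\eta+\mu=t^c$ cannot repair this, because $\sup_{v}\langle v\rangle^{4}e^{-t^c|v|^{2}}=4\,e^{-2+t^c}\,t^{-2c}$: any bound whose right-hand exponent is exactly $t^c|v|^{2}$ must carry a prefactor of size $t^{-2c}$. Indeed \eqref{bound extra term} as stated fails: at $|v|=t^{-c/2}$ the left side is at least $|v|^{4}=t^{-2c}$, while the right sides are $2e\,t^{-c/2}$ and $e\,t^{-c}$, both smaller once $t\ll 1$.

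For comparison, the paper's own proof has the same defect: it bounds $e^{\lambda\langle v\rangle t}\le 4e^{t^{c/2}|v|^{2}}$ and $\langle v\rangle\le t^{-c/8}e^{t^{c/8}|v|^{2}}$, which after taking fourth powers yields the exponent $\big(t^{c/2}+4t^{c/8}\big)|v|^{2}$; since $t^{c/8}\gg t^{c}$ for $t\ll 1$, this does not reach the stated exponent $t^{c}|v|^{2}$ either (and its intermediate claim $\langle v\rangle^{4}\le t^{-c/2}e^{t^{c/2}|v|^{2}}$ is false by the same test point). What your computation actually establishes, namely $\langle v\rangle^{4}e^{\lambda\langle v\rangle t}\lesssim t^{-2c}e^{t^{c}|v|^{2}}$ for $t\ll 1$ depending on $\lambda$ and $c$, is the correct estimate of this type and is what should be inserted into \eqref{eqn: int over V_l} and the subsequent $k$-fold integration; it only changes the loss per fold from $t_*^{-c}$ to $t_*^{-2c}$, which the final optimization can absorb by taking $c$ half the value fixed in \eqref{c} (e.g.\ $c=1/30$). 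So your derivation is essentially sharp, but the last step asserting $Ct^{-2c}\le 2t^{-c/2}$ is wrong, and the prefactor claimed in the lemma is not attainable with the exponent $t^{c}|v|^{2}$.
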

\begin{proof}
For $t<t^c\ll 1$ and $c<1$, we bound
\begin{align*}
e^{\lambda\langle v\rangle t}   & \leq e^{\lambda t} e^{\lambda|v|t} \leq 2e^{t^{c/2}(|v|^2+\lambda^2)}\leq 4e^{t^{c/2}|v|^2},
\end{align*}
\begin{equation*}
 \langle v\rangle^4    \leq t^{-c/2}e^{t^{c/2} |v|^2}.
\end{equation*}
In the first inequality we have used $t\ll 1$ to have
\[e^{\lambda t}<e^{t^{c/2}\lambda^2}<2,\quad      \lambda|v|<\lambda^2+|v|^2.\]

In the second inequality we have used
\begin{align*}
  \langle v\rangle & =1+|v|\leq 2+|v|^2 \leq t^{-c/8}+|v|^2 \leq t^{-c/8}e^{t^{c/8}|v|^2},
\end{align*}
where we have used the Taylor expansion for $e^{t^{\frac{c}{8}}|v|^2}$ in the last step.

Thus with $t\ll 1$ we conclude the lemma.

\end{proof}

\subsection{Properties of the collision kernel and kinetic weight}

The next lemma indicates the invariant property of $\alpha$ under the operator $v\cdot \nabla_x$.
\begin{lemma}\label{Lemma: velocity lemma}(Lemma 2 in~\cite{GKTT})\\
When the transport operator acts on $\alpha$, we have an upper bound
\begin{equation}\label{v nabla alpha bdd}
v\cdot \nabla_x \alpha \lesssim_\xi |v|\alpha(x,v).
\end{equation}
Moreover, there exists $C=C(\xi)$ such that for all $0\leq s_1,s_2\leq t$,
\begin{equation}\label{invariant}
e^{-C|v||s_1-s_2|}\alpha(s_1;t,x,v)\leq \alpha(s_2;t,x,v)\leq e^{C|v||s_1-s_2|}\alpha(s_1;t,x,v).
\end{equation}

\end{lemma}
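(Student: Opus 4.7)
I would prove \eqref{v nabla alpha bdd} by a direct differentiation of $\tilde{\alpha}^2$ that reveals a key cancellation, and then obtain \eqref{invariant} by integrating the resulting pointwise inequality along the straight-line trajectory.

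\textbf{Step 1 (cancellation identity).} Differentiating $\tilde{\alpha}^2 = (v\cdot \nabla_x \xi)^2 - 2\xi(v\cdot \nabla^2_x \xi\cdot v)$ along the transport operator $v\cdot \nabla_x$, the first summand contributes $2(v\cdot \nabla_x \xi)(v\cdot \nabla^2_x \xi\cdot v)$, while the second contributes $-2(v\cdot \nabla_x\xi)(v\cdot \nabla^2_x\xi\cdot v)$ together with $-2\xi$ times a third-derivative term. The two first-order contributions cancel exactly, leaving
\[
v\cdot \nabla_x \tilde{\alpha}^2(x,v) \;=\; -2\xi(x)\sum_{i,j,k=1}^{3} v_i v_j v_k\, \partial_i\partial_j\partial_k \xi(x).
\]
The crucial feature is the persistence of the factor $\xi(x)$, which vanishes on $\partial\Omega$ precisely where $\tilde{\alpha}$ can degenerate.

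\textbf{Step 2 (convexity lower bound and transport estimate).} From the strict convexity \eqref{convex} I have $v\cdot \nabla_x^2 \xi\cdot v\gtrsim |v|^2$; combined with $-\xi(x)\geq 0$ on $\bar\Omega$, the definition of $\tilde{\alpha}$ directly gives the key lower bound $\tilde{\alpha}(x,v)^2 \gtrsim |\xi(x)|\,|v|^2$. Dividing the identity of Step 1 by $2\tilde{\alpha}$ yields
\[
|v\cdot \nabla_x \tilde{\alpha}|\;\lesssim\; \frac{|\xi(x)|\,|v|^3}{\tilde{\alpha}(x,v)}\;\lesssim\; \sqrt{|\xi(x)|}\,|v|^2 \;\lesssim\; |v|\,\tilde{\alpha}(x,v).
\]
Since $\alpha = \chi_\e(\tilde{\alpha})$ with $|\chi'_\e|\leq 1$, and $\chi'_\e$ is supported in $\{\tilde{\alpha}\leq 4\e\}$ where $\alpha$ and $\tilde{\alpha}$ are comparable, this transfers to $|v\cdot \nabla_x \alpha|\lesssim |v|\alpha$, which is \eqref{v nabla alpha bdd}. (On $\{\tilde{\alpha}>4\e\}$ the derivative simply vanishes.)

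\textbf{Step 3 (invariance along the trajectory).} Setting $g(s):=\alpha(x-(t-s)v,v)$, the chain rule together with Step 2 gives $|g'(s)|=|v\cdot \nabla_x \alpha|\leq C|v|\,g(s)$, hence $\bigl|\tfrac{d}{ds}\log g(s)\bigr|\leq C|v|$. Integrating between $s_1$ and $s_2$ produces the two-sided exponential bound \eqref{invariant} with $C=C(\xi)$ inherited from Step 2.

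\textbf{Main obstacle.} The essential difficulty lies in the apparent singularity $|\xi|\,|v|^3/\tilde{\alpha}$ emerging in Step 2, which would diverge as $(x,v)$ approaches the grazing set where $\tilde{\alpha}\to 0$. The estimate is saved only by the simultaneous use of (a) the exact cancellation in Step 1, which produces the extra factor $\xi(x)$, and (b) the convexity-driven lower bound $\tilde{\alpha}^2\gtrsim |\xi(x)|\,|v|^2$; either ingredient alone would be insufficient. A minor technical point is that the identity of Step 1 formally involves the third derivatives of $\xi$, so in the $C^2$ regularity stated in the paper one interprets the identity via a mollification of $\xi$, passing to the limit with only the $L^\infty$ bounds on $\nabla^2 \xi$ being used in the final estimate after the $|\xi(x)|$ factor is absorbed by the convexity lower bound on $\tilde{\alpha}$.
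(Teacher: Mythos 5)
The paper gives no proof of this lemma at all---it is quoted verbatim from~\cite{GKTT}---so the relevant comparison is with the standard argument there, and your Steps 1--3 reproduce it: the cancellation $v\cdot\nabla_x\tilde{\alpha}^2=-2\xi(x)\sum_{i,j,k}v_iv_jv_k\partial_i\partial_j\partial_k\xi$, the convexity bound $\tilde{\alpha}^2\gtrsim|\xi(x)||v|^2$, the transfer to $\alpha=\chi_\e(\tilde{\alpha})$ via $|\chi_\e'|\leq 1$ together with $\alpha\sim\tilde{\alpha}$ on the support of $\chi_\e'$, and a Gronwall argument along the straight characteristic. Two remarks. First, in Step 3 it is cleaner to apply Gronwall directly to $\tilde{\alpha}^2$, for which $|v\cdot\nabla_x\tilde{\alpha}^2|\lesssim |v|\,\tilde{\alpha}^2$ follows at once, and take square roots at the end; dividing by $2\tilde{\alpha}$ is formally problematic at points where $\tilde{\alpha}$ vanishes, though this is only presentational. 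Second, and more substantively, your closing ``minor technical point'' is not correct as stated: the identity of Step 1 genuinely involves $\nabla^3\xi$, and after dividing by $\tilde{\alpha}\gtrsim\sqrt{|\xi|}\,|v|$ the surviving estimate is of size $\sqrt{|\xi|}\,|v|^2\,\|\nabla^3\xi\|_\infty$, so the third-derivative norm does not drop out of the final bound. Mollifying a merely $C^2$ function $\xi$ produces $\|\nabla^3\xi_\delta\|_\infty\to\infty$, and nothing in the computation absorbs it---the factor $|\xi(x)|$ has already been spent on the lower bound for $\tilde{\alpha}$. The velocity lemma in~\cite{GKTT} is proved for $C^3$ domains, and that is the regularity your argument actually uses; the $C^2$ assumption is a mismatch already present in the paper's hypotheses, not something your proof repairs, and you should either assume $\xi\in C^3$ or say explicitly that the lemma is being imported under that stronger hypothesis.
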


We summarize the properties of the collision operator in the following lemma.
\begin{lemma}\label{Lemma: collision operator}(Lemma 12 in~\cite{CK})\\
The linearized Boltzmann operator in~\eqref{eqn: L operator} has the following form:
\begin{equation}\label{nu}
\nu(v)\equiv \iint_{\mathbb{R}^3\times \mathbb{S}^2}B(v-u,w)\mu_0(u)\dd w\dd u\sim 1+|v|,
\end{equation}
\begin{equation}\label{nabla nu}
|\nabla_v  \nu(v)|\lesssim 1,
\end{equation}

\begin{equation}\label{K}
Kf_s = \int_{\mathbb{R}^3}\mathbf{k}(v,u)f_s(u)\dd u,
\end{equation}
where $\mathbf{k}(v,u)\lesssim \mathbf{k}_\varrho(v,u)$ with
\begin{equation}\label{k_varrho}
\mathbf{k}_\varrho(v,u) = \frac{e^{-\varrho|v-u|^2}}{|v-u|} \text{ for some } \varrho>0.
\end{equation}

The $\mathbf{k}_\varrho(v,u)$ satisfies the following condition:
\begin{equation}\label{k_varrho L1}
\mathbf{k}_\varrho(v,u) \in L^1_u,\quad     \frac{\mathbf{k}(v,u)}{|v-u|}\in L^1_u.
\end{equation}
In consequence,
\begin{equation}\label{Kf bdd}
\Vert Kf_s\Vert_\infty \lesssim \Vert w_\vartheta f_s\Vert_\infty.
\end{equation}

The derivative of $\mathbf{k}(v,u)$ satisfies the following condition:
\begin{equation}\label{nabla u k}
|\nabla_u \mathbf{k}(v,u)|\lesssim \frac{\mathbf{k}_\varrho(v,u)}{|v-u|}.
\end{equation}

For $(i,j)=(1,2)$ or $(i,j)=(2,1)$, the nonlinear Boltzmann operator can be bounded as
\begin{equation}\label{gamma gain}
\Gamma_{\text{gain}}(f_1,f_2)\lesssim \Vert w_\theta f_i\Vert_\infty \int_{\mathbb{R}^3} \mathbf{k}_\varrho(v,u) |f_j(x,u)|\dd u.
\end{equation}
In consequence, we have
\begin{equation}\label{gamma bdd}
\Vert \Gamma(f_s,f_s)\Vert_\infty\lesssim \Vert w_\vartheta f_s\Vert_\infty^2,
\end{equation}

\begin{equation}\label{nabla gamma gain}
|\partial_{x,v} \Gamma_{\text{gain}}(f,f)|\lesssim \Vert w_\theta f\Vert_\infty \int_{\mathbb{R}^3} \mathbf{k}_\varrho(v,u) |\partial_{x,v} f(x,u)|\dd u + \Vert w_\theta f_s\Vert_\infty^2,
\end{equation}
\begin{equation}\label{nabla gamma}
|\nabla_x \Gamma(f_s,f_s)(x,v)|\lesssim \Vert w_\vartheta f_s\Vert_\infty \frac{\Vert \alpha \nabla_x f_s\Vert_\infty}{\alpha(x,v)} + \Vert w_\vartheta f_s\Vert_\infty \int_{\mathbb{R}^3} \mathbf{k}_\varrho(v,u) |\nabla_x f_s(x,u)|\dd u.
\end{equation}

\end{lemma}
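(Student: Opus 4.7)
The plan is to verify the listed claims as a sequence of largely independent Grad-style estimates, leveraging the hard-sphere structure $B(v-u,\omega) = |v-u|\, q_0(\cdot)$ with $\int_{\mathbb{S}^2} q_0 \dd \omega < \infty$ and the Gaussian moments of $\mu_0$. First, for \eqref{nu} I would isolate the factor $|v-u|$: the upper bound follows from $|v-u| \leq |v|+|u|$ combined with finite moments of $\mu_0$, and the lower bound from restricting the $u$-integral to $\{|u| \leq |v|/4\}$ for large $|v|$. The derivative estimate \eqref{nabla nu} is then obtained by differentiation under the integral, since $|\nabla_v|v-u|| \leq 1$.

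Next I would invoke the classical Grad splitting for $L$: the change of variables $\omega \mapsto u'$ in the linearized gain term produces the representation $Kf = \int \mathbf{k}(v,u) f(u) \dd u$ in which $\mathbf{k}(v,u)$ is a sum of Maxwellian-weighted factors each dominated by $\mathbf{k}_\varrho(v,u) = e^{-\varrho|v-u|^2}/|v-u|$ for some $\varrho = \varrho(T_0) > 0$. The $L^1$ bounds \eqref{k_varrho L1} then reduce, via the substitution $z = v-u$ and spherical coordinates, to finiteness of $\int_0^\infty e^{-\varrho r^2} r \dd r$ and $\int_0^\infty e^{-\varrho r^2} \dd r$. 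Bound \eqref{Kf bdd} follows by writing $|f_s(u)| \leq \Vert w_\vartheta f_s\Vert_\infty e^{-\vartheta|u|^2}$ and completing the square in $\mathbf{k}_\varrho(v,u) e^{-\vartheta|u|^2}$, which yields a uniform-in-$v$ constant. The derivative estimate \eqref{nabla u k} is obtained by direct differentiation of the explicit $\mathbf{k}$: differentiation of the Gaussian factors preserves exponential decay, while differentiation of the $1/|v-u|$-type Jacobian factors raises the singularity by one power, absorbed into $\mathbf{k}_\varrho/|v-u|$.

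For the nonlinear operator, \eqref{gamma gain} follows by applying the same Grad-type splitting to $\Gamma_{\text{gain}}$ after the pre/post velocity change of variables, then extracting $\Vert w_\theta f_i\Vert_\infty$ from one of the two factors and identifying the remaining integrand as a $\mathbf{k}_\varrho$-type kernel acting on $f_j$. Combined with the pointwise bound $\nu(F_s) f_s/\sqrt{\mu_0} \lesssim (1+|v|) \Vert w_\vartheta f_s\Vert_\infty^2 e^{-\vartheta|v|^2}$ on the loss term, this yields \eqref{gamma bdd}. For \eqref{nabla gamma gain} I would differentiate under the integral: the $x$-derivative hits only $f$ (since $B$ is independent of $x$), producing the integral term involving $|\partial_{x,v} f|$; a $v$-derivative either hits $f$ (giving the same integral contribution) or the kernel via the chain rule through $(u',v')$, in which case \eqref{nabla u k} and an $L^1$ bound on $\mathbf{k}_\varrho/|v-u|$ convert this into the $\Vert w_\theta f\Vert_\infty^2$ remainder.

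The delicate step is \eqref{nabla gamma}. Splitting $\Gamma(f_s,f_s) = \Gamma_{\text{gain}}(f_s,f_s) - \nu(F_s) f_s/\sqrt{\mu_0}$ and differentiating in $x$, the gain-contribution and the piece where $\nabla_x$ hits $\nu(F_s)$ both reduce, by \eqref{nabla gamma gain}, to integrals of $\mathbf{k}_\varrho$ against $\nabla_x f_s$, producing the second term of \eqref{nabla gamma}. The remaining piece $\nu(F_s)(x,v)\, \nabla_x f_s(x,v)/\sqrt{\mu_0}$ is genuinely pointwise in $(x,v)$, and here the kinetic weight must enter: bounding $|\nabla_x f_s(x,v)| \leq \Vert \alpha \nabla_x f_s\Vert_\infty/\alpha(x,v)$ and absorbing $\nu(F_s)/\sqrt{\mu_0} \lesssim \Vert w_\vartheta f_s\Vert_\infty$ produces the first term of \eqref{nabla gamma}. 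The main obstacle is precisely this structural separation between pointwise and integral contributions: a naive differentiation mixes them, and one must carefully isolate the loss contribution so it appears in the pointwise form that the kinetic weight $1/\alpha(x,v)$ is able to accommodate. The remainder of the proof is the classical Grad decomposition, which is by now routine.
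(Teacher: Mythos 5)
The paper itself does not prove this lemma: it is quoted verbatim from Lemma 12 of \cite{CK}, so there is no in-paper argument to compare against, and your sketch should be judged as a reconstruction of the standard proof that the citation rests on. On that score most of it is fine and is indeed the classical route: hard-sphere moment bounds give \eqref{nu} and \eqref{nabla nu}; the Grad splitting and the Gaussian-over-$|v-u|$ domination give \eqref{K}--\eqref{k_varrho}, and the $L^1$ statements \eqref{k_varrho L1} and the weighted bound \eqref{Kf bdd} follow exactly as you say; \eqref{nabla u k} is the usual differentiation of the explicit Grad kernel with a slightly smaller $\varrho$; and \eqref{gamma gain}, \eqref{gamma bdd}, \eqref{nabla gamma gain} follow from sup-norm extraction together with the pre/post-collisional change of variables.

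The one step that does not close as written is your treatment of the loss part in \eqref{nabla gamma}. First, a notational slip: the loss term of $\Gamma(f_s,f_s)=\frac{1}{\sqrt{\mu_0}}Q(\sqrt{\mu_0}f_s,\sqrt{\mu_0}f_s)$ is $\nu(\sqrt{\mu_0}f_s)\,f_s$ (the $\sqrt{\mu_0}$ cancels), not $\nu(F_s)f_s/\sqrt{\mu_0}$. More substantively, your ``absorption'' $\nu(\sqrt{\mu_0}f_s)\lesssim \Vert w_\vartheta f_s\Vert_\infty$ is false for hard spheres: the collision frequency of $\sqrt{\mu_0}f_s$ grows linearly, $\nu(\sqrt{\mu_0}f_s)(x,v)\lesssim (1+|v|)\Vert w_\vartheta f_s\Vert_\infty$. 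In \eqref{gamma bdd} this growth is harmless because the underived factor $f_s(x,v)$ supplies Gaussian decay, and you use that correctly; but in the pointwise piece $\nu(\sqrt{\mu_0}f_s)\,\nabla_x f_s(x,v)$ of \eqref{nabla gamma} the underived factor is $\nabla_x f_s(x,v)$, which carries only the $1/\alpha(x,v)$ control and no velocity decay (recall $\alpha\leq 2\e$ is bounded), so your argument yields $(1+|v|)\Vert w_\vartheta f_s\Vert_\infty\Vert\alpha\nabla_x f_s\Vert_\infty/\alpha(x,v)$ rather than the first term of \eqref{nabla gamma}. To close this one must either keep a $\nu(v)$ (or $\langle v\rangle$) factor in that term --- which is how such estimates are normally stated, and which would be harmless in this paper's only use of the lemma, since in Lemma \ref{Lemma: prior estimate} the bound sits under $\int e^{-\nu(v)(t-s)}\,\mathrm{d}s$ where $\nu(v)e^{-\nu(v)(t-s)}$ integrates to $O(1)$ --- or supply a separate argument; simply dropping the growth is not valid. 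A smaller related point: for the piece where $\nabla_x$ hits $\nu(\sqrt{\mu_0}f_s)$, the kernel $|v-u|\sqrt{\mu_0(u)}$ is not pointwise dominated by $\mathbf{k}_\varrho(v,u)$, so invoking \eqref{nabla gamma gain} (which concerns the gain only) is not quite enough; you must first borrow the decay $|f_s(x,v)|\leq \Vert w_\vartheta f_s\Vert_\infty e^{-\vartheta|v|^2}$ from the underived factor and only then compare kernels.
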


\begin{lemma}\label{Lemma: k tilde}(Lemma 13 in~\cite{CK})\\
If $0<\frac{\tilde{\theta}}{4}<\varrho$, if $0<\tilde{\varrho}<  \varrho- \frac{\tilde{\theta}}{4}$,
\begin{equation}\label{k_theta}
\mathbf{k}_{\varrho}(v,u) \frac{e^{\tilde{\theta} |v|^2}}{e^{\tilde{\theta} |u|^2}} \lesssim  \mathbf{k}_{\tilde{\varrho}}(v,u) ,
\end{equation}
where $\mathbf{k}_{\varrho}$ is defined in~\eqref{k_varrho}.

\end{lemma}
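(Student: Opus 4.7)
The plan is to reduce the claim to a pointwise estimate on the exponent and exploit the Grad-type structure of the true collision kernel. After canceling the common factor $1/|v-u|$ on each side, the statement is equivalent to
\[
\exp\!\Big(-\varrho |v-u|^2 + \tilde{\theta}(|v|^2 - |u|^2)\Big) \;\lesssim\; \exp\!\big(-\tilde{\varrho} |v-u|^2\big),
\]
i.e.\ a pointwise bound on the exponent up to an $O(1)$ constant. The subtle point is that this pointwise inequality is \emph{not} uniformly true for the naive $\mathbf{k}_\varrho$ when one lets $u$ run to infinity with $v-u$ held fixed; to close it one uses the sharper Grad form of the kernel underlying Lemma~\ref{Lemma: collision operator}, which carries an additional Gaussian factor $\exp\!\big(-\varrho(|v|^2-|u|^2)^2/|v-u|^2\big)$ that the crude bound $\mathbf{k}\lesssim \mathbf{k}_\varrho$ discards but can be reinstated when combining with the exponential weight.

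With this sharper exponent in hand, the main step is an elementary completion of squares. Setting $A:=|v-u|$ and $B:=(|v|^2-|u|^2)/|v-u|$, the combined exponent becomes
\[
-\varrho A^2 - \varrho B^2 + \tilde\theta A B.
\]
I would apply Young's inequality $\tilde\theta A B \leq \epsilon B^2 + \tfrac{\tilde\theta^2}{4\epsilon} A^2$ and pick $\epsilon$ in the window $\big(\tfrac{\tilde\theta^2}{4(\varrho-\tilde\varrho)},\,\varrho\big)$. This choice makes the coefficient of $B^2$ non-positive (so the $B$-contribution can be discarded) and the coefficient of $A^2$ at most $-\tilde\varrho$, which is exactly the bound $-\tilde\varrho|v-u|^2$ needed on the right.

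The hard part is purely algebraic: checking that the interval for $\epsilon$ is non-empty under the stated hypotheses, which is precisely where the constant $1/4$ in $\tilde\theta/4 < \varrho$ and in $\tilde\varrho < \varrho - \tilde\theta/4$ enters. A slightly cleaner writeup would first case-split on $|v|\leq |u|$ (where $e^{\tilde\theta(|v|^2-|u|^2)}\leq 1$ and the claim is immediate from $\tilde\varrho<\varrho$) versus $|v|>|u|$ (where the Young's-inequality completion above furnishes the bound). Since Lemma~\ref{Lemma: k tilde} is cited directly from \cite{CK}, one may alternatively simply invoke that proof rather than redo the algebra.
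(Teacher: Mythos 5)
The paper gives no argument of its own here: the lemma is quoted verbatim from \cite{CK}, so the ``paper's proof'' is the citation your last sentence falls back on. Your reconstruction follows the standard route behind that citation, and your key observation is correct and worth stressing: the displayed inequality is false for the envelope $\mathbf{k}_\varrho$ of \eqref{k_varrho} itself (take $v=u+w$ with $|w|$ fixed and $u\to\infty$ parallel to $w$, so that $|v|^2-|u|^2\to\infty$ while $|v-u|$ stays bounded), so \eqref{k_theta} must be read for the genuine Grad kernel $\mathbf{k}$, which retains the extra factor $\exp\big(-c\,(|v|^2-|u|^2)^2/|v-u|^2\big)$; completing the square (or Young's inequality) in $A=|v-u|$, $B=(|v|^2-|u|^2)/|v-u|$ is exactly how this is proved in \cite{CK} and its antecedents.

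However, the step you defer as ``purely algebraic'' does not close in the form you set it up. With both Gaussian coefficients taken equal to $\varrho$, the supremum over $B$ of $-\varrho B^2+\tilde\theta AB$ is $\tilde\theta^2A^2/(4\varrho)$ (and this value of $B$ is attainable for every $A$), so the best possible conclusion is $\tilde\varrho\le\varrho-\tilde\theta^2/(4\varrho)$; equivalently, your window $\big(\tfrac{\tilde\theta^2}{4(\varrho-\tilde\varrho)},\varrho\big)$ is non-empty if and only if $\tilde\theta^2\le 4\varrho(\varrho-\tilde\varrho)$. This is \emph{not} implied by the stated hypotheses $\tilde\theta/4<\varrho$ and $\tilde\varrho<\varrho-\tilde\theta/4$: for instance $\tilde\theta=1$, $\varrho=0.3$, $\tilde\varrho=0.04$ satisfies both, yet $4\varrho(\varrho-\tilde\varrho)=0.312<1=\tilde\theta^2$, so the interval is empty and, in this equal-coefficient model, the claimed pointwise bound genuinely fails. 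The hypotheses do suffice once one additionally has $\tilde\theta\le\varrho$ (then $\tilde\theta^2/(4\varrho)\le\tilde\theta/4$), which is the regime in which this paper applies the lemma ($\tilde\theta=c\ll 1$ with $\varrho$ fixed), or if one keeps the precise constants of the Grad kernel instead of a single generic $\varrho$ for both factors. So either record that extra restriction, carry the actual kernel constants through the square-completion, or simply invoke \cite{CK} as the paper does; as written, the deferred verification is the one point at which your argument has a real gap.
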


When we integrate the collision operator $\partial\Gamma_{\text{gain}}(f,f)$ given in~\eqref{gamma gain}, to construct $\alpha-$weighted $C^1$ bound, the extra weight $\alpha$ appears in the denominator. The following lemma is desired to bound the integration of $\frac{1}{\alpha}$.
\begin{lemma}\label{Lemma: NLN}(Lemma 14 in~\cite{CK})
\begin{equation}\label{NLN}
\int_0^t e^{-\nu(t-s)}\int_{\mathbb{R}^3} \frac{\mathbf{k}_\varrho(v,u)}{\alpha(x-(t-s)v,u)} \dd u \lesssim \frac{t}{\alpha(x,v)}.
\end{equation}

\begin{equation}\label{NLN epsilon}
\int_{t-\e}^t e^{-\nu(t-s)}\int_{\mathbb{R}^3} \frac{\mathbf{k}_\varrho(v,u)}{\alpha(x-(t-s)v,u)} \dd u \lesssim \frac{O(\e)}{\alpha(x,v)}.
\end{equation}

\end{lemma}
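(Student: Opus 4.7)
The plan is to reduce \eqref{NLN} to a uniform pointwise estimate on the velocity convolution at a frozen time, and then integrate in $s$. Fix $s\in[0,t]$ and set $y:=x-(t-s)v$. The velocity lemma (Lemma~\ref{Lemma: velocity lemma}) gives $\alpha(y,v)\geq e^{-C|v|(t-s)}\alpha(x,v)$, and the growth factor $e^{C|v|(t-s)}$ is absorbed by $e^{-\nu(t-s)}$ since $\nu(v)\sim 1+|v|$; for the regime of interest $(t-s)\leq t\ll 1$ keeps the exchange between the two factors harmless after one uses the Gaussian tail of $\mathbf{k}_\varrho$ inside the $u$-integral. The task therefore reduces to showing
\[
I(y,v)\;:=\;\int_{\R^3}\frac{\mathbf{k}_\varrho(v,u)}{\alpha(y,u)}\,\dd u \;\ls\; \frac{1}{\alpha(y,v)} \qquad \text{uniformly in } y\in\bar{\O},\;v\in\R^3.
\]

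To establish this pointwise bound, I would exploit the lower bound
\[
\alpha(y,u)^2 \;\gtrsim\; |u\cdot\nabla\xi(y)|^2 + |\xi(y)|\,|u|^2
\]
coming from the strict convexity \eqref{convex} together with the definition \eqref{kinetic_distance}. Decomposing $u=u_\perp\hat n(y)+u_\parallel$ with $\hat n(y):=\nabla\xi(y)/|\nabla\xi(y)|$, this reads $\alpha(y,u)\sim |u_\perp|+\sqrt{|\xi(y)|}\,|u|$, and analogously for $\alpha(y,v)$. This structure makes $\alpha(y,\cdot)$ Lipschitz with an $O(1)$ constant, so on the ball $\{u:|u-v|\leq \alpha(y,v)/(2C)\}$ one has $\alpha(y,u)\gtrsim \alpha(y,v)$; pairing with $\int \mathbf{k}_\varrho(v,u)\,\dd u\ls 1$ from \eqref{k_varrho L1} then controls the inner contribution by $1/\alpha(y,v)$. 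Outside the ball, the Gaussian factor $e^{-\varrho|v-u|^2}$ supplies tail smallness of size $e^{-c\alpha(y,v)^2}$, and the residual integral is handled by separating $u_\perp$ and $u_\parallel$ and using the explicit one-dimensional estimate $\int |u_\perp|^{-1}e^{-c(u_\perp-v_\perp)^2}\,\dd u_\perp$, regularized by the $\sqrt{|\xi(y)|}\,|u|$ piece whenever $u_\perp$ is small.

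Integrating over $s\in[0,t]$ with $e^{-\nu(t-s)}\leq 1$ then yields \eqref{NLN}, and restricting the integration domain to $[t-\e,t]$ yields \eqref{NLN epsilon} with $O(\e)$ in place of $t$. The hard part will be the degenerate corner where $y$ is close to $\p\O$ (so $|\xi(y)|\ll 1$) and $v$ is nearly tangential (so $|v_\perp|\ll 1$): here both $\alpha(y,v)$ and $\alpha(y,u)$ can be simultaneously small, and one must check that the inner-ball Lipschitz argument captures the singularity correctly without losing constants. This is precisely where strict convexity enters decisively, as it prevents the normal-direction contribution $|u\cdot\nabla\xi(y)|^2$ from being cancelled by the spatial contribution $|\xi(y)|\,|u|^2$ and ensures the two lower bounds for $\alpha(y,u)$ and $\alpha(y,v)$ match up in the right way.
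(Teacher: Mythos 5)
There is a genuine gap, and it is at the heart of your argument: the frozen-time pointwise bound $I(y,v)=\int_{\R^3}\mathbf{k}_\varrho(v,u)\alpha(y,u)^{-1}\dd u\lesssim \alpha(y,v)^{-1}$, uniformly in $y\in\bar\O$, is false. Indeed $\alpha(y,u)=\tilde\alpha(y,u)\sim\sqrt{|u\cdot n(y)|^2+|\xi(y)|\,|u|^2}$ in the small-$\tilde\alpha$ regime, so taking $y$ with $|\xi(y)|=\delta\ll1$, $|v|=1$ and $n(y)\cdot v\sim 1$ (so that the right-hand side $1/\alpha(y,v)$ is an $O(1)$ constant), the region $\{|u_\parallel-v_\parallel|\le 1,\ |u\cdot n(y)|\le 1/2\}$ already gives
\begin{equation*}
I(y,v)\ \gtrsim\ \int_{-1/2}^{1/2}\frac{\dd u_n}{\sqrt{u_n^2+\delta}}\ \sim\ \ln\tfrac1\delta\ \xrightarrow[\delta\to0]{}\ \infty ,
\end{equation*}
and for $y\in\p\O$ the integral is infinite. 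So the $1/|u\cdot n(y)|$ singularity on the tangential plane is only log-regularized by the $\sqrt{|\xi(y)|}|u|$ piece, and no amount of splitting into an inner ball plus a Gaussian tail can remove the $|\ln|\xi(y)||$ loss; note also that your "tail smallness $e^{-c\alpha(y,v)^2}$" is vacuous, since by the cutoff \eqref{chi} one always has $\alpha\le 2\e\ll1$, so that factor is of order one. Consequently the proposed reduction "uniform $u$-estimate, then integrate trivially in $s$" cannot prove \eqref{NLN}, and \eqref{NLN epsilon} does not follow by merely shrinking the time interval.

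The actual proof (the paper does not reprove the lemma; it quotes Lemma 14 of \cite{CK}, which goes back to the nonlocal-to-local estimate of \cite{GKTT}) uses the $s$-integration in an essential way: the inner $u$-integral is estimated with the unavoidable logarithmic factor in $|\xi(x-(t-s)v)|$ (equivalently, in the ratio $\tilde\alpha^2/(|\xi|\,|v|^2)$), and this factor is then integrated in $s$ along the characteristic, using that $s\mapsto\xi(x-(t-s)v)$ is convex with second derivative $\gtrsim|v|^2$ by \eqref{convex} (so $|\xi(X(s))|$ degenerates at most at two times and at a controlled linear/quadratic rate), together with the near-invariance of $\tilde\alpha(X(s),v)$ from Lemma \ref{Lemma: velocity lemma} to pull out $1/\alpha(x,v)$. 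Your ingredients (the lower bound $\tilde\alpha(y,u)^2\gtrsim|u\cdot\nabla\xi(y)|^2+|\xi(y)||u|^2$, the Lipschitz inner-ball estimate, $\mathbf{k}_\varrho\in L^1_u$) are all fine for the near-diagonal part, but without the time-integration mechanism the off-diagonal, near-tangential part defeats the argument.
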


\subsection{Reparametrization of boundary and stochastic cycle}
In this subsection we reparametrize the boundary and stochastic cycle in Definition \ref{Def:Back time cycle}. We will mainly use the reparametrization in section 4 to prove Theorem \ref{thm: C1 steady}.

We assume that for all $q\in \partial \Omega$, there exists $0<\delta_1 \ll 1$
\Be\label{O_p}
\eta_q:
B_+(0; \delta_1)
\ni \mathbf{x}_q := (\mathbf{x}_{q,1},\mathbf{x}_{q,2},\mathbf{x}_{q,3})
 \rightarrow
\mathcal{O}_q:= \eta_q(
B_+(0; \delta_1))
 \ \textit{ is one-to-one and onto for all $ q \in \partial \Omega$},
\Ee
and $\eta_q(\mathbf{x}_q)\in \partial \Omega$ if and only if $\mathbf{x}_{q,3}=0$ within the range of $\eta_q$.

Since the boundary is compact and $C^2$, for fixed $0<\delta_1 \ll 1$ we may choose a finite number of $p \in \mathcal{P} \subset\p\O$ and $0<\delta_2\ll 1$ such that $\mathcal{O}_p=\eta_p(
B_+(0; \delta_1)) \subset B(p;\delta_2) \cap \bar{\O}$ and $\{\mathcal{O}_p \}$ forms a finite covering of $\partial \Omega$. We define a partition of unity
\Be\label{iota}
\sum_{p \in \mathcal{P}} \iota_p(x) = \left\{
                                       \begin{array}{ll}
                                         1, & \hbox{for $x\in \partial \Omega$} \\
                                         0 & \hbox{for $x \notin \mathcal{O}_p$}
                                       \end{array}
                                     \right.
 \text{ such that }0 \leq \iota_p(x) \leq 1.
 \Ee
 Without loss of generality (see \cite{KL}) we can always reparametrize $\eta_p$ such that $\partial_{\mathbf{x}_{p,i}} \eta_p \neq 0$ for $i=1,2,3$ at $\mathbf{x}_{p,3}=0$, and an \textit{orthogonality} holds as
\Be\label{orthogonal}
\partial_{\mathbf{x}_{p,i}}\eta_p \cdot \partial_{\mathbf{x}_{p,j}}\eta_p =0 \ \ \text{at} \ \ \mathbf{x}_{p,3}=0 \text{  for  } i\neq j \text{ and } i,j\in \{1,2,3\}.
\Ee
For simplicity, we denote
\begin{equation}\label{partial_i eta}
\partial_i \eta_p(\mathbf{x}_p): = \partial_{\mathbf{x}_{p,i}} \eta_p.
\end{equation}

\begin{definition} For $x \in \bar{\O}$, we choose $p \in\mathcal{P}$ as in (\ref{O_p}). We define
\begin{align}
g_{p,ii}(\mathbf{x}_p) &  =\langle \partial_i \eta_p(\mathbf{x}_p),\partial_i \eta_p(\mathbf{x}_p)\rangle \ \ \text{for} \ \ i\in \{1,2,3\}, \\
    T_{\mathbf{x}_p}&
    =\left(
                               \begin{array}{ccc}
           \frac{\p_1 \eta_p(\mathbf{x}_p)}{\sqrt{g_{p,11}(\mathbf{x}_p) }}
           &      \frac{\p_2 \eta_p(\mathbf{x}_p)}{\sqrt{g_{p,22}(\mathbf{x}_p) }}
           &     \frac{\p_3 \eta_p(\mathbf{x}_p)}{\sqrt{g_{p,33}(\mathbf{x}_p) }}
            \\
                               \end{array}
                             \right)^t.\label{T}
\end{align}
Here $A^t$ stands the transpose of a matrix $A$. Note that when $\mathbf{x}_{p,3}=0$, $T_{\mathbf{x}_p}       \frac{\p_i \eta_p(\mathbf{x}_p)}{\sqrt{g_{p,ii}(\mathbf{x}_p) }}
  = e_i$ for $i=1,2,3$ where $\{e_i\}$ is a standard basis of $\R^3$.

We define
\Be\label{bar_v}
\mathbf{v}_j(\mathbf{x}_p) = \frac{\p_j \eta_p(\mathbf{x}_p)}{\sqrt{g_{p,jj}(\mathbf{x}_p) }} \cdot  v.
\Ee
\end{definition}

We note that from (\ref{orthogonal}), the map $T_{\mathbf{x}_p}$ is an orthonormal matrix when $\mathbf{x}_{p,3}=0$. Therefore both maps $v \rightarrow \mathbf{v} (\mathbf{x}_p )$ and $\mathbf{v} (\mathbf{x}_p ) \rightarrow v$ have a unit Jacobian.
Now we reparametrize the stochastic cycle using the local chart defined in Definition \ref{Def:Back time cycle}.

\begin{definition}\label{definition: chart}
Recall the stochastic cycles in Definition \ref{Def:Back time cycle}. For each cycle $x^k$ let us choose $p^k \in \mathcal{P}$ in (\ref{O_p}). Then we denote
\Be\begin{split}\label{xkvk}
\mathbf{x}^k_{p^k}&:= (\mathbf{x}^k_{p^k,1}, \mathbf{x}^k_{p^k,2},0)   \text{ such that }
\eta_{p^k} (\mathbf{x}^k_{p^k}) = x_k, \ \ \text{for} \  k =1, 2,
\\
\mathbf{v}^k_{p^k,i}&:= \frac{\p_i \eta_{p^k}(\mathbf{x}_{p^k}^k)}{\sqrt{g_{p^k,ii}(\mathbf{x}_{p^k}^k) }} \cdot  v_k  \ \ \text{for} \  k = 1,2.
\end{split}
\Ee

From chain rule we define
 \Be
\p_{\mathbf{x}^{k}_{p^{k},i}}[
a( \eta_{p^{k}} ( \mathbf{x}_{p^{k} }^{k}  ),    {v}_{k} ) ]
 :=
\frac{\p \eta_{p^{k}}(\mathbf{x}^{k}_{p^{k},i})}{\p \mathbf{x}^{k}_{p^{k},i}}
\cdot \nabla_x a ( \eta_{p^{k}} ( \mathbf{x}_{p^{k} }^{k}  ), v_{k}) , \ \ i=1,2.
\label{fBD_x1}
\Ee

\end{definition}

When we study the regularity we will need to take derivative to the stochastic cycle. We summarize the derivative in the following lemma.

\begin{lemma}\label{Lemma: nabla tbxb}(Lemma 1 in~\cite{CK})\\
For the $\tb$ and $\xb$ defined in~\eqref{BET}, the derivative reads
\Be
\begin{split}\label{nabla_tbxb}
\nabla_x \tb(x,v) = \frac{n(\xb)}{n(\xb) \cdot v},\ \
\nabla_v \tb(x,v) = - \frac{\tb n(\xb)}{n(\xb) \cdot v},\\
\nabla_x \xb(x,v) = Id_{3\times 3} - \frac{n(\xb) \otimes v}{n(\xb) \cdot v},\ \
\nabla_v \xb(x,v) = - \tb Id + \frac{ \tb n(\xb) \otimes v}{n(\xb) \cdot v}.
\end{split}\Ee

For $i=1,2$, $j=1,2$,

\begin{equation}\label{xip deri xbp}
\frac{\p \mathbf{x}^{2}_{p^{2},i}}{\p{\mathbf{x}^{1  }_{p^{1 },j}}} = \frac{1}{\sqrt{g_{p^{2}, ii} (\mathbf{x}^{2}_{p^{2} } )}}
\left[
\frac{\p_{i} \eta_{p^{2}} (\mathbf{x}^{2}_{p^{2} } ) }{\sqrt{g_{p^{2},ii}(\mathbf{x}^{2}_{p^{2} } ) }}
- \frac{\mathbf{v}^{2}_{p^{2}, i}}{\mathbf{v}^{2}_{p^{2}, 3}}
\frac{\p_{3} \eta_{p^{2}} (\mathbf{x}^{2}_{p^{2} } )  }{\sqrt{g_{p^{2},33}(\mathbf{x}^{2}_{p^{2} } )}}
\right] \cdot \p_j \eta_{p^{1}}(\mathbf{x}^{1}_{p^{1} } ) .
\end{equation}

\begin{equation}\label{xi deri xbp}
\frac{\partial \mathbf{x}_{p^{1},i}^{1}}{\partial [x]_j}=\frac{1}{\sqrt{g_{p^{1}, ii} (\mathbf{x}^{1}_{p^{1} } )}}
\left[
\frac{\p_{i} \eta_{p^{1}} (\mathbf{x}^{1}_{p^{1} } ) }{\sqrt{g_{p^{1},ii}(\mathbf{x}^{1}_{p^{1} } ) }}
- \frac{\mathbf{v}_{p^{1}, i}}{\mathbf{v}_{p^{1}, 3}}
\frac{\p_{3} \eta_{p^{1}} (\mathbf{x}^{1}_{p^{1} } )  }{\sqrt{g_{p^{1},33}(\mathbf{x}^{1}_{p^{1} } )}}
\right] \cdot e_j.
\end{equation}
Here $[x]_j$ is defined as the $j$-th coordinate of $x$ as specified in~\eqref{[x]}.

\end{lemma}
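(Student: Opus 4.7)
The plan is to split the lemma into two parts. The first part treats the derivatives of the scalar backward exit time $\tb$ and of the exit position $\xb$ in \eqref{nabla_tbxb}. The key relation is that $\xb=x-\tb v$ lies on $\partial\Omega$, i.e., $\xi(x-\tb(x,v)v)\equiv 0$ as an identity on a neighborhood where $\tb$ is smooth. Differentiating this identity with respect to $x_j$ and using $\nabla\xi(\xb)\parallel n(\xb)$ gives $n_j(\xb)-(n(\xb)\cdot v)\,\p_{x_j}\tb=0$, hence $\nabla_x\tb=n(\xb)/(n(\xb)\cdot v)$. An analogous differentiation in $v_j$ gives $-\tb\,n_j(\xb)-(n(\xb)\cdot v)\,\p_{v_j}\tb=0$, hence $\nabla_v\tb=-\tb\,n(\xb)/(n(\xb)\cdot v)$. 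The formulas for $\nabla_x\xb$ and $\nabla_v\xb$ then follow by substituting these two expressions into the $x$- and $v$-derivatives of the explicit relation $\xb=x-\tb v$.

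For the chain-rule identities \eqref{xip deri xbp} and \eqref{xi deri xbp} I argue the two in parallel. For \eqref{xi deri xbp} the starting point is $\eta_{p^1}(\mathbf{x}^1_{p^1,1},\mathbf{x}^1_{p^1,2},0)=x_1=\xb(x,v)$, valid because $x_1\in\partial\Omega$ forces $\mathbf{x}^1_{p^1,3}=0$. Differentiating both sides with respect to $[x]_j$ at $v$ fixed, the right-hand side becomes $\nabla_x\xb(x,v)\cdot e_j=e_j-\frac{n_j(x_1)}{n(x_1)\cdot v}\,v$ by \eqref{nabla_tbxb}, while the left-hand side expands as $\sum_{i=1}^{2}\p_i\eta_{p^1}(\mathbf{x}^1_{p^1})\,\frac{\p\mathbf{x}^1_{p^1,i}}{\p[x]_j}$ since the third chart coordinate is pinned to $0$. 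To isolate each $\p\mathbf{x}^1_{p^1,i}/\p[x]_j$ I take the inner product with $\p_i\eta_{p^1}(\mathbf{x}^1_{p^1})$ and invoke the orthogonality \eqref{orthogonal} at $\mathbf{x}^1_{p^1,3}=0$, which decouples the tangential frame. The vector $\p_3\eta_{p^1}$ at $\mathbf{x}^1_{p^1,3}=0$ is collinear with $n(x_1)$ with length $\sqrt{g_{p^1,33}}$, so $n(x_1)\cdot v=\pm\mathbf{v}_{p^1,3}$ and $n_j(x_1)=\pm(\p_3\eta_{p^1})_j/\sqrt{g_{p^1,33}}$; the two signs cancel in the ratio $n_j/(n\cdot v)$ and the subtraction produces exactly the correction term $-\frac{\mathbf{v}_{p^1,i}}{\mathbf{v}_{p^1,3}}\frac{\p_3\eta_{p^1}}{\sqrt{g_{p^1,33}}}$ displayed in \eqref{xi deri xbp}. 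The argument for \eqref{xip deri xbp} is identical after the relabeling $(x,v,x_1,p^1)\to(x_1,v_1,x_2,p^2)$ and differentiating with respect to $\mathbf{x}^1_{p^1,j}$ in place of $[x]_j$, so that $\p x_1/\p\mathbf{x}^1_{p^1,j}=\p_j\eta_{p^1}(\mathbf{x}^1_{p^1})$ appears in place of $e_j$ on the right-hand side.

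The main obstacle is bookkeeping for the local orthogonal frame: one must apply \eqref{orthogonal} only at the boundary slice $\mathbf{x}^k_{p^k,3}=0$, correctly identify $\p_3\eta_{p^k}$ with $\pm\sqrt{g_{p^k,33}}\,n(x_k)$, and verify that this sign drops out of the final quotient. Once these conventions are settled, both chain-rule formulas reduce to an elementary computation combining \eqref{nabla_tbxb} with orthogonal decomposition in the frame $\{\p_1\eta_{p^k},\p_2\eta_{p^k},\p_3\eta_{p^k}\}$.
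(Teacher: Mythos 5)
Your argument is correct and is essentially the standard derivation: the paper itself gives no proof of this lemma (it is quoted as Lemma 1 of \cite{CK}), and the proof there proceeds exactly as you propose, first implicitly differentiating the identity $\xi(x-\tb(x,v)v)=0$ to obtain \eqref{nabla_tbxb}, then applying the chain rule to $\eta_{p^k}(\mathbf{x}^k_{p^k})=\xb$ and projecting onto the orthogonal frame $\{\p_1\eta_{p^k},\p_2\eta_{p^k},\p_3\eta_{p^k}\}$ at the boundary slice, where the sign ambiguity in $\p_3\eta_{p^k}=\pm\sqrt{g_{p^k,33}}\,n$ indeed cancels in the quotient. One minor notational remark: in \eqref{xip deri xbp} the components $\mathbf{v}^2_{p^2,i}$ are those of the velocity $v_1$ along the segment from $x_1$ to $x_2$ expressed in the chart at $x_2$ (exactly what your relabeling produces, and consistent with the later use in \eqref{partial partial x}), so your proof matches the intended statement.
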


\begin{lemma}\label{Lemma: change of variable}(Lemma 3 in~\cite{CK})\\
The following map is one-to-one
\Be\label{map_v_to_xbtb}
v_{1} \in   \{ n(x_{1}) \cdot v_{1} >0: \xb(x_{1},v_{1}) \in B(p^{2}, \delta_2)\} \mapsto
(\mathbf{x}^{2}_{p^{2},1}, \mathbf{x}^{2}_{p^{2},2}, \tb^{1}),
\Ee
with
\Be\label{jac_v_to_xbtb}
\det\left(\frac{\p (\mathbf{x}^{2}_{p^{2},1}, \mathbf{x}^{2}_{p^{2},2}, \tb^{1})}{\p v_{1}}\right)= \frac{1}{\sqrt{ g_{p^{2},11}(\mathbf{x}^{2}_{p^{2}})  g_{p^{2},22}(\mathbf{x}_{p^{2}}^{2}) }}
\frac{|\tb^{1}|^3}{  |n(x_{2}) \cdot v_{1}| }.
\Ee

Here $\tb^1$ is the as the backward exit time starting from $(x_1,v_1)$:
\begin{equation}\label{tb1}
\tb^1 = \tb(x_1,v_1).
\end{equation}

\end{lemma}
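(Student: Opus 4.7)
The plan is to construct an explicit inverse of the map in~\eqref{map_v_to_xbtb}, which simultaneously yields injectivity and enables a direct computation of the Jacobian via the inverse function theorem.

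For injectivity, given a triple $(a_1, a_2, \tau)$ in the image, the boundary point $x_2 = \eta_{p^{2}}(a_1, a_2, 0)$ is recovered uniquely from the one-to-one chart~\eqref{O_p}. Since the backward trajectory satisfies $x_2 = \xb(x_1, v_1) = x_1 - \tb^{1} v_1$, one then reads off $v_1 = (x_1 - x_2)/\tau$. This explicit inverse
\[
F(a_1, a_2, \tau) := \frac{x_1 - \eta_{p^{2}}(a_1, a_2, 0)}{\tau}
\]
proves that the original map is one-to-one on the set specified in~\eqref{map_v_to_xbtb}.

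For the Jacobian, I would compute $\det(\partial F/\partial(a_1, a_2, \tau))$ directly. Differentiation yields the columns
\[
\frac{\partial F}{\partial a_i} = -\frac{\partial_i \eta_{p^{2}}(\mathbf{x}^{2}_{p^{2}})}{\tau} \quad (i=1,2), \qquad \frac{\partial F}{\partial \tau} = -\frac{x_1 - x_2}{\tau^2} = -\frac{v_1}{\tau},
\]
so $\left|\det \partial F\right| = \tau^{-3}\left|\det[\partial_1 \eta_{p^{2}}, \partial_2 \eta_{p^{2}}, v_1]\right|$. The remaining $3 \times 3$ determinant is evaluated using the orthogonality~\eqref{orthogonal}: since $x_2 \in \partial \Omega$ corresponds to $\mathbf{x}^{2}_{p^{2},3}=0$, the frame $\{\partial_i \eta_{p^{2}}/\sqrt{g_{p^{2},ii}}\}_{i=1}^3$ is orthonormal there and, by~\eqref{bar_v}, $v_1 = \sum_{i=1}^3 \mathbf{v}^{2}_{p^{2},i}\,\partial_i \eta_{p^{2}}/\sqrt{g_{p^{2},ii}}$. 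Multilinearity kills the $i=1,2$ contributions and leaves
\[
\det[\partial_1 \eta_{p^{2}}, \partial_2 \eta_{p^{2}}, v_1] = \mathbf{v}^{2}_{p^{2},3}\sqrt{g_{p^{2},11}(\mathbf{x}^{2}_{p^{2}})\,g_{p^{2},22}(\mathbf{x}^{2}_{p^{2}})}.
\]
Since $\partial_3 \eta_{p^{2}}/\sqrt{g_{p^{2},33}}$ is a unit normal at $x_2$, one has $|\mathbf{v}^{2}_{p^{2},3}| = |n(x_2)\cdot v_1|$. Taking reciprocals delivers~\eqref{jac_v_to_xbtb}.

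The only subtle point is that the orthogonality~\eqref{orthogonal} holds exactly at $\mathbf{x}_{p,3}=0$, which is precisely where the evaluation occurs since $x_2 \in \partial \Omega$; off the boundary the frame would not be orthonormal and the determinant would pick up cross terms. Apart from verifying this (and the sign convention that $n(x_2)\cdot v_1 < 0$, explaining the absolute value), the argument is routine chain rule.
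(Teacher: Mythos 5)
Your argument is correct. Note that the paper itself offers no proof of this lemma: it is quoted verbatim as Lemma 3 of~\cite{CK}, so there is nothing internal to compare against; your write-up is a self-contained derivation of the cited fact. The structure is the natural one: invert the map explicitly, $v_1=(x_1-\eta_{p^2}(a_1,a_2,0))/\tau$, differentiate, and evaluate the resulting $3\times 3$ determinant using that $\{\partial_i\eta_{p^2}/\sqrt{g_{p^2,ii}}\}_{i=1,2,3}$ is orthonormal at $\mathbf{x}^2_{p^2,3}=0$ (this is exactly the remark following~\eqref{T}, and $\partial_3\eta_{p^2}/\sqrt{g_{p^2,33}}=\pm n(x_2)$ there), which gives $|\det[\partial_1\eta,\partial_2\eta,v_1]|=\sqrt{g_{p^2,11}g_{p^2,22}}\,|n(x_2)\cdot v_1|$ and hence~\eqref{jac_v_to_xbtb} by reciprocation. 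One point worth making explicit: to pass from injectivity to the inverse function theorem you use that $F(a_1,a_2,\tau)=(x_1-\eta_{p^2}(a_1,a_2,0))/\tau$ is genuinely a two-sided inverse near the point in question, i.e.\ that for the reconstructed $v_1$ one indeed has $\xb(x_1,v_1)=\eta_{p^2}(a_1,a_2,0)$ and $\tb^1=\tau$; this is where the strict convexity~\eqref{convex} enters, since it guarantees the segment joining the boundary points $x_1$ and $x_2$ meets $\partial\Omega$ only at its endpoints. With that observation, and your remark on the sign/absolute value, the proof is complete.
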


\begin{lemma}\label{Lemma: nv<v2}(Lemma 4 in~\cite{CK})\\
Given a $C^2$ convex domain defined in~\eqref{convex},
\Be\label{nv<v2}
\begin{split}
|n_{p^{j}} (\mathbf{x}_{p^{j}}^{j}) \cdot  (x_{1} -
 \eta_{p^{2}} (\mathbf{x}_{p^{2}}^{2})
 )| \sim  |x_{1} -
 \eta_{p^{2}} (\mathbf{x}_{p^{2}}^{2})
 |^2,
 \ \  &j=1,2.
 \end{split}
\Ee
For $j'=1,2$,
\Be\label{bound_vb_x}
  \bigg| \frac{\p[ n_{p^{j}} (\mathbf{x}_{p^{j}}^{j}) \cdot  (x_{1} -
 \eta_{p^{2}} (\mathbf{x}_{p^{2}}^{2})
 )]}{\p {\mathbf{x}_{p^{2},j^\prime}^{2}}}  \bigg| \lesssim \| \eta \|_{C^2}
 |x_{1} -
 \eta_{p^{2}} (\mathbf{x}_{p^{2}}^{2})|
 ,
  \ \ j=1,2.
\Ee
\end{lemma}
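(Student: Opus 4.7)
The plan is to use Taylor expansion of the defining function $\xi$ together with the strict convexity hypothesis \eqref{convex}, and then differentiate carefully, exploiting the fact that the coordinate tangent vectors $\partial_i \eta_{p^2}(\mathbf{x}_{p^2}^2)$ are orthogonal to $n(x_2)$ at the boundary point.

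\textbf{Step 1 (size estimate).} Abbreviate $x_2 = \eta_{p^2}(\mathbf{x}^2_{p^2})$ and recall $n_{p^j}(\mathbf{x}^j_{p^j}) = \nabla \xi(x_j)/|\nabla \xi(x_j)|$. Since $x_1, x_2 \in \partial \Omega$, we have $\xi(x_1) = \xi(x_2) = 0$. Taylor-expanding $\xi$ from $x_2$ to $x_1$ (or from $x_1$ to $x_2$) and subtracting gives
\[
\nabla \xi(x_2) \cdot (x_1 - x_2) = -\tfrac{1}{2}(x_1-x_2)^T \nabla^2 \xi(\tilde{x})(x_1-x_2)
\]
for some intermediate $\tilde{x}$, with the analogous identity holding when the roles of $x_1$ and $x_2$ are swapped. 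The strict convexity assumption \eqref{convex} provides the lower bound $\gtrsim |x_1-x_2|^2$, while $\xi \in C^2$ gives the matching upper bound. Dividing by $|\nabla \xi(x_j)|$ (bounded above and below away from $\partial \Omega$-vanishing) yields \eqref{nv<v2} for both $j = 1$ and $j = 2$.

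\textbf{Step 2 (derivative estimate, case $j=2$).} Here $n_{p^2}$ is itself a function of $\mathbf{x}^2_{p^2}$. Applying the product rule,
\[
\frac{\partial}{\partial \mathbf{x}^2_{p^2, j'}}\bigl[ n_{p^2}(\mathbf{x}^2_{p^2}) \cdot (x_1 - \eta_{p^2}(\mathbf{x}^2_{p^2}))\bigr]
= \partial_{j'} n_{p^2}(\mathbf{x}^2_{p^2}) \cdot (x_1 - x_2)
- n(x_2) \cdot \partial_{j'} \eta_{p^2}(\mathbf{x}^2_{p^2}).
\]
Because $\mathbf{x}^2_{p^2,3}=0$, for $j'=1,2$ the tangent vector $\partial_{j'} \eta_{p^2}(\mathbf{x}^2_{p^2})$ is tangent to $\partial\Omega$ at $x_2$, hence orthogonal to $n(x_2)$; the second term therefore vanishes. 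The first term is controlled by $\|\eta\|_{C^2}|x_1 - x_2|$ since derivatives of $n = \nabla\xi/|\nabla\xi|$ composed with $\eta_{p^2}$ are bounded by $\|\eta\|_{C^2}$.

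\textbf{Step 3 (derivative estimate, case $j=1$).} Now $n_{p^1}(\mathbf{x}^1_{p^1}) = n(x_1)$ does not depend on $\mathbf{x}^2_{p^2}$, so
\[
\frac{\partial}{\partial \mathbf{x}^2_{p^2, j'}}\bigl[ n(x_1) \cdot (x_1 - \eta_{p^2}(\mathbf{x}^2_{p^2}))\bigr]
= - n(x_1) \cdot \partial_{j'} \eta_{p^2}(\mathbf{x}^2_{p^2}).
\]
To extract the factor $|x_1 - x_2|$ we insert the zero identity $n(x_2) \cdot \partial_{j'} \eta_{p^2}(\mathbf{x}^2_{p^2}) = 0$ and rewrite the right-hand side as $-(n(x_1) - n(x_2)) \cdot \partial_{j'} \eta_{p^2}(\mathbf{x}^2_{p^2})$. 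Since $n$ is a $C^1$ function on $\partial \Omega$ (as a unit normalization of $\nabla \xi$), we have $|n(x_1) - n(x_2)| \lesssim \|\eta\|_{C^2} |x_1 - x_2|$, and $|\partial_{j'} \eta_{p^2}|$ is bounded by $\|\eta\|_{C^2}$. This gives \eqref{bound_vb_x} for $j=1$.

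The main subtlety I anticipate is the $j = 1$ subcase: at first glance the derivative of $n(x_1) \cdot \eta_{p^2}(\mathbf{x}^2_{p^2})$ is only $O(1)$, not $O(|x_1 - x_2|)$. The trick of subtracting the vanishing quantity $n(x_2)\cdot\partial_{j'}\eta_{p^2}$ is what converts the bound to first order in $|x_1-x_2|$, and it crucially uses the tangency identity \eqref{orthogonal} at $\mathbf{x}^2_{p^2,3}=0$ together with the Lipschitz regularity of $n$.
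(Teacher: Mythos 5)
Your proof is correct: Taylor expansion of $\xi$ between the two boundary points $x_1,x_2$ combined with the strict convexity \eqref{convex} (and the nonvanishing of $\nabla\xi$ near $\partial\Omega$) gives \eqref{nv<v2}, and the tangency identity $n(x_2)\cdot\partial_{j'}\eta_{p^2}(\mathbf{x}^2_{p^2})=0$ at $\mathbf{x}^2_{p^2,3}=0$ — used directly for $j=2$ and via the insertion of this vanishing term for $j=1$ together with the Lipschitz bound on $n$ — gives \eqref{bound_vb_x}; this is essentially the argument behind Lemma 4 of \cite{CK}, which the present paper imports without reproving. The only cosmetic remark is that your $j=1$ estimate naturally yields a constant of size $\|\eta\|_{C^2}^2$ (Lipschitz constant of $n$ times $|\partial_{j'}\eta_{p^2}|$), which is harmlessly absorbed into the implicit constant of $\lesssim$.
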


\section{Weighted $C^1$-estimate of the dynamical solution.}
In this section we prove Theorem \ref{Thm: dynamic C1}. We will mainly prove the weighted $C^1$ estimate of the iteration equation~\eqref{eqn:formula of f^(m+1)} in Proposition \ref{proposition: boundedness}.

First we derive the boundary condition for $F=\sqrt{\mu}f$. By the boundary condition of $F$~\eqref{eqn:BC} and the reciprocity property~\eqref{eqn: reciprocity}, the boundary condition for $f$ becomes, for $(x,v)\in \gamma_-$,
\[f(t,x,v)|n(x)\cdot v|=\frac{1}{\sqrt{\mu}}\int_{n(x)\cdot u>0}   R(u\to v;x,t)  f(t,x,u)\sqrt{\mu(u)}\{n(x)\cdot u\}\dd u\]
\[=\frac{1}{\sqrt{\mu}}\int_{n(x)\cdot u>0}   R(-v\to -u;x,t) \frac{e^{-|v|^2/(2T_w(x))}}{e^{-|u|^2/(2T_w(x))}}  f(t,x,u)\sqrt{\mu(u)}\frac{|n(x)\cdot v|}{|n(x)\cdot u|}\{n(x)\cdot u\}\dd u.\]
Thus
\begin{equation}\label{eqn:C-L boundary condition in pro measure}
f(t,x,v)|_{\gamma_-}=e^{[\frac{1}{4T_M}-\frac{1}{2T_w(x)}]|v|^2}\int_{n(x)\cdot u>0} f(t,x,u)e^{-[\frac{1}{4T_M}-\frac{1}{2T_w(x)}]|u|^2}\dd \sigma(u,v).
\end{equation}
Here we denote
\begin{equation}\label{eqn:probability measure}
\dd\sigma(u,v):=R(-v\to -u;x,t)\dd u,
\end{equation}
which is a probability measure in the space $\{(x,u),n(x)\cdot u>0\}$ (well-defined due to~\eqref{eqn: normalization}).

We consider the following iteration equation:
\begin{equation}\label{eqn: Fm+1}
    \partial_t F^{m+1}+v\cdot \nabla_x F^{m+1}=Q_{\text{gain}}(F^m,F^m)-\nu(F^m)F^{m+1},\quad F^{m+1}|_{t=0}=F_0,
\end{equation}
with boundary condition
\[F^{m+1}(t,x,v)|n(x)\cdot v|=\int_{n(x)\cdot u>0}R(u\to v;x,t)F^{m}(t,x,u)\{n(x)\cdot u\}\dd u.\]
For $m\leq 0$ we set
\[F^m(t,x,v)=F_0(x,v).\]

We pose $F^{m+1}=\sqrt{\mu}f^{m+1}$, the equation for $f^{m+1}$ reads
\begin{equation}\label{eqn:formula of f^(m+1)}
\partial_t f^{m+1}+v\cdot \nabla_x f^{m+1}+\nu(F^m)f^{m+1} = \Gamma_{\text{gain}}\left(f^m,f^m\right).
\end{equation}

Taking the derivative $\partial =[\nabla_x ,\nabla_v]$ with the weight $e^{-\lambda\langle v\rangle t}\alpha $ we obtain
\begin{equation}\label{equation for partial fm+1}
\begin{split}
    & [\partial_t +v\cdot \nabla_x +\nu^m] e^{-\lambda\langle v\rangle t}\alpha(x,v)\partial f^{m+1}(t,x,v)=\mathcal{G}^m, \\
     &h(0,x,v) = \alpha(x,v) \partial f_0(x,v).
\end{split}
\end{equation}

In~\eqref{equation for partial fm+1} $\nu^m$ and $\mathcal{G}^m$ are defined as
\begin{align}
   &\nu^m = \nu(F^m)+\lambda\langle v\rangle-\alpha^{-1}[v\cdot \nabla_x \alpha]  \label{nu m}\\
   & \mathcal{G}^m(t,x) = e^{-\lambda\langle v\rangle t}\alpha(x,v) \big[-[\partial v]\cdot \nabla_x f^{m+1}-\partial [\nu \sqrt{\mu}f^m]f^{m+1}+\partial [\Gamma_{\text{gain}}(f^m,f^m)] \big].\label{Gm}
\end{align}

We choose $\lambda=\lambda(\xi)\gg 1$ and apply~\eqref{v nabla alpha bdd} to have
\begin{equation}\label{nu m geq}
\eqref{nu m}=\nu^m\geq \lambda \langle v\rangle -O_\xi(1)|v|\geq | v|.
\end{equation}

The boundary condition is given by the following lemma.

\begin{lemma}\label{Lemma: bc for partial f}(Lemma 12 in \cite{CKQ})\\
For $(x,v)\in \gamma_-$, we have the following bound for $e^{-\lambda\langle v\rangle t}\alpha(x,v) \partial f^{m+1}$ on the boundary:
\begin{equation}\label{eqn: derivative bound on the boundary first part}
|e^{-\lambda\langle v\rangle t}\alpha(x,v) \partial f^{m+1}(t,x,v)|\lesssim P(\Vert w_{\theta} f^m\Vert_\infty) + \langle v\rangle^2e^{[\frac{1}{4T_M}-\frac{1}{2T_w(x)}]|v|^2}\times~\eqref{eqn: derivative bound on the boundary second part}
\end{equation}
with
\begin{equation}\label{eqn: derivative bound on the boundary second part}
  \begin{split}
     & \int_{n(x)\cdot u>0} \langle u\rangle^2 |\partial f^m(t,x,u)|e^{-[\frac{1}{4T_M}-\frac{1}{2T_w(x)}]|u|^2}\dd\sigma(u,v)                                                               .
  \end{split}
\end{equation}

%
%
%
%
%
\end{lemma}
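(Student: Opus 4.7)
The strategy is to differentiate the boundary representation of $f^{m+1}$ (obtained from reciprocity in the same way as \eqref{eqn:C-L boundary condition in pro measure}) term by term, and sort the resulting pieces into two groups: those where the derivative lands on $f^m$ (which contribute to the integral \eqref{eqn: derivative bound on the boundary second part}) and those where it lands on the kernel/prefactor (which contribute polynomially to $P(\|w_\theta f^m\|_\infty)$). The kinetic weight $\alpha(x,v)$ is built precisely to cancel the singular $|n(x)\cdot v|$ factor that appears when one differentiates the boundary identity.

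\medskip

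\textbf{Step 1: Boundary identity for $f^{m+1}$.} Following the same computation as that leading to \eqref{eqn:C-L boundary condition in pro measure}, the iterated boundary condition for $f^{m+1}$ reads, for $(x,v)\in\gamma_-$,
\begin{equation*}
f^{m+1}(t,x,v) = e^{[\frac{1}{4T_M}-\frac{1}{2T_w(x)}]|v|^2}\int_{n(x)\cdot u>0} f^m(t,x,u)\, e^{-[\frac{1}{4T_M}-\frac{1}{2T_w(x)}]|u|^2}\, \dd\sigma(u,v).
\end{equation*}

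\textbf{Step 2: Differentiation and sorting.} Apply $\partial=[\nabla_x,\nabla_v]$ to this identity. The derivative distributes across: (i)~the scalar prefactor $e^{[\frac{1}{4T_M}-\frac{1}{2T_w(x)}]|v|^2}$, (ii)~the integrand weight $e^{-[\frac{1}{4T_M}-\frac{1}{2T_w(x)}]|u|^2}$, (iii)~the scattering kernel $R(-v\to -u;x,t)$ hidden in $\dd\sigma(u,v)$, and (iv)~the function $f^m(t,x,u)$ itself. Only term (iv) carries a derivative of $f^m$; all other terms produce, at worst, polynomial factors in $v$ and $u$ multiplied by the original kernel $\dd\sigma$ (or a cousin of it with the same decay). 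To see this for (iii), one differentiates the explicit formula~\eqref{eqn: Formula for R}: the $\nabla_v$ derivatives produce factors of the form $\frac{v_\perp}{T_w r_\perp}$, $\frac{v_\parallel-(1-r_\parallel)u_\parallel}{T_w r_\parallel(2-r_\parallel)}$, and a bounded correction from $I_0'/I_0$ (which grows at most polynomially); $\nabla_x$ derivatives additionally pick up $\nabla n(x)$ and $\nabla T_w(x)$, bounded by $\|\xi\|_{C^2}$ and the smoothness of $T_w$. Each such factor is controlled by $\langle v\rangle^a\langle u\rangle^b$ with $a,b\leq 1$, and combining with the factors from (i)--(ii) yields at most $O(\langle v\rangle\langle u\rangle)$ extra weight against the Gaussian of $\dd\sigma$.

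\medskip

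\textbf{Step 3: Cancellation of the boundary singularity.} The $\nabla_x$ derivative also produces a singular contribution of the form $[n(x)\cdot v]^{-1}$ when it acts through the $|n(x)\cdot v|$-factor generated by reciprocity. This is exactly where the kinetic weight enters: by \eqref{cancel n dot v}, the multiplier $\alpha(x,v)$ cancels this $|n(x)\cdot v|$ precisely on $\gamma_-$, so the combined prefactor $\alpha(x,v)\,|n(x)\cdot v|^{-1}$ is uniformly bounded. Also, inside the integrand, $f^m$ is evaluated at the \emph{outgoing} velocity $u$, so we need the companion weight to bound $|\partial f^m(t,x,u)|$ in the form $\alpha(x,u)^{-1}\cdot [\alpha(x,u)\partial f^m]$; but on $\gamma_+$ with $n(x)\cdot u>0$ small, the same identity $\alpha(x,u)\sim n(x)\cdot u$ balances against the $n(x)\cdot u$ factor already present in the probability measure $\dd\sigma(u,v)$. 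After these cancellations, what remains from term~(iv) is exactly the integral in~\eqref{eqn: derivative bound on the boundary second part}, with the extra $\langle v\rangle^2\langle u\rangle^2$ absorbing the polynomial gains from Step~2, together with the outer factor $e^{-\lambda\langle v\rangle t}\leq 1$.

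\medskip

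\textbf{Step 4: Remainder estimate.} For terms (i)--(iii), $\partial f^m$ does not appear, so $f^m$ is bounded pointwise by $w_\theta^{-1}\|w_\theta f^m\|_\infty$. The $u$-integral against $d\sigma$ times the polynomial factors and Gaussian $w_\theta^{-1}$ converges (by Lemmas~\ref{Lemma: abc} and~\ref{Lemma: perp abc}, using the condition \eqref{eqn: Constrain on T} which guarantees $\frac{1}{4T_M}-\frac{1}{2T_w(x)}+\theta$ stays on the integrable side of the Gaussian exponent), yielding a polynomial $P(\|w_\theta f^m\|_\infty)$. The main obstacle is the bookkeeping in Step~2: carefully verifying that differentiating the Bessel function $I_0$ and the normal/tangential decomposition does not produce an uncontrolled singularity, but since $I_0'(y)/I_0(y)\leq 1$ and all coefficients in \eqref{eqn: Formula for R} are $C^1$ under \eqref{eqn: r condition}, one indeed only gets polynomial growth, which is absorbed by the $\langle v\rangle^2$ and $\langle u\rangle^2$ factors as claimed.
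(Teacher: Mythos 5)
There is a genuine gap, and it sits exactly at the point the lemma exists to handle. Your Steps 1--2 only make sense for derivatives of a function defined \emph{on} $\gamma_-$: tangential spatial derivatives along $\partial\Omega$ and velocity derivatives. The lemma, however, bounds $\alpha\,\partial f^{m+1}$ with $\partial=[\nabla_x,\nabla_v]$ including the \emph{normal} component of $\nabla_x$, and that component cannot be obtained by differentiating the boundary identity, since the identity holds only for $x\in\partial\Omega$. The argument the paper relies on (the cited Lemma 12 of \cite{CKQ}; the paper gives no proof of its own) recovers $n\cdot\nabla_x f^{m+1}$ from the interior equation \eqref{eqn:formula of f^(m+1)} evaluated at the boundary: one writes $(n\cdot v)\,\partial_n f^{m+1}= -\partial_t f^{m+1}-(\text{tangential transport})-\nu(F^m) f^{m+1}+\Gamma_{\text{gain}}(f^m,f^m)$ and divides by $n\cdot v$; this division is the true origin of the $[n(x)\cdot v]^{-1}$ singularity that $\alpha(x,v)\sim n(x)\cdot v$ cancels via \eqref{cancel n dot v}. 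Your Step 3 instead attributes the singularity to an $|n(x)\cdot v|$ factor ``generated by reciprocity,'' but after the rewriting \eqref{eqn:C-L boundary condition in pro measure} no such factor remains: $\dd\sigma(u,v)=R(-v\to-u;x,t)\,\dd u$ carries $|n(x)\cdot u|$ in the integration variable (it vanishes, rather than blows up, at grazing $u$), and the $v$-dependence of the prefactor and kernel is smooth, so no $1/(n\cdot v)$ can arise from differentiating the boundary identity itself.

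A second, related omission is that your scheme yields only tangential derivatives of $f^m(t,x,u)$ inside the integral and gives no mechanism producing the weights $\langle v\rangle^2$ and $\langle u\rangle^2$ in \eqref{eqn: derivative bound on the boundary first part}--\eqref{eqn: derivative bound on the boundary second part}. Those arise precisely from the equation-based step you skipped: solving for the normal derivative requires $\partial_t f^{m+1}$ on $\gamma_-$, which one gets by differentiating the boundary identity in $t$ and then converting $\partial_t f^m(t,x,u)$ via the transport equation for $f^m$ into $-u\cdot\nabla_x f^m+\nu$-- and collision terms; this is what places $\langle u\rangle$-weights against $\partial f^m(t,x,u)$, while the $v\cdot\nabla_x$ terms divided by $n\cdot v$ account for the $\langle v\rangle$-weights (the collision contributions go into $P(\Vert w_\theta f^m\Vert_\infty)$, using \eqref{gamma gain}). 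Your bookkeeping for the C-L kernel derivatives, the $I_0$ bound, and the Gaussian integrability under \eqref{eqn: Constrain on T} is fine as far as it goes, but without recovering the normal and time derivatives from the equation the proposal does not prove the stated estimate.
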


Then we establish the weighted $L^\infty$ bound of the sequence $\partial f^m$ in the following proposition.

\begin{proposition}\label{proposition: boundedness}
Assume $\partial f^{m+1}$ satisfies~\eqref{equation for partial fm+1} with the boundary condition~\eqref{eqn: derivative bound on the boundary first part} and the wall temperature satisfies~\eqref{eqn: r condition},~\eqref{eqn: Constrain on T}. Also assume the initial condition has bound
\begin{equation*}
  \Vert \alpha \partial f_0\Vert_{L^\infty}<\infty.
\end{equation*}
There exists $t_\infty\ll 1$ such that if
\begin{equation}\label{eqn: fm is bounded}
  \sup_{t\leq t_\infty}\sup_{i\leq m}\Vert e^{-\lambda\langle v\rangle t}\alpha\partial f^i(t,x,v)\Vert_{L^\infty}\leq C_\infty [\Vert \alpha \partial f_0\Vert_{L^\infty}+P(\sup_m\Vert w_\theta f^m\Vert_\infty)],
\end{equation}
then
\begin{equation}\label{eqn: L_infty bound for f^m+1}
\sup_{0\leq t\leq t_\infty}\Vert e^{-\lambda\langle v\rangle t}\alpha\partial f^{m+1}(t,x,v)\Vert_{L^\infty} \leq  C_\infty [\Vert \alpha \partial f_0\Vert_{L^\infty}+P(\sup_m\Vert w_\theta f^m\Vert_\infty)].
\end{equation}
Here $C_\infty$ is a constant defined in~\eqref{eqn: Cinfty} and
\begin{equation}\label{eqn: t_1}
t\leq t_{\infty}=t_{\infty}(T_M,T_m,r_\perp,r_\parallel,\Omega,\sup_m \Vert w_\theta f^m\Vert_\infty)\ll 1.
\end{equation}

\end{proposition}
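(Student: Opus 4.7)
The plan is to close the induction \eqref{eqn: L_infty bound for f^m+1} by writing Duhamel's formula for $h^{m+1}(t,x,v) := e^{-\lambda\langle v\rangle t}\alpha(x,v)\partial f^{m+1}(t,x,v)$ along the characteristic, then iterating the boundary condition of Lemma \ref{Lemma: bc for partial f} through the stochastic cycle of Definition \ref{Def:Back time cycle}. Integrating \eqref{equation for partial fm+1} along the straight trajectory gives
\begin{equation*}
\begin{split}
h^{m+1}(t,x,v) &= \mathbf{1}_{t\leq t_1}\,e^{-\int_0^t \nu^m\,\dd\tau}\,\alpha(x-tv,v)\partial f_0(x-tv,v)\\
&\quad + \mathbf{1}_{t_1>0}\,e^{-\int_{t_1}^t \nu^m\,\dd\tau}\,h^{m+1}(t_1,x_1,v)\\
&\quad + \int_{\max(0,t_1)}^t e^{-\int_s^t \nu^m\,\dd\tau}\,\mathcal{G}^m(s,x-(t-s)v,v)\,\dd s.
\end{split}
\end{equation*}
The source $\mathcal{G}^m$ in \eqref{Gm} splits into three pieces: a transport commutator $[\partial v]\cdot \nabla_x f^{m+1}$ that only arises for $\partial = \nabla_v$ and is absorbed by the lower bound $\nu^m \geq |v|$ in \eqref{nu m geq}; a term of order $\|w_\theta f^m\|_\infty |f^{m+1}|$ coming from differentiating $\nu(F^m)$, controlled via Theorem \ref{Thm: local existence}; and the derivative of $\Gamma_{\text{gain}}$, which by \eqref{nabla gamma gain} is dominated by $\|w_\theta f^m\|_\infty \int \mathbf{k}_\varrho(v,u) |\partial f^m(u)|\,\dd u + \|w_\theta f^m\|_\infty^2$. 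After substituting $|\partial f^m| = \alpha^{-1} e^{\lambda\langle u\rangle s}|h^m|$ and applying Lemma \ref{Lemma: NLN} together with Lemma \ref{Lemma: k tilde}, the interior contribution is bounded by $C\,t\,\sup_{i\leq m}\|h^i\|_\infty + Ct\|w_\theta f^m\|_\infty^2$ after multiplication by $\alpha(x,v)$.

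The main work is the boundary term $h^{m+1}(t_1,x_1,v)$. Lemma \ref{Lemma: bc for partial f} bounds it by $P(\|w_\theta f^m\|_\infty)$ plus $\langle v\rangle^2 e^{[\frac{1}{4T_M}-\frac{1}{2T_w(x_1)}]|v|^2}\int_{n\cdot u>0} \langle u\rangle^2 |\partial f^m(t_1,x_1,u)| e^{-[\frac{1}{4T_M}-\frac{1}{2T_w(x_1)}]|u|^2}\,\dd\sigma(u,v)$. Rewriting $|\partial f^m|$ in terms of $h^m$ introduces a factor $\alpha^{-1}e^{\lambda\langle u\rangle t_1}$, and Lemma \ref{Lemma: extra term} with $\e = t^c$ then converts $\langle v\rangle^2\langle u\rangle^2 e^{\lambda\langle \cdot\rangle t}$ into $t^{-c}e^{t^c|v|^2}e^{t^c|u|^2}$. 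The resulting integrand is precisely of the exponential-times-Gaussian form, in the normal and tangential variables, to which Lemma \ref{Lemma: abc}, Lemma \ref{Lemma: perp abc}, and Lemma \ref{Lemma: integrate normal small} apply; the strict inequality $a+\e<b$ that these lemmas require is exactly the temperature constraint \eqref{eqn: Constrain on T}, as long as $t^c$ is sufficiently small. Iterating the boundary condition $k$ times through the stochastic cycle produces a $k$-fold nested integral of the same structure, and these three lemmas furnish an induction whose coefficients converge to $1$ as $t\to 0$, so the $k$-fold integral stays uniformly bounded in $k$.

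To truncate the cycle, introduce the grazing set $\gamma_+^\delta = \{v:|n\cdot v|>\delta,\ |v|\leq \delta^{-1}\}$. Each integration over $\mathcal{V}_j\setminus\gamma_+^\delta$ gains a factor $\delta$ via \eqref{eqn: coe abc small}, \eqref{eqn: coe abc perp small}, and \eqref{eqn: coe perp small 2}; on $\gamma_+^\delta$ the backward exit time is bounded below by $c\delta^3$, hence within $[0,t]$ at most $N = \lfloor t/(c\delta^3)\rfloor$ bounces can lie in the grazing set. Setting $\delta = t^{1/3}\delta'$ with $\delta'\ll 1$ decouples $N$ from $t$, while the per-bounce gain $t^{1/3}\delta'$ dominates the loss $t^{-c}$ from Lemma \ref{Lemma: extra term} because $c = 1/15 < 1/3$. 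Truncating the cycle at $k = 2N$ thus leaves a remainder $\lesssim (C\delta)^N$. Collecting the interior, collision, and boundary estimates gives
\begin{equation*}
\sup_{0\leq t\leq t_\infty}\|h^{m+1}(t)\|_\infty \leq \tfrac{1}{2}\sup_{i\leq m}\sup_{0\leq t\leq t_\infty}\|h^i(t)\|_\infty + C\bigl[\|\alpha\partial f_0\|_\infty + P(\sup_m\|w_\theta f^m\|_\infty)\bigr],
\end{equation*}
for $t_\infty = t_\infty(T_M,T_m,r_\perp,r_\parallel,\Omega,\sup_m\|w_\theta f^m\|_\infty)$ sufficiently small. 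Combined with \eqref{eqn: fm is bounded}, this closes the induction with any $C_\infty \geq 2C$.

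The principal obstacle is precisely the extra $\langle v\rangle^2$ produced at every bounce in Lemma \ref{Lemma: bc for partial f}. A naive bound $\langle v\rangle^2 \lesssim \e^{-1}e^{\e|v|^2}$ would force $\e$ to shrink with the cycle length $k$, and since $k$ scales like $\delta^{-3}$ the required hierarchy $\delta \ll \e(k(\delta))$ cannot be arranged. Lemma \ref{Lemma: extra term} resolves this by absorbing the polynomial into $e^{t^c|v|^2}$, making $\e = t^c$ depend only on the horizon $t$; then $k$ depends only on $\delta'$, and the final choice of small $t_\infty$ compensates all the accumulated error factors at once.
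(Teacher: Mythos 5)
Your overall architecture matches the paper's: Duhamel's formula for $e^{-\lambda\langle v\rangle t}\alpha\partial f^{m+1}$, iteration of the boundary condition of Lemma \ref{Lemma: bc for partial f} through the stochastic cycle, absorption of the polynomial weights via Lemma \ref{Lemma: extra term} with $\e=t^{c}$, and the grazing-set truncation with $\delta=t^{1/3}\delta'$ and $c=1/15$. But there is a genuine gap in the truncation step. You assert that ``each integration over $\mathcal{V}_j\setminus\gamma_+^\delta$ gains a factor $\delta$'' from~\eqref{eqn: coe abc small},~\eqref{eqn: coe abc perp small},~\eqref{eqn: coe perp small 2}. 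That is only true for the part $|n\cdot v_j|<\delta$, or when $v_j$ lies far from the \emph{shifted} Gaussian center: the C-L kernel at bounce $j$ is a Gaussian centered at $\eta_{j,\parallel}v_{j-1,\parallel}$ (resp. $\eta_{j,\perp}v_{j-1,\perp}$), so when $|v_j|>\delta^{-1}$ but $|v_{j}-\eta_{j}v_{j-1}|<\delta^{-1}$ (i.e. the previous velocity is also large) no single integration produces any smallness. This is precisely the ``bad case'' the paper must handle separately: one needs $\eta_{j,\parallel},\eta_{j,\perp}\le\eta<1$ (Lemma \ref{Lemma:  (a)(c)}), which forces $|v_{j-1}|>|v_j|+\delta^{-1}$ in the bad case, and then a counting argument over the subsequence trapped between two non-grazing velocities (Sets 1--6 in Lemma \ref{Lemma: Step3}, together with Lemma \ref{Lemma: accumulate}) shows the compensating increments $a_j$ must sum to $\gtrsim L\delta^{-1}$, yielding the Gaussian gain $e^{-L\delta^{-1}}$ and hence the $(3\delta)^{L/2}$ decay that makes the remainder after $k_0$ bounces of size $(1/2)^{k_0}$ (Lemma \ref{lemma: t^k}). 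Without this argument the remainder bound ``$\lesssim(C\delta)^N$'' is unjustified, and it is exactly here — not in the inequality $a+\e<b$ of Lemmas \ref{Lemma: abc}--\ref{Lemma: integrate normal small}, which holds automatically from $T_{l,q}\le 2T_M$ and $t_*^c\ll1$ — that the temperature condition~\eqref{eqn: Constrain on T} is used, since it is what guarantees $\eta<1$ via~\eqref{eqn: e2 def},~\eqref{eqn: eta_paral},~\eqref{eqn: eta perp}.

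Two further points. First, the $k$-fold integral is not ``uniformly bounded in $k$'': by Lemma \ref{lemma: boundedness} it costs $t_*^{-kc}C_{T_M,T_m}^{k}$ and, more importantly, it accumulates the growing exponential weight $\mathcal{A}_{l,1}$ in $|v|^2$ (with the drifting temperatures $T_{l,i}$ of~\eqref{eqn: definition of T_p}); this must be cancelled against the prefactor $e^{[\frac{1}{4T_M}-\frac{1}{2T_w(x_1)}]|v|^2}$, which is the content of condition~\eqref{eqn: less than 0} and is absent from your argument. Second, your truncation level $k=2N$ is too short: since roughly $100\frac{1+\eta}{1-\eta}N$ bounces can sit in short subsequences that yield no smallness, one needs $k$ much larger than $N$ (the paper takes $k=N^3$) so that the surviving power of $3\delta'$ beats both the combinatorial factor $\binom{k-1}{p}$ and the per-fold losses $t_*^{-c}C_{T_M,T_m}$; with $k=2N$ the exponent $k/4-101\frac{1+\eta}{1-\eta}N$ is negative and no decay remains.
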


\begin{remark}
The parameters in~\eqref{eqn: t_1} guarantee that the small time only depends on the temperature, accommodation and $L^\infty$ bound $\Vert w_\theta f^m\Vert $. The uniform-in-$m$ $L^\infty$ bound is concluded in~\cite{chen}:
\begin{equation}\label{uniform l infty}
 \sup_m \Vert w_\theta f^m\Vert_\infty <\infty.
\end{equation}

The Proposition \ref{proposition: boundedness} implies the uniform-in-$m$ $L^\infty$ estimate for $e^{-\lambda\langle v\rangle t}\alpha\partial f^{m}(t,x,v)$,
\begin{equation}\label{eqn: theta'}
\sup_m\Vert e^{-\lambda\langle v\rangle t}\alpha\partial f^{m}(t,x,v)\Vert_\infty<\infty.
\end{equation}

The strategy to prove Proposition \ref{proposition: boundedness} is to express $e^{-\lambda\langle v\rangle t}\alpha\partial f^{m+1}(t,x,v)$ along the characteristic using the C-L boundary condition. We present the characteristic formula in Lemma~\ref{lemma: the tracjectory formula for f^(m+1)}. We will use Lemma~\ref{lemma: boundedness} and Lemma~\ref{lemma: t^k} to bound the formula.
\end{remark}

We represent $e^{-\lambda\langle v\rangle t}\alpha\partial f^{m+1}(t,x,v)$ with the stochastic cycles defined as follows.

\begin{lemma}\label{lemma: the tracjectory formula for f^(m+1)}
Assume $e^{-\lambda\langle v\rangle t}\alpha\partial f^{m+1}(t,x,v)$ satisfies~\eqref{equation for partial fm+1} with the Cercignani-Lampis boundary condition~\eqref{eqn: derivative bound on the boundary first part}. If $t_1\leq 0$, then
\begin{equation}\label{eqn: Duhamal principle for case1}
|e^{-\lambda\langle v\rangle t}\alpha(x,v)\partial f^{m+1}(t,x,v)|\leq |\alpha(x,v)\partial f_0(X^1(0),v)| +\int_0^t |\mathcal{G}^m (s,X^1(s),v)|\dd s.
\end{equation}
If $t_1>0$, for $k\geq 2$, then
\begin{equation}\label{eqn: Duhamel principle for case 2}
\begin{split}
  |e^{-\lambda\langle v\rangle t}\alpha(x,v)\partial f^{m+1}(t,x,v)|  & \leq \int_{t_1}^t \mathcal{G}^m(s,X^{1}(s),v)\dd s + P(\Vert w_\theta f^m\Vert_\infty)\\
     & +\langle v\rangle^2 e^{[\frac{1}{4T_M}-\frac{1}{2T_w(x_1)}]|v|^2}\int_{\prod_{j=1}^{k-1}\mathcal{V}_j}H,
\end{split}
\end{equation}
where $H$ is bounded by
\begin{equation}\label{eqn: formula for H}
\begin{split}
   &  \sum_{l=1}^{k-1}\mathbf{1}_{\{t_l>0,t_{l+1}\leq 0\}}|\alpha \partial f_0\left(X^{l+1}(0),v_l\right)|\dd\Sigma_{l}^k\\
    & +\sum_{l=1}^{k-1}\int_{\max\{0,t_{l+1}\}}^{t_l} |\mathcal{G}^{m-l}(s,X^{l+1}(s)|\dd\Sigma_{l}^k \dd s\\
    &+\sum_{l=2}^{k-1} \mathbf{1}_{\{t_l>0\}}  P(\Vert w_{\theta}f^{m-l+1}\Vert_\infty)   \dd \Sigma_{l-1}^k     \\
    &+\mathbf{1}_{\{t_k>0\}}|e^{-\lambda\langle v_k\rangle t_k}\alpha \partial f^{m-k+2}\left(t_k,x_k,v_{k-1}\right)|\dd\Sigma_{k-1}^k,
\end{split}
\end{equation}
with
\begin{equation}\label{eqn:trajectory measure}
\begin{split}
 \dd\Sigma_{l,m}^k=&    \Big\{\prod_{j=l+1}^{k-1}\dd\sigma\left(v_j,v_{j-1}\right)\Big\}\\
 & \times \Big\{e^{\lambda\langle v_l\rangle t_l}\langle v_{l-1}\rangle^2 e^{-[\frac{1}{4T_M}-\frac{1}{2T_w(x_l)}]|v_l|^2}\frac{1}{n(x_l)\cdot v_l}
\dd\sigma(v_l,v_{l-1})\Big\}\\
    & \times\Big\{\prod_{j=1}^{l-1} e^{\lambda\langle v_j\rangle t_j}\langle v_j \rangle^4 e^{[\frac{1}{2T_w(x_j)}-\frac{1}{2T_w(x_{j+1})}]|v_j|^2}\frac{1}{n(x_j)\cdot v_j}  \dd\sigma\left(v_j,v_{j-1}\right)\Big\}.
\end{split}
\end{equation}
\end{lemma}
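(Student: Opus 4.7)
The plan is to iterate Duhamel's principle along backward characteristics, inserting the boundary bound of Lemma \ref{Lemma: bc for partial f} at each collision with $\partial\Omega$, for up to $k-1$ collisions. Set $h^{j}(s,x,v) := e^{-\lambda\langle v\rangle s}\alpha(x,v)\partial f^{j}(s,x,v)$; this satisfies (\ref{equation for partial fm+1}) with damping $\nu^{j-1}$. By (\ref{nu m geq}), $\nu^{j-1} \geq |v|\geq 0$, so the integrating factor $\exp(-\int_s^t \nu^{j-1}\,\dd\tau)$ is bounded by $1$. In the case $t_1\leq 0$, a single Duhamel step on $[0,t]$ along $X^1(s)=x-(t-s)v$ produces (\ref{eqn: Duhamal principle for case1}) at once.

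For $t_1 > 0$ I would proceed by finite induction on the bounce index. Duhamel on $[t_1,t]$ gives $|h^{m+1}(t,x,v)|\leq |h^{m+1}(t_1,x_1,v)|+\int_{t_1}^t|\mathcal{G}^m|\,\dd s$, and Lemma \ref{Lemma: bc for partial f} splits $|h^{m+1}(t_1,x_1,v)|$ into the stand-alone $P(\Vert w_\theta f^m\Vert_\infty)$ contribution (second term of (\ref{eqn: Duhamel principle for case 2})) and the scattering integral, whose outer weight $\langle v\rangle^2 e^{[\frac{1}{4T_M}-\frac{1}{2T_w(x_1)}]|v|^2}$ is exactly the prefactor of $\int H$ in (\ref{eqn: Duhamel principle for case 2}). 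Inside that integral $(x_1,v_1)\in\gamma_+$ with $n(x_1)\cdot v_1>0$, so (\ref{cancel n dot v}) gives $\alpha(x_1,v_1)\sim n(x_1)\cdot v_1$, and rewriting
\[
|\partial f^m(t_1,x_1,v_1)| \;=\; \frac{e^{\lambda\langle v_1\rangle t_1}}{\alpha(x_1,v_1)}\,|h^m(t_1,x_1,v_1)| \;\lesssim\; \frac{e^{\lambda\langle v_1\rangle t_1}}{n(x_1)\cdot v_1}\,|h^m(t_1,x_1,v_1)|
\]
introduces the factors $\frac{e^{\lambda\langle v_1\rangle t_1}}{n(x_1)\cdot v_1}\dd\sigma(v_1,v)$ which, together with the remaining $\langle v_1\rangle^2 e^{-[\cdots]|v_1|^2}$ from the inner integrand, form the $l=1$ slot of $\dd\Sigma_l^k$.

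The inductive step is the same operation applied at level $l$ to $|h^{m-l+1}(t_l,x_l,v_l)|$ paired with the accumulated measure $\dd\Sigma_l^k$. Duhamel on $[\max\{0,t_{l+1}\},t_l]$ along $X^{l+1}(s)=x_l-(t_l-s)v_l$ produces the source contribution $\int|\mathcal{G}^{m-l}|\dd s$ (second sum of $H$) together with either (i) the initial datum $|\alpha\partial f_0(X^{l+1}(0),v_l)|$ when $t_{l+1}\leq 0$ (first sum of $H$), or (ii) a new boundary value when $t_{l+1}>0$. In case (ii), Lemma \ref{Lemma: bc for partial f} splits the boundary value into the stand-alone $P(\Vert w_\theta f^{m-l}\Vert_\infty)$ term, which truncates the iteration and is paired with $\dd\Sigma_l^k$, matching the third sum of $H$ after reindexing $l+1\mapsto l$, and a scattering contribution whose outer piece $\langle v_l\rangle^2 e^{[\frac{1}{4T_M}-\frac{1}{2T_w(x_{l+1})}]|v_l|^2}$ telescopes with the $e^{-[\frac{1}{4T_M}-\frac{1}{2T_w(x_l)}]|v_l|^2}$ already stored in $\dd\Sigma_l^k$ into $e^{[\frac{1}{2T_w(x_l)}-\frac{1}{2T_w(x_{l+1})}]|v_l|^2}$, while the two $\langle v_l\rangle^2$'s combine to $\langle v_l\rangle^4$; the inner factor $\langle v_{l+1}\rangle^2 e^{-[\cdots]|v_{l+1}|^2}$ together with the conversion $\alpha^{-1}\sim (n\cdot v_{l+1})^{-1}$ and the kernel $\dd\sigma(v_{l+1},v_l)$ becomes the middle slot of $\dd\Sigma_{l+1}^k$, advancing the bookkeeping from level $l$ to $l+1$. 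After $k-1$ iterations the residual boundary value $|h^{m-k+2}(t_k,x_k,v_{k-1})|$ with the fully accumulated measure $\dd\Sigma_{k-1}^k$ is the fourth line of $H$.

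The argument is otherwise purely structural, without any hard estimate; the only delicate point is careful index accounting, namely that stand-alone $P$-terms arising at bounce $l+1$ must be paired with $\dd\Sigma_l^k$ whereas source and initial-data terms at that same bounce carry $\dd\Sigma_{l+1}^k$, and that the exponential weights across consecutive bounces correctly telescope into the form stated in (\ref{eqn:trajectory measure}).
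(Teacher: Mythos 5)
Your proposal is correct and follows essentially the same route as the paper: Duhamel with the sign property \eqref{nu m geq} of $\nu^m$, insertion of Lemma \ref{Lemma: bc for partial f} at each bounce, the conversion $\alpha(x_l,v_l)^{-1}\lesssim (n(x_l)\cdot v_l)^{-1}$, and the same telescoping of the exponential weights and combination of $\langle v_l\rangle^2$ factors into the measure \eqref{eqn:trajectory measure}, with matching index bookkeeping for the $P$-terms versus the source/initial-data terms. The only presentational difference is that you iterate forward along the bounces while the paper runs a formal induction on $k$, tacking on the trivial integrations $\int_{\mathcal{V}_k}\dd\sigma(v_k,v_{k-1})=1$ (its \eqref{add vk}) to extend the measures, which is implicit in your setup.
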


\begin{proof}

From~\eqref{eqn:formula of f^(m+1)}, for $0\leq s\leq t$, we apply the fundamental theorem of calculus to get
\[\frac{\dd}{\dd s}\int_s^t -\nu^m \dd\tau=\frac{\dd}{\dd s}\int_t^s \nu^md\tau=\nu^m.\]
Thus based on~\eqref{equation for partial fm+1},
\begin{equation}\label{eqn: inte factor}
\frac{\dd}{\dd s }\left[e^{-\int_s^t \nu^m \dd\tau}  e^{-\lambda\langle v\rangle s} \alpha(X^1(s),v)  \partial f^{m+1}(s,X^1(s),v)\right]=e^{-\int_s^t \nu^m d\tau}\mathcal{G}^m(s,X^1(s),v).
\end{equation}
By~\eqref{nu m geq},
\begin{equation}\label{eqn: v^2}
e^{-\int_s^t \nu^m d\tau} \leq e^{-|v|(t-s)}\leq 0.
\end{equation}
Combining~\eqref{eqn: inte factor} and~\eqref{eqn: v^2}, we derive that if $t_1\leq 0$, then we have~\eqref{eqn: Duhamal principle for case1}.

If $t_1(t,x,v)>0$, then
\begin{equation}\label{eqn: proof for the intial step in the Duhamul principle}
\begin{split}
 |e^{-\lambda\langle v\rangle t}\alpha(x,v)\partial f^{m+1}(t,x,v)\textbf{1}_{\{t_1>0\}}|  & \leq  |e^{-\lambda\langle v\rangle t_1}\alpha(x,v) \partial f^{m+1}\left(t_1,x_1,v\right)|e^{-|v|(t-t_1)}\\
    & +\int_{t_1}^t e^{-|v|(t-s)}\mathcal{G}^m(s,X^1(s),v)|\dd s.
\end{split}
\end{equation}
We use an induction of $k$ to prove~\eqref{eqn: Duhamel principle for case 2}. The first term of the RHS of~\eqref{eqn: proof for the intial step in the Duhamul principle} can be bounded by the boundary condition~\eqref{eqn: derivative bound on the boundary first part} as
\begin{equation}\label{eqn: above term}
\begin{split}
    &P(\Vert w_{\theta} f\Vert_\infty)+e^{-\lambda\langle v\rangle(t-t_1)} \langle v\rangle^2 e^{[\frac{1}{4T_M}-\frac{1}{2T_w(x)}]|v|^2}\int_{\mathcal{V}_1}e^{-\lambda\langle v_1\rangle t_1} \alpha(x_1,v_1) |\partial f^m(t_1,x_1,v_1)|   \\
     & \times e^{\lambda\langle v_1\rangle t_1}e^{-[\frac{1}{4T_M}-\frac{1}{2T_w(x_1)}]|v_1|^2} \frac{\langle v_1\rangle^2}{n(x_1)\cdot v_1} \dd\sigma(v_1,v),
\end{split}
\end{equation}
where we have used $n(x_1)\cdot v_1\lesssim \alpha(x_1,v_1)$.

Then we apply~\eqref{eqn: Duhamal principle for case1} and~\eqref{eqn: proof for the intial step in the Duhamul principle} to derive
\begin{equation}\label{from x1 to x2}
\begin{split}
 \eqref{eqn: above term}  & \leq \langle v\rangle^2 e^{-[\frac{1}{4T_M}-\frac{1}{2T_w(x)}]|v|^2}\\
 &\times\Big[\int_{\mathcal{V}_1}\mathbf{1}_{\{t_2\leq 0<t_1\}} \alpha(X^2(t^1),v_1)
 |\partial f^{m}(0,X^{2}(t_1),v_1)| \frac{\langle v_1\rangle^2e^{\lambda\langle v_1\rangle t_1} e^{-[\frac{1}{4T_M}-\frac{1}{2T_w(x_1)}]|v_1|^2}}{n(x_1)\cdot v_1}  \dd\sigma(v_1,v) \\
   & +\int_{\mathcal{V}_1}\int_0^{t_1}\mathbf{1}_{\{t_2\leq 0<t_1\}}e^{-|v_1|(t_1-s)}  |\mathcal{G}^{m-1}(s,X^2(s),v_1)|\frac{\langle v_1\rangle^2e^{\lambda\langle v_1\rangle t_1} e^{-[\frac{1}{4T_M}-\frac{1}{2T_w(x_1)}]|v_1|^2}}{n(x_1)\cdot v_1}\dd\sigma(v_1,v) \dd s\\
   & +\int_{\mathcal{V}_1}\mathbf{1}_{\{t_2>0\}}e^{-|v_1|(t_1-t_2)} e^{-\lambda \langle v_1\rangle t_2} \alpha(x_2,v_1)
| \partial f^{m}(t_2,x_2,v_1)|\frac{\langle v_1\rangle^2e^{\lambda\langle v_1\rangle t_1} e^{-[\frac{1}{4T_M}-\frac{1}{2T_w(x_1)}]|v_1|^2}}{n(x_1)\cdot v_1} \dd\sigma(v_1,v) \\
   &+\int_{\mathcal{V}_1}\int_{t_2}^{t_1}\mathbf{1}_{\{t_2> 0\}}e^{-|v_1|(t_1-s)}  |\mathcal{G}^{m-1}(s,X^2(s),v_1)|\frac{\langle v_1\rangle^2e^{\lambda\langle v_1\rangle t_1} e^{-[\frac{1}{4T_M}-\frac{1}{2T_w(x_1)}]|v_1|^2}}{n(x_1)\cdot v_1}\dd\sigma\left(v_1,v\right) \dd s \Big].
\end{split}
\end{equation}

Therefore, the formula~\eqref{eqn: Duhamel principle for case 2} is valid for $k=2$.

Assume~\eqref{eqn: Duhamel principle for case 2} is valid for $k\geq 2$ (induction hypothesis). Now we prove that~\eqref{eqn: Duhamel principle for case 2} holds for $k+1$. We express the last term in~\eqref{eqn: formula for H} using the boundary condition. Applying~\eqref{eqn: derivative bound on the boundary first part}\eqref{eqn: derivative bound on the boundary second part}, the contribution of constant term is
\begin{align*}
   &\int_{\prod_{j=1}^{k-1}\mathcal{V}_j} \mathbf{1}_{t_k>0} \Vert w_\theta f^{m-k+1}\Vert_\infty \dd\Sigma^k_{k-1}.
\end{align*}
Then the summation in the third line of~\eqref{eqn:trajectory measure} extends to $k$:
\[\sum_{l=2}^{k}\mathbf{1}_{t_l>0} P(\Vert w_{\theta}f^{m-l+1}\Vert_\infty) d \Sigma_{l-1}^k.\]
Since $\int_{\mathcal{V}_k}\dd\sigma(v_k,v_{k-1})=1 $ from~\eqref{eqn:probability measure}, we add $v_k$ integration to derive that for $l\leq k$
\begin{equation}\label{add vk}
\dd\sigma(v_k,v_{k-1}) \Sigma_{l-1}^k = \dd\Sigma_{l-1}^{k+1}.
\end{equation}
Thus the third line of~\eqref{eqn:trajectory measure} is valid for $k+1$.

For the other term in~\eqref{eqn: derivative bound on the boundary first part}\eqref{eqn: derivative bound on the boundary second part}, the front term $\langle v_{k-1}\rangle^2 e^{[\frac{1}{4T_M}-\frac{1}{2T_w(x_{k})}]|v_{k-1}|^2}$ depends on $v_{k-1}$, we move this term to the integration over $\mathcal{V}_{k-1}$ in~\eqref{eqn: Duhamel principle for case 2}. Using the second line of~\eqref{eqn:trajectory measure}, the integration over $\mathcal{V}_{k-1}$ is
\begin{equation}\label{eqn: wl}
\begin{split}
    & \int_{\mathcal{V}_{k-1}} e^{\lambda\langle v_{k-1}\rangle t_{k-1}} \frac{\langle v_{k-1}\rangle^4 }{n(x_{k-1})\cdot v_{k-1}}  e^{-[\frac{1}{4T_M}-\frac{1}{2T_w(x_{k-1})}]|v_{k-1}|^2}  e^{-[\frac{1}{4T_M}-\frac{1}{2T_w(x_{k})}]|v_{k-1}|}   \dd\sigma(v_{k-1},v_{k-2}) \\
     & =\int_{\mathcal{V}_{k-1}} e^{\lambda\langle v_{k-1}\rangle t_{k-1}} \frac{\langle v_{k-1}\rangle^4 }{n(x_{k-1})\cdot v_{k-1}}  e^{[\frac{1}{2T_w(x_{k-1})}-\frac{1}{2T_w(x_{k})}]|v_{k-1}|^2}   \dd\sigma(v_{k-1},v_{k-2}),
\end{split}
\end{equation}
which is consistent with third line in~\eqref{eqn:trajectory measure} with $l=k-1$.

For the remaining integration over $\mathcal{V}_k$ in~\eqref{eqn: derivative bound on the boundary first part}, we split it into two terms as
\begin{equation}\label{eqn: sh}
\int_{\mathcal{V}_k} \langle v_k\rangle^2 |\partial f^{m-k}(t^k,x^k,v^k)|e^{-[\frac{1}{4T_M}-\frac{1}{2T_w(x_k)}]|v_k|^2} \dd\sigma(v_k,v_{k-1})=\underbrace{\int_{\mathcal{V}_k}\mathbf{1}_{\{t_{k+1}\leq 0<t_k\}}}_{\eqref{eqn: sh}_1}+\underbrace{\int_{\mathcal{V}_k}\mathbf{1}_{\{t_{k+1}>0\}}}_{\eqref{eqn: sh}_2}.
\end{equation}
For the first term of the RHS of~\eqref{eqn: sh}, we use a similar bound as~\eqref{from x1 to x2} and derive that
\begin{equation}\label{eqn: wy}
\begin{split}
   &\eqref{eqn: sh}_1\leq  \int_{\mathcal{V}_k} \mathbf{1}_{\{t_{k+1}\leq 0<t_k\}} \alpha(X^{k+1}(t^k),v_k)
 \partial f^{m-k+1}(0,X^{k+1}(t_k),v_k) \frac{\langle v_k\rangle^2e^{\lambda\langle v_k\rangle t_k} e^{-[\frac{1}{4T_M}-\frac{1}{2T_w(x_k)}]|v_k|^2}}{n(x_k)\cdot v_{k}}  \dd\sigma(v_k,v_{k-1}) \\
    & +\int_{\mathcal{V}_k}\int_0^{t_k}\mathbf{1}_{\{t_{k+1}\leq 0<t_k\}}e^{-|v_k|(t_k-s)}  |\mathcal{G}^{m-k}(s,X^{k+1}(s),v_k)|\frac{\langle v_k\rangle^2e^{\lambda\langle v_k\rangle t_k} e^{-[\frac{1}{4T_M}-\frac{1}{2T_w(x_k)}]|v_k|^2}}{n(x_k)\cdot v_k}\dd\sigma(v_k,v_{k-1}) \dd s.
\end{split}
\end{equation}
In~\eqref{eqn: wy},
\[\frac{\langle v_k\rangle^2e^{\lambda\langle v_k\rangle t_k} e^{-[\frac{1}{4T_M}-\frac{1}{2T_w(x_k)}]|v_k|^2}}{n(x_k)\cdot v_{k}}\]
is consistent with the second line of~\eqref{eqn:trajectory measure} with $l=k$.

From the induction hypothesis(~\eqref{eqn: Duhamel principle for case 2} is valid for $k$), we derive the integration over $\mathcal{V}_j$ for $j\leq k-1$ is consistent with the third line of~\eqref{eqn:trajectory measure}.
After taking integration $\int_{\prod_{j=1}^{k-1} \mathcal{V}_j}$ we change $\dd\Sigma_{k-1}^k$ in~\eqref{eqn:trajectory measure} to $\dd\Sigma_{k}^{k+1}$. Thus the contribution of~\eqref{eqn: wy} is
\begin{equation}\label{eqn: qs}
  \begin{split}
     &\int_{\prod_{j=1}^{k} \mathcal{V}_j} \mathbf{1}_{\{t_{k+1}\leq 0<t_k\}}|\alpha \partial f_0\left(X^{k+1}(0),v_k\right)|\dd\Sigma_{k}^{k+1} \\
      & +\int_{\prod_{j=1}^{k} \mathcal{V}_j}\int_{0}^{t_k} \mathcal{G}^{m-k}(s)\dd\Sigma_{k}^{k+1}\dd s.
  \end{split}
\end{equation}

For the second term of the RHS of~\eqref{eqn: sh}, similar to~\eqref{from x1 to x2} we derive
\begin{equation}\label{eqn: wyl}
  \begin{split}
     &  \eqref{eqn: sh}_2 \notag\\
     &\leq \int_{\mathcal{V}_k}\mathbf{1}_{\{t_{k+1}>0\}}e^{-\lambda \langle v_k\rangle t_k}e^{-\lambda\langle v_{k}\rangle t_k}\alpha(x_{k+1},v_k)
 \partial f^{m-k+1}(t_{k+1},x_{k+1},v_k) \frac{\langle v_k\rangle^2e^{\lambda\langle v_k\rangle t_k} e^{-[\frac{1}{4T_M}-\frac{1}{2T_w(x_k)}]|v_k|^2}}{n(x_k)\cdot v_{k}}  \dd\sigma(v_k,v_{k-1})\\
      &+ \int_{\mathcal{V}_k}\int_{t_{k+1}}^{t_k}\mathbf{1}_{\{t_{k+1}>0\}}e^{-|v_k|(t_k-s)}  |\mathcal{G}^{m-k}(s,X^{k+1}(s),v_k)|\frac{\langle v_k\rangle^2e^{\lambda\langle v_k\rangle t_k} e^{-[\frac{1}{4T_M}-\frac{1}{2T_w(x_k)}]|v_k|^2}}{n(x_k)\cdot v_k}\dd\sigma(v_k,v_{k-1}) \dd s.
  \end{split}
\end{equation}
Similar to~\eqref{eqn: qs}, after taking integration over $\int_{\prod_{j=1}^{k-1}\mathcal{V}_j}$ the contribution of~\eqref{eqn: wyl} is
\begin{equation}\label{eqn: nxh}
\begin{split}
   & \int_{\prod_{j=1}^{k} \mathcal{V}_j} \mathbf{1}_{\{t_{k+1}>0\}}e^{-\lambda\langle v_{k}\rangle t_k}\alpha(x_{k+1},v_k)
 \partial f^{m-k+1}(t_{k+1},x_{k+1},v_k)\dd\Sigma_{k}^{k+1}\\
    & +\int_{\prod_{j=1}^{k} \mathcal{V}_j}\int_{t_{k+1}}^{t_k} \mathcal{G}^{m-k}(s)\dd\Sigma_{k}^{k+1} \dd s.
\end{split}
\end{equation}

From~\eqref{eqn: nxh}~\eqref{eqn: qs}, the summation in the first and second lines of~\eqref{eqn: formula for H} extends to $k$. And the index of the fourth line of~\eqref{eqn: formula for H} changes from $k$ to $k+1$.
For the rest terms, the index $l\leq k-1$. We add the $v_k$ integration as~\eqref{add vk} so that the integration change to $\prod_{l=1}^{k}\mathcal{V}_j$.

Therefore, the formula~\eqref{eqn: formula for H} is valid for $k+1$ and we derive the lemma.
\end{proof}

The next lemma is the key to prove the $L^\infty$ bound for $h^{m+1}$. Below we define several notation: let
\begin{equation}\label{eqn: def of r}
r_{max}:=\max(r_\parallel(2-r_\parallel),r_\perp),~~~~~~~~~~~~ r_{min}:=\min(r_\parallel(2-r_\parallel),r_\perp).
\end{equation}
Then we have
\begin{equation}\label{eqn: r>0}
1\geq r_{max}\geq r_{min}>0.
\end{equation}

We inductively define:
\begin{equation}\label{eqn: definition of T_p}
T_{l,l}=2T_M,    \quad T_{l,l-1}=r_{min}T_M +(1-r_{min})T_{l,l}, \cdots,\quad  T_{l,1}= r_{min}T_M +(1-r_{min})T_{l,2}.
\end{equation}
By a direct computation, for $1\leq i\leq l$, we have
\begin{equation}\label{eqn: formula of Tp}
T_{l,i}=2T_M+(T_M-2T_M)[1-(1-r_{min})^{l-i}].
\end{equation}
Moreover, we denote
\begin{equation}\label{eqn: big phi}
\begin{split}
 \dd \Phi_{p,m}^{k,l}(s):=  & \{\prod_{j=l+1}^{k-1}\dd\sigma(v_j,v_{j-1})\}\\
 &\times \{\frac{e^{\lambda\langle v_l\rangle t_l}\langle v_{l-1}\rangle^2 e^{-[\frac{1}{4T_M}-\frac{1}{2T_w(x_l)}]|v_l|^2}}{n(x_l)\cdot v_l}
\dd\sigma(v_l,v_{l-1})\}  \\ \
    & \times \{\prod_{j=p}^{l-1}   e^{[\frac{1}{2T_w(x_j)}-\frac{1}{2T_w(x_{j+1})}]|v_j|^2}  \frac{e^{\lambda\langle v_j\rangle t_j}\langle v_j\rangle^4}{n(x_j)\cdot v_j} \dd\sigma(v_j,v_{j-1})\}.
\end{split}
\end{equation}
Note that if $p=1$, $\dd \Phi_{1,m}^{k,l}=\dd\Sigma_{l}^{k}$ where $\dd\Sigma_{l}^{k}$ is defined in~\eqref{eqn:trajectory measure}. And we denote
\begin{equation}\label{eqn: upsilon}
\dd\Upsilon_{p}^{p'}:=\prod_{j=p}^{p'}   e^{[\frac{1}{2T_w(x_j)}-\frac{1}{2T_w(x_{j+1})}]|v_j|^2} \frac{e^{\lambda\langle v_j\rangle t_j}\langle v_j\rangle^4}{n(x_j)\cdot v_j} \dd\sigma(v_j,v_{j-1}).
\end{equation}
Then by the definition of~\eqref{eqn: big phi} and~\eqref{eqn:trajectory measure}, we have
\begin{equation}\label{eqn: ppt for Phi}
\dd \Phi_{p,m}^{k,l}=\dd\Phi_{p',m}^{k,l}\dd\Upsilon_{p}^{p'-1},
\end{equation}
\begin{equation}\label{eqn: property for Gamma}
\dd\Sigma_{l}^k=\dd\Phi_{p,m}^{k,l}\dd\Upsilon_1^{p-1}.
\end{equation}
\begin{remark}
In Lemma \ref{lemma: the tracjectory formula for f^(m+1)} the integration has multiple fold and each fold contains the variable $T_w(x),T_M,r_\perp,r_\parallel$.
We define these inductive notations to find a pattern to bound such integration.

\end{remark}

Now we state the lemma.

\begin{lemma}\label{lemma: boundedness}
Given the formula for $e^{-\lambda\langle v\rangle t}\alpha f^{m+1}(t)$ in~\eqref{eqn: Duhamal principle for case1} and~\eqref{eqn: Duhamel principle for case 2} in lemma~\ref{lemma: the tracjectory formula for f^(m+1)}, there exists
\begin{equation}\label{eqn: t*}
t_*=t_*(T_M,T_m,k,r_\parallel,r_\perp,c),
\end{equation}
($t_*$ need to satisfy more conditions specified in Lemma \ref{lemma: t^k} and~\eqref{eqn: less than 0}) such that: when $t\leq t_*$, we have
\begin{equation}\label{eqn: boundedness for l-p+1 fold integration}
   \int_{\prod_{j=p}^{k-1}\mathcal{V}_j}     \mathbf{1}_{\{t_l>0\}}  \dd\Phi_{p,m}^{k,l} \leq t_*^{-(l-p+1)c} (C_{T_M,T_m})^{l-p+1}\mathcal{A}_{l,p}.
\end{equation}
Here we define:
\begin{equation}\label{eqn: Elp}
\mathcal{A}_{l,p}:=\exp\left(\big[ \frac{[T_{l,p}-T_w(x_{p})][1-r_{min}]}{2T_w(x_{p})[T_{l,p}(1-r_{min})+r_{min} T_w(x_{p})]} + \mathcal{C}_{l-p+1}t_*^{c}\big]|v_{p-1}|^2\right).
\end{equation}
$C_{T_M,T_m}$ is a constant defined in~\eqref{Constant term} and
\begin{equation}\label{Extra term in expo}
  \mathcal{C}_{n} := \sum_{i=1}^{n} \mathcal{C}^i = \mathcal{C} \frac{\mathcal{C}^n-1}{\mathcal{C}-1},
\end{equation}
where $\mathcal{C}$ is constant defined in~\eqref{eqn: cal C}. And $c<1$ is a constant. We will
specify $c=\frac{1}{15}$ later in~\eqref{c}.

Moreover, for any $p<p'\leq l$, we have
\begin{equation}\label{eqn: structure}
\begin{split}
 \int_{\prod_{j=p}^{k-1}\mathcal{V}_j} \mathbf{1}_{\{t_l>0\}}  \dd\Phi_{p,m}^{k,l}   & \leq t_*^{-(l-p'+1)q}(C_{T_M,T_m})^{2(l-p'+1)} \int_{\prod_{j=p}^{p'-1} \mathcal{V}_j}  \mathbf{1}_{\{t_l>0\}} \mathcal{A}_{l,p'} \dd\Upsilon_p^{p'-1}\\
     & \leq t_*^{-(l-p+1)q}(C_{T_M,T_m})^{2(l-p+1)}\mathcal{A}_{l,p}.
\end{split}
\end{equation}

\end{lemma}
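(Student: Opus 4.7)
The goal is to bound a multi-fold integral whose integrand, after rewriting $R$ in its product form \eqref{eqn: Formula for R}, is essentially a product of Gaussian-type kernels in the normal and tangential components of each $v_j$, weighted by polynomial factors $\langle v_j\rangle^4 e^{\lambda\langle v_j\rangle t_j}$ and temperature-difference exponentials. I will proceed by reverse induction on the index $p$, integrating one Post-collision velocity at a time, starting from $j=k-1$ and moving down to $j=p$, and tracking how each integration transforms the exponent carried on the remaining variable $v_{p-1}$ into precisely $\mathcal{A}_{l,p}$.

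\textbf{Step 1 (trivial integrations).} For every $j\in\{l+1,\ldots,k-1\}$, the integrand in $v_j$ is just the probability measure $\dd\sigma(v_j,v_{j-1})$, which integrates to $1$ by Lemma~\ref{Lemma: Prob measure}. This reduces the problem to the $l-p+1$ non-trivial integrations over $v_p,\ldots,v_l$.

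\textbf{Step 2 (base case $p=l$).} Here I apply Lemma~\ref{Lemma: extra term} to bound the factor $e^{\lambda\langle v_l\rangle t_l}$ by $t_*^{-c/2}e^{t_*^c|v_l|^2}$ (the $\langle v_{l-1}\rangle^2$ does not depend on $v_l$, so it is deferred). The kernel $R$ in $\dd\sigma(v_l,v_{l-1})$ contains an explicit $|n(x_l)\cdot v_l|$ which cancels the $1/(n(x_l)\cdot v_l)$ in the measure. I then split $v_l$ into normal and tangential components and apply Lemma~\ref{Lemma: perp abc} to the $v_{l,\perp}$-integral (with the $I_0$ factor present) and Lemma~\ref{Lemma: abc} to the two-dimensional $v_{l,\parallel}$-integral. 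The hypothesis~\eqref{eqn: Constrain on T}, combined with $r_{\min}\leq r_\perp$ and $r_{\min}\leq r_\parallel(2-r_\parallel)$, guarantees the condition $a+\varepsilon<b$ in those lemmas for sufficiently small $t_*^c$. A direct computation shows the resulting quadratic exponent in $|v_{l-1}|^2$ is exactly the one appearing in $\mathcal{A}_{l,l}$ (with $T_{l,l}=2T_M$), plus a correction $\mathcal{C}\,t_*^c|v_{l-1}|^2$ arising from Lemma~\ref{Lemma: extra term}. This yields~\eqref{eqn: boundedness for l-p+1 fold integration} for $p=l$.

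\textbf{Step 3 (inductive step).} Suppose~\eqref{eqn: boundedness for l-p+1 fold integration} holds for index $p+1$. To go from $p+1$ to $p$, I integrate out $v_p$. The integrand in $v_p$ (by the induction hypothesis and~\eqref{eqn: ppt for Phi}) carries four contributions: the tangential/normal kernel of $\dd\sigma(v_p,v_{p-1})$, the small temperature-jump $e^{[\frac{1}{2T_w(x_p)}-\frac{1}{2T_w(x_{p+1})}]|v_p|^2}$ coming from $\dd\Upsilon$, the polynomial factor $\langle v_p\rangle^4 e^{\lambda\langle v_p\rangle t_p}$ (bounded via Lemma~\ref{Lemma: extra term}), and the induced Gaussian $\exp((\text{coeff}(T_{l,p+1}))|v_p|^2)$ from $\mathcal{A}_{l,p+1}$. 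Again applying Lemma~\ref{Lemma: abc} and Lemma~\ref{Lemma: perp abc} on the two components of $v_p$, a short algebraic manipulation shows that the resulting coefficient of $|v_{p-1}|^2$ is precisely the value associated to $T_{l,p}$ by the recursion~\eqref{eqn: definition of T_p}, i.e.\ $T_{l,p}=r_{\min}T_M+(1-r_{\min})T_{l,p+1}$. The $t_*^c$ corrections add up geometrically to $\mathcal{C}_{l-p+1}t_*^c$ as in~\eqref{Extra term in expo}, and each integration contributes a factor $t_*^{-c}C_{T_M,T_m}$, yielding the product $t_*^{-(l-p+1)c}(C_{T_M,T_m})^{l-p+1}$ after $l-p+1$ steps.

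\textbf{Step 4 (structural bound~\eqref{eqn: structure}).} This is obtained by stopping the inductive integration at $j=p'$: the part $\int_{\prod_{j=p'}^{k-1}\mathcal{V}_j}\dd\Phi_{p',m}^{k,l}$ is bounded by $t_*^{-(l-p'+1)c}(C_{T_M,T_m})^{l-p'+1}\mathcal{A}_{l,p'}$ by the first bound, and the remaining factors are exactly $\dd\Upsilon_p^{p'-1}$ by~\eqref{eqn: ppt for Phi}; the final inequality follows by applying the first bound once more to the $\dd\Upsilon$ integral. \textbf{Main obstacle.} The principal difficulty is the bookkeeping: verifying that the algebraic combination of $\frac{ab}{b-a-\varepsilon}$ produced by each application of Lemma~\ref{Lemma: abc}/\ref{Lemma: perp abc} matches exactly the recursion for $T_{l,p}$, and that the small-$t_*$ correction $t_*^c$ stays below the margin left by~\eqref{eqn: Constrain on T} uniformly over the $l-p+1$ iterations. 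This forces $t_*$ to depend on $k$ (hence on $\delta$) as noted in~\eqref{eqn: t*}, and is the reason the choice $c=\tfrac{1}{15}$ will be needed later.
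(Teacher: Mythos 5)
Your proposal follows essentially the same route as the paper's proof: reduce to $k=l+1$ using the normalization of $\dd\sigma$, then perform a backward induction in $p$, bounding the factors $\langle v_p\rangle^4 e^{\lambda\langle v_p\rangle t_p}$ by Lemma~\ref{Lemma: extra term} and computing the normal/tangential Gaussian integrals via Lemma~\ref{Lemma: abc} and Lemma~\ref{Lemma: perp abc}, with the $t_*^c$ corrections accumulating geometrically to $\mathcal{C}_{l-p+1}t_*^c$, each step contributing $t_*^{-c}C_{T_M,T_m}$, and the structural bound~\eqref{eqn: structure} obtained by stopping the induction at $p'$. The one point you compress is that after combining the exponent of $\mathcal{A}_{l,p+1}$ with the temperature-jump factor, the coefficient of $|v_p|^2$ still contains $T_w(x_{p+1})$, which depends on $v_p$ through $x_{p+1}$; the paper removes this dependence by the monotone bound $T_w(x_{p+1})\leq T_M$ as in~\eqref{eqn: Help to integrate x_p over v_p-1}, and it is exactly this replacement that produces the recursion $T_{l,p}=r_{min}T_M+(1-r_{min})T_{l,p+1}$ rather than the algebra matching it identically.
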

\begin{remark}
To prove Lemma \ref{lemma: boundedness} we do not need the condition~\eqref{eqn: less than 0}. Such condition will be used in the proof of Proposition \ref{proposition: boundedness}.
\end{remark}

\begin{proof}
From~\eqref{eqn: normalization} and~\eqref{eqn:probability measure}, for the first bracket of the first line in~\eqref{eqn:trajectory measure} with $l+1\leq j\leq k-1$, we have
\[\int_{\prod_{j=l+1}^{k-1} \mathcal{V}_j}    \prod_{j=l+1}^{k-1}\dd\sigma(v_j,v_{j-1})=1.\]
Without loss of generality we can assume $k=l+1$. Thus $\dd\Phi_{p,m}^{k,l}=\dd\Phi_{p,m}^{l+1,l}$. We use an induction of $p$ with $1\leq p\leq l$ to prove~\eqref{eqn: boundedness for l-p+1 fold integration}.

When $p=l$, by the second line of~\eqref{eqn: big phi}, the integration over $\mathcal{V}_l$ is bounded by
\begin{equation}\label{eqn: dsigmal}
\int_{\mathcal{V}_l} e^{-[\frac{1}{4T_M}-\frac{1}{2T_w(x_l)}]|v_l|^2}\frac{e^{\lambda\langle v_l\rangle t_l}\langle v_l\rangle^4} {n(x_l)\cdot v_l}\dd\sigma(v_l,v_{l-1}).
\end{equation}

Clearly $e^{\lambda\langle v_l\rangle t_l}\leq e^{\lambda\langle v_l\rangle t_*}.$ We expand $\dd\sigma(v_l,v_{l-1})$ with~\eqref{eqn: Formula for R} and~\eqref{eqn:probability measure}, then we apply~\eqref{bound extra term} in Lemma \ref{Lemma: extra term} to bound~\eqref{eqn: dsigmal} by
\begin{equation}\label{eqn: int over V_l}
\begin{split}
   &t_*^{-c}\int_{\mathcal{V}_{l,\perp}} \frac{2}{r_\perp}\frac{1}{\langle v_{l,\perp}\rangle^2}{2T_w(x_l)}  e^{-[\frac{1}{4T_M}-\frac{1}{2T_w(x_l)}-t_*^c]|v_{l,\perp}|^2}I_0\left(\frac{(1-r_\perp)^{1/2}v_{l,\perp}v_{l-1,\perp}}{T_w(x_l)r_\perp}\right)e^{-\frac{|v_{l,\perp}|^2+(1-r_\perp)|v_{l-1,\perp}|^2}{2T_w(x_l)r_\perp}}  \dd v_{l,\perp} \\
    &\times \int_{\mathcal{V}_{l,\parallel}}\frac{1}{\pi r_\parallel(2-r_\parallel)(2T_w(x_l))}e^{-[\frac{1}{4T_M}-\frac{1}{2T_w(x_l)}-t_*^c]|v_{l,\parallel}|^2}e^{-\frac{1}{2T_w(x_l)}\frac{|v_{l,\parallel}-(1-r_\parallel)v_{l-1,\parallel}|^2}{r_\parallel(2-r_\parallel)}}\dd v_{l,\parallel},
\end{split}
\end{equation}
where $v_{l,\parallel}$, $v_{l,\perp}$, $\mathcal{V}_{l,\perp}$ and $\mathcal{V}_{l,\parallel}$ are defined as
\begin{equation}\label{eqn: Define space}
v_{l,\perp}=v_l\cdot n(x_l),~~ v_{l,\parallel}=v_l-v_{l,\perp}n(x_l),~~\mathcal{V}_{l,\perp}=\{v_{l,\perp}:v_l\in \mathcal{V}_l\},~~\mathcal{V}_{l,\parallel}=\{v_{l,\parallel}:v_l\in \mathcal{V}_l\}.
\end{equation}
$v_{l-1,\parallel}$ and $v_{l-1,\perp}$ are defined similarly.

First we compute the integration over $\mathcal{V}_{l,\parallel}$, the second line of~\eqref{eqn: int over V_l}. To apply~\eqref{eqn: coe abc} in Lemma \ref{Lemma: abc}, we set
\[\e=t_*^c,\quad w=(1-r_\parallel)v_{l-1,\parallel},\quad v=v_{l,\parallel},\]
\begin{equation}\label{eqn: coefficient a and b}
a=-[\frac{1}{4T_M}-\frac{1}{2T_w(x_l)}],\quad b=\frac{1}{2T_w(x_l)r_\parallel(2-r_\parallel)}.
\end{equation}
We take $t_*=t_*(T_M,c)\ll 1$ such that when $t< t_*$, we have
\begin{equation}\label{eqn: b-a-e}
b-a-t_*^c=\frac{1}{2T_w(x_l)r_\parallel(2-r_\parallel)}-\frac{1}{2T_w(x_l)}+\frac{1}{4T_M}-t_*^c
\geq \frac{1}{4T_M}-t_*^c\geq \frac{1}{8T_M}.
\end{equation}
Then we further require $t\leq t_*(T_M,c)\ll 1$ such that $1+8T_M t_*^c<2$, then we have
\begin{align}
  \frac{b}{b-a-t_*^c} &=\frac{b}{b-a}[1+\frac{t_*^c}{b-a-t_*^c}]\leq \frac{2T_M}{2T_M+[T_w(x_l)-2T_M]r_\parallel(2-r_\parallel)}[1+8T_M t_*^c]\notag  \\
   & \leq \frac{4T_M}{2T_M+[T_m-2T_M]r_{max}}:=C_{T_M},\label{eqn: 1 one}
\end{align}
where we used~\eqref{eqn: def of r}.

In regard to~\eqref{eqn: coe abc}, we have
\begin{equation}\label{eqn: com}
\frac{(a+t_*^c)b}{b-a-t_*^c}=\frac{ab}{b-a}[1+\frac{t_*^c}{b-a-t_*^c}]+\frac{b}{b-a-t_*^c}t_*^c.
\end{equation}
By~\eqref{eqn: 1 one} we obtain
\[\frac{b}{b-a-t_*^c}t_*^c\leq \frac{4T_M}{2T_M+[T_m-2T_M]r_{\max}}t_*^c.\]
By~\eqref{eqn: coefficient a and b}, we have
\[\frac{ab}{b-a}=\frac{2T_M-T_w(x_l)}{2T_w(x_l)[2T_M+[T_w(x_l)-2T_M]r_\parallel(2-r_\parallel)]}.\]
Therefore, by~\eqref{eqn: b-a-e} and~\eqref{eqn: com} we obtain
\begin{equation}\label{eqn: 2 two}
\begin{split}
  &  \frac{(a+t_*^c)b}{b-a-t_*^c}\leq \frac{2T_M-T_w(x_l)}{2T_w(x_l)[2T_M+[T_w(x_l)-2T_M]r_\parallel(2-r_\parallel)]}+\mathcal{C} t_*^c,
\end{split}
\end{equation}
where we defined
\begin{equation}\label{eqn: cal C}
\mathcal{C}:=\frac{4T_M\big(2T_M-T_m\big)}{2T_m[2T_M+[T_m-2T_M]r_{max}]}+\frac{4T_M}{2T_M+[T_m-2T_M]r_{max}}.
\end{equation}

By~\eqref{eqn: 1 one},~\eqref{eqn: 2 two} and Lemma~\ref{Lemma: abc}, using $w=(1-r_\parallel)v_{l-1,\parallel}$ we bound the second line of~\eqref{eqn: int over V_l} by
\begin{equation}\label{eqn: result for para}
C_{T_M}\exp\bigg(\Big[\frac{[2T_M-T_w(x_l)]}{2T_w(x_l)[2T_M(1-r_\parallel)^2+r_\parallel(2-r_\parallel) T_w(x_l)]}+\mathcal{C}t_*^c \Big]|(1-r_\parallel)v_{l-1,\parallel}|^2\bigg)
\end{equation}
\begin{equation}\label{eqn: result of dsigmal}
\leq C_{T_M}\exp\bigg(\Big[\frac{[2T_M-T_w(x_l)][1-r_{min}]}{2T_w(x_l)\big[2T_M(1-r_{min})+r_{min} T_w(x_l)\big]}+\mathcal{C}t_*^c\Big]|v_{l-1,\parallel}|^2\bigg),
\end{equation}
where we used~\eqref{eqn: def of r} and~\eqref{eqn: r>0}.

Next we compute the first line of~\eqref{eqn: int over V_l}. To apply~\eqref{eqn: coe abc perp} in Lemma \ref{Lemma: perp abc}, we set
\[\e = t_*^c,\quad w=\sqrt{1-r_\parallel}v_{l-1,\perp},\quad v=v_{l,\perp},\]
\[a=-[\frac{1}{4T_M}-\frac{1}{2T_w(x_l)}],~ b=\frac{1}{2T_w(x_l)r_\perp}.\]
$\frac{(a+\e)b}{b-a-\e}$ can be computed using the same way as~\eqref{eqn: 2 two} with replacing $r_\parallel(2-r_\parallel)$ by $r_\perp$. Here the difference is the constant term becomes
\begin{align}
 \frac{2b}{\sqrt{b-a-t_*^c}}  &=2\sqrt{b} \sqrt{\frac{b}{b-a-t_*^c}}\leq 2\sqrt{b} C_{T_M}  \notag\\
   & \leq \frac{2}{\sqrt{T_m}}\times \sqrt{C_{T_M}}.
\end{align}

Hence replacing $r_\parallel(2-r_\parallel)$ by $r_\perp$ and replacing $v_{l-1,\parallel}$ by $v_{l-1,\perp}$ in~\eqref{eqn: result for para}, we bound the first line of~\eqref{eqn: int over V_l} by
\[\frac{2}{\sqrt{T_m}}\sqrt{C_{T_M}}\exp\bigg(\Big[\frac{[2T_M-T_w(x_l)]}{2T_w(x_l)[2T_M(1-r_\perp)+r_\perp T_w(x_l)]}+\mathcal{C}t_*^c \Big]|\sqrt{1-r_\perp}v_{l-1,\perp}|^2\bigg)\]
\begin{equation}\label{eqn: result of dsigmal normal}
\leq \frac{2}{\sqrt{T_m}} C_{T_M}\exp\bigg(\Big[\frac{[2T_M-T_w(x_l)][1-r_{min}]}{2T_w(x_l)\big[2T_M(1-r_{min})+r_{min} T_w(x_l)\big]}+\mathcal{C}t_*^c\Big]|v_{l-1,\perp}|^2\bigg),
\end{equation}
where we used~\eqref{eqn: def of r} and~\eqref{eqn: r>0}.

Then we define the constant term in~\eqref{eqn: Elp} as
\begin{equation}\label{Constant term}
C_{T_M,T_m} = \frac{2}{\sqrt{T_m}} C_{T_M}^{3/2}.
\end{equation}

Collecting~\eqref{eqn: result of dsigmal}~\eqref{eqn: result of dsigmal normal}, we derive
\[\eqref{eqn: int over V_l}\leq t_*^{-c} C_{T_M,T_m}\exp\left(\left[\frac{[2T_M-T_w(x_l)][1-r_{min}]}{2T_w(x_l)\big[2T_M(1-r_{min})+r_{min} T_w(x_l)\big]}+\mathcal{C}t_*^c\right]|v_{l-1}|^2\right)=C_{T_M,T_m}\mathcal{A}_{l,l},\]
where $\mathcal{A}_{l,l}$ is defined in~\eqref{eqn: Elp} and $T_{l,l}=2T_M$.

Therefore,~\eqref{eqn: boundedness for l-p+1 fold integration} is valid for $p=l$ by $\mathcal{C}_1 = \mathcal{C}$.

Suppose~\eqref{eqn: boundedness for l-p+1 fold integration} is valid for $p=q+1$(induction hypothesis) with $q+1\leq l$, then
\[\int_{\prod_{j=q+1}^{l}\mathcal{V}_j}        \mathbf{1}_{\{t_l>0\}} \dd\Phi_{q+1,m}^{l+1,l}\leq t_*^{-(l-q)c} C_{T_M,T_m}^{l-q}\mathcal{A}_{l,q+1}.\]
We want to show~\eqref{eqn: boundedness for l-p+1 fold integration} holds for $p=q$. By the hypothesis and the third line of~\eqref{eqn: big phi},
\begin{equation}\label{eqn: dsigmaq}
  \int_{\prod_{j=q}^{l}\mathcal{V}_j}        \mathbf{1}_{\{t_l>0\}} \dd\Phi_{q,m}^{l+1,l} \leq  t_*^{-(l-q+1)c} C_{T_M,T_m}^{l-q}\int_{\mathcal{V}_q} \mathcal{A}_{l,q+1} e^{[\frac{1}{2T_w(x_{q})}-\frac{1}{2T_w(x_{q+1})}+t_*^c]|v_{q}|^2}\frac{1}{n(x_q)\cdot v_q}  \dd\sigma(v_q,v_{q-1}),
\end{equation}
where we have applied Lemma \ref{Lemma: extra term}.

Using the definition of $\mathcal{A}_{l,q+1}$ in~\eqref{eqn: Elp}, we obtain
\begin{equation}\label{eqn: dsigmap}
\begin{split}
   &\eqref{eqn: dsigmaq} \leq  t_*^{-(l-q+1)c} C_{T_M,T_m}^{l-q}  \int_{\mathcal{V}_{q}}    \exp\bigg(\frac{(T_{l,q+1}-T_w(x_{q+1}))(1-r_{min})}{2T_w(x_{q+1})[T_{l,q+1}(1-r_{min})+r_{min} T_w(x_{q+1})]}|v_q|^2+\mathcal{C}_{l-q}t_*^c|v_q|^2\bigg)                                     \\
    &   e^{[\frac{1}{2T_w(x_{q})}-\frac{1}{2T_w(x_{q+1})}+t_*^c]|v_{q}|^2}  \dd\sigma(v_q,v_{q-1}).
\end{split}
\end{equation}
We focus on the coefficient of $|v_q|^2$ in~\eqref{eqn: dsigmap}, we derive
\begin{align*}
   &\frac{(T_{l,q+1}-T_w(x_{q+1}))(1-r_{min})}{2T_w(x_{q+1})[T_{l,q+1}(1-r_{min})+r_{min} T_w(x_{q+1})]}|v_q|^2+[\frac{1}{2T_w(x_{q})}-\frac{1}{2T_w(x_{q+1})}]|v_q|^2  \\
   & =       \frac{(T_{l,q+1}-T_w(x_{q+1}))(1-r_{min})-[T_{l,q+1}(1-r_{min})+r_{min} T_w(x_{q+1})]}{2T_w(x_{q+1})[T_{l,q+1}(1-r_{min})+r_{min} T_w(x_{q+1})]}|v_q|^2                             + \frac{|v_q|^2}{2T_w(x_q)} \\
   & =    \frac{ -T_w(x_{q+1})(1-r_{min})-r_{min} T_w(x_{q+1})}{2T_w(x_{q+1})[T_{l,q+1}(1-r_{min})+r_{min} T_w(x_{q+1})]}|v_q|^2                             + \frac{|v_q|^2}{2T_w(x_q)} \\
   &=    \frac{-|v_q|^2}{2[T_{l,q+1}(1-r_{min})+r_{min}T_w(x_{q+1})]}                + \frac{|v_q|^2}{2T_w(x_q)}.
\end{align*}

By the Definition~\ref{Def:Back time cycle}, $x_{q+1}=x_{q+1}(t,x,v,v_1,\cdots,v_{q})$, thus $T_w(x_{q+1})$ depends on $v_{q}$. In order to explicitly compute the integration over $\mathcal{V}_q$, we need to get rid of the dependence of the $T_w(x_{q+1})$ on $v_{q}$. Then we bound
\begin{equation}\label{eqn: Help to integrate x_p over v_p-1}
\begin{split}
   & \exp\left(\frac{-|v_{q}|^2}{2[T_{l,q+1}(1-r_{min})+r_{min} T_w(x_{q+1})]}\right)\leq \exp\left(\frac{-|v_{q}|^2}{2[T_{l,q+1}(1-r_{min})+r_{min}T_M]}\right)= \exp\left(\frac{-|v_{q}|^2}{2T_{l,q}}\right),
\end{split}
\end{equation}
where we used~\eqref{eqn: definition of T_p}.

Hence by~\eqref{eqn:probability measure}~\eqref{eqn: Formula for R} and~\eqref{eqn: Help to integrate x_p over v_p-1}, we derive
\begin{equation}\label{eqn: int over V_p}
\begin{split}
  &\eqref{eqn: dsigmap}\leq t_*^{-(l-q+1)c}C_{T_M,T_m}^{l-q} \\
   &  \times \int_{\mathcal{V}_{q,\perp}} \frac{1}{r_\perp T_w(x_q)}  e^{-[\frac{1}{2T_{l,q}}-\frac{1}{2T_w(x_q)}-\mathcal{C}_{l-q}t_*^c-t_*^c]|v_{q,\perp}|^2}I_0\left(\frac{(1-r_\perp)^{1/2}v_{q,\perp}v_{q-1,\perp}}{T_w(x_q)r_\perp}\right)e^{-\frac{|v_{q,\perp}|^2+(1-r_\perp)|v_{q-1,\perp}|^2}{2T_w(x_q)r_\perp}}  \dd v_{q,\perp} \\
    & \times\int_{\mathcal{V}_{q,\parallel}}\frac{1}{\pi r_\parallel(2-r_\parallel)(2T_w(x_q))}e^{-[\frac{1}{2T_{l,q}}-\frac{1}{2T_w(x_q)}-\mathcal{C}_{l-q}t_*^c-t_*^c]|v_{q,\parallel}|^2}e^{-\frac{1}{2T_w(x_q)}\frac{|v_{q,\parallel}-(1-r_\parallel)v_{q-1,\parallel}|^2}{r_\parallel(2-r_\parallel)}}\dd v_{q,\parallel}.
\end{split}
\end{equation}
In the third line of~\eqref{eqn: int over V_p}, to apply~\eqref{eqn: coe abc} in Lemma \ref{Lemma: abc}, we set
\[a=-[\frac{1}{2T_{l,q}}-\frac{1}{2T_w(x_{q})}],~ b=\frac{1}{2T_w(x_{q})r_\parallel(2-r_\parallel)},~\e=\mathcal{C}_{l-q}t_*^c+t_*^c,~w=(1-r_\parallel)v_{q-1,\parallel}.\]
Taking~\eqref{eqn: coefficient a and b} for comparison, we can replace $2T_M$ by $T_{l,q}$ and replace $t_*^c$ by $\mathcal{C}_{l-q}t_*^c+t_*^c$. Then we apply the replacement to~\eqref{eqn: b-a-e} and obtain
\[b-a-\e\geq \frac{1}{2T_{l,q}}-\mathcal{C}_{l-q}t_*^c-t_*^c\geq \frac{1}{4T_M}-\mathcal{C}_k t_*^c-t_*^c= \frac{1}{4T_M}-\mathcal{C}\frac{\mathcal{C}^k-1}{\mathcal{C}-1}t_*^c-t_*^c> \frac{1}{5T_M},\]
where we applied~\eqref{Extra term in expo} and we take $t_*=t_*(T_M,\mathcal{C},k,c)$ to be small enough with $t\leq t^*$. Also we require the $t<t_*(T_M,\mathcal{C},k,c)$ satisfy
\[\frac{\e}{b-a-\e}\leq 5T_M(1+\mathcal{C}_k)t^c\leq 2.\]
By the definition of $\mathcal{C}$ in~\eqref{eqn: cal C} we conclude the $t_*$ only depends on the parameter in~\eqref{eqn: t*}. Thus by the same computation as~\eqref{eqn: 1 one} we obtain
\[\frac{b}{b-a-\e}\leq \frac{2T_{l,q}}{T_{l,q}+[\min\{T_w(x)\}-T_{l,q}]r_\parallel(2-r_\parallel)}\leq C_{T_M},\]
where we used $T_{l,q}\leq 2T_M$ from~\eqref{eqn: definition of T_p} and \eqref{eqn: def of r}. $C_{T_M}$ is defined in~\eqref{eqn: 1 one}.

By the same computation as~\eqref{eqn: 2 two}, we obtain
\[\frac{(a+\e)b}{b-a-\e}= \frac{ab}{b-a}+\frac{ab}{b-a}\frac{\e}{b-a-\e}+\frac{b}{b-a-\e}\e\]
\[\leq  \frac{T_{l,q}-T_w(x_q)}{2T_w(x_q)[T_{l,q}+[T_w(x_q)-T_{l,q}]r_\parallel(2-r_\parallel)]}+\mathcal{C}_{l-q+1}t_*^c.\]
Here we used $T_{l,q}\leq 2T_M$ and \eqref{eqn: def of r} to obtain
\begin{align*}
 & \frac{ab}{b-a}\frac{\e}{b-a-\e}+ \frac{b\e}{b-a-\e}  \\
& \leq \frac{4T_M\big(T_{l,q}-\min\{T_w(x)\}\big)}{2\min\{T_w(x)\}[T_{l,q}+[\min\{T_w(x)\}-T_{l,q}]r_\parallel(2-r_\parallel)]}[1+\mathcal{C}_{l-q}]t_*^c  \\
   & +\frac{2T_{l,q}}{2+[\min\{T_w(x)\}-T_{l,q}]r_\parallel(2-r_\parallel)}[1+\mathcal{C}_{l-q}]t_*^c   \leq [\mathcal{C}+\mathcal{C}\mathcal{C}_{l-q}]t_*^c = \mathcal{C}_{l-q+1} t_*^c
\end{align*}
with $\mathcal{C}$ defined in~\eqref{eqn: cal C} and $\mathcal{C}_{l-q}$ defined in~\eqref{Extra term in expo}.

Thus by Lemma \ref{Lemma: abc} with $w=(1-r_\parallel)v_{q-1,\parallel}$, the third line of~\eqref{eqn: int over V_p} is bounded by
\[C_{T_M}\exp\left(\big[ \frac{[T_{l,q}-T_w(x_{q})]}{2T_w(x_{q})[T_{l,q}(1-r_\parallel)^2+r(2-r_\parallel) T_w(x_{q})]} + \mathcal{C}_{l-q+1}t_*^c\big]|(1-r_\parallel)v_{q-1,\parallel}|^2\right)\]
\begin{equation}\label{eqn: Vq para}
\leq C_{T_M}\exp\left(\big[ \frac{[T_{l,q}-T_w(x_{q})][1-r_{min}]}{2T_w(x_{q})[T_{l,q}(1-r_{min})+r_{min} T_w(x_{q})]} + \mathcal{C}_{l-q+1}t_*^c\big]|v_{q-1,\parallel}|^2\right).
\end{equation}
By the same computation as~\eqref{eqn: result of dsigmal normal} the second line of~\eqref{eqn: int over V_p} is bounded by
\begin{equation}\label{eqn: Vq perp}
\frac{2}{\sqrt{T_m}}\sqrt{C_{T_M}}\exp\left(\big[ \frac{[T_{l,q}-T_w(x_{q})][1-r_{min}]}{2T_w(x_{q})[T_{l,q}(1-r_{min})+r_{min} T_w(x_{q})]} + \mathcal{C}_{l-q+1}t_*^c\big]|v_{q-1,\perp}|^2\right).
\end{equation}
By~\eqref{eqn: Vq para} and~\eqref{eqn: Vq perp} and the notation~\eqref{Constant term}, we derive that
\begin{align*}
\eqref{eqn: int over V_p}   & \leq t_*^{-(l-q+1)c}(C_{T_M,T_m})^{l-q+1}\exp\left(\big[ \frac{[T_{l,q}-T_w(x_{q})][1-r_{min}]}{2T_w(x_{q})[T_{l,q}(1-r_{min})+r_{min} T_w(x_{q})]} + \mathcal{C}_{l-q+1}t_*^c\big]|v_{q-1}|^2\right) \\
   & =t_*^{-(l-q+1)c} C_{T_M,T_M}^{l-q+1}\mathcal{A}_{l,q},
\end{align*}
which is consistent with~\eqref{eqn: boundedness for l-p+1 fold integration} with $p=q$. The induction is valid we derive~\eqref{eqn: boundedness for l-p+1 fold integration}.

Now we focus on~\eqref{eqn: structure}. The first inequality in~\eqref{eqn: structure} follows directly from~\eqref{eqn: boundedness for l-p+1 fold integration} and~\eqref{eqn: ppt for Phi}. For the second inequality, by~\eqref{eqn: upsilon} and Lemma \ref{Lemma: extra term} we have
\begin{align}
 &t_*^{-(l-p'+1)c} C_{T_M,T_m}^{l-p'+1} \int_{\prod_{j=p}^{p'-1} \mathcal{V}_j}  \mathbf{1}_{\{t_l>0\}} \mathcal{A}_{l,p'} \dd\Upsilon_p^{p'-1}\notag \\
 & \leq t_*^{-(l-p'+1)c} C_{T_M,T_m}^{l-p'+1} \int_{\prod_{j=p}^{p'-2} \mathcal{V}_j} \int_{\mathcal{V}_{p'-1}} \mathbf{1}_{\{t_l>0\}} \mathcal{A}_{l,p'}  \frac{e^{[\frac{1}{2T_w(x_{p'-1})}-\frac{1}{2T_w(x_{p'})}+t_*^c]|v_{p'-1}|^2}}{n(v_{p'-1})\cdot v_{p'-1}} \dd\sigma(v_{p'-1},v_{p'-2})  \dd\Upsilon_p^{p'-2}.\label{eqn: second ineq}
\end{align}
In the proof of~\eqref{eqn: boundedness for l-p+1 fold integration} we have
\[\eqref{eqn: dsigmaq}\leq \eqref{eqn: dsigmap}\leq \eqref{eqn: int over V_p} \leq t_*^{-(l-q+1)c} C_{T_M,T_m}^{l-q+1}\mathcal{A}_{l,q}.\]
Then by replacing $q$ by $p'-1$ in the estimate $~\eqref{eqn: dsigmaq}\leq t_*^{-(l-q+1)c} C_{T_M,T_m}^{l-q+1}\mathcal{A}_{l,q}$ we have

\[\eqref{eqn: second ineq}\leq t_*^{-(l-p'+2)c} C_{T_M,T_m}^{l-p'+2}\int_{\prod_{j=p}^{p'-2} \mathcal{V}_j} \mathbf{1}_{\{t_l>0\}} \mathcal{A}_{l,p'-1} \dd\Upsilon_p^{p'-2}   .\]
Keep doing this computation until integrating over $\mathcal{V}_p$ we obtain the second inequality in~\eqref{eqn: structure}.

\end{proof}

The next lemma conclude the smallness of the last term of~\eqref{eqn: formula for H}.
\begin{lemma}\label{lemma: t^k}
Assume
\begin{equation}\label{eqn: assume T}
\frac{T_m}{T_M}>\max\Big(\frac{1-r_\parallel}{2-r_\parallel},\frac{\sqrt{1-r_\perp}-(1-r_\perp)}{r_\perp}\Big).
\end{equation}
 For the last term of~\eqref{eqn: formula for H}, we require $t_*$ in~\eqref{eqn: t*} further satisfies the condition~\eqref{eqn: B i para} and~\eqref{condition for t*}(these conditions are consistent with the dependent variables in~\eqref{eqn: t*}). Then there exists
\begin{equation}\label{eqn: k_0 dependence}
k_0=k_0(\Omega,C_{T_M,T_m},\mathcal{C},T_M,r_\perp,r_\parallel)\gg 1,
\end{equation}
such that for all $t<t_*$, we have
\begin{equation}\label{eqn: 1/2 decay}
   \int_{\prod_{j=1}^{k_0-1}\mathcal{V}_j} \mathbf{1}_{\{t_{k_0}>0\}}\dd\Sigma_{k_0-1}^{k_0}  \leq (\frac{1}{2})^{k_0} \mathcal{A}_{k_0-1,1},
\end{equation}
where $\mathcal{A}_{k_0-1,1}$ is defined in~\eqref{eqn: Elp}.
\end{lemma}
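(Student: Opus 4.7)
The plan is to execute the grazing/non-grazing decomposition outlined in the introduction of the proof strategy: for each factor $\dd\sigma(v_j,v_{j-1})$ in $\dd\Sigma_{k_0-1}^{k_0}$, split the integration domain $\mathcal{V}_j$ into the non-grazing piece
\[
\gamma_+^{\delta} = \{v_j : n(x_j)\cdot v_j > \delta, \; |v_j|\leq \delta^{-1}\}
\]
and its complement $\mathcal{V}_j\setminus \gamma_+^{\delta}$. On $\mathcal{V}_j\setminus\gamma_+^{\delta}$ we will harvest a small factor by using the ``small'' versions of the Gaussian estimates, namely~\eqref{eqn: coe abc small} in Lemma~\ref{Lemma: abc} and~\eqref{eqn: coe abc perp small} in Lemma~\ref{Lemma: perp abc}: each such integration is bounded by $\delta$ times the corresponding bound that would appear in Lemma~\ref{lemma: boundedness}, so that the induction scheme of that lemma still closes while accumulating a factor of $\delta$ per grazing velocity.

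Next, I would control the number of non-grazing velocities. In a strictly convex $C^2$ domain (see~\eqref{convex}) a standard geometric argument (as in~\cite{GKTT}) shows that if $v_j\in\gamma_+^\delta$ then $|t_j-t_{j+1}|\gtrsim \delta^3$. Since all times are at least $0$ and at most $t\leq t_*$, at most $N:=\lfloor C\,t_*/\delta^3\rfloor+1$ of the $v_1,\dots,v_{k_0-1}$ can lie in $\gamma_+^{\delta}$; whenever $\mathbf{1}_{\{t_{k_0}>0\}}$ is non-zero, at least $k_0-1-N$ of the velocities must lie in the complement. Following the induction of Lemma~\ref{lemma: boundedness} and inserting a factor $\delta$ at each of these grazing steps, together with the bound $\langle v_j\rangle^4 e^{\lambda\langle v_j\rangle t_j}\leq t_*^{-c} e^{t_*^c|v_j|^2}$ from Lemma~\ref{Lemma: extra term}, I obtain
\[
\int_{\prod_{j=1}^{k_0-1}\mathcal{V}_j}\mathbf{1}_{\{t_{k_0}>0\}}\dd\Sigma_{k_0-1}^{k_0}
\;\leq\; \bigl(t_*^{-c}\,C_{T_M,T_m}\bigr)^{k_0-1}\,\delta^{\,k_0-1-N}\, \mathcal{A}_{k_0-1,1}.
\]

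To convert this into the desired $(1/2)^{k_0}$ bound, I would set $\delta=t_*^{1/3}\delta'$ with $\delta'\ll 1$ fixed, so that $N\sim 1/\delta'^3$ becomes independent of $t_*$; the per-step gain is $t_*^{-c}\cdot \delta = \delta' \, t_*^{1/3-c}$, which with the choice $c=\tfrac{1}{15}$ gives $\delta'\, t_*^{4/15}$, arbitrarily small after further shrinking $t_*$. Choosing $k_0$ so that $(k_0-1-N)/k_0$ is bounded below by a positive constant (i.e.\ $k_0$ large depending on $N=N(\delta')$, hence on $\Omega,T_M,r_\perp,r_\parallel,\mathcal{C},C_{T_M,T_m}$) then produces the factor $(1/2)^{k_0}$. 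The conditions on $t_*$ that this imposes, in particular $\delta'\, t_*^{4/15}\ll 1$ and the conditions of Lemma~\ref{lemma: boundedness}, are all compatible with the dependence~\eqref{eqn: t*}.

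The main obstacle will be verifying that the exponential factor $\mathcal{A}_{k_0-1,1}$ does \emph{not} blow up with $k_0$: the coefficient of $|v|^2$ in~\eqref{eqn: Elp} is the sum of a geometric-type term in the temperature ratio and the correction $\mathcal{C}_{k_0-1}t_*^c$. The first term is controlled precisely by the hypothesis~\eqref{eqn: assume T}, which guarantees
\[
\frac{[T_{l,p}-T_w(x_p)](1-r_{min})}{2T_w(x_p)\bigl[T_{l,p}(1-r_{min})+r_{min}T_w(x_p)\bigr]}
\]
remains uniformly below $\tfrac{1}{4T_M}$ (so that on-shell comparison with $w_\theta$ keeps the bound finite); the second is handled by selecting $t_*$ so small that $\mathcal{C}_{k_0}t_*^c\ll 1$ once $k_0$ has been fixed (this is the role of condition~\eqref{condition for t*}). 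Establishing that these two conditions can be met simultaneously---first fix $\delta'$, then $k_0=k_0(\delta',\dots)$ as in~\eqref{eqn: k_0 dependence}, then $t_*=t_*(k_0,\delta',\dots)$ small enough that both the gain $\delta'\,t_*^{4/15}$ dominates and $\mathcal{A}_{k_0-1,1}$ stays bounded---is the delicate bookkeeping step that closes the proof.
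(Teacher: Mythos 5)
There is a genuine gap at the heart of your argument: you claim that every velocity $v_j\in\mathcal{V}_j\setminus\gamma_+^{\delta}$ ``harvests a small factor $\delta$'' via the small-region Gaussian estimates \eqref{eqn: coe abc small}, \eqref{eqn: coe abc perp small}. That is only true for the grazing part $|v_j\cdot n(x_j)|<\delta$ and for the part of the large-velocity region lying \emph{far from the center of the shifted Gaussian}. After the measure $\dd\sigma(v_j,v_{j-1})$ is expanded, the Gaussian in $v_j$ is centered at (essentially) $\eta_{j,\parallel}v_{j-1,\parallel}$ and $\eta_{j,\perp}v_{j-1,\perp}$, and $v_{j-1}$ is itself an integration variable that can be arbitrarily large. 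Hence the set $\{|v_j|>\delta^{-1}\}$ can sit right on top of the Gaussian peak and carries $O(1)$ mass; no factor $\delta$ comes out of Lemma~\ref{Lemma: abc} or Lemma~\ref{Lemma: integrate normal small} there. This is exactly the case the paper spends Lemmas~\ref{Lemma: (2)}, \ref{Lemma: accumulate} and \ref{Lemma: Step3} on: one splits into the dichotomy $|v_{j}-\eta_j v_{j-1}|>\delta^{-1}$ (small factor $\delta$) versus $|v_{j}-\eta_j v_{j-1}|<\delta^{-1}$, and in the second case, because $\eta_{j,\parallel},\eta_{j,\perp}<\eta<1$, the speed must drop by at least $\delta^{-1}$ at that step. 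Since each maximal block of such velocities is sandwiched between two non-grazing velocities of size at most $O(\delta^{-1})$, the total drop must be compensated by steps with large increases $a_j$, whose Gaussian cost $e^{-a_j^2/4T_M}$ accumulates (via Cauchy--Schwarz on $\sum a_j$) to $e^{-L\delta^{-1}}$, which is what actually replaces the missing $\delta$ factors. Without this magnitude-tracking argument your bound $\delta^{k_0-1-N}$ is simply not available, so the proof as written does not close.

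Relatedly, you misplace the role of the hypothesis \eqref{eqn: assume T}. It is not needed to keep $\mathcal{A}_{k_0-1,1}$ from blowing up (that exponent is controlled later, in \eqref{eqn: less than 0}, using only $T_{l,1}<2T_M$ and $r_{min}>0$, together with the smallness condition \eqref{condition for t*} on $\mathcal{C}_{k_0}t_*^c$). Its actual purpose is to guarantee $\eta_{i,\parallel},\eta_{i,\perp}<\eta<1$ in \eqref{eqn: eta_paral}, \eqref{eqn: eta perp}, which is precisely what makes the ``speed must drop'' alternative of Lemma~\ref{Lemma:  (a)(c)} work; note also that this only holds for indices $i\leq k/2$ (since $T_{k-1,i}\to T_M$ requires $k-i$ large), which is why the paper restricts the grazing/non-grazing decomposition to the first half of the cycle. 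The remaining bookkeeping in your proposal ($\delta=t_*^{1/3}\delta'$, $N\sim\delta'^{-3}$ independent of $t_*$, $c=1/15$, $k_0=N^3$) does match the paper's Step 2--Step 4, but the missing dichotomy and velocity-monotonicity argument is the essential content of the lemma.
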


\begin{remark}
The key difference between Lemma \ref{lemma: t^k} and Lemma \ref{lemma: boundedness} is that we have the small term $(\frac{1}{2})^{k_0}$. With this extra term Lemma \ref{lemma: t^k} implies the measure of the last term of~\eqref{eqn: formula for H} is small provided $k=k_0$ is large enough.
Such property is essential in our analysis since we then only need to consider a finite-fold integration and bound the rest fold by small magnitude number.

The $k_0$ is specified in~\eqref{eqn: k_0 dependence}. Combining with~\eqref{eqn: t*} with $c=\frac{1}{15}$ specified in~\eqref{c} we conclude
\begin{equation}\label{t* dependence}
t_*=t_*(\Omega,T_M,T_m,r_\parallel,r_\perp).
\end{equation}
\end{remark}

We need several preparations to prove Lemma \ref{lemma: t^k}.

\begin{lemma}\label{Lemma: (2)}
For $1\leq i\leq k-1$, if
\begin{equation}\label{eqn: 2 condition}
|v_i\cdot n(x_i)|<\delta,
\end{equation}
then
\begin{equation}\label{eqn: 2}
    \int_{\prod_{j=i}^{k-1} \mathcal{V}_j} \mathbf{1}_{\{v_i\in \mathcal{V}_i:|v_i\cdot n(x_i)|<\delta\}}   \mathbf{1}_{\{t_k>0\}} \dd\Phi_{i,m}^{k,k-1} \leq \delta t_*^{-(k-i)c}   C_{T_M,T_m}^{k-i}\mathcal{A}_{k-1,i}.
\end{equation}

If
\begin{equation}\label{eqn: b condition}
|v_{i,\parallel}-\eta_{i,\parallel}v_{i-1,\parallel}|>\delta^{-1},
\end{equation}
then
\begin{equation}\label{eqn: case b}
\int_{\prod_{j=i}^{k-1} \mathcal{V}_j}\mathbf{1}_{\{t_k>0\}}\mathbf{1}_{\{|v_{i,\parallel}-\eta_{i,\parallel}v_{i-1,\parallel}|>\delta^{-1}\}}\dd\Phi_{i,m}^{k,k-1}
 \leq \delta    t_*^{-(k-i)c}   C_{T_M,T_m}^{k-i}\mathcal{A}_{k-1,i}.
\end{equation}
Here $\eta_{i,\parallel}$ is a constant defined in~\eqref{eqn: eta i para}.

If
\begin{equation}\label{eqn: d condition}
|v_{i,\perp}-\eta_{i,\perp}v_{i-1,\perp}|>\delta^{-1},
\end{equation}
then
\begin{equation}\label{eqn: case d}
\int_{\prod_{j=i}^{k-1} \mathcal{V}_j}\mathbf{1}_{\{t_k>0\}}\mathbf{1}_{\{|v_{i,\perp}-\eta_{i,\perp}v_{i-1,\perp}|>\delta^{-1}\}}\dd\Phi_{i,m}^{k,k-1}
 \leq \delta    t_*^{-(k-i)c}   C_{T_M,T_m}^{k-i}\mathcal{A}_{k-1,i}.
\end{equation}
Here $\eta_{i,\perp}$ is a constant defined in~\eqref{eqn: eta i perp}.

\end{lemma}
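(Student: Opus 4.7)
The plan is to mimic the inductive structure of Lemma~\ref{lemma: boundedness}, replacing the outermost integration over $\mathcal{V}_i$ by its small-region analogue so that one fold yields an extra factor of $\delta$. In each case the restriction on $v_i$ converts a bounded Gaussian integration into one controlled by the smallness estimates \eqref{eqn: coe abc small}, \eqref{eqn: coe abc perp small}, \eqref{eqn: coe perp small 2}.

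First I would use the factorization \eqref{eqn: ppt for Phi} with $p=i$, $p'=i+1$ to write $d\Phi_{i,m}^{k,k-1}=d\Phi_{i+1,m}^{k,k-1}\,d\Upsilon_i^i$, and apply Lemma~\ref{lemma: boundedness} (equivalently, the structural bound \eqref{eqn: structure}) to the inner folds $v_{i+1},\ldots,v_{k-1}$:
$$\int_{\prod_{j=i+1}^{k-1}\mathcal{V}_j}\mathbf{1}_{\{t_k>0\}}\,d\Phi_{i+1,m}^{k,k-1}\ \le\ t_*^{-(k-1-i)c}\,C_{T_M,T_m}^{k-1-i}\,\mathcal{A}_{k-1,i+1}.$$
What remains is a single integration over $\mathcal{V}_i$ of $\mathcal{A}_{k-1,i+1}$ against $d\Upsilon_i^i$ together with the restricting indicator. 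I would then expand $d\Upsilon_i^i$ using the explicit C-L kernel \eqref{eqn: Formula for R}, bound the polynomial prefactor $\langle v_i\rangle^4 e^{\lambda\langle v_i\rangle t_i}$ by $t_*^{-c}e^{t_*^c|v_i|^2}$ via Lemma~\ref{Lemma: extra term}, and split $v_i$ into its normal and tangential components exactly as in \eqref{eqn: int over V_p}.

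For case 1, the indicator confines the normal piece to $0<v_{i,\perp}<\delta$; Lemma~\ref{Lemma: perp abc} equation \eqref{eqn: coe abc perp small} produces the factor $\delta$ in front of the usual constant, and the tangential integration proceeds unrestricted, giving the full constant $C_{T_M}$ and reproducing the weight $\mathcal{A}_{k-1,i}$ after the same algebraic reduction as in \eqref{eqn: Help to integrate x_p over v_p-1}--\eqref{eqn: Vq para}. For cases 2 and 3, the indicator places $v_{i,\parallel}$ or $v_{i,\perp}$ outside a ball of radius $\delta^{-1}$ centered at $\eta_{i,\parallel}v_{i-1,\parallel}$ or $\eta_{i,\perp}v_{i-1,\perp}$; the constants $\eta_{i,\parallel}$ and $\eta_{i,\perp}$ from \eqref{eqn: eta i para}--\eqref{eqn: eta i perp} are by construction the Gaussian centers $\tfrac{b}{b-a-\e}(1-r_\parallel)$ and $\tfrac{b}{b-a-\e}\sqrt{1-r_\perp}$ produced by the kernel of Lemmas~\ref{Lemma: abc} and \ref{Lemma: integrate normal small} with $a=-[\tfrac{1}{2T_{k-1,i}}-\tfrac{1}{2T_w(x_i)}]$, $b$ as in \eqref{eqn: coefficient a and b}, $\e=\mathcal{C}_{k-1-i}t_*^c+t_*^c$. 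Consequently \eqref{eqn: coe abc small} and \eqref{eqn: coe perp small 2} apply directly and each delivers a $\delta$ in place of the usual constant. Multiplying by the previously computed factor $t_*^{-(k-1-i)c}C_{T_M,T_m}^{k-1-i}\mathcal{A}_{k-1,i+1}$ and performing the same $T_w(x_{i+1})\mapsto T_{k-1,i}$ reduction as in \eqref{eqn: Help to integrate x_p over v_p-1} recovers $\mathcal{A}_{k-1,i}$ and yields the bounds \eqref{eqn: 2}, \eqref{eqn: case b}, \eqref{eqn: case d}.

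The main obstacle will be verifying that the Gaussian centers $\eta_{i,\parallel}$, $\eta_{i,\perp}$ exactly line up with the translation produced when one multiplies the exponential factor inside $\mathcal{A}_{k-1,i+1}$ (which effectively shifts ``$a$'' from the value used at the base step of Lemma~\ref{lemma: boundedness} to $-[\tfrac{1}{2T_{k-1,i}}-\tfrac{1}{2T_w(x_i)}]$) against the C-L kernel. Once this identification is made, the three claimed bounds follow by routine bookkeeping, with the running constants $C_{T_M,T_m}$ and $\mathcal{C}_{k-i}t_*^c$ collecting exactly as in the inductive step of Lemma~\ref{lemma: boundedness}.
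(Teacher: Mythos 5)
Your proposal is correct and follows essentially the same route as the paper: reduce to the single $\mathcal{V}_i$-fold via the inductive bound of Lemma~\ref{lemma: boundedness} (the paper invokes \eqref{eqn: int over V_p} with $l=k-1$, $q=i$, which is exactly your factorization plus \eqref{eqn: Help to integrate x_p over v_p-1}), then apply \eqref{eqn: coe abc perp small}, \eqref{eqn: coe abc small}, \eqref{eqn: coe perp small 2} with the same choices of $a$, $b$, $\e$, $w$, the Gaussian centers $\frac{b}{b-a-\e}w$ coinciding with $\eta_{i,\parallel}v_{i-1,\parallel}$, $\eta_{i,\perp}v_{i-1,\perp}$ by the very definitions \eqref{eqn: eta i para}, \eqref{eqn: eta i perp}. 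The alignment you flag as the main obstacle is verified in the paper exactly as you describe (see \eqref{eqn: constant for the t^k}), so no gap remains beyond harmless index bookkeeping ($\mathcal{C}_{k-1-i}$ versus $\mathcal{C}_{k-i}$).
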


\begin{proof}
First we focus on~\eqref{eqn: 2}. By~\eqref{eqn: int over V_p} in Lemma \ref{lemma: boundedness}, we can replace $l$ by $k-1$ and replace $q$ by $i$ to obtain
\begin{equation}\label{eqn: int V_i}
\begin{split}
  & \int_{\prod_{j=i}^{k-1} \mathcal{V}_j}    \mathbf{1}_{\{t_k>0\}} \dd\Phi_{i,m}^{k,k-1}\leq t_*^{-(k-i)c} C_{T_M,T_m}^{k-i} \\
   &  \times \int_{\mathcal{V}_{i,\perp}} \frac{1}{r_\perp T_w(x_i)}  e^{-[\frac{1}{2T_{k-1,i}}-\frac{1}{2T_w(x_i)}-\mathcal{C}_{k-i}t_*^c-t_*^c]|v_{i,\perp}|^2}I_0\left(\frac{(1-r_\perp)^{1/2}v_{i,\perp}v_{i-1,\perp}}{T_w(x_i)r_\perp}\right)e^{-\frac{|v_{i,\perp}|^2+(1-r_\perp)|v_{i-1,\perp}|^2}{2T_w(x)r_\perp}}  \dd v_{i,\perp} \\
    & \times\int_{\mathcal{V}_{i,\parallel}}\frac{1}{\pi r_\parallel(2-r_\parallel)(2T_w(x_i))}e^{-[\frac{1}{2T_{k-1,i}}-\frac{1}{2T_w(x_i)}-\mathcal{C}_{k-i}t_*^c-t_*^c]|v_{i,\parallel}|^2}e^{-\frac{1}{2T_w(x_i)}\frac{|v_{i,\parallel}-(1-r_\parallel)v_{i-1,\parallel}|^2}{r_\parallel(2-r_\parallel)}}\dd v_{i,\parallel}.
\end{split}
\end{equation}
Under the condition~\eqref{eqn: 2 condition}, we consider the second line of~\eqref{eqn: int V_i} with integrating over $\{v_{i,\perp}\in \mathcal{V}_{i,\perp}:|v_i\cdot n(x_i)|<\frac{1-\eta}{2(1+\eta)}\delta\}$. To apply~\eqref{eqn: coe abc perp small} in Lemma \ref{Lemma: perp abc}, we set
\[a=-[\frac{1}{2T_{k-1,i}}-\frac{1}{2T_w(x_{i})}],~ b=\frac{1}{2T_w(x_{i})r_\perp},~\e=\mathcal{C}_{k-i}t_*^c+t_*^c,~w=\sqrt{1-r_\perp}v_{i-1,\perp}.\]
Under the condition $|v_i\cdot n(x_i)|<\frac{1-\eta}{2(1+\eta)}\delta$, applying~\eqref{eqn: coe abc perp small} in Lemma \ref{Lemma: perp abc} and using~\eqref{eqn: Vq perp} with $q=i,l=k-1$, we bound the second line of~\eqref{eqn: int V_i} by
\begin{equation}\label{eqn: perp small}
\delta \frac{2}{\sqrt{T_m}}\sqrt{C_{T_M}}\exp\left(\big[ \frac{[T_{k-1,i}-T_w(x_{i})][1-r_{min}]}{2T_w(x_{i})[T_{k-1,i}(1-r_{min})+r_{min} T_w(x_{i})]} + \mathcal{C}_{k-i+1}t_*^c\big]|v_{i-1,\perp}|^2\right).
\end{equation}
Taking~\eqref{eqn: Vq perp} for comparison, we conclude the second line of~\eqref{eqn: int V_i} provides one more constant term $\delta$. The third line of~\eqref{eqn: int V_i} is bounded by~\eqref{eqn: Vq para} with $q=i,l=k-1$. Therefore, we derive~\eqref{eqn: 2}.

Then we focus on~\eqref{eqn: case b}. We consider the third line of~\eqref{eqn: int V_i}. To apply~\eqref{eqn: coe abc small} in Lemma \ref{Lemma: abc}, we set
\begin{equation}\label{eqn: abe}
     a=-\frac{1}{2T_{k-1,i}} +\frac{1}{2T_w(x_i)},\quad b=\frac{1}{2T_w(x_i)r_\parallel(2-r_\parallel)},\quad \e=\mathcal{C}_{k-i}t_*^c+t_*^c,~w=(1-r_\parallel)v_{i-1,\parallel}.
\end{equation}
We define
\begin{equation}\label{eqn: B i para def}
B_{i,\parallel}:=b-a-\e.
\end{equation}
In regard to~\eqref{eqn: coe abc small},
\[    \frac{b}{b-a-\e}w=\frac{b}{b-a}[1+\frac{\e}{b-a-\e}] w. \]
By~\eqref{eqn: abe},
\[\frac{b}{b-a}=\frac{T_{k-1,i}}{T_{k-1,i}(1-r_\parallel)^2+T_w(x_i)r_\parallel(2-r_\parallel)},\quad \frac{\e}{b-a-\e}=\frac{\mathcal{C}_{k-i}t_*^c+t_*^c}{B_{i,\parallel}}.\]
Thus we obtain
\begin{equation}\label{eqn: constant for the t^k}
    \frac{b}{b-a-\e}w=\eta_{i,\parallel}v_{i-1,\parallel},
\end{equation}
where we defined
\begin{equation}\label{eqn: eta i para}
\eta_{i,\parallel}:=\frac{T_{k-1,i}[1+(\mathcal{C}_{k-i}+1)t_*^c/B_{i,\parallel}]}{T_{k-1,i}(1-r_\parallel)^2+T_w(x_i)r_\parallel(2-r_\parallel)}(1-r_\parallel).
\end{equation}
Thus under the condition~\eqref{eqn: b condition}, applying~\eqref{eqn: coe abc small} in Lemma \ref{Lemma: abc} with $\frac{b}{b-a-\e}w=\eta_{i,\parallel}v_{i-1,\parallel}$ and using~\eqref{eqn: Vq para} with $q=i,l=k-1$, we bound the third line of~\eqref{eqn: int V_i} by
\[\delta C_{T_M}\exp\left(\big[ \frac{[T_{k-1,i}-T_w(x_{i})][1-r_{min}]}{2T_w(x_{i})[T_{k-1,i}(1-r_{min})+r_{min} T_w(x_{i})]} + \mathcal{C}_{k-i+1}t_*^c\big]|v_{i-1,\parallel}|^2\right).\]
Thus we derive~\eqref{eqn: case b} due to the extra constant $\delta$.

Last we focus on~\eqref{eqn: case d}. We consider the second line of~\eqref{eqn: int V_i} with integrating over $\{v_{i,\perp}:v_{i,\perp}\in \mathcal{V}_{i,\perp},|v_{i,\perp}|>\frac{1+\eta}{1-\eta}\delta^{-1}\}$. To apply~\eqref{eqn: coe abc perp small} in Lemma \ref{Lemma: integrate normal small}, we set
\begin{equation}\label{eqn: abe perp}
     a=-\frac{1}{2T_{k-1,i}} +\frac{1}{2T_w(x_i)},\quad b=\frac{1}{2T_w(x_i)r_\perp},\quad \e=\mathcal{C}_{k-i}t_*^c+t_*^c,~ w=\sqrt{1-r_\perp}v_{i-1,\perp}.
\end{equation}
Define
\begin{equation}\label{eqn: B i perp}
B_{i,\perp}:=b-a-\e.
\end{equation}
By the same computation as~\eqref{eqn: constant for the t^k},
\[\frac{b}{b-a-\e}w=\eta_{i,\perp}v_{i-1,\perp},\]
where we defined
\begin{equation}\label{eqn: eta i perp}
\eta_{i,\perp}:= \frac{T_{k-1,i}[1+\frac{(\mathcal{C}_{k-i}+1)t_*^c}{B_{i,\perp}}]}{T_{k-1,i}(1-r_\perp)+T_w(x_i)r_\perp}\sqrt{1-r_\perp}.
\end{equation}
Thus under the condition~\eqref{eqn: d condition}, applying~\eqref{eqn: coe perp small 2} in Lemma~\ref{Lemma: integrate normal small} with $\frac{b}{b-a-\e}w=\eta_{i,\perp}v_{i-1,\perp}$ and using~\eqref{eqn: Vq perp} with $q=i,l=k-1$, we bound the second line of~\eqref{eqn: int V_i} by
\[\delta \frac{2}{\sqrt{T_m}}\sqrt{C_{T_M}}\exp\left(\big[ \frac{[T_{k-1,i}-T_w(x_{i})][1-r_{min}]}{2T_w(x_{i})[T_{k-1,i}(1-r_{min})+r_{min} T_w(x_{i})]} + \mathcal{C}_{k-i+1}t_*^c\big]|v_{i-1,\perp}|^2\right).\]
Then we derive~\eqref{eqn: case b} due to the extra constant $\delta$.

\end{proof}

\begin{lemma}\label{Lemma:  (a)(c)}
For $\eta_{i,\parallel}$ and $\eta_{i,\perp}$ defined in Lemma \ref{Lemma: (2)}, we suppose there exists $\eta<1$ such that
\begin{equation}\label{eqn: eta condition}
  \max\{\eta_{i,\parallel},\eta_{i,\perp}\}<\eta<1.
\end{equation}
Then if
\begin{equation}\label{eqn: (a) condition}
  |v_{i,\parallel}|>\frac{1+\eta}{1-\eta}\delta^{-1} \text{ and } |v_{i,\parallel}-\eta_{i,\parallel}v_{i-1,\parallel}|<\delta^{-1},
\end{equation}
we have
\begin{equation}\label{eqn: (a)}
  |v_{i-1,\parallel}|>|v_{i,\parallel}|+\delta^{-1}.
\end{equation}

Also if
\begin{equation}\label{eqn: (c) condition}
  |v_{i,\perp}|>\frac{1+\eta}{1-\eta}\delta^{-1} \text{ and } |v_{i,\perp}-\eta_{i,\perp}v_{i-1,\perp}|<\delta^{-1},
\end{equation}
then we have
\begin{equation}\label{eqn: (c)}
  |v_{i-1,\perp}|>|v_{i,\perp}|+\delta^{-1}.
\end{equation}

\end{lemma}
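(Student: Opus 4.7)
The proof is a short triangle-inequality argument in one real variable, applied once in the tangential direction and once in the normal direction. I will only sketch the tangential case, since the perpendicular case is identical after replacing the subscript $\parallel$ by $\perp$.

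Starting from the hypothesis $|v_{i,\parallel}-\eta_{i,\parallel}v_{i-1,\parallel}|<\delta^{-1}$ and the triangle inequality, I would write
\[
|v_{i,\parallel}| \;\leq\; |v_{i,\parallel}-\eta_{i,\parallel}v_{i-1,\parallel}| + \eta_{i,\parallel}\,|v_{i-1,\parallel}| \;<\; \delta^{-1} + \eta\,|v_{i-1,\parallel}|,
\]
using $\eta_{i,\parallel}<\eta$ from~\eqref{eqn: eta condition}. Rearranging gives $|v_{i-1,\parallel}| > (|v_{i,\parallel}|-\delta^{-1})/\eta$, so it suffices to check that
\[
\frac{|v_{i,\parallel}|-\delta^{-1}}{\eta} \;\geq\; |v_{i,\parallel}| + \delta^{-1}.
\]
Cross-multiplying by $\eta>0$ and collecting terms, this inequality is equivalent to $(1-\eta)|v_{i,\parallel}| \geq (1+\eta)\delta^{-1}$, i.e.\ $|v_{i,\parallel}|\geq \frac{1+\eta}{1-\eta}\delta^{-1}$, which is exactly the size assumption~\eqref{eqn: (a) condition}. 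This yields~\eqref{eqn: (a)}.

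For the normal statement~\eqref{eqn: (c)} I would repeat the same computation with $\eta_{i,\perp}$ in place of $\eta_{i,\parallel}$, $v_{i,\perp}, v_{i-1,\perp}$ in place of $v_{i,\parallel}, v_{i-1,\parallel}$, and again invoke~\eqref{eqn: eta condition} to absorb $\eta_{i,\perp}<\eta$. There is no real obstacle here: the whole content is that the affine map $w\mapsto \eta w$ with $|\eta|<1$ is a strict contraction, so any preimage of a point of size $\gtrsim\delta^{-1}$ must be strictly larger than the image by a margin that outgrows $\delta^{-1}$ once one is past the threshold $\frac{1+\eta}{1-\eta}\delta^{-1}$. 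The only subtlety worth flagging is that the threshold $\frac{1+\eta}{1-\eta}\delta^{-1}$ in~\eqref{eqn: (a) condition} and~\eqref{eqn: (c) condition} is \emph{sharp} for this implication: it is exactly the value at which the line $w\mapsto \eta w + \delta^{-1}$ meets the line $w\mapsto w - \delta^{-1}$, so the hypothesis cannot be relaxed without losing the gain $\delta^{-1}$ in the conclusion. This explains the particular form of the constant $\frac{1+\eta}{1-\eta}$ appearing in the statement.
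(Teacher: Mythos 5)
Your argument is correct and is essentially the paper's own proof: both reduce to the triangle inequality $|v_{i-1,\parallel}|>(|v_{i,\parallel}|-\delta^{-1})/\eta_{i,\parallel}$ (you simply replace $\eta_{i,\parallel}$ by the larger $\eta$ at the outset, where the paper instead keeps $\eta_{i,\parallel}$ and uses the monotonicity of $s\mapsto\frac{1+s}{1-s}$ to pass from $\eta$ to $\eta_{i,\parallel}$), followed by the threshold condition $|v_{i,\parallel}|>\frac{1+\eta}{1-\eta}\delta^{-1}$, with the perpendicular case verbatim. No gap; your remark on the sharpness of the threshold matches the role it plays in the paper.
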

\begin{remark}
Lemma \ref{Lemma: (2)} includes the ``good" cases since those extra small factor $\delta$ contributes to the decaying constant in Lemma \ref{lemma: t^k}.
Lemma \ref{Lemma:  (a)(c)} discusses those ``bad" cases since such cases do not directly provide any small factor. Thus those cases are the main difficulty in our estimate. In Lemma \ref{Lemma: Step3} we will specify the way to handle them using the properties in this lemma.
\end{remark}

\begin{proof}
Under the condition~\eqref{eqn: (a) condition} we have
\[\eta_{i,\parallel}|v_{i-1,\parallel}|> |v_{i,\parallel}|-\delta^{-1}.\]
Thus we derive
\begin{align*}
|v_{i-1,\parallel}|   &> |v_{i,\parallel}|+\frac{1-\eta_{i,\parallel}}{\eta_{i,\parallel}}|v_{i,\parallel}|-\frac{1}{\eta_{i,\parallel}}\delta^{-1} \\
   & >|v_{i,\parallel}|+\frac{1-\eta_{i,\parallel}}{\eta_{i,\parallel}}\frac{1+\eta}{1-\eta}\delta^{-1}-\frac{1}{\eta_{i,\parallel}}\delta^{-1}\\
   &>|v_{i,\parallel}|+\frac{1-\eta_{i,\parallel}}{\eta_{i,\parallel}}\frac{1+\eta_{i,\parallel}}{1-\eta_{i,\parallel}}\delta^{-1}-\frac{1}{\eta_{i,\parallel}}\delta^{-1}\\
   &>|v_{i,\parallel}|+\frac{1+\eta_{i,\parallel}}{\eta_{i,\parallel}}\delta^{-1}-\frac{1}{\eta_{i,\parallel}}\delta^{-1}>|v_{i,\parallel}|+\delta^{-1},
\end{align*}
where we used $|v_{i,\parallel}|>\frac{1+\eta}{1-\eta}\delta^{-1}$ in the second line and $1>\eta\geq \eta_{i,\parallel}$ in the third line. Then we obtain~\eqref{eqn: (a)}.

Under the condition~\eqref{eqn: (c) condition}, we apply the same computation above to obtain~\eqref{eqn: (c)}.

\end{proof}

\begin{lemma}\label{Lemma: accumulate}
Suppose there are $n$ number of $v_j$ such that
\begin{equation}\label{eqn: satisfy condition}
|v_{j,\parallel}-\eta_{j,\parallel}v_{j-1,\parallel}|\geq \delta^{-1},
\end{equation}
and also suppose the index $j$ in these $v_j$ are $i_1<i_2<\cdots<i_n$, then
\begin{equation}\label{eqn: claim M}
\int_{\prod_{j={i_1}}^{k-1} \mathcal{V}_j}\mathbf{1}_{\{t_k>0\}}\mathbf{1}_{\{\text{~\eqref{eqn: satisfy condition} holds for $j=i_1,i_2,\cdots, i_n$}\}}\dd\Phi_{i_1,m}^{k,k-1} \leq  (\delta)^{n}   t_*^{-(k-i_1)c}C_{T_M,T_m}^{k-i_1}\mathcal{A}_{k-1,i_1}.
\end{equation}

\end{lemma}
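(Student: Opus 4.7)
I will prove \eqref{eqn: claim M} by induction on $n$, peeling off one grazing index at a time and invoking Lemma~\ref{Lemma: (2)} together with the structural bound~\eqref{eqn: structure} from Lemma~\ref{lemma: boundedness} to absorb the ``good'' pieces between consecutive bad indices.

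The base case $n=1$ is immediate: if only $i_1$ satisfies \eqref{eqn: satisfy condition}, then using the splitting \eqref{eqn: ppt for Phi} write $d\Phi_{i_1,m}^{k,k-1} = d\Phi_{i_1+1,m}^{k,k-1}\, d\Upsilon_{i_1}^{i_1}$, apply \eqref{eqn: case b} for the integration that contains the indicator at index $i_1$ (extracting the factor $\delta$), and apply \eqref{eqn: structure} with $p=i_1$, $p'=i_1+1$, $l=k-1$ to collapse the remaining $\mathcal{A}_{k-1,i_1+1}$ to $\mathcal{A}_{k-1,i_1}$. This yields the claimed bound with $n=1$.

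For the inductive step, assume the claim holds for $n-1$. Split at $i_2$ using \eqref{eqn: ppt for Phi}:
\[
d\Phi_{i_1,m}^{k,k-1} \;=\; d\Phi_{i_2,m}^{k,k-1}\, d\Upsilon_{i_1}^{i_2-1}.
\]
Apply the induction hypothesis to the inner integral over $\prod_{j=i_2}^{k-1}\mathcal{V}_j$ with the $n-1$ bad indices $i_2<\cdots<i_n$, obtaining
\[
\int_{\prod_{j=i_2}^{k-1}\mathcal{V}_j}\mathbf{1}_{\{t_k>0\}}\mathbf{1}_{\{\eqref{eqn: satisfy condition}\text{ at }i_2,\dots,i_n\}}\,d\Phi_{i_2,m}^{k,k-1}
\;\leq\; \delta^{n-1}\, t_*^{-(k-i_2)c}\,C_{T_M,T_m}^{k-i_2}\,\mathcal{A}_{k-1,i_2}.
\]
It then remains to bound
\[
\int_{\prod_{j=i_1}^{i_2-1}\mathcal{V}_j}\mathbf{1}_{\{|v_{i_1,\parallel}-\eta_{i_1,\parallel}v_{i_1-1,\parallel}|\geq \delta^{-1}\}}\,\mathcal{A}_{k-1,i_2}\,d\Upsilon_{i_1}^{i_2-1}
\]
by $\delta\, t_*^{-(i_2-i_1)c}\,C_{T_M,T_m}^{i_2-i_1}\,\mathcal{A}_{k-1,i_1}$. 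This is exactly the inductive mechanism used inside the proof of Lemma~\ref{lemma: boundedness} (the induction on $p$ that produced \eqref{eqn: int over V_p}), except that at the very last step $p=i_1$ one invokes the smaller estimate \eqref{eqn: case b} in place of \eqref{eqn: boundedness for l-p+1 fold integration}. Since the tangential indicator at $i_1$ only affects the $v_{i_1,\parallel}$-integration, the normal $v_{i_1,\perp}$-integration proceeds as in \eqref{eqn: Vq perp} while the parallel piece gains the extra factor $\delta$ from \eqref{eqn: case b}; the telescoping of $T_{l,p}$ through \eqref{eqn: definition of T_p} and \eqref{eqn: Help to integrate x_p over v_p-1} is unchanged, so $\mathcal{A}_{k-1,i_2}$ is transported to $\mathcal{A}_{k-1,i_1}$ exactly as in Lemma~\ref{lemma: boundedness}. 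Multiplying the two bounds produces the factor $\delta^{n-1}\cdot \delta = \delta^n$ and $t_*^{-(k-i_2)c}\cdot t_*^{-(i_2-i_1)c}=t_*^{-(k-i_1)c}$, completing the induction.

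\textbf{Main obstacle.} The delicate point is making sure the auxiliary exponential factors $\mathcal{A}_{l,p}$ chain correctly: at the splicing index $i_2$ we must carry $\mathcal{A}_{k-1,i_2}$ (a function of $v_{i_2-1}$) through the integration over $\mathcal{V}_{i_2-1},\dots,\mathcal{V}_{i_1}$ and arrive at $\mathcal{A}_{k-1,i_1}$. This is precisely the role played by the inductive step in Lemma~\ref{lemma: boundedness}, whose computation relies on \eqref{eqn: Help to integrate x_p over v_p-1} to eliminate the $v_q$-dependence of $T_w(x_{q+1})$; one must verify that the insertion of the $\delta^{-1}$ indicator at the terminal index $i_1$ does not interfere with this telescoping, which it does not since the $v_{i_1,\perp}$ integration is not constrained by \eqref{eqn: satisfy condition} and the $v_{i_1,\parallel}$ integration is handled by the truncated Lemma~\ref{Lemma: abc} estimate \eqref{eqn: coe abc small} used to prove \eqref{eqn: case b}. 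The accumulated constants $\mathcal{C}_{k-i_1}t_*^c$ in the exponent of $\mathcal{A}_{k-1,i_1}$ remain under control because $k$ is fixed and $t_*$ is chosen small enough in the ambient Lemma~\ref{lemma: t^k}.
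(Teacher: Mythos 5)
Your proposal is correct and follows essentially the same route as the paper: repeated use of the smallness estimate \eqref{eqn: case b} at each bad index combined with the structural telescoping \eqref{eqn: structure} (i.e.\ the inductive mechanism of Lemma~\ref{lemma: boundedness}) to transport $\mathcal{A}_{k-1,\cdot}$ down to $\mathcal{A}_{k-1,i_1}$, accumulating one factor of $\delta$ per bad index. The only difference is bookkeeping — you induct on $n$ by splitting at $i_2$, while the paper iterates explicitly starting from the largest index $i_n$ — which does not change the substance of the argument.
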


\begin{proof}
By~\eqref{eqn: structure} in Lemma 2 with $l=k-1$, $p=i_1$, $p'=i_n$ and using~\eqref{eqn: case b} with $i=i_n$, we have
\begin{align}
   & \int_{\prod_{j=i_1}^{k-1} \mathcal{V}_j}\mathbf{1}_{\{t_k>0\}}\mathbf{1}_{\{\text{~\eqref{eqn: satisfy condition} holds for $j=i_1,\cdots,i_n$}\}}\dd\Phi_{i_1,m}^{k,k-1}\notag \\
   & \leq \delta t_*^{-(k-i_n)c} C_{T_M,T_m}^{k-i_n} \int_{\prod_{j=i_1}^{i_n-1} \mathcal{V}_j} \mathcal{A}_{k-1,i_n} \mathbf{1}_{\{t_k>0\}}\mathbf{1}_{\{\text{~\eqref{eqn: satisfy condition} holds for $j=i_1,\cdots,i_{n-1}$}\}}        \dd\Upsilon_{i_1}^{i_n-1}\notag\\
   &=\delta t_*^{-(k-i_n)c} C_{T_M,T_m}^{k-i_n} \int_{\prod_{j=i_1}^{i_{n-1}-1}\mathcal{V}_j}\int_{\prod_{j=i_{n-1}}^{(i_n)-1} \mathcal{V}_j} \mathcal{A}_{k-1,i_n} \mathbf{1}_{\{t_k>0\}}\mathbf{1}_{\{\text{~\eqref{eqn: satisfy condition} holds for $j=i_1,\cdots,i_{n-1}$}\}}      \dd\Upsilon_{i_{n-1}}^{(i_{n})-1}\dd\Upsilon_{i_1}^{i_{n-1}-1}.\label{eqn: split iM}
\end{align}

Again by~\eqref{eqn: structure} and~\eqref{eqn: case b} with $i=i_{n-1}$ we have
\[\eqref{eqn: split iM}\leq \delta^2 t_*^{-(k-i_{n-1})c} C_{T_M,T_m}^{k-i_{n-1}}\int_{\prod_{j=i_1}^{i_{n-1}-1} \mathcal{V}_j}\mathcal{A}_{k-1,i_{n-1}} \mathbf{1}_{\{t_k>0\}}\mathbf{1}_{\{\text{~\eqref{eqn: satisfy condition} holds for $j=i_1,\cdots,i_{n-2}$}\}}        \dd\Upsilon_{i_1}^{i_{n-1}-1}.\]
Keep doing this computation until integrating over $\mathcal{V}_{i_1}$ we derive~\eqref{eqn: claim M}.

\end{proof}

\begin{lemma}\label{Lemma: Step3}
For $0<\delta\ll 1$, we define
\begin{equation}\label{eqn: decom}
  \mathcal{V}_{j}^{\delta}:=\{v_j\in \mathcal{V}_j:|v_j\cdot n(x_j)|>\delta,|v_j|\leq \delta^{-1}\}.
\end{equation}
For the sequence $\{v_1,v_2,\cdots,v_{k-1}\}$, consider a subsequence  $\{v_{l+1},v_{l+2},\cdots,v_{l+L}\}$ with $l+1<l+L\leq k-1$ as follows:
\begin{equation}\label{eqn: sequence}
  \underbrace{v_{l},}_{\in \mathcal{V}_l^{\frac{1-\eta}{2(1+\eta)}\delta}}\quad \quad \underbrace{v_{l+1},v_{l+2}\cdots v_{l+L}}_{\text{all}\in \mathcal{V}_{l+j}\backslash \mathcal{V}_{l+j}^{\frac{1-\eta}{2(1+\eta)}\delta}},\quad \quad\underbrace{v_{l+L+1}}_{\in \mathcal{V}_{l+L+1}^{\frac{1-\eta}{2(1+\eta)}\delta}}.
\end{equation}
In~\eqref{eqn: sequence}, if $L\geq 100\frac{1+\eta}{1-\eta}$, then we have
\begin{equation}\label{eqn: Step3}
\int_{\prod_{j={l}}^{k-1} \mathcal{V}_j}\mathbf{1}_{\{t_k>0\}}\mathbf{1}_{\{v_{l+j}\in \mathcal{V}_{l+j}\backslash \mathcal{V}_{l+j}^{\frac{1-\eta}{2(1+\eta)}\delta} \text{ for } 1\leq j\leq L\}}\dd\Phi_{l,m}^{k,k-1} \leq  (3\delta)^{L/2}   t_*^{-(k-l)c}C_{T_M,T_m}^{k-l}\mathcal{A}_{k-1,l}.
\end{equation}
Here the $\eta$ satisfies the condition~\eqref{eqn: eta condition}.

\end{lemma}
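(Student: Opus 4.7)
The plan is to partition the integration domain of $v_{l+1},\dots,v_{l+L}$ according to which of the five cases from Lemmas \ref{Lemma: (2)} and \ref{Lemma:  (a)(c)} applies at each step $l+j$ for $1\le j\le L$. Since $v_{l+j}\notin \mathcal{V}_{l+j}^{\delta'}$ with $\delta'=\tfrac{1-\eta}{2(1+\eta)}\delta$, either $|v_{l+j}\cdot n(x_{l+j})|<\delta'$ (case (2)), or $|v_{l+j}|>(\delta')^{-1}$; the latter forces at least one of $|v_{l+j,\parallel}|>\tfrac{1+\eta}{1-\eta}\delta^{-1}$ or $|v_{l+j,\perp}|>\tfrac{1+\eta}{1-\eta}\delta^{-1}$ by pigeonhole, and each such largeness then splits into either a smallness case ((b) or (d)) yielding a factor $\delta$ via Lemma \ref{Lemma: (2)}, or a growth case ((a) or (c)) yielding the shift constraint $|v_{l+j-1,\parallel}|>|v_{l+j,\parallel}|+\delta^{-1}$ (respectively for $\perp$) via Lemma \ref{Lemma:  (a)(c)}.

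The core of the proof is to show that in any admissible classification of the $L$ steps, at least $L/2$ steps must give a smallness factor $\delta$. Any consecutive run of growth cases in one direction of length $m$ iterates the estimate of Lemma \ref{Lemma:  (a)(c)} to yield $|v_{l+j_{\mathrm{end}}-m,\parallel}|\geq m\delta^{-1}+\tfrac{1+\eta}{1-\eta}\delta^{-1}$, and once $m$ exceeds $\tfrac{1+\eta}{1-\eta}$ the resulting forced largeness, combined with the Gaussian decay in $\dd\sigma$ (with rate controlled by $1/T_m$), converts the run into an effective smallness better than $\delta^m$ via the tail bounds already encoded in \eqref{eqn: coe abc small} and \eqref{eqn: coe perp small 2}. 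So growth runs beyond a threshold proportional to $\tfrac{1+\eta}{1-\eta}$ can always be traded for smallness, and since $L\geq 100\tfrac{1+\eta}{1-\eta}$ a simple pigeonhole guarantees at least $L/2$ effective smallness steps out of the $L$ indices.

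Iterating Lemma \ref{Lemma: accumulate} across these smallness steps, together with \eqref{eqn: case d} for perpendicular and \eqref{eqn: 2} for normal-small cases, produces the target factor $\delta^{L/2}$. The remaining $3^{L/2}$ in $(3\delta)^{L/2}$ absorbs the enumeration over which of the (at most three) smallness labels occurs at each relevant step. The constants $t_*^{-(k-l)c}$, $C_{T_M,T_m}^{k-l}$ and $\mathcal{A}_{k-1,l}$ in the final bound arise from the unconstrained portion of the integration exactly as in Lemmas \ref{Lemma: accumulate} and \ref{lemma: boundedness}. The main obstacle will be making the counting rigorous when parallel and perpendicular growth cases are interleaved with smallness cases; this requires separately tracking smallness-index sequences in the parallel and perpendicular components, handling the polynomial prefactors $\langle v_j\rangle^4$ and $1/(n(x_j)\cdot v_j)$ via Lemma \ref{Lemma: extra term} without damaging the exponent, and using the constraint $\eta<1$ from \eqref{eqn: eta condition} to ensure the $\delta^{-1}$ increment in the recursion is not diluted by $\eta_{i,\parallel}$ or $\eta_{i,\perp}$.
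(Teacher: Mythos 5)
There is a genuine gap in your trading mechanism for the growth cases. The tail estimates \eqref{eqn: coe abc small} and \eqref{eqn: coe perp small 2} produce smallness only when the integrated velocity lies far from the \emph{shifted Gaussian center} $\eta_{i,\parallel}v_{i-1,\parallel}$ (resp.\ $\eta_{i,\perp}v_{i-1,\perp}$), not merely when it is large. But at a growth step the velocity is, by definition, within $\delta^{-1}$ of that center, so no per-step smallness is available along a growth run: the forced largeness from \eqref{eqn: (a)}, \eqref{eqn: (c)} simply cascades to smaller indices penalty-free, because the center at the next step is built from the (possibly even larger) preceding velocity. In particular your claim that a run of length $m$ "converts into an effective smallness better than $\delta^m$" is unsubstantiated: the run may terminate at a single step where the distance-to-center is large, contributing one tail factor whose size depends on that single jump, not $m$ factors of $\delta$. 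Consequently the pigeonhole conclusion "at least $L/2$ effective smallness steps" fails; e.g.\ $L-1$ consecutive parallel-growth steps compensated by one huge upward jump yields only $O(1)$ steps carrying explicit tail factors, so a count of $\delta$-factors per step cannot reach $(3\delta)^{L/2}$ by your route.

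The paper's proof closes exactly this hole with different bookkeeping. It first dichotomizes: if at least $L/2$ indices lie in the smallness set $\mathcal{W}_{j,\delta}$ (cases \eqref{eqn: 2}, \eqref{eqn: case b}, \eqref{eqn: case d}), Lemma \ref{Lemma: accumulate} gives $(3\delta)^{L/2}$ directly, which is where the factor $3$ actually comes from (the union bound in \eqref{eqn: V_i,delta}, not an enumeration of labels). Otherwise at least $L/4$ indices are parallel (or perpendicular) growth cases, and then it exploits that \emph{both} endpoints $v_l, v_{l+L+1}$ are bounded: the telescoping identity \eqref{eqn: xiangjian} forces the total of the upward jumps $a_j=|v_{q_j,\parallel}|-|v_{q_j-1,\parallel}|$ over the Set2 steps to exceed $M\delta^{-1}/2$ as in \eqref{eqn: ai sum}; each such jump is at distance at least $a_j$ from its Gaussian center because $\eta_{q_j,\parallel}<1$ (this is \eqref{eqn: bound a_i}), giving a factor $e^{-a_j^2/4T_M}$, and Cauchy--Schwarz over the at most $\tfrac34 L$ such steps yields a total factor $e^{-L\delta^{-1}}\leq(3\delta)^{L/2}$. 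So the smallness in the bad regime is not distributed as $L/2$ factors of $\delta$ but concentrated in a few strong Gaussian factors controlled quadratically; without the Set1/Set2/Set3 decomposition, the telescoping between the two bounded endpoints, and the Cauchy--Schwarz step, your argument does not go through.
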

\begin{remark}
In this lemma we combine the estimates in Lemma \ref{Lemma: (2)} and Lemma \ref{Lemma:  (a)(c)} and derive the desired decaying term $(3\delta)^{L/2}$. In the proof we will address the difficulty stated in Lemma \ref{Lemma:  (a)(c)}.

\end{remark}

\begin{proof}
By the definition~\eqref{eqn: decom} we have
\[\mathcal{V}_{i}\backslash \mathcal{V}_{i}^{\frac{1-\eta}{2(1+\eta)}\delta}  =\{v_i\in \mathcal{V}_i:|v_i\cdot n(x_i)|<\frac{1-\eta}{2(1+\eta)}\delta \text{ or }|v_i|\geq \frac{2(1+\eta)}{1-\eta}\delta^{-1}\}.\]
Here we summarize the result of Lemma \ref{Lemma: (2)} and Lemma \ref{Lemma:  (a)(c)}.
With $\frac{1-\eta}{1+\eta}\delta<\delta$, when $v_i\in \mathcal{V}_i\backslash \mathcal{V}_i^{\frac{1-\eta}{2(1+\eta)}\delta}$
\begin{enumerate}

  \item When $|v_i\cdot n(x_i)|<\frac{1-\eta}{2(1+\eta)}\delta$, then we have~\eqref{eqn: 2}.

  \item When $|v_{i}|>\frac{2(1+\eta)}{1-\eta}\delta^{-1}$,
   \begin{enumerate}
     \item when $|v_{i,\parallel}|>\frac{1+\eta}{1-\eta}\delta^{-1}$, if $|v_{i,\parallel}-\eta_{i,\parallel}v_{i-1,\parallel}|<\delta^{-1}$, then $|v_{i-1,\parallel}|>|v_{i,\parallel}|+\delta^{-1}$. \\
     \item when $|v_{i,\parallel}|>\frac{1+\eta}{1-\eta}\delta^{-1}$, if $|v_{i,\parallel}-\eta_{i,\parallel}v_{i-1,\parallel}|\geq \delta^{-1}$, then we have~\eqref{eqn: case b}. \\
     \item when $|v_{i,\perp}|>\frac{1+\eta}{1-\eta}\delta^{-1}$, if $|v_{i,\perp}-\eta_{i,\perp}v_{i-1,\perp}|<\delta^{-1}$, then $|v_{i-1,\perp}|>|v_{i,\perp}|+\delta^{-1}$ .\\
     \item when $|v_{i,\perp}|>\frac{1+\eta}{1-\eta}\delta^{-1}$, if $|v_{i,\perp}-\eta_{i,\perp}v_{i-1,\perp}|\geq \delta^{-1}$, then we have~\eqref{eqn: case d}.\\
   \end{enumerate}

\end{enumerate}

We define $\mathcal{W}_{i,\delta}$ as the space that provides the smallness:
\[\mathcal{W}_{i,\delta}:=\{v_i\in \mathcal{V}_i:|v_{i,\perp}|<\frac{1-\eta}{2(1+\eta)}\delta\}\bigcup \{v_i\in \mathcal{V}_i:|v_{i,\perp}|>\frac{1+\eta}{1-\eta}\delta^{-1}\text{ and }|v_{i,\perp}-\eta_{i,\perp}v_{i-1,\perp}|>\delta^{-1}\}\]
\[\bigcup \{v_i\in \mathcal{V}_i:|v_{i,\parallel}|>\frac{1+\eta}{1-\eta}\delta^{-1}\text{ and }|v_{i,\parallel}-\eta_{i,\parallel}v_{i-1,\parallel}|>\delta^{-1}\}.\]
Then we have
\begin{equation}\label{eqn: subset}
  \begin{split}
     & \mathcal{V}_{i}\backslash \mathcal{V}_{i}^{\frac{1-\eta}{2(1+\eta)}\delta} \subset \mathcal{W}_{i,\delta} \bigcup \{v_{i,\perp}\in \mathcal{V}_{i,\perp}|v_{i,\perp}|>\frac{1+\eta}{1-\eta}\delta^{-1}~\text{and}~|v_{i,\perp}-\eta_{i,\perp}v_{i-1,\perp}|<\delta^{-1}\}
 \\
      & \bigcup \{v_{i,\parallel}\in \mathcal{V}_{i,\parallel}|v_{i,\parallel}|>\frac{1+\eta}{1-\eta}\delta^{-1}~\text{and}~|v_{i,\parallel}-\eta_{i,\parallel}v_{i-1,\parallel}|<\delta^{-1}\}.
  \end{split}
\end{equation}
By~\eqref{eqn: 2},~\eqref{eqn: case b} and~\eqref{eqn: case d} with $\frac{1-\eta}{1+\eta}\delta<\delta$, we obtain
\begin{equation}\label{eqn: V_i,delta}
  \int_{\prod_{j=i}^{k-1}\mathcal{V}_j}  \mathbf{1}_{\{v_i\in \mathcal{W}_{i,\delta}\}}   \mathbf{1}_{\{t_k>0\}}  \dd\Phi_{i,m}^{k,k-1} \leq 3\delta   t_*^{-(k-i)c} C_{T_M,T_m}^{k-i}\mathcal{A}_{k-1,i}.
\end{equation}

For the subsequence $\{v_{l+1},\cdots,v_{l+L}\}$ in~\eqref{eqn: sequence}, when the number of $v_j\in \mathcal{W}_{j,\delta}$ is larger than $L/2$, by~\eqref{eqn: claim M} in Lemma \ref{Lemma: accumulate} with $n=L/2$ and replacing the condition~\eqref{eqn: satisfy condition} by $v_j\in \mathcal{W}_{j,\delta}$, we obtain
\begin{align}
   & \int_{\prod_{j=l}^{k-1} \mathcal{V}_j}   \mathbf{1}_{\{\text{Number of }v_j\in \mathcal{W}_{j,\delta} \text{ is larger than }L/2\}}    \mathbf{1}_{\{t_k>0\}} \dd\Phi_{l,m}^{k,k-1} \\
   & \leq (3\delta)^{L/2}    t_*^{-(k-l_i)c}C_{T_M,T_m}^{k-l_i}\mathcal{A}_{k-1,l}.\label{eqn: 3delta}
\end{align}
We finish the discussion with the case(1),(2b),(2d). Then we focus on the case (2a),(2c).

When the number of $v_j \notin \mathcal{W}_{j,\delta}$ is larger than $L/2$, by~\eqref{eqn: subset} we further consider two cases. The first case is that the number of $v_j\in \{v_j:|v_{j,\parallel}|>\frac{1+\eta}{1-\eta}\delta^{-1}~\text{and}~|v_{j,\parallel}-\eta_{j,\parallel}v_{j-1,\parallel}|<\delta^{-1}\}$ is larger than $L/4$. According to the relation of $v_{j,\parallel}$ and $v_{j-1,\parallel}$, we categorize them into
\begin{description}
  \item[Set1] $\{v_j\notin \mathcal{W}_{j,\delta}:|v_{j,\parallel}|>\frac{1+\eta}{1-\eta}\delta^{-1}~\text{and}~|v_{j,\parallel}-\eta_{j,\parallel}v_{j-1,\parallel}|<\delta^{-1}\}$.
\end{description}
Denote $M=|\text{Set1}|$ and the corresponding index in Set1 as $j=p_1,p_2,\cdots,p_{M}$. Then we have
\begin{equation}\label{eqn: Mi'}
  L/4\leq M\leq L.
\end{equation}
By~\eqref{eqn: (a)} in Lemma \ref{Lemma:  (a)(c)}, for those $v_{p_j}$, we have
\begin{equation}\label{eqn: increase large}
|v_{p_j,\parallel}|-|v_{p_j-1,\parallel}|<-\delta^{-1}.
\end{equation}

\begin{description}
  \item[Set2]$\{v_j\in \mathcal{V}_j\backslash \mathcal{V}_j^{\frac{1-\eta}{2(1+\eta)\delta}}:|v_{j,\parallel}|\geq |v_{j-1,\parallel}|\}$.
\end{description}

Denote $\mathcal{M}=|\text{Set2}|$ and the corresponding index in Set2 as $j=q_1,q_2,\cdots,q_{\mathcal{M}}$. By~\eqref{eqn: Mi'} we have
\begin{equation}\label{eqn: mathcal M}
1\leq \mathcal{M}\leq L-M\leq \frac{3}{4}L.
\end{equation}
Then for those $v_{q_j}$ we define
\begin{equation}\label{eqn: ai def}
a_j:=|v_{q_j,\parallel}|-|v_{q_j-1,\parallel}|>0.
\end{equation}

 \begin{description}

  \item[Set3] $\{v_j\in \mathcal{V}_j\backslash \mathcal{V}_j^{\frac{1-\eta}{2(1+\eta)\delta}}:|v_{j,\parallel}|\leq |v_{j-1,\parallel}|\leq |v_{j,\parallel}|+\delta^{-1}\}$.
\end{description}

Denote $N=|\text{Set3}|$ and the corresponding index in Set3 as $j=o_1,o_2,\cdots,o_N$. Then for those $o_j$, we have
\begin{equation}\label{eqn: increase small}
|v_{o_j,\parallel}|\leq |v_{o_j-1,\parallel}|\leq |v_{o_j,\parallel}|+\delta^{-1}.
\end{equation}

From~\eqref{eqn: sequence}, we have $v_{l}\in \mathcal{V}_{l}^{\frac{1-\eta}{2(1+\eta)}\delta}$ and $v_{l+L+1}\in \mathcal{V}_{l+L+1}^{\frac{1-\eta}{2(1+\eta)}\delta}$, thus we can obtain
\begin{equation}\label{eqn: xiangjian}
 -\frac{2(1+\eta)}{1-\eta}\delta^{-1}<|v_{l+L+1,\parallel}|-|v_{l,\parallel}|= \sum_{j=1}^{L+1} |v_{l+j,\parallel}|-|v_{l+j-1,\parallel}|.
\end{equation}
By~\eqref{eqn: increase large},~\eqref{eqn: ai def} and~\eqref{eqn: increase small}, we derive that
\begin{align*}
 \frac{-2(1+\eta)}{1-\eta}\delta^{-1}  &<\sum_{j=1}^{M} \big(|v_{p_j,\parallel}|-|v_{p_j-1,\parallel}|\big)+\sum_{j=1}^{\mathcal{M}} \big(|v_{q_j,\parallel}|-|v_{q_j-1,\parallel}|\big)+\sum_{j=1}^{N} \big(|v_{o_j,\parallel}|-|v_{o_j-1,\parallel}|\big)   \\
   & \leq -M \delta^{-1}+\sum_{j=1}^{\mathcal{M}}a_j.
\end{align*}

Therefore, by $L\geq 100\frac{1+\eta}{1-\eta}$ and~\eqref{eqn: Mi'}, we obtain
\[\frac{2(1+\eta)}{1-\eta}\delta^{-1}\leq \frac{L}{10}\delta^{-1}\leq \frac{M}{2}\delta^{-1}\]
and thus
\begin{equation}\label{eqn: ai sum}
 \sum_{j=1}^{\mathcal{M}}a_j\geq M\delta^{-1}-\frac{2(1+\eta)}{1-\eta}\delta^{-1}>\frac{M\delta^{-1}}{2}.
\end{equation}
We focus on the integration over $\mathcal{V}_{q_i}$, such indexes satisfy~\eqref{eqn: ai def}. Let $1\leq i\leq \mathcal{M}$, we consider the third line of~\eqref{eqn: int V_i} with $i=q_i$ and with integrating over $\{v_{q_i,\parallel}\in \mathcal{V}_{q_i,\parallel}:|v_{q_i,\parallel}|-|v_{q_i-1,\parallel}|= a_i\}$. To apply~\eqref{eqn: coe abc small} in Lemma \ref{Lemma: abc}, we set
\[a=-\frac{1}{2T_{k-1,q_i}}+\frac{1}{2T_w(x_{q_i})},\quad b=\frac{1}{2T_w(x_{q_i})r_\parallel(2-r_\parallel)}, \quad \e=\mathcal{C}_{k-q_i}t_*^c+t_*^c.\]
By the same computation as~\eqref{eqn: B i para}, we have
\begin{equation}\label{eqn: k_1}
  a+\e-b=-\frac{1}{2T_{k-1,q_i}}+\frac{1}{2T_w(x_{q_i})}-\frac{1}{2T_w(x_{q_i})r_\parallel(2-r_\parallel)}+\mathcal{C}_{k-q_i}t_*^c+t_*^c <-\frac{1}{5T_M}.
\end{equation}
Then we use $\eta_{q_i,\parallel}<1$ to obtain
\begin{equation}\label{eqn: bound a_i}
 \mathbf{1}_{\{|v_{q_i,\parallel}|-|v_{q_i-1,\parallel}|= a_i\}}\leq  \mathbf{1}_{\{|v_{q_i,\parallel}|-\eta_{q_i,\parallel}|v_{q_i-1,\parallel}|>a_i\}}  \leq  \mathbf{1}_{\{|v_{q_i,\parallel}-\eta_{q_i,\parallel}v_{q_i-1,\parallel}|>a_i\}}.
\end{equation}
By~\eqref{eqn: coe abc small} in Lemma \ref{Lemma: abc} and~\eqref{eqn: bound a_i}, we apply~\eqref{eqn: Vq para} with $q=q_i$ to bound the third line of~\eqref{eqn: int V_i}( the integration over $\mathcal{V}_{q_i,\parallel}$ ) by
\begin{equation}\label{eqn: 4aiTM}
e^{-\frac{a_i^2}{4T_M}} C_{T_M}\exp\left(\big[ \frac{[T_{k-1,q_i}-T_w(x_{q_i})][1-r_{min}]}{2T_w(x_{q_i})[T_{k-1,q_i}(1-r_{min})+r_{min} T_w(x_{q_i})]} + \mathcal{C}_{k-q_i+1}t_*^c\big]|v_{q_i-1,\parallel}|^2\right).
\end{equation}
Hence by the constant in~\eqref{eqn: 4aiTM} we draw a similar conclusion as~\eqref{eqn: V_i,delta}:
\begin{equation}\label{eqn: case a_i}
   \int_{\prod_{j=q_i}^{k-1} \mathcal{V}_j}\mathbf{1}_{\{t_k>0\}}\mathbf{1}_{\{|v_{q_i,\parallel}|-|v_{q_i-1,\parallel}|= a_i\}}\dd\Phi_{q_i,m}^{k,k-1}
\leq e^{-\frac{a_i^2}{4T_M}} t_*^{-(k-q_i+1)c} C_{T_M,T_m}^{k-q_i+1}\mathcal{A}_{k-1,q_i}.
\end{equation}
Therefore, by Lemma \ref{Lemma: accumulate}, after integrating over $\mathcal{V}_{q_1,\parallel},\mathcal{V}_{q_2,\parallel},\cdots,\mathcal{V}_{q_\mathcal{M},\parallel}$ we obtain an extra constant
\[e^{-[a_i^2+a_2^2+\cdots +a_{\mathcal{M}}^2]/4T_M}\leq e^{-[a_i+a_2+\cdots +a_{\mathcal{M}}]^2/(4T_M\mathcal{M})}\leq e^{-[M\delta^{-1}/2]^2/(4T_M\mathcal{M})}\]
\[\leq e^{-[\frac{L}{8}\delta^{-1}]^2/(4T_M\frac{3}{4}L)}\leq e^{-\frac{1}{96T_M}L(\delta^{-1})^2}\leq e^{-L\delta^{-1}}.\]
Here we used~\eqref{eqn: ai sum} in the last step of first line and use~\eqref{eqn: Mi'},~\eqref{eqn: mathcal M} in the first step of second line and take $\delta\ll 1$ in the last step of second line. Then $e^{-L\delta^{-1}}$ is smaller than $(3\delta)^{L/2}$ in~\eqref{eqn: 3delta} and we conclude
\begin{equation}\label{eqn: 3delta 1}
  \int_{\prod_{j=l}^{k-1} \mathcal{V}_j}   \mathbf{1}_{\{ M=|\text{Set1}|\geq L/4\}}    \mathbf{1}_{\{t_k>0\}} \dd\Phi_{l,m}^{k,k-1}\leq (3\delta)^{L/2}    t_*^{-(k-l_i)c} C_{T_M,T_m}^{k-l_i}\mathcal{A}_{k-1,l}.
\end{equation}

The second case is that the number of $v_j\in \{v_j\notin \mathcal{W}_{j,\delta}:|v_{j,\perp}|>\frac{1+\eta}{1-\eta}\delta^{-1}\}$ is larger than $L/4$. We categorize $v_{j,\perp}$ into

\begin{description}
  \item[Set4] $\{v_j\notin \mathcal{W}_{j,\delta}:|v_{j,\perp}|>\frac{1+\eta}{1-\eta}\delta^{-1}~\text{and}~|v_{j,\perp}-\eta_{j,\perp}v_{j-1,\perp}|<\delta^{-1}\}$.
\end{description}

\begin{description}
  \item[Set5]$\{v_j\in \mathcal{V}_j\backslash \mathcal{V}_j^{\frac{1-\eta}{2(1+\eta)\delta}}:|v_{j,\perp}|>|v_{j-1,\perp}|\}$.
\end{description}

 \begin{description}

  \item[Set6] $\{v_j\in \mathcal{V}_j\backslash \mathcal{V}_j^{\frac{1-\eta}{2(1+\eta)\delta}}:|v_{j,\perp}|\leq |v_{j-1,\perp}|\leq |v_{j,\perp}|+\delta^{-1}\}$.
\end{description}
Denote $|\text{Set4}|=M_1$ and the corresponding index as $p'_1,p'_2,\cdots,p'_{M_1}$, $|\text{Set5}|=\mathcal{M}_1$ and the corresponding index as $q'_1,q'_2,\cdots,q'_{\mathcal{M}_1}$, $|\text{Set6}|=N_1$ and the corresponding index as $o'_1,o'_2,\cdots,o'_{N_1}$. Also define $b_j:=|v_{q'_j,\perp}|-|v_{q'_j-1,\perp}|$. By the same computation as~\eqref{eqn: ai sum}, we have
\[ \sum_{j=1}^{\mathcal{M}_1}b_j\geq M_1\delta^{-1}-\frac{2(1+\eta)}{1-\eta}\delta^{-1}>\frac{M_1\delta^{-1}}{2}.\]
We focus on the integration over $v_{q'_j}$. Let $1\leq i\leq \mathcal{M}_1$, we consider the second line of~\eqref{eqn: int V_i} with $i=q'_i$ and with integrating over $\{v_{q'_i,\perp}\in \mathcal{V}_{q'_i,\perp}:|v_{q'_i,\perp}|-|v_{q'_i-1,\perp}|= b_i\}$. To apply~\eqref{eqn: coe perp smaller 2} in Lemma \ref{Lemma: abc}, we set
\[a=-\frac{1}{2T_{k-1,q_i'}}+\frac{1}{2T_w(x_{q_i'})},\quad b=\frac{1}{2T_w(x_{q_i'})r_\perp}, \quad \e=\mathcal{C}^{k-q_i'}t_*^c+t_*^c.\]
By the same computation as~\eqref{eqn: B i para}, we have
\begin{equation}\label{eqn: k_1}
  a+\e-b=-\frac{1}{2T_{k-1,q_i'}}+\frac{1}{2T_w(x_{q_i'})}-\frac{1}{2T_w(x_{q_i'})r_\perp}+\mathcal{C}_{k-q_i'}t_*^c+t_*^c <-\frac{1}{5T_M}.
\end{equation}
Similar to~\eqref{eqn: bound a_i}, we have
\[ \mathbf{1}_{\{|v_{q'_i,\perp}|-|v_{q'_i-1,\perp}|= b_i\}}\leq \mathbf{1}_{\{|v_{q'_i,\perp}-\eta_{q'_i,\perp}v_{q'_i-1,\perp}|>b_i\}}.\]
Hence by~\eqref{eqn: coe perp smaller 2} in Lemma \ref{Lemma: integrate normal small} and applying~\eqref{eqn: Vq perp}, we bound the integration over $\mathcal{V}_{q'_i,\perp}$ by
\[e^{-\frac{b_i^2}{16T_M}} \frac{2}{\sqrt{T_m}}\sqrt{C_{T_M}}\exp\left(\big[ \frac{[T_{k-1,q_i'}-T_w(x_{q_i'})][1-r_{min}]}{2T_w(x_{q_i'})[T_{k-1,q_i'}(1-r_{min})+r_{min} T_w(x_{q_i'})]} + \mathcal{C}_{k-q_i'+1}t_*^c\big]|v_{q_i'-1,\perp}|^2\right).\]
Therefore,
\[\int_{\prod_{j=q_i'}^{k-1} \mathcal{V}_j}\mathbf{1}_{\{t_k>0\}}\mathbf{1}_{\{|v_{q_i',\perp}|-|v_{q_i'-1,\perp}|= b_i\}}\dd\Phi_{q_i',m}^{k,k-1}\leq e^{-\frac{b_i^2}{16T_M}} t_*^{-(k-q_i')c} C_{T_M,T_m}^{k-q_i'} \mathcal{A}_{k-1,q_i'}.\]
 The integration over $\mathcal{V}_{q'_1,\perp},\mathcal{V}_{q'_2,\perp},\cdots,\mathcal{V}_{q'_{\mathcal{M}_1},\perp}$ provides an extra constant
\[e^{-[b_1^2+b_2^2+\cdots +b_{\mathcal{M}_1}^2]/16T_M}\leq e^{-\frac{1}{400T_M}L (\delta^{-1})^2}\leq e^{-L\delta^{-1}},\]
where we set $\delta\ll 1$ in the last step. Then $e^{-L\delta^{-1}}$ is smaller than $(3\delta)^{L/2}$ in~\eqref{eqn: 3delta} and we conclude
\begin{equation}\label{eqn: 3delta 2}
\int_{\prod_{j=l}^{k-1} \mathcal{V}_j}   \mathbf{1}_{\{ M_1=|\text{Set4}|\geq L/4\}}    \mathbf{1}_{\{t_k>0\}} \dd\Phi_{l,m}^{k,k-1} \leq (3\delta)^{L/2}    t_*^{-(k-l)c} C_{T_M,T_m}^{k-l}\mathcal{A}_{k-1,l}.
\end{equation}

Finally collecting~\eqref{eqn: 3delta},~\eqref{eqn: 3delta 1} and~\eqref{eqn: 3delta 2} we derive the lemma.

\end{proof}

Now we are ready to prove the Lemma \ref{lemma: t^k}.

\begin{proof}[\textbf{Proof of Lemma \ref{lemma: t^k}}]

\textbf{Step 1}

To prove~\eqref{eqn: 1/2 decay} holds for the C-L boundary condition, we mainly use the decomposition~\eqref{eqn: decom} done by \cite{CKL} and~\cite{GKTT} for the diffuse boundary condition. In order to apply Lemma \ref{Lemma: Step3}, here we consider the space $\mathcal{V}_i^{\frac{1-\eta}{2(1+\eta)}\delta}$ and ensure $\eta$ satisfy the condition~\eqref{eqn: eta condition}. In this step we mainly focus on constructing the $\eta$, which will be defined in~\eqref{eqn: eta}.

First we consider $\eta_{i,\parallel}$, which is defined in~\eqref{eqn: eta i para}. In regard to~\eqref{eqn: abe} and~\eqref{eqn: B i para def}, we require $t_*=t_*(k,T_M,c,\mathcal{C})$( consistent with~\eqref{eqn: t*} ) to be small enough such that
\begin{equation}\label{eqn: B i para}
B_{i,\parallel}\geq \frac{1}{2T_{k-1,i}}-\mathcal{C}_{k-i}t_*^c-t_*^c\geq \frac{1}{4T_M}-\mathcal{C}_{k}t_*^c-t_*^c\geq \frac{1}{5T_M}.
\end{equation}
By~\eqref{eqn: formula of Tp}, $T_{k-1,i}\to T_M$ as $k-i\to \infty$. For any $\e_1>0$, there exists $k_1$ s.t when
\begin{equation}\label{eqn: e1}
k\geq k_1,\quad i\leq k/2, \text{ we have }T_{k-1,i}\leq (1+\e_1)T_M.
\end{equation}
Moreover, by~\eqref{eqn: assume T}, there exists $\e_2$ s.t
\begin{equation}\label{eqn: e2 def}
\frac{T_m}{T_M}>\frac{1-r_\parallel}{2-r_\parallel}(1+\e_2)
\end{equation}
and thus
\begin{equation}\label{eqn: e2 dependence}
\e_2=\e_2(T_m,T_M,r_\parallel,r_\perp).
\end{equation}

Thus we can bound $T_w(x_i)$ in the $\eta_{i,\parallel}$( defined in~\eqref{eqn: eta i para}) below as
\begin{equation}\label{eqn: bound below}
T_w(x_i)=T_{k-1,i}\frac{T_w(x_i)}{T_{k-1,i}}\geq T_{k-1,i}\frac{T_w(x_i)}{T_M}\frac{1}{1+\e_1}> \frac{1-r_\parallel}{2-r_\parallel}T_{k-1,i}\frac{1+\e_2}{1+\e_1}.
\end{equation}
Thus we obtain
\begin{equation}\label{eqn: eta i para bounded}
\eta_{i,\parallel}<\frac{1+\frac{(\mathcal{C}_{k-i}+1)t_*^c}{B_{i,\parallel}}}{(1-r_\parallel)^2+ \frac{1-r_\parallel}{2-r_\parallel}\frac{1+\e_2}{1+\e_1}r_\parallel(2-r_\parallel)}(1-r_\parallel)= \frac{1+\frac{(\mathcal{C}_{k-i}+1)t_*^c}{B_{i,\parallel}}}{1-r_\parallel+r_\parallel\frac{1+\e_2}{1+\e_1}}                                                                 .
\end{equation}
By~\eqref{eqn: e1}, we take
\begin{equation}\label{eqn: k_1 dependence}
k=k_1=k_1(\e_2,T_M,r_{\min})
\end{equation}
to be large enough such that $\e_1<\e_2/4$. By~\eqref{eqn: B i para} and~\eqref{eqn: eta i para bounded}, we derive that when $k=k_1$,
\begin{equation}\label{eqn: sup  less 1}
\sup_{i\leq k/2}\eta_{i,\parallel}\leq \frac{1+5T_M(\mathcal{C}_{k}+1)t_*^c}{1-r_\parallel+r_\parallel\frac{1+\e_2}{1+\e_2/4}}<\eta_\parallel<1.
\end{equation}
Here we define
\begin{equation}\label{eqn: eta_paral}
\eta_\parallel:=\frac{1}{1-r_\parallel+r_\parallel\frac{1+\e_2}{1+\e_2/2}}<1
\end{equation}
and we require $t_*=t'(k,T_M,\e_2,\mathcal{C},r_\parallel)$ to be small enough and such that
\begin{equation}\label{condition for t*}
  5T_M\mathcal{C}_{k}t_*^c\ll 1
\end{equation}
to ensure the second inequality in~\eqref{eqn: sup  less 1}.
Combining~\eqref{eqn: e2 dependence} and~\eqref{eqn: k_1 dependence}, we conclude the condition for $t_*$~\eqref{condition for t*} is consistent with~\eqref{eqn: t*}.

Then we consider $\eta_{i,\perp}$, which is defined in~\eqref{eqn: eta i perp}. In regard to~\eqref{eqn: abe perp} and~\eqref{eqn: B i perp}, by~\eqref{eqn: B i para} we have $B_{i,\perp}\geq \frac{1}{5T_M}.$ By $\frac{T_m}{T_M}>\frac{\sqrt{1-r_\perp}-(1-r_\perp)}{r_\perp}$ in~\eqref{eqn: assume T} we can use the same computation as~\eqref{eqn: bound below} to obtain
\[T_w(x_i)>  \frac{\sqrt{1-r_\perp}-(1-r_\perp)}{r_\perp}  T_{k-1,i}\frac{1+\e_2}{1+\e_1},\]
with $\e_1<\e_2/4$. Thus we obtain
\[\eta_{i,\perp}<\eta_\perp <1           ,                                                     \]
where we defined
\begin{equation}\label{eqn: eta perp}
\eta_{\perp}:=\frac{1}{\sqrt{1-r_\perp}+(1-\sqrt{1-r_\perp})\frac{1+\e_2}{1+\e_2/2}}<1
\end{equation}
with small enough $t_*=t_*(k,T_M,\e_2,\mathcal{C},r_\parallel)$( consistent with~\eqref{eqn: t*} ).

Finally we define
\begin{equation}\label{eqn: eta}
  \eta:=\max\{\eta_\perp,\eta_\parallel\}<1.
\end{equation}

\textbf{Step 2}

\textbf{Claim}: We have
\begin{equation}\label{eqn:claim_delta}
  |t_{j}-t_{j+1}|\gtrsim_\Omega \Big( \frac{1-\eta}{2(1+\eta)}\delta\Big)^3,\text{ for }v_j\in \mathcal{V}_j^{\frac{1-\eta}{2(1+\eta)}\delta},0\leq t_j.
\end{equation}
\begin{proof}

For $t_j\leq 1$,
\[|\int_{t_j}^{t_{j+1}}v_j \dd s|^2=|x_{j+1}-x_j|^2\gtrsim |(x_{j+1}-x_j)\cdot n(x_j)|\]
\[=|\int_{t_j}^{t_{j+1}}v_j\cdot n(x_j)\dd s|=|v_j\cdot n(x_j)||t_j-t_{j+1}|.\]
Here we used the fact that if $x,y\in \partial \Omega$ and $\partial \Omega$ is $C^2$ and $\Omega$ is bounded then $|x-y|^2\gtrsim_\Omega |(x-y)\cdot n(x)|$( see the proof in~\cite{EGKM} ). Thus
\begin{equation}\label{}
  |v_j\cdot n(x_j)|\lesssim \frac{1}{|t_j-t_{j+1}|}|\int_{t_j}^{t_{j+1}} v_jds|^2
  \lesssim |t_j-t_{j+1}||v_j|^2.
\end{equation}
Since $v_j\in \mathcal{V}^{\frac{1-\eta}{2(1+\eta)}\delta}_j$, $t_j\leq 0$, let $0\leq t\leq t'$, we have
\begin{equation}\label{}
  |v_j\cdot n(x_j)|\lesssim |t_j-t_{j+1}|\Big(\frac{1-\eta}{2(1+\eta)}\delta \Big)^{-2}.
\end{equation}
Then we prove~\eqref{eqn:claim_delta}.
\end{proof}

In consequence, when $t_k> 0$ and $t<t_*$, by~\eqref{eqn:claim_delta}, there can be at most $t_*\{[C_{\Omega}(\frac{2(1+\eta)}{(1-\eta)\delta})^3]+1\}$ numbers of $v_j\in \mathcal{V}_j^{\frac{1-\eta}{2(1+\eta)}\delta}$.
Equivalently there are at least $k-2-t_*\Big([C_{\Omega}(\frac{2(1+\eta)}{(1-\eta)\delta})^3]+1\Big)$ numbers of $v_j\in \mathcal{V}_j\backslash \mathcal{V}_j^{\frac{1-\eta}{2(1+\eta)}\delta}$.

\textbf{Step 3}

In this step we combine Step 1 and Step 2 and focus on the integration over $\prod_{j=1}^{k-1} \mathcal{V}_j$.

By~\eqref{eqn:claim_delta} in Step 2, we define
\begin{equation}\label{eqn: N}
N:=t_*\Big[C_{\Omega}\big(\frac{2(1+\eta)}{\delta(1-\eta)}\big)^3\Big]+t_*.
\end{equation}
For the sequence $\{v_1,v_2,\cdots,v_{k-1}\}$, suppose there are $p$ number of $v_j\in \mathcal{V}_j^{\frac{1-\eta}{2(1+\eta)}\delta}$ with $p\leq N$, we conclude there are at most $\left(
                                                                                  \begin{array}{c}
                                                                                    k-1 \\
                                                                                    p \\
                                                                                  \end{array}
                                                                                \right)
$ number of these sequences. Below we only consider a single sequence of them.

In order to get~\eqref{eqn: eta_paral},\eqref{eqn: eta perp}$<1$, we need to ensure the condition~\eqref{eqn: e1}. Thus we take $k=k_1(T_M,r_\perp,r_\parallel)$ and only use the decomposition $\mathcal{V}_j=\Big(\mathcal{V}_j\backslash \mathcal{V}_j^{\frac{1-\eta}{2(1+\eta)}\delta} \Big)   \cup \mathcal{V}_j^{\frac{1-\eta}{2(1+\eta)}\delta}$ for $\prod_{j=1}^{k/2} \mathcal{V}_j$. Then we only consider the half sequence $\{v_1,v_2,\cdots,v_{k/2}\}$. We derive that when $t_k>0$, there are at most $N$ number of $v_j\in \mathcal{V}_j^{\frac{1-\eta}{2(1+\eta)}\delta}$ and at least $k/2-1-N$ number of $v_j\in \mathcal{V}_j\backslash \mathcal{V}_j^{\frac{1-\eta}{2(1+\eta)}\delta}$ in $\prod_{j=1}^{k/2}\mathcal{V}_j$.

In this single half sequence $\{v_1,\cdots, v_{k/2}\}$, in order to apply Lemma \ref{Lemma: Step3}, we only want to consider the subsequence~\eqref{eqn: sequence} with $l+1<l+L\leq k/2$ and $L\geq 100\frac{1+\eta}{1-\eta}$. Thus we need to ignore those subsequence with $L<100\frac{1+\eta}{1-\eta}$. By~\eqref{eqn: sequence}, we conclude that at the end of this subsequence, it is adjacent to a $v_l\in \mathcal{V}_{l}^{\frac{1-\eta}{2(1+\eta)}\delta}$. By~\eqref{eqn: N}, we conclude
\begin{equation}\label{Conclude}
      \textit{There are at most $N$ number of subsequences~\eqref{eqn: sequence} with $L\leq 100\frac{1+\eta}{1-\eta}$}.
\end{equation}
We ignore these subsequences. Then we define the parameters for the remaining subsequence( with $L\geq 100\frac{1+\eta}{1-\eta}$ ) as:
\[M_1:= \text{the number of $v_j\in \mathcal{V}_j\backslash \mathcal{V}_j^{\frac{1-\eta}{2(1+\eta)}\delta}$ in the first subsequence starting from $v_1$},\]
\[n:= \text{the number of these subsequences}.\]
Similarly we can define $M_2,M_3,\cdots, M_n$ as the number in the second, third, $\cdots$, $n$-th subsequence. Recall that we only consider $\prod_{j=1}^{k/2} \mathcal{V}_j$, thus we have
\begin{equation}\label{eqn: Mi number}
100\frac{1+\eta}{1-\eta}\leq M_i\leq k/2,   \text{ for } 1\leq i\leq n.
\end{equation}
By~\eqref{Conclude}, we obtain
\begin{equation}\label{eqn: sum of M_i}
  k/2 \geq M_1+\cdots M_n\geq k/2-1-100\frac{1+\eta}{1-\eta}N>\frac{k}{2}-101\frac{1+\eta}{1-\eta}N.
\end{equation}
Take $M_i$ with $1\leq i\leq n$ as an example. Suppose this subsequence starts from $v_{l_i+1}$ to $v_{l_i+M_i}$, by~\eqref{eqn: Step3} in Lemma \ref{Lemma: Step3} with replacing $l$ by $l_i$ and $L$ by $M_i$, we obtain
\begin{equation}\label{eqn: M_i conclusion}
\int_{\prod_{j={l_i}}^{k-1} \mathcal{V}_j}\mathbf{1}_{\{t_k>0\}}\mathbf{1}_{\{v_{l_i+j}\in \mathcal{V}_{l_i+j}\backslash \mathcal{V}_{l_i+j}^{\frac{1-\eta}{2(1+\eta)}\delta} \text{ for } 1\leq j\leq M_i\}}\dd\Phi_{l_i,m}^{k,k-1} \leq  (3\delta)^{M_i/2}  t_*^{-(k-l_i)c} C_{T_M,T_m}^{k-l_i}\mathcal{A}_{k-l,1_i}.
\end{equation}

Since~\eqref{eqn: M_i conclusion} holds for all $1\leq i\leq n$, by Lemma \ref{Lemma: accumulate} we can draw the conclusion for the Step 3 as follows. For a single sequence $\{v_1,v_2,\cdots,v_{k-1}\}$, when there are $p$ number $v_j\in \mathcal{V}_{j}^{\frac{1-\eta}{2(1+\eta)}\delta}$, we have
\begin{align}
   &  \int_{\prod_{j=1}^{k-1} \mathcal{V}_j}   \mathbf{1}_{\{\text{$p$ number $v_j\in \mathcal{V}_{j}^{\frac{1-\eta}{2(1+\eta)}\delta}$ for a single sequence}\}}    \mathbf{1}_{\{t_k>0\}} \dd\Sigma_{k-1}^{k} \notag\\
   & \leq (3\delta)^{(M_1+\cdots+M_n)/2} t_*^{-kc} C_{T_M,T_m}^{k} \mathcal{A}_{k-1,1}.\label{eqn: Step 3 conclusion}
\end{align}

\textbf{Step 4}

Now we are ready to prove the lemma. By~\eqref{eqn: N}, we have
\begin{align}
   & \int_{\prod_{j=1}^{k-1}\mathcal{V}_j} \mathbf{1}_{\{t_k>0\}} \dd\Sigma_{k-1}^k \notag \\
   & \leq \sum_{p=1}^{N}\int_{\{\text{Exactly $p$ number of $v_j\in \mathcal{V}_j^{\frac{1-\eta}{2(1+\eta)}\delta}$ }\}} \mathbf{1}_{\{t_k>0\}} \dd\Sigma_{k-1}^k. \label{eqn: proof step3}
\end{align}

Since~\eqref{eqn: Step 3 conclusion} holds for a single sequence, we derive
\[\eqref{eqn: proof step3}\leq t_*^{-kc } C_{T_M,T_m}^{k}\sum_{p=1}^N\left(
    \begin{array}{c}
      k-1 \\
     p \\
    \end{array}
  \right)(3\delta)^{(M_1+M_2+\cdots M_n)/2} \mathcal{A}_{k-1,1}
\]
\begin{equation}\label{eqn: tk coe}
\leq t_*^{-kc}C_{T_M,T_m}^{k}N(k-1)^N(3\delta)^{k/4-101\frac{1+\eta}{1-\eta}N}\mathcal{A}_{k-1,1},
\end{equation}
where we used~\eqref{eqn: sum of M_i} in the second line.

Now we let
\[\delta=t_*^{1/3}\delta' \text{ with }\delta'\ll 1\]
such that
\[N=\frac{t_*}{t_*}\Big[C_{\Omega}\big(\frac{2(1+\eta)}{\delta'(1-\eta)}\big)^3+1\Big].\]

Using~\eqref{eqn: N} we derive
\[3\delta'= C(\Omega,\eta)N^{-1/3}.\]

Take $k=N^3$, the coefficient in~\eqref{eqn: tk coe} is bounded by
\begin{equation}\label{eqn: Finally}
t_*^{-N^3 c} C_{T_M,T_m}^{N^3}N^{3N+1} (3\delta)^{N^3/4-101\frac{1+\eta}{1-\eta}N}\leq t_*^{-N^3c} C_{T_M,T_m}^{N^3} t_*^{N^3/15} N^{4N}(3\delta')^{N^3/5},
\end{equation}
where we choosed $N=N(\eta)$ large such that $N^3/4-101\frac{1+\eta}{1-\eta}N\geq N^3/5$.

Finally we choose
\begin{equation}\label{c}
c:=\frac{1}{15}.
\end{equation}
We bound~\eqref{eqn: Finally} by
\begin{align*}
   &t_*^{-N^3 (c-\frac{1}{15})} C_{T_M,T_m}^{N^3}N^{4N}(C(\Omega,\eta)N^{-1/3})^{N^3/5}\leq e^{N^3\log(C_{T_M,T_m})} e^{4N\log N}e^{(N^3/5)\log(C(\Omega,\eta)N^{-1/3})} \\
   & =e^{4N \log N}e^{(N^3/5)(log(C(\Omega,\eta))-\frac{1}{3}\log N)}e^{N^3 \log(C_{T_M,T_m})}= e^{4N\log N-\frac{N^3}{15}(\log N-3\log C_{\Omega,\eta}-15\log C_{T_M,T_m})}\\
   &\leq e^{4N\log N-\frac{N^3}{30}\log N}\leq e^{-\frac{N^3}{15}\log N}=e^{-\frac{k}{150}\log k}\leq (\frac{1}{2})^k,
\end{align*}
where we choosed $\delta'$ to be small enough in the second line such that $N=N(\Omega,\eta,C_{T_M,T_m})$ is large enough to satisfy
\[\log N -3\log C(\Omega,\eta)-15 \log C_{T_M,T_m}\geq \frac{\log N}{2},\]
\[4N\log N-\frac{N^3}{30}\log N\leq -\frac{N^3}{50}\log N.\]
And thus we choose $k=N^3=k_2=k_2(\Omega,\eta,C_{T_M,T_m})$ and we also require $\log k>150$ in the last step. Then we get~\eqref{eqn: 1/2 decay}.

Therefore, by the condition~\eqref{eqn: e1}, we choose $k=k_0=\max\{k_1,k_2\}$. By the definition of $\eta$~\eqref{eqn: eta} with~\eqref{eqn: eta_paral} and~\eqref{eqn: eta perp}, we obtain $\eta=\eta(T_M,\mathcal{C},r_\perp,r_\parallel,\e_2)$. Thus by~\eqref{eqn: e2 dependence} and~\eqref{eqn: k_1 dependence}, we conclude the $k_0$ we choose here does not depend on $t$ and only depends on the parameter in~\eqref{eqn: k_0 dependence}. We conclude the lemma.

\end{proof}

\begin{proof}[\textbf{Proof of Proposition \ref{proposition: boundedness}}]
First we take
\begin{equation}\label{eqn: first condition for tinf}
t_{\infty}\leq t_*,
\end{equation}
with $t_*$ defined in~\eqref{t* dependence}. Then we let $k=k_0$ with $k_0$ defined in~\eqref{eqn: k_0 dependence} so that we can apply Lemma \ref{lemma: t^k} and Lemma \ref{lemma: boundedness}. Define the constant in~\eqref{eqn: fm is bounded} as
\begin{equation}\label{eqn: Cinfty}
  C_\infty=8t_*^{-k_0/15}(C_{T_M,T_m})^{k_0}.
\end{equation}

We mainly use the formula given in Lemma \ref{lemma: the tracjectory formula for f^(m+1)}. By~\eqref{Gm} we have
\begin{equation}\label{bound for G}
|\mathcal{G}^m(s)| \leq \Vert w_\theta f^m\Vert_\infty^2+\Vert e^{-\lambda\langle v\rangle s}\alpha\partial f^m\Vert_\infty [ \sup_m\Vert w_\theta f^m\Vert_\infty+1]+\Vert w_\theta f^m\Vert_\infty e^{-\lambda \langle v\rangle s}\alpha(x,v)\int_{\mathbb{R}^3} \mathbf{k}_\varrho(v,u) |\partial f^m(X^1(s),u)| \dd u,
\end{equation}
where we used~\eqref{nabla gamma gain}.

We consider two cases.
\begin{description}
\item[Case1] $t_1\leq 0$,
\end{description}
By~\eqref{eqn: Duhamal principle for case1} and~\eqref{bound for G}, for some polynomial $P$ we have
\begin{align}
&|e^{-\lambda\langle v\rangle t}\alpha(x,v) \partial f^{m+1}(t,x,v)| \notag \\
 & \leq |\alpha \partial f_0(X^1(0),v)| +t \Vert e^{-\lambda\langle v\rangle t}
\alpha\partial f^{m+1}\Vert_\infty+tP(\sup_m\Vert w_\theta f^m\Vert_\infty)\label{eqn: first term} \\
   &    +P(\sup_m\Vert w_\theta f^m\Vert_\infty)\alpha(x,v)\int_0^t \int_{\mathbb{R}^3} e^{-|v|(t-s)}  \mathbf{k}_\varrho(v,u)e^{-\lambda [\langle v\rangle -\langle u\rangle]s} \frac{\Vert e^{-\lambda\langle v\rangle s}\alpha\partial f^m(s)\Vert_\infty  }{\alpha(x-(t-s)v,u)}  \dd u \dd s.\label{eqn: second term}
\end{align}
Since $s\leq t\ll 1$, $e^{-\lambda [\langle v\rangle -\langle u\rangle]s}\lesssim 1+e^{\varrho|v-u|^2/2}$. And thus
\[\mathbf{k}_\varrho e^{-\lambda [\langle v\rangle -\langle u\rangle]s} \lesssim   \mathbf{k}_\varrho + \mathbf{k}_{\varrho/2}.\]
Then applying Lemma \ref{Lemma: NLN} we have
\begin{align*}
 \eqref{eqn: second term}  & \leq tP(\sup_m\Vert w_\theta f^m\Vert_\infty)\sup_{s\leq t}\Vert e^{-\lambda\langle v\rangle s}\alpha\partial f^m(s)\Vert_\infty.
\end{align*}

Collecting~\eqref{eqn: first term} and~\eqref{eqn: second term} we obtain
\begin{equation}\label{eqn: hm+1 bounded case 1}
\begin{split}
 \Vert e^{-\lambda\langle v\rangle t}\alpha\partial f^{m+1}(t)\mathbf{1}_{\{t_1\leq 0\}}\Vert_\infty   & \leq  t\sup_{0\leq s\leq t}\Vert e^{-\lambda\langle v\rangle s}\alpha\partial f^{m+1}(s)\Vert_\infty \\
     & + tP(\sup_m\Vert w_\theta f^m\Vert_\infty) +tP(\sup_m\Vert w_\theta f^m\Vert_\infty)\sup_{0\leq s\leq t}\Vert e^{-\lambda\langle v\rangle s}\alpha\partial f^{m}(s)\Vert_\infty.
\end{split}
\end{equation}
Since~\eqref{eqn: hm+1 bounded case 1} holds for all $t<t_\infty$, we derive
\[\sup_s\Vert e^{-\lambda\langle v\rangle s}\alpha\partial f^{m+1}(s)\mathbf{1}_{\{t_1\leq 0\}}\Vert_\infty \leq  \text{ R.H.S of }\eqref{eqn: hm+1 bounded case 1}.\]
And thus with $t\ll 1$,
\begin{equation}\label{hm+1 bounded case 1}
\sup_{s\leq t} \Vert e^{-\lambda\langle v\rangle s}\alpha\partial f^{m+1}(s)\mathbf{1}_{\{t_1\leq 0\}}\Vert_\infty \leq  2tP(\sup_m\Vert w_\theta f^m\Vert_\infty) +t[1+P(\sup_m\Vert w_\theta f^m\Vert_\infty)]\sup_{s\leq t}\Vert e^{-\lambda\langle v\rangle s}\alpha\partial f^{m}(s)\Vert_\infty.
\end{equation}

\begin{description}
\item[Case2] $t_1\geq 0$,
\end{description}
We consider~\eqref{eqn: Duhamel principle for case 2} in Lemma \ref{lemma: the tracjectory formula for f^(m+1)}. For the first line, by~\eqref{bound for G} and the same computation as~\eqref{eqn: second term} we obtain
\begin{equation}\label{eqn: first line bounded}
\int_{t_1}^t e^{-(t-s)|v|} \mathcal{G}^m(s)\dd s  \leq  t\sup_{0\leq s\leq t}\Vert e^{-\lambda\langle v\rangle s}\alpha\partial f^{m+1}(s)\Vert_\infty+ tP(\sup_m\Vert w_\theta f^m\Vert_\infty) +t\sup_s\Vert e^{-\lambda\langle v\rangle s}\alpha\partial f^{m}(s)\Vert_\infty.
\end{equation}
For the second line of~\eqref{eqn: Duhamel principle for case 2}, we bound it by
\begin{equation}\label{eqn: extra term to cancel}
\exp\bigg(\big[\frac{1}{4T_M}-\frac{1}{2T_w(x_1)}\big]|v|^2\bigg)\int_{\prod_{j=1}^{k_0-1}\mathcal{V}_j}H.
\end{equation}
Now we focus on $\int_{\prod_{j=1}^{k_0-1}\mathcal{V}_j}H$. We compute $H$ term by term using~\eqref{eqn: formula for H}.

First we compute the first line of~\eqref{eqn: formula for H}. By Lemma~\ref{lemma: boundedness} with $p=1$, for every $1\leq l\leq k_0-1$, we have
\[\int_{\prod_{j=1}^{k_0-1}\mathcal{V}_j}  \mathbf{1}_{\{t_{l+1}\leq 0<t_l\}} |\alpha\partial f_0\big(X^{m-l}(0),V^{m-l}(0)\big)|  \dd\Sigma_{l}^{k_0}\leq \Vert \alpha\partial f_0\Vert_\infty  \int_{\prod_{j=1}^{k_0-1}\mathcal{V}_j}  \mathbf{1}_{\{t_{l+1}\leq 0<t_l\}}   \dd\Sigma_{l}^{k_0}\]
\begin{equation}\label{eqn: l term}
     \leq  t_*^{-l/15} C_{T_M,T_m}^l\Vert \alpha\partial f_0\Vert_{\infty}\exp\bigg(\frac{(T_{l,1}-T_w(x_1))(1-r_{min})}{2T_w(x_1)[T_{l,1}(1-r_{min})+r_{min} T_w(x_1)]}|v|^2+\mathcal{C}_{l}t_*^{1/15}|v|^2\bigg).
\end{equation}
In regard to~\eqref{eqn: extra term to cancel} we have
\begin{align*}
   & \exp\bigg(\big[\frac{1}{4T_M-2T_w(x_1)}\big]|v|^2\bigg)\times ~\eqref{eqn: l term} \\
   & =t_*^{-l/15} C_{T_M,T_m}^l\Vert \alpha\partial f_0\Vert_{\infty}\exp\bigg(\Big[\frac{-1}{2\big(T_w(x_1)r_{min}+T_{l,1}(1-r_{min})\big)}+\frac{1}{4T_M}\Big]|v|^2+\mathcal{C}_{l}t_*^{1/15}|v|^2\bigg).
\end{align*}
Using the definition~\eqref{eqn: definition of T_p} we have $T_w(x_1)<2T_M$ and $T_{l,1}<2T_M$. Then we require
\begin{equation}\label{eqn: third condition for tinfty}
t_*=t_*(T_M,k_0,\mathcal{C})
\end{equation}
to be small enough such that the coefficient for $|v|^2$ is
\[\frac{-1}{2\big(T_w(x_1)r_{min}+T_{l,1}(1-r_{min})\big)}+\frac{1}{4T_M}+\mathcal{C}_{l}t_*^{1/15}\]
\begin{equation}\label{eqn: less than 0}
\leq \frac{-1}{2\big(T_Mr_{min}+T_{l,1}(1-r_{min})\big)}+\frac{1}{4T_M}+\mathcal{C}_{k_0}t_*^{1/15}\leq 0.
\end{equation}
Note that the condition~\eqref{eqn: third condition for tinfty} is consistent with~\eqref{t* dependence}.

Since~\eqref{eqn: l term} holds for all $1\leq l\leq k_0-1$, by~\eqref{eqn: less than 0} the contribution of the first line of~\eqref{eqn: formula for H} in ~\eqref{eqn: extra term to cancel} is bounded by
\begin{equation}\label{eqn: first term bounded}
t_*^{-k_0/15}C_{T_M,T_m}^{k_0}\Vert \alpha\partial f_0\Vert_{\infty}.
\end{equation}

Then we compute the second line of~\eqref{eqn: formula for H}:
\begin{align}
   & \int_{\max\{0,t_{l+1}\}}^{t_l}\int_{\prod_{j=1}^{k_0-1}\mathcal{V}_{j}} e^{-(t_l-s)|v_l|} |\mathcal{G}^{m-l}(s)|\dd\Sigma_{l}^{k_0}\dd s\notag \\
   & \leq tP(\sup_m\Vert w_\theta f^m\Vert_\infty)  \sup_{i\leq m} \Vert e^{-\lambda\langle v\rangle t}\alpha\partial f^i\Vert_\infty \int_{\prod_{j=1}^{k_0-1}\mathcal{V}_j} \dd\Sigma_{l}^{k_0}\notag\\
   &\leq t t_*^{-k_0/15}C_{T_M,T_m}^{k_0}   P(\sup_m\Vert w_\theta f^m\Vert_\infty)  \sup_{i\leq m} \Vert e^{-\lambda\langle v\rangle t}\alpha\partial f^i\Vert_\infty  \notag \\
   &\times \exp\bigg(\frac{(T_{l,1}-T_w(x_1))(1-r_{min})}{2T_w(x_1)[T_{l,1}(1-r_{min})+r_{min} T_w(x_1)]}|v|^2+\mathcal{C}_{l}t_*^{1/15}|v|^2\bigg) \notag\\
   &\leq     \frac{\sup_{i\leq m}\Vert e^{-\lambda\langle v\rangle t}\alpha\partial f^i\Vert_\infty    }{5k_0}\exp\bigg(\frac{(T_{l,1}-T_w(x_1))(1-r_{min})}{2T_w(x_1)[T_{l,1}(1-r_{min})+r_{min} T_w(x_1)]}|v|^2+\mathcal{C}_{l}t_*^{1/15}|v|^2\bigg).\label{eqn: last line second}
\end{align}
In the second line we applied the same computation as~\eqref{eqn: second term} to the $s$-integration. In the third line we applied~\eqref{eqn: boundedness for l-p+1 fold integration} In the last line we applied Lemma \ref{lemma: boundedness} and take $t_\infty=t_\infty(t_*,k_0,C_{T_M,T_m},P( \sup_m\Vert w_\theta f^m\Vert_\infty))$ to be small enough such that for $t<t_\infty$,
\begin{equation}\label{condition for tinfty}
t t_*^{-k_0/15}C_{T_M,T_m}^{k_0}P( \sup_m\Vert w_\theta f^m\Vert_\infty) \leq \frac{1}{5k_0}.
\end{equation}

In regard to~\eqref{eqn: extra term to cancel}, by~\eqref{eqn: less than 0} we obtain
\[\exp\bigg(\big[\frac{1}{4T_M}-\frac{1}{2T_w(x_1)}\big]|v|^2\bigg)\times ~\eqref{eqn: last line second}\leq \frac{1}{5k_0} \sup_{i\leq m}\Vert e^{-\lambda\langle v\rangle t}\alpha\partial f^i\Vert_\infty    .\]

Since~\eqref{eqn: last line second} holds for all $1\leq l\leq k_0-1$, the contribution of the second line of~\eqref{eqn: formula for H} in~\eqref{eqn: extra term to cancel} is bounded by
\begin{equation}\label{eqn: second term bounded}
  \frac{k_0-1}{5k_0}\sup_{i\leq m}\Vert e^{-\lambda\langle v\rangle t}\alpha\partial f^i\Vert_\infty     \leq \frac{1}{5}\sup_{i\leq m} \Vert e^{-\lambda\langle v\rangle t}\alpha\partial f^i\Vert_\infty.
\end{equation}

Then we compute the third line of~\eqref{eqn: formula for H}. Directly applying Lemma \ref{lemma: boundedness} we obtain
\begin{align}
   &\int_{\prod_{j=1}^{k_0-1}\mathcal{V}_j} \mathbf{1}_{\{t_{l+1}<0\}} P(\sup_m \Vert w_\theta f^m\Vert_\infty) \dd \Sigma_l^{k_0}  \notag\\
   &  \leq  t_*^{-l/15} C_{T_M,T_m}^lP(\sup_m \Vert w_\theta f^m\Vert_\infty)\exp\bigg(\frac{(T_{l,1}-T_w(x_1))(1-r_{min})}{2T_w(x_1)[T_{l,1}(1-r_{min})+r_{min} T_w(x_1)]}|v|^2+\mathcal{C}_{l}t_*^{1/15}|v|^2\bigg).\label{third term bdd}
\end{align}

In regard to~\eqref{eqn: extra term to cancel}, by~\eqref{eqn: less than 0} we obtain
\begin{equation}\label{third term bound}
\exp\bigg(\big[\frac{1}{4T_M}-\frac{1}{2T_w(x_1)}\big]|v|^2\bigg)\times \eqref{third term bdd}\leq  t_*^{-l/15} C_{T_M,T_m}^lP(\sup_m \Vert w_\theta f^m\Vert_\infty).
\end{equation}

Last we compute the fourth term of~\eqref{eqn: formula for H}. By Lemma \ref{lemma: t^k} and the assumption~\eqref{eqn: fm is bounded} we obtain
\begin{align}
 & \int_{\prod_{j=1}^{k_0-1}\mathcal{V}_j}  \mathbf{1}_{\{0<t_{k_0}\}} \Vert e^{-\lambda\langle v\rangle t_{k_0}}\alpha\partial f^{m-k_0}(t_{k_0})\Vert_\infty      \dd\Sigma_{k_0-1}^{k_0} \notag\\
  & \leq \Vert e^{-\lambda\langle v\rangle t}\alpha\partial f\Vert_\infty      \int_{\prod_{j=1}^{k_0-1}\mathcal{V}_j}  \mathbf{1}_{\{0<t_{k_0}\}}   \dd\Sigma_{k_0-1}^{k_0}\notag \\
  &\leq       (\frac{1}{2})^{k_0}\sup_{i\leq m}\Vert e^{-\lambda\langle v\rangle t}\alpha\partial f^i\Vert_\infty    \exp\bigg(\frac{(T_{l,1}-T_w(x_1))(1-r_{min})}{2T_w(x_1)[T_{l,1}(1-r_{min})+r_{min} T_w(x_1)]}|v|^2+\mathcal{C}_{l}t_*^{1/15}|v|^2\bigg).\label{eqn: third term bounded in H}
\end{align}

In regard to~\eqref{eqn: extra term to cancel}, by~\eqref{eqn: less than 0} we have
\[\exp\bigg(\big[\frac{1}{4T_M}-\frac{1}{2T_w(x_1)}\big]|v|^2\bigg)\times ~\eqref{eqn: third term bounded in H}\leq (\frac{1}{2})^{k_0}\Vert e^{-\lambda\langle v\rangle t}\alpha\partial f\Vert_\infty    .\]
Thus the contribution of the third line of~\eqref{eqn: formula for H} in~\eqref{eqn: extra term to cancel} is bounded by
\begin{equation}\label{eqn: third term bounded}
(\frac{1}{2})^{k_0}\sup_{i\leq m}\Vert e^{-\lambda\langle v\rangle t}\alpha\partial f^i\Vert_\infty.
\end{equation}

Collecting~\eqref{eqn: first term bounded}~\eqref{eqn: second term bounded}~\eqref{third term bound} and~\eqref{eqn: third term bounded} we conclude that the second line of~\eqref{eqn: Duhamel principle for case 2} is bounded by
\begin{equation}\label{eqn: second line bounded}
\eqref{eqn: extra term to cancel}\leq [(\frac{1}{2})^{k_0}+\frac{1}{5}]\sup_{i\leq m}\Vert e^{-\lambda\langle v\rangle t}\alpha\partial f^i\Vert_\infty + t_*^{-k_0/15}C_{T_M,T_m}^{k_0}\big[\Vert \alpha\partial f_0\Vert_{\infty}+P(\sup_m \Vert w_\theta f^m\Vert_\infty)\big].
\end{equation}
Adding~\eqref{eqn: second line bounded} to~\eqref{eqn: first line bounded} we use~\eqref{eqn: Duhamel principle for case 2} and $t\ll 1$ to derive
\begin{equation}\label{eqn: hm+1 bounded case 2}
\begin{split}
&   \Vert e^{-\lambda\langle v\rangle t}\alpha  \partial f^{m+1}(t,x,v)\mathbf{1}_{\{t_{1}\geq 0\}}\Vert_\infty \\
& \leq t\sup_{0\leq s\leq t}\Vert e^{-\lambda\langle v\rangle s}\alpha\partial f^{m+1}(s)\Vert_\infty   \\
     &+ [\frac{1}{4}C_\infty+2t_*^{-k_0/15}C_{T_M,T_m}^{k_0}]\big[\Vert \alpha\partial f_0\Vert_{\infty}+P(\sup_m \Vert w_\theta f^m\Vert_\infty)\big]\\
     & \leq t\sup_{0\leq s\leq t}\Vert e^{-\lambda\langle v\rangle s}\alpha\partial f^{m+1}(s)\Vert_\infty+4t_*^{-k_0/15}C_{T_M,T_m}^{k_0}\big[\Vert \alpha\partial f_0\Vert_{\infty}+P(\sup_m \Vert w_\theta f^m\Vert_\infty)\big],
\end{split}
\end{equation}
where we have used the definition of $C_\infty$~\eqref{eqn: Cinfty} in the last line.

Since~\eqref{eqn: hm+1 bounded case 2} holds for all $t<t_\infty$, we derive that
\begin{align*}
   & \sup_{s\leq t}\Vert e^{-\lambda\langle v\rangle s}\alpha  \partial f^{m+1}(s,x,v)\mathbf{1}_{\{t_{1}\geq 0\}}\Vert_\infty  \leq \text{ Last line of }\eqref{eqn: hm+1 bounded case 2}.
\end{align*}

Therefore, with $t\ll 1$ we conclude
\begin{equation}\label{hm+1 bounded case 2}
\sup_{s\leq t}\Vert e^{-\lambda\langle v\rangle s}\alpha  \partial f^{m+1}(s,x,v)\mathbf{1}_{\{t_{1}\geq 0\}}\Vert_\infty \leq 8t_*^{-k_0/15}C_{T_M,T_m}^{k_0}\big[\Vert \alpha\partial f_0\Vert_{\infty}+P(\sup_m \Vert w_\theta f^m\Vert_\infty)\big].
\end{equation}

Combining~\eqref{hm+1 bounded case 1} and~\eqref{hm+1 bounded case 2} we derive~\eqref{eqn: L_infty bound for f^m+1}.

Last we focus the parameters for $t_\infty$ in~\eqref{eqn: t_1}. In the proof the constraint for $t_\infty$ comes from~\eqref{condition for tinfty}. Thus from the definition of $k_0$ in~\eqref{eqn: k_0 dependence}, definition of $C_{T_M,T_m}$ in~\eqref{eqn: 1 one} and definition of $t_*$ in~\eqref{t* dependence}
\[t_\infty=t_\infty(t_*,k_0,C_{T_M,T_m},P( \sup_m\Vert w_\theta f^m\Vert_\infty))=t_\infty(T_M,T_m,\Omega,r_\perp,r_\parallel,\sup_m\Vert w_\theta f^m\Vert_\infty).\]
Thus we derive~\eqref{eqn: t_1}.

\end{proof}

\begin{proof}[\textbf{Proof of Theorem~\ref{Thm: dynamic C1}}]
The uniform-in-m bound~\eqref{eqn: theta'} follows from Proposition \ref{proposition: boundedness}. Then we follow the same argument to $e^{-\lambda \langle v\rangle t} \alpha [\partial f^{m+1}-\partial f^m]$ and conclude that $e^{-\lambda \langle v\rangle t} \alpha \partial f^m$ is a Cauchy sequence in $L^\infty$.
Then we pass the limit and conclude Theorem~\ref{Thm: dynamic C1}.

\end{proof}

\section{Weighted $C^1$-estimate of the stationary Boltzmann equation}
In this section we prove the weighted $C^1$-estimate of the stationary Boltzmann equation~\eqref{eqn: Steady Boltzmann}. In particular, we will prove Theorem \ref{thm: C1 steady}.

First we give the boundary condition for $f_s$ in the following lemma.

\begin{lemma}\label{Lemma: bc for fs}(Lemma 9 in \cite{chen})\\
The boundary condition for $f_s$ defined in~\eqref{equation of f_s} is given by
\begin{align*}
  f_s(x,v)|_{\gamma_-} &=r_s+e^{[\frac{1}{4T_0}-\frac{1}{2T_w(x)}]|v|^2}\int_{n(x)\cdot u>0} f_s(x,u)e^{-[\frac{1}{4T_0}-\frac{1}{2T_w(x)}]|u|^2}\dd \sigma(u,v).
\end{align*}
Here the remainder term $r_s$ is given by
\begin{equation}\label{rs}
r_s = \frac{\mu_{x,r_\parallel,r_\perp}-\mu_0 }{\sqrt{\mu_0}},
\end{equation}
with
\begin{equation}\label{mu_xr}
\begin{split}
   & \mu_{x,r_\parallel,r_\perp}=\frac{1}{2\pi [T_0(1-r_\parallel)^2+T_w(x)r_\parallel(2-r_\parallel)]}e^{-\frac{|v_\parallel|^2}{2[T_0(1-r_\parallel)^2+T_w(x)r_\parallel(2-r_\parallel)]}}
 \\
    & \times \frac{1}{T_0(1-r_\perp)+T_w(x)r_\perp}e^{-\frac{|v_\perp|^2}{2[T_0(1-r_\perp)+T_w(x)r_\perp]}}.
\end{split}
\end{equation}

\end{lemma}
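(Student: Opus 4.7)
The plan is to substitute $F_s = \mu_0 + \sqrt{\mu_0}f_s$ into the Cercignani-Lampis boundary identity $F_s(x,v)|n(x)\cdot v| = \int_{n(x)\cdot u>0}R(u\to v;x)F_s(x,u)\{n(x)\cdot u\}\,\mathrm{d}u$ and to separate the contribution of the reference Maxwellian $\mu_0$ from that of the perturbation $\sqrt{\mu_0}f_s$. Writing the right-hand side as $I_1 + I_2$, where
\begin{equation*}
I_1 := \frac{1}{|n(x)\cdot v|}\int_{n(x)\cdot u>0} R(u\to v;x)\mu_0(u)\{n(x)\cdot u\}\,\mathrm{d}u,
\end{equation*}
\begin{equation*}
I_2 := \frac{1}{|n(x)\cdot v|}\int_{n(x)\cdot u>0} R(u\to v;x)\sqrt{\mu_0(u)}f_s(x,u)\{n(x)\cdot u\}\,\mathrm{d}u,
\end{equation*}
the identity becomes $\mu_0(v) + \sqrt{\mu_0(v)}f_s(x,v) = I_1 + I_2$, so that $f_s = (I_1-\mu_0)/\sqrt{\mu_0} + I_2/\sqrt{\mu_0}$ on $\gamma_-$. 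The structure of the lemma then reduces to identifying $I_1$ with $\mu_{x,r_\parallel,r_\perp}$ and $I_2/\sqrt{\mu_0(v)}$ with the claimed integral operator acting on $f_s$.

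The treatment of $I_2$ is a direct transcription of the computation that produces \eqref{eqn:C-L boundary condition in pro measure} in Section 3, only with $\mu$ replaced by $\mu_0$. Namely, I would apply the reciprocity property \eqref{eqn: reciprocity} to rewrite $R(u\to v;x)$ in terms of $R(-v\to -u;x)$; the ratio $|n(x)\cdot v|/|n(x)\cdot u|$ produced by reciprocity cancels the surface-measure factor $|n(x)\cdot u|$ out front, while the ratio $e^{-|v|^2/(2T_w(x))}/e^{-|u|^2/(2T_w(x))}$ combines with $\sqrt{\mu_0(u)}/\sqrt{\mu_0(v)} = e^{(|v|^2-|u|^2)/(4T_0)}$ to produce exactly the exponential weights $e^{[\frac{1}{4T_0}-\frac{1}{2T_w(x)}]|v|^2}$ and $e^{-[\frac{1}{4T_0}-\frac{1}{2T_w(x)}]|u|^2}$. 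The probability measure $\mathrm{d}\sigma(u,v)=R(-v\to -u;x)\,\mathrm{d}u$ of \eqref{eqn:probability measure} then emerges in place of $R(u\to v;x)\,\mathrm{d}u$, yielding the second term of the claimed identity.

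For $I_1$, I would compute directly using the explicit form \eqref{eqn: Formula for R}. The key observation is that both the C-L kernel and $\mu_0(u)=\frac{1}{2\pi T_0^2}e^{-|u|^2/(2T_0)}$ factor across the decomposition $|u|^2=|u_\parallel|^2+u_\perp^2$, so after absorbing $|n(x)\cdot v|$ out front, $I_1$ splits as the product of a two-dimensional parallel integral and a one-dimensional perpendicular integral. The parallel factor is a standard Gaussian integral obtained by completing the square in $u_\parallel$ inside $\exp(-\frac{|v_\parallel-(1-r_\parallel)u_\parallel|^2}{2T_w r_\parallel(2-r_\parallel)} - \frac{|u_\parallel|^2}{2T_0})$, producing the parallel Gaussian in \eqref{mu_xr}. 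The perpendicular factor, carrying the Bessel weight $I_0$ from \eqref{I0}, is evaluated via the classical identity $\int_0^\infty x\, e^{-px^2}I_0(bx)\,\mathrm{d}x = \frac{1}{2p}e^{b^2/(4p)}$ applied with $p=\frac{1-r_\perp}{2T_w r_\perp}+\frac{1}{2T_0}$ and $b=\frac{(1-r_\perp)^{1/2}v_\perp}{T_w r_\perp}$, producing the perpendicular Gaussian in \eqref{mu_xr}. Tracking the normalizing constants $\frac{2}{\pi r_\perp r_\parallel(2-r_\parallel)(2T_w)^2}\cdot\frac{1}{2\pi T_0^2}$ shows they collapse exactly to the prefactor of $\mu_{x,r_\parallel,r_\perp}$.

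Setting $r_s := (I_1-\mu_0)/\sqrt{\mu_0} = (\mu_{x,r_\parallel,r_\perp}-\mu_0)/\sqrt{\mu_0}$ then yields the stated decomposition. The only nontrivial step is the evaluation of the Bessel integral in the perpendicular direction; the rest is bookkeeping of the exponential weights, handled exactly as in the derivation of \eqref{eqn:C-L boundary condition in pro measure}. No convexity of $\Omega$ or smallness of $\delta_0$ is needed at this stage — the identity is purely algebraic.
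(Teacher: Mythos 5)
Your argument is correct. Note that this paper does not prove the lemma itself but imports it as Lemma 9 of \cite{chen}; your derivation is the natural one and checks out: the perturbation part follows from reciprocity~\eqref{eqn: reciprocity} exactly as in the derivation of~\eqref{eqn:C-L boundary condition in pro measure} (with $T_M$ replaced by $T_0$), and for the Maxwellian part the completion of the square in $u_\parallel$ and the identity $\int_0^\infty x\,e^{-px^2}I_0(bx)\,\dd x=\tfrac{1}{2p}e^{b^2/(4p)}$ with $p=\tfrac{1-r_\perp}{2T_w r_\perp}+\tfrac{1}{2T_0}$, $b=\tfrac{(1-r_\perp)^{1/2}|v_\perp|}{T_w r_\perp}$ reproduce both the exponents and the prefactor of $\mu_{x,r_\parallel,r_\perp}$ in~\eqref{mu_xr}, so $r_s=(\mu_{x,r_\parallel,r_\perp}-\mu_0)/\sqrt{\mu_0}$ as claimed.
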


As mentioned in the introduction, when we perform the integration by parts, polynomial terms appear in the integration. In the next lemma we will bound all the possible integration related to the C-L boundary.

\begin{lemma}\label{Lemma: integrable}
Denote
\begin{equation}\label{front term}
\mathcal{A}:=\frac{2}{r_\perp r_\parallel(2-r_\parallel)\pi} \frac{1}{(2T_w(x_1))^2} e^{[\frac{1}{4T_0}-\frac{1}{2T_w(x_1)}]|v|^2}.
\end{equation}
For~\eqref{back term} given by
\begin{equation}\label{back term}
\begin{split}
    &e^{-[\frac{1}{4T_0}-\frac{1}{2T_w(x_1)}]|v_1|^2}(n(x_1)\cdot v_1)I_0\bigg(\frac{(1-r_\perp)^{1/2}v_{1,\perp}v_{\perp}}{r_\perp T_w(x_1)} \bigg) \\
    &\times \exp\bigg(-\frac{1}{2T_w(x_1)}\Big[ \frac{|v_{1,\perp}|^2+(1-r_\perp)|v_\perp|^2}{r_\perp}+\frac{|v_{1,\parallel}-(1-r_\parallel)v_\parallel|^2}{r_\parallel(2-r_\parallel)} \Big]\bigg),
\end{split}
\end{equation}
under the condition~\eqref{eqn: small pert condition}, we have
\begin{equation}\label{integrate back}
\mathcal{A}\times \int_{n(x_1)\cdot v_1>0}      [1+|v_1|^2+|v|^2]     \eqref{back term} \lesssim 1,
\end{equation}

\begin{equation}\label{integrate back / alpha}
\mathcal{A}\times \int_{n(x_1)\cdot v_1>0}  \frac{1}{\alpha(x_1,v_1)}       \eqref{back term} \lesssim 1,
\end{equation}

\begin{equation}\label{integrate nablav back}
\mathcal{A}\times \int_{n(x_1)\cdot v_1>0}        [1+|v_1|]  \nabla_{v_1} [\eqref{back term}] \lesssim 1.
\end{equation}

For $x_1 = \eta_{p^1}(\mathbf{x}_{p^1}^1)$ and $i=1,2$
\begin{equation}\label{integrate partial_x A}
 \partial_{\mathbf{x}_{p^1,i}^1} \mathcal{A}\times \int_{n(x_1)\cdot v_1>0}          \eqref{back term}  \lesssim 1.
\end{equation}

\end{lemma}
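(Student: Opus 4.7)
The strategy is to view $\mathcal{A}\cdot\eqref{back term}$ as essentially the probability measure $d\sigma(v_1,v)$ dressed with the exponential weights $e^{[\frac{1}{4T_0}-\frac{1}{2T_w(x_1)}]|v|^2}e^{-[\frac{1}{4T_0}-\frac{1}{2T_w(x_1)}]|v_1|^2}$ and an extra factor $(n(x_1)\cdot v_1)$. I would factor the integrals over $v_1$ into independent integrals in $v_{1,\perp}$ and $v_{1,\parallel}$, and then use Lemma \ref{Lemma: abc} for the tangential Gaussian and Lemma \ref{Lemma: perp abc} for the normal direction with the Bessel factor $I_0$. With the parameter choices $a=-[\frac{1}{4T_0}-\frac{1}{2T_w(x_1)}]$, $b=\frac{1}{2T_w(x_1)r_\perp}$ (normal) or $b=\frac{1}{2T_w(x_1)r_\parallel(2-r_\parallel)}$ (tangential) and $\e=0$, these lemmas yield a coefficient of $|v|^2$ of the exact form $\frac{(2T_0-T_w(x_1))(1-r_{\min})}{2T_w(x_1)[2T_0(1-r_{\min})+r_{\min}T_w(x_1)]}$, which together with the prefactor $\frac{1}{4T_0}-\frac{1}{2T_w(x_1)}$ from $\mathcal{A}$ sums to a non-positive number precisely under the small perturbation condition~\eqref{eqn: small pert condition}. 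That is the principal bookkeeping check.

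For~\eqref{integrate back}, the extra polynomial factor $[1+|v|^2+|v_1|^2]$ is absorbed via Lemma \ref{Lemma: extra term} (or directly $|v_1|^2\le \e^{-1}e^{\e|v_1|^2}$), which only enlarges $b-a$ by a tiny $\e$; under~\eqref{eqn: small pert condition} the resulting combined coefficient of $|v|^2$ stays non-positive after adding $\frac{1}{4T_0}-\frac{1}{2T_w(x_1)}$. The $|v|^2$ factor outside the integral is handled the same way, pulling $|v|^2\lesssim \e^{-1}e^{\e|v|^2}$ outside and absorbing the $\e|v|^2$ term into the same bookkeeping.

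For~\eqref{integrate back / alpha}, I would use~\eqref{cancel n dot v}, which gives $\alpha(x_1,v_1)\sim n(x_1)\cdot v_1$ for grazing $v_1$, and $\alpha(x_1,v_1)\gtrsim 1$ otherwise; splitting the integration domain accordingly, the grazing piece cancels one copy of $n(x_1)\cdot v_1$ in~\eqref{back term}, reducing the problem to a purely Gaussian integral already handled above, while the non-grazing piece is trivial. For~\eqref{integrate nablav back}, the derivative $\nabla_{v_1}$ either hits the exponential (producing a linear factor in $v_1$), the Bessel function $I_0$ (producing $I_0'=I_1$, which obeys the same kind of estimate as $I_0$ but with an additional linear growth in $v_{1,\perp}$), or the factor $n(x_1)\cdot v_1$ (producing a constant). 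All three situations produce at most a polynomial $[1+|v|+|v_1|]$ factor on top of the same Gaussian kernel, so they reduce to the estimate~\eqref{integrate back}. For~\eqref{integrate partial_x A}, $\partial_{\mathbf{x}_{p^1,i}^1}\mathcal{A}$ brings down at most $\nabla_xT_w(x_1)\cdot[1+|v|^2]$ times $\mathcal{A}$; using $\|T_w\|_{C^1(\partial\Omega)}<\infty$ and treating the extra $|v|^2$ by $\e^{-1}e^{\e|v|^2}$ as before, this reduces to the bounded case.

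The main obstacle is the careful bookkeeping of exponential coefficients in the $v$ variable after integrating out $v_1$: one must verify that for every combination (polynomial prefactor, $v_1$-derivative falling on either exponential or $I_0$, $x$-derivative on $\mathcal{A}$) the coefficient $\frac{(2T_0-T_w(x_1))(1-r_{\min})}{2T_w(x_1)[2T_0(1-r_{\min})+r_{\min}T_w(x_1)]}+[\frac{1}{4T_0}-\frac{1}{2T_w(x_1)}]+O(\e)$ of $|v|^2$ is non-positive. This is where the smallness hypothesis $\max\{|T_w-T_0|,|1-r_\perp|,|1-r_\parallel|\}<\delta_0$ is used in a quantitative way, exactly paralleling the constraint~\eqref{eqn: Constrain on T} in the dynamical proof.
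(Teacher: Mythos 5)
Your proposal is correct in substance, but it takes a noticeably different route from the paper for the core estimates \eqref{integrate back}--\eqref{integrate nablav back}. The paper never invokes the exact Gaussian-integration machinery of Lemma \ref{Lemma: abc} and Lemma \ref{Lemma: perp abc} here: instead it uses the smallness in \eqref{eqn: small pert condition} to bound all the C-L--specific coefficients ($\tfrac{1-r_\perp}{r_\perp}$, $\tfrac{(1-r_\perp)^{1/2}}{r_\perp}$, $\tfrac{(1-r_\parallel)^2}{r_\parallel(2-r_\parallel)}$, $\tfrac{1}{T_w}-\tfrac{1}{T_0}$) by $O(\e)$, bounds $I_0$ by the exponential of its argument and uses $|ab|\lesssim a^2+b^2$, so that \eqref{back term} collapses to $|v_1|e^{-\frac{1}{4T_0}|v_1|^2}e^{\frac{O(\e)}{T_0}(|v|^2+|v_1|^2)}$; combined with the $e^{[\frac{1}{4T_0}-\frac{1}{2T_w}]|v|^2}$ in $\mathcal{A}$ this gives a net $e^{-\frac{1-O(\e)}{4T_0}|v|^2}$ that swallows every polynomial factor, and the remaining pieces ($1/\alpha$ cancelled by $n(x_1)\cdot v_1$, the $\nabla_{v_1}$ and $\partial_{\mathbf{x}^1_{p^1,i}}$ terms producing only polynomial factors) are handled exactly as you describe. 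Your route instead recycles the dynamical-section lemmas with $a=\frac{1}{2T_w}-\frac{1}{4T_0}$, $b=\frac{1}{2T_wr_\perp}$ or $\frac{1}{2T_wr_\parallel(2-r_\parallel)}$, $w=\sqrt{1-r_\perp}\,v_\perp$ or $(1-r_\parallel)v_\parallel$, and checks that the resulting exponent plus $\frac{1}{4T_0}-\frac{1}{2T_w}$ is negative (it equals $-\frac{(2T_0-T_w)r}{4T_0[2T_0(1-r)+T_wr]}$, so strictly negative with margin $\sim\frac{1}{4T_0}$ in the perturbative regime, which you need in order to absorb the outside factor $[1+|v|^2]$ — mere non-positivity would not suffice). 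This is heavier bookkeeping than the paper's, with two small points to mind: the factor $n(x_1)\cdot v_1$ in \eqref{back term} means Lemma \ref{Lemma: perp abc} does not apply verbatim and the linear factor must be fed into the $e^{\e v^2}$ slot (or one uses a Lemma \ref{Lemma: integrate normal small}-type variant), and your phrase ``precisely under the small perturbation condition'' undersells your own method — its payoff is exactly that it would prove the lemma under a relaxed constraint of the type \eqref{eqn: Constrain on T}, consistent with the paper's remark that \eqref{eqn: small pert condition} is imposed here only to simplify the proof, whereas the paper's crude $O(\e)$ reduction genuinely needs the smallness.
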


\begin{remark}
The condition~\eqref{eqn: small pert condition} is not necessary in this lemma. Since we will only use this lemma for the stationary problem we impose such condition to simplify the proof.
\end{remark}

\begin{proof}
From condition~\eqref{eqn: small pert condition}, we have $|T_w(x_1)-T_0|,|1-r_\perp|,|1-r_\parallel|\ll 1$. Then for some $\e\ll 1$,
\[|\frac{1}{r_\perp}-1|, \ |1-\frac{1}{r_\parallel(2-r_\parallel)}|=|1-\frac{1}{1-(1-r_\parallel)^2}|\lesssim O(\e),  \]
\[|\frac{(1-r_\perp)}{r_\perp}|, \ |\frac{(1-r_\perp)^{1/2}}{r_\perp}| , \  |\frac{2(1-r_\parallel)}{r_\parallel(2-r_\parallel)}|, \  |\frac{(1-r_\parallel)^2}{r_\parallel(2-r_\parallel)}| \lesssim O(\e),\]
\[|\frac{1}{T_w(x_1)}-\frac{1}{T_0}|\lesssim O(\e).\]
Hence
\begin{align}
\eqref{back term}   &\lesssim |n(x_1)\cdot v_1| e^{-[\frac{1}{4T_0}-\frac{1}{2T_0}]|v_1|^2}e^{\frac{O(\e)}{T_0}|v_1|^2}e ^{-\frac{1}{2T_0}|v_{1,\perp}|^2 }e^{\frac{O(\e)}{T_0}|v_{1,\perp}|^2}e^{\frac{O(\e)}{T_0}|v_\perp|^2} \times e^{\frac{O(\e)}{T_0}v_{1,\perp}v_\perp}  \notag\\
   & \times  e^{-\frac{1}{2T_0}|v_{1,\parallel}|^2} e^{\frac{O(\e)}{T_0}|v_{1,\parallel}v_\parallel|} e^{\frac{O(\e)}{T_0}|v_\parallel|^2} \label{first bound of back} \\
   &  \lesssim  |v_1|e^{-\frac{1}{4T_0}|v_1|^2}    e^{\frac{O(\e)}{T_0}|v|^2} e^{\frac{O(\e)}{T_0}|v_1|^2}, \notag
\end{align}
in the last line we have used $|ab|\lesssim |a|^2+|b|^2$, $|v_\parallel|^2+|v_\perp|^2 = |v|^2$.

Thus using $\e \ll 1$ and~\eqref{front term} we have
\begin{align}
  & \mathcal{A}\int_{n(x_1)\cdot v_1>0}[1+|v|^2+|v_1|^2]\eqref{back term} \\
   & \lesssim \frac{1}{T_0^2}e^{[\frac{1}{4T_0}-\frac{1}{2T_0}]|v|^2}e^{\frac{O(\e)}{T_0}|v|^2}[1+|v|^2]\int_{n(x_1)\cdot v_1>0} |v_1|[1+|v_1|^2] e^{-\frac{1}{4T_0}|v_1|^2}    e^{\frac{O(\e)}{T_0}|v|^2} e^{\frac{O(\e)}{T_0}|v_1|^2}\notag \\
   & \lesssim  \frac{1}{T_0^2}   e^{-\frac{1-O(\e)}{4T_0}|v|^2}[1+|v|^2] \int_{n(x_1)\cdot v_1>0} |v_1|[1+|v_1|^2]e^{-\frac{1-O(\e)}{4T_0}|v_1|^2} \lesssim 1,   \label{bdd of A back}
\end{align}
where we used $T_0\gtrsim 1$. Then we conclude~\eqref{integrate back}.

\eqref{integrate back / alpha} follows from~\eqref{first bound of back}, where $\frac{1}{\alpha(x_1,v_1)}$ is cancelled by $|n(x_1)\cdot v_1|$, and the rest computation is the same.

Then we prove~\eqref{integrate nablav back}. From~\eqref{back term}, taking the $v_1$ derivative we will have extra term
\[[\frac{1}{4T_0}-\frac{1}{2T_w(x_1)}]|v_1|, \quad \frac{|v_{1,\perp}|}{T_w(x_1)r_\perp}, \quad \frac{|v_{1,\parallel}-(1-r_\parallel)v_\parallel|}{T_w(x_1)r_\parallel(2-r_\parallel)},\]
and from~\eqref{I0},
\begin{align*}
\nabla_{v_1}I_0\bigg(\frac{(1-r_\perp)^{1/2}v_{1,\perp}v_{\perp}}{r_\perp T_w(x_1)}\bigg)   & =\pi^{-1}\int_0^{\pi}e^{\frac{(1-r_\perp)^{1/2}v_{1,\perp}v_{\perp}}{r_\perp T_w(x_1)}\cos \phi} \nabla_{v_1} \Big(\frac{(1-r_\perp)^{1/2}v_{1,\perp}v_{\perp}}{r_\perp T_w(x_1)}\Big)\cos \phi \dd \phi \\
   & \lesssim \nabla_{v_1} \Big(\frac{(1-r_\perp)^{1/2}v_{1,\perp}v_{\perp}}{r_\perp T_w(x_1)}\Big) I_0\bigg(\frac{(1-r_\perp)^{1/2}v_{1,\perp}v_{\perp}}{r_\perp T_w(x_1)}\bigg),
\end{align*}
the extra term is
\[\frac{(1-r_\perp)^{1/2}|v_\perp|}{T_w(r_1)r_\perp}.\]
Thus all the extra term can be bounded as
\[\frac{|v|+|v_1|}{T_0}\lesssim  |v|+|v_1|\lesssim [1+|v|^2+|v_1|^2].\]
This upper bound is already included in~\eqref{integrate back}. Thus we conclude~\eqref{integrate nablav back}.

Last we prove~\eqref{integrate partial_x A}. From~\eqref{front term} taking $\partial_{\mathbf{x}_{p^1,i}^1}$ derivative we have extra term
\[\frac{\partial_i \eta_{p^1}(\mathbf{x}_{p^1}^1)}{T_w(x_1)^3}, \quad \frac{\partial_i \eta_{p^1}(\mathbf{x}_{p^1}^1)}{T_w(x_1)^2}|v|^2.\]

From~\eqref{back term}, taking $\partial_{\mathbf{x}_{p^1,i}^1}$ derivative we have extra term
\[\frac{\partial_i \eta_{p^1}(\mathbf{x}_{p^1}^1)}{T^2_w(x_1)}|v_1|^2,\quad \frac{\partial_i \eta_{p^1}(\mathbf{x}_{p^1}^1)}{T^2_w(x_1)}.  \]
The extra term are bounded by
\[\Vert \eta\Vert_{C^1}[\frac{1}{T_0^3}+\frac{1}{T_0^2}][1+|v|^2],\]
which is included in~\eqref{integrate back}. Thus we conclude~\eqref{integrate partial_x A}.

\end{proof}

Then we start to prove Theorem \ref{thm: C1 steady}. The main idea is to express the characteristic of~\eqref{equation of f_s} by using the Duhamel's principle:
\begin{align}
  f_s(x,v) &=\mathbf{1}_{t\geq \tb} e^{-\nu(v)\tb}f_s(\xb,v) \notag \\
   &+\mathbf{1}_{t<\tb}e^{-\nu(v)t}f_s(x-tv,v)  \label{trajectory} \\
   & +\int_{\max\{0,t-\tb\}}^t e^{-\nu(v)(t-s)}h(x-(t-s)v,v)\dd s,\notag
\end{align}
where $h = K(f_s)+\Gamma(f_s,f_s)$.

Here in order to distinguish between Euclidean coordinate and the backward cycles, we denote
\begin{equation}\label{[x]}
x=([x]_1,[x]_2,[x]_3).
\end{equation}

Thus
\[\nabla_x = (\partial_{[x]_1}, \partial_{[x]_2},\partial_{[x]_3}).\]

We take the spatial derivative to~\eqref{trajectory} to have
\begin{align}
  \partial_{[x]_j}f_s(x,v) & =\mathbf{1}_{t\geq \tb}e^{-\nu(v)\tb}\partial_{[x]_j}[f_s(\xb,v)] \label{f1} \\
   & -\mathbf{1}_{t\geq \tb}\nu(v)\partial_{[x]_j}\tb(x,v)e^{-\nu(v)\tb}f_s(\xb,v) \label{f2}\\
   & +\mathbf{1}_{t<\tb}e^{-\nu(v)t}\partial_{[x]_j}[f_s(x-tv,v)] \label{f3}\\
   & +\int^t_{\max\{0,t-\tb\}} e^{-\nu(v)(t-s)}\partial_{[x]_j}[h(x-(t-s)v,v)]\dd s \label{f4}\\
   & -\mathbf{1}_{t\geq \tb}\partial_{[x]_j}\tb e^{-\nu(v)\tb}h(x-\tb v,v).\label{f5}
\end{align}

First we give an estimate for~\eqref{f1}-\eqref{f5}.
\begin{lemma}\label{Lemma: prior estimate}
For $h=Kf_s+\Gamma(f_s,f_s)$, we can express $\partial_{[x]_j}f_s(x,v)$ as
\begin{align}
\partial_{[x]_j}f_s(x,v) & = \frac{O(1)[\Vert w_\vartheta f_s\Vert_\infty+\Vert w_\vartheta f_s\Vert_\infty^2]+o(1)\Vert \alpha\nabla_x f_s\Vert_\infty }{\alpha(x,v)} \label{cons}\\
   &+\int^t_{\max\{0,t-\tb\}} e^{-\nu(v)(t-s)}\int_{\mathbb{R}^3} \mathbf{k}(v,u) \partial_{[x]_j}f(x-(t-s)v,u)\dd u \dd s \label{collision} \\
   & +\sum_{i=1,2}\frac{\partial \mathbf{x}^1_{p^1,i}}{\partial [x]_j}\partial_{\mathbf{x}^1_{p^1,i}}[f_s(\eta_{p^1}(\mathbf{x}_{p^1}^1),v)]    . \label{bdr term}
\end{align}

\end{lemma}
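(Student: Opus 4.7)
The plan is to estimate each of the five terms (f1)--(f5) in the Duhamel representation separately and regroup them into (cons), (collision) and (bdr term). First, the initial-data term (f3) does not contribute in the steady regime: sending $t \to \infty$ (or noting that $f_s$ is time-independent so we may take $t \geq \tb$ throughout), the factor $e^{-\nu(v)t}$ kills it since $\nu(v) \gtrsim 1$ and $f_s$ is bounded. Only (f1), (f2), (f4), (f5) remain.

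Next, I would bound (f2) and (f5) using the kinetic weight. By Lemma \ref{Lemma: nabla tbxb}, $|\partial_{[x]_j}\tb| = |n(\xb)|/|n(\xb) \cdot v| \lesssim 1/|n(\xb) \cdot v|$, and by the relation (cancel n dot v) combined with the invariance in Lemma \ref{Lemma: velocity lemma}, $|n(\xb) \cdot v| \sim \alpha(\xb,v) \sim \alpha(x,v)$. Hence $|\partial_{[x]_j}\tb| \lesssim 1/\alpha(x,v)$. For (f2) the factor $\nu(v) e^{-\nu(v)\tb} |f_s(\xb, v)| \lesssim (1+|v|) e^{-\vartheta|v|^2} \Vert w_\vartheta f_s \Vert_\infty \lesssim \Vert w_\vartheta f_s \Vert_\infty$, producing $O(\Vert w_\vartheta f_s \Vert_\infty)/\alpha(x,v)$ which sits inside (cons). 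For (f5) one uses Lemma \ref{Lemma: collision operator} to get $|h(\xb,v)| \leq |Kf_s| + |\Gamma(f_s,f_s)| \lesssim \Vert w_\vartheta f_s \Vert_\infty + \Vert w_\vartheta f_s \Vert_\infty^2$, giving the matching contribution to (cons).

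For (f4), I would differentiate $h(x-(t-s)v,v) = (Kf_s + \Gamma(f_s,f_s))(x-(t-s)v,v)$ under the integral. The $K$-part directly yields the term $\int \mathbf{k}(v,u)\partial_{[x]_j} f_s(x-(t-s)v,u)\,\mathrm{d}u$, which is (collision); the integrable piece of the $\Gamma$-part from (nabla gamma) has the same $\mathbf{k}_\varrho$-structure and can be absorbed into (collision) since $\mathbf{k}$ and $\mathbf{k}_\varrho$ share the same integrability properties. The remaining singular piece in (nabla gamma) is $\Vert w_\vartheta f_s\Vert_\infty \Vert \alpha\nabla_x f_s\Vert_\infty/\alpha(x-(t-s)v,v)$; by the invariance $\alpha(x-\tau v,v) \gtrsim e^{-C|v|\tau}\alpha(x,v)$ together with the geometric bound $|v|\tau \leq |v|\tb \lesssim \operatorname{diam}(\Omega)$, the $e^{-\nu(v)(t-s)}$-integral of this over $[\max\{0,t-\tb\},t]$ is bounded by $C \Vert w_\vartheta f_s\Vert_\infty \Vert \alpha\nabla_x f_s\Vert_\infty/\alpha(x,v)$. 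Under (eqn small pert condition), Corollary \ref{Thm: steady solution} gives $\Vert w_\vartheta f_s\Vert_\infty \lesssim \delta_0 \ll 1$, so this is precisely the $o(1)\Vert \alpha\nabla_x f_s\Vert_\infty/\alpha(x,v)$ piece of (cons).

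Finally, for (f1), I would apply the chain rule in the local chart of Definition \ref{definition: chart}. Writing $\xb = \eta_{p^1}(\mathbf{x}_{p^1}^1)$ with $\mathbf{x}_{p^1,3}^1=0$ and recalling from Lemma \ref{Lemma: nabla tbxb} that $\nabla_x \xb = \mathrm{Id} - n(\xb)\otimes v / (n(\xb)\cdot v)$ projects onto the tangent plane at $\xb$, only the two tangential directions $i=1,2$ contribute, yielding
\[
\partial_{[x]_j}[f_s(\xb,v)] = \sum_{i=1,2} \frac{\partial \mathbf{x}^1_{p^1,i}}{\partial [x]_j}\, \partial_{\mathbf{x}^1_{p^1,i}}[f_s(\eta_{p^1}(\mathbf{x}^1_{p^1}),v)],
\]
which is exactly (bdr term); the benign factor $e^{-\nu(v)\tb} \leq 1$ is absorbed. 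The main obstacle is the singular $1/\alpha(x-\tau v,v)$ in the $\Gamma$-contribution to (f4), since it degenerates near grazing trajectories; this is overcome by combining Lemma \ref{Lemma: velocity lemma} with the geometric bound on $|v|\tb$ so that the exponential $e^{C|v|\tau}$ produced by the invariance remains uniformly bounded, and then exploiting the smallness of $\Vert w_\vartheta f_s\Vert_\infty$ to produce the $o(1)$ required by the statement.
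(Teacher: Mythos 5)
Your handling of \eqref{f1}, \eqref{f2}, \eqref{f4} and \eqref{f5} is essentially the paper's argument: the chain rule in the local chart for \eqref{f1}; the bound $|\partial_{[x]_j}\tb|\lesssim 1/\alpha(x,v)$ together with $\nu(v)\lesssim w_\vartheta(v)$, \eqref{Kf bdd} and \eqref{gamma bdd} for \eqref{f2} and \eqref{f5}; and \eqref{nabla gamma} for \eqref{f4}, where your velocity-lemma argument with $|v|\tb\lesssim \mathrm{diam}(\Omega)$ plays the role of Lemma \ref{Lemma: NLN}, and the smallness of $\Vert w_\vartheta f_s\Vert_\infty$ from Corollary \ref{Thm: steady solution} produces the $o(1)$. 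One bookkeeping slip: the $\mathbf{k}_\varrho$-part of the $\Gamma$-derivative cannot literally be ``absorbed into \eqref{collision}'', since \eqref{collision} carries the kernel $\mathbf{k}(v,u)$ of $K$ and $\mathbf{k}\lesssim\mathbf{k}_\varrho$ only in one direction; the paper instead applies Lemma \ref{Lemma: NLN} to that piece and places it in \eqref{cons}, which your own estimates also give, so this is harmless once rephrased.

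The genuine gap is your dismissal of \eqref{f3}. That term is $e^{-\nu(v)t}\partial_{[x]_j}[f_s(x-tv,v)]$: it contains the unknown derivative, not $f_s$ itself, so ``$f_s$ is bounded'' does not make it negligible --- a priori the derivative is only controlled through $\Vert\alpha\nabla_x f_s\Vert_\infty/\alpha(x-tv,v)$, which is precisely the quantity being estimated. Nor can you ``take $t\geq\tb$ throughout'': $\tb(x,v)\to\infty$ as $|v|\to 0$, so no fixed $t$ works uniformly, and letting $t$ depend on $(x,v)$ would break the fixed-$t$ structure of \eqref{collision} that is reused when the lemma is applied again (to $f_s(x_1,v_1)$ and inside \eqref{kk3}) in the proof of Theorem \ref{thm: C1 steady}. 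The paper's treatment keeps \eqref{f3} and bounds it by $o(1)\Vert\alpha\nabla_x f_s\Vert_\infty/\alpha(x,v)$, the smallness coming from $e^{-\nu(v)t}$ with a fixed $t\gg 1$ and $\nu(v)\gtrsim 1$, combined with the comparison of $\alpha(x-tv,v)$ and $\alpha(x,v)$ via Lemma \ref{Lemma: velocity lemma} (using $|v|t\leq |v|\tb\lesssim\mathrm{diam}(\Omega)$ on the set $\{t<\tb\}$, exactly as you argued for \eqref{f4}); this contribution then sits inside \eqref{cons}. With that correction your argument coincides with the paper's proof.
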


\begin{proof}
We estimate every term in~\eqref{f1}-\eqref{f5}.

From the chain rule and the definition of $\eta_{p^1}(\mathbf{x}_{p^1}^1)=x_1$ in~\eqref{xkvk}, the contribution of \eqref{f1} is \eqref{bdr term}.

For~\eqref{f2}, since $\nu(v)\lesssim w_{\vartheta}(v)$, we apply~\eqref{nabla_tbxb} to get
\begin{align*}
 \eqref{f2} &=\frac{O(1)\Vert \nu(v) f_s(\xb,v)\Vert}{\alpha(x,v)}   \lesssim  \frac{O(1)\Vert w_{\vartheta} f_s\Vert}{\alpha(x,v)}.
\end{align*}
Such contribution is included in~\eqref{cons}.

For~\eqref{f3}, using $t\gg 1$ we get
\begin{align*}
  \eqref{f3} &=o(1)\partial_{[x]_j}f_s(x-tv,v)=o(1)\frac{\Vert \alpha\nabla_x f_s\Vert_\infty}{\alpha(x,v)}.
\end{align*}

For~\eqref{f4}, we first consider the contribution of $h=K(f_s)$, which reads
\begin{align*}
 \eqref{f4}_{h=K}  &=\int^t_{\max\{0,t-\tb\}}e^{-\nu(v)(t-s)} \partial_{[x]_j}[\int_{\mathbb{R}^3} \mathbf{k}(v,u)f_s(x-(t-s)v,u)  ]   \dd s  \\
   & =\int^t_{\max\{0,t-\tb\}}e^{-\nu(v)(t-s)}\int_{\mathbb{R}^3} \mathbf{k}(v,u) \partial_{[x]_j} f_s(x-(t-s)v,u)     \dd s.
\end{align*}
Such contribution is included in~\eqref{collision}.

Then we consider the contribution of $h=\Gamma(f_s,f_s)$. By~\eqref{nabla gamma} we have
\begin{align*}
\eqref{f4}_{h=\Gamma}   &\lesssim \frac{\Vert w_\vartheta f_s\Vert_\infty \Vert \alpha\nabla_x f_s\Vert_\infty }{\alpha(x,v)}   + \Vert w_\vartheta f_s\Vert_\infty \int^t_{\max\{0,t-\tb\}}e^{-\nu(v)(t-s)} \int_{\mathbb{R}^3} \mathbf{k}_\rho(v,u) |\partial_{[x]_j}f_s(x-(t-s)v,u)|     \dd s       \\
   & =\frac{\Vert w_\vartheta f_s\Vert_\infty \Vert \alpha\nabla_x f_s\Vert_\infty }{\alpha(x,v)}   +\Vert w_\vartheta f_s\Vert_\infty \int^t_{\max\{0,t-\tb\}}e^{-\nu(v)(t-s)} \int_{\mathbb{R}^3} \mathbf{k}_\rho(v,u) \frac{\Vert \alpha \nabla_x f_s  \Vert_\infty}{\alpha(x-(t-s)v,u)}    \dd s\\
   &\lesssim     \frac{\Vert w_\vartheta f_s\Vert_\infty \Vert \alpha\nabla_x f_s\Vert_\infty
   }{\alpha(x,v)},
\end{align*}
where we have applied Lemma \ref{Lemma: NLN} in the last line. Since $\Vert w_\vartheta f_s\Vert \ll 1$ from Corollary \ref{Thm: steady solution}, the contribution of $h=\Gamma(f_s,f_s)$ of~\eqref{f4} in included in~\eqref{cons}.

For the last term~\eqref{f5}, we apply~\eqref{nabla_tbxb} and~\eqref{Kf bdd} \eqref{gamma bdd} to get
\begin{align*}
  \eqref{f5} & =\frac{O(1)\Vert h\Vert_\infty}{\alpha(x,v)}=\frac{O(1)\Vert w_\vartheta f_s \Vert_\infty^2}{\alpha(x,v)}.
\end{align*}
Such contribution is included in~\eqref{cons}.

Then we conclude the lemma.

\end{proof}

Then we start the proof of Theorem \ref{thm: C1 steady}.

\begin{proof}[\textbf{Proof of Theorem \ref{thm: C1 steady}}]

By Lemma \ref{Lemma: prior estimate}, we only need to estimate~\eqref{collision} and~\eqref{bdr term}.

First we estimate~\eqref{bdr term}. By~\eqref{xi deri xbp} in Lemma \ref{Lemma: nabla tbxb} we have
\begin{align}
 \eqref{bdr term}  & =\frac{O(1)}{\alpha(x,v)}\sum_{i=1,2}\underbrace{\partial_{\mathbf{x}^1_{p^1,i}}[f_s(\eta_{p^1}(\mathbf{x}_{p^1}^1),v)]}_{\eqref{partial  bdr}_*}.\label{partial bdr}
\end{align}

Using the notation~\eqref{bar_v} and Lemma \ref{Lemma: bc for fs}, the boundary condition at $f_s(\eta_{p^1}(\mathbf{x}_{p^1}^1),v)$ can be written as
\begin{align*}
 &f_s(\eta_{p^1}(\mathbf{x}_{p^1}^1),v)  \\ &=r_s+\mathcal{A}\int_{n(\eta_{p^1}(\mathbf{x}_{p^1}^1))\cdot v_1>0} f_s(\eta_{p^1}(\mathbf{x}_{p^1}^1),v_1)e^{-[\frac{1}{4T_0}-\frac{1}{2T_w(\eta_{p^1}(\mathbf{x}_{p^1}^1))}]|v_1|^2}  I_0\bigg(\frac{(1-r_\perp)^{1/2}v_{1,\perp}v_{\perp}}{r_\perp T_w(\eta_{p^1}(\mathbf{x}_{p^1}^1))} \bigg) \\
   & \times |n(\eta_{p^1}(\mathbf{x}_{p^1}^1))\cdot v_1|\exp\bigg(-\frac{1}{2T_w(\eta_{p^1}(\mathbf{x}_{p^1}^1))}\Big[ \frac{|v_{1,\perp}|^2+(1-r_\perp)|v_\perp|^2}{r_\perp}+\frac{|v_{1,\parallel}-(1-r_\parallel)v_\parallel|^2}{r_\parallel(2-r_\parallel)} \Big]\bigg)\\
  & =r_s+\mathcal{A} \times \int_{\mathbf{v}_{p^1,3}^1>0}    f_s(\eta_{p^1}(\mathbf{x}_{p^1}^1),T^t_{\mathbf{x}_{p^1}^1}\mathbf{v}^1_{p^1}) \times \eqref{back term coordinate} \dd \mathbf{v}_{p^1}^1,\\
\end{align*}
with
\begin{equation}\label{back term coordinate}
\begin{split}
    & \mathbf{v}_{p^1,3}^1e^{-[\frac{1}{4T_0}-\frac{1}{2T_w(\eta_{p^1}(\mathbf{x}_{p^1}^1))}]|\mathbf{v}^1_{p^1}|^2}   I_0\bigg(\frac{(1-r_\perp)^{1/2}\mathbf{v}_{p^1,3}^1v_{\perp}}{r_\perp T_w(\eta_{p^1}(\mathbf{x}_{p^1}^1))} \bigg) \\
     & \times \exp\bigg(-\frac{1}{2T_w(\eta_{p^1}(\mathbf{x}_{p^1}^1))}\Big[ \frac{|\mathbf{v}_{p^1,3}^1|^2+(1-r_\perp)|v_\perp|^2}{r_\perp}+\frac{|(T^t_{\mathbf{x}_{p^1}^1}\mathbf{v}^1_{p^1}-\mathbf{v}_{p^1,3}^1n(\eta_{p^1}(\mathbf{x}_{p^1}^1)))-(1-r_\parallel)v_\parallel|^2}{r_\parallel(2-r_\parallel)} \Big]\bigg).
\end{split}
\end{equation}

Taking $\partial_{\mathbf{x}_{p^1,i}^1}$ to $f_s(\eta_{p^1}(\mathbf{x}_{p^1}^1),v)$ we get
\begin{align}
\partial_{\mathbf{x}_{p^1,i}^1} f_s(\eta_{p^1}(\mathbf{x}_{p^1}^1),v)  &\lesssim \partial_{\mathbf{x}_{p^1,i}^1} r_s  \label{bc1}\\
   &+  \partial_{\mathbf{x}_{p^1,i}^1}[\mathcal{A}]\int_{\mathbf{v}_{p^1,3}^1>0} \Vert w_\vartheta f_s\Vert_\infty \times \eqref{back term coordinate} \label{bc2}\\
   & + \mathcal{A}\int_{\mathbf{v}_{p^1,3}^1>0} \partial_{\mathbf{x}_{p^1,i}^1}[f_s(\eta_{p^1}(\mathbf{x}_{p^1}^1),T^t_{\mathbf{x}_{p^1}^1}\mathbf{v}^1_{p^1})] \times \eqref{back term coordinate}\label{bc3}\\
   &+\mathcal{A} \int_{\mathbf{v}_{p^1,3}^1>0} f_s(\eta_{p^1}(\mathbf{x}_{p^1}^1),T^t_{\mathbf{x}_{p^1}^1}\mathbf{v}^1_{p^1})  \partial_{\mathbf{x}_{p^1,i}^1} [\eqref{back term coordinate}]\label{bc4}.
\end{align}

Since
\[\partial_{\mathbf{x}_{p^1,i}^1} T_w(\eta_{p^1}(\mathbf{x}_{p^1}^1))=\nabla T_w \cdot \partial_3 \eta_{p^1}(\mathbf{x}_{p^1}^1)\lesssim \Vert T_w\Vert_{C^1}\Vert \eta\Vert_{C^1},\]
applying~\eqref{partial_i eta} we have
\begin{equation}\label{bound of bc1}
\eqref{bc1}\lesssim \Vert T_w\Vert_{C^1}\Vert \eta\Vert_{C^1}.
\end{equation}

For~\eqref{bc2} we change the $\mathbf{v}_{p^1}^1$ integration back to $v_1$ integration, thus $\eqref{back term coordinate}$ is replaced by $\eqref{back term}$, with integral domain changing back to $n(x_1)\cdot v_1>0$. Applying~\eqref{integrate partial_x A} we conclude
\begin{equation}\label{bound of bc2}
\eqref{bc2}\lesssim  \Vert w_\vartheta f_s\Vert_\infty\partial_{\mathbf{x}_{p^1,i}^1}[\mathcal{A}]\int_{n(x_1)\cdot v_1>0} \eqref{back term} \dd v_1 \lesssim \Vert w_\vartheta f_s\Vert_\infty .
\end{equation}

For~\eqref{bc4}, taking $\partial_{\mathbf{x}_{p^1,i}^1}$ to~\eqref{back term coordinate} we have extra term
\[\frac{\nabla_x T_w(\eta_{p^1}(\mathbf{x}_{p^1}^1)) \partial_i \eta_{p^1}(\mathbf{x}_{p^1}^1)|\mathbf{v}_{p^1}^1|^2}{T^2_w(\eta_{p^1}(\mathbf{x}_{p^1}^1))},\]
\[\frac{\nabla_x T_w(\eta_{p^1}(\mathbf{x}_{p^1}^1)) \partial_i \eta_{p^1}(\mathbf{x}_{p^1}^1)}{T^2_w(\eta_{p^1}(\mathbf{x}_{p^1}^1))} \Big[ \frac{|\mathbf{v}_{p^1,3}^1|^2+(1-r_\perp)|v_\perp|^2}{r_\perp}+\frac{|(T^t_{\mathbf{x}_{p^1}^1}\mathbf{v}^1_{p^1}-\mathbf{v}_{p^1,3}n(\eta_{p^1}(\mathbf{x}_{p^1}^1)))-(1-r_\parallel)v_\parallel|^2}{r_\parallel(2-r_\parallel)} \Big],\]
\[\frac{[(T^t_{\mathbf{x}_{p^1}^1}\mathbf{v}^1-\mathbf{v}_{p^1,3}^1n(\eta_{p^1}(\mathbf{x}_{p^1}^1)))-(1-r_\parallel)v_\parallel][\partial_{\mathbf{x}_{p^1,i}^1}T^t_{\mathbf{x}_{p^1}}\mathbf{v}_{p^1}^1-\mathbf{v}_{p^1,3}^1 \partial_{\mathbf{x}_{p^1,i}^1} n(\eta_{p^1}(\mathbf{x}_{p^1}))]}{T_w(\eta_{p^1}(\mathbf{x}_{p^1}^1))r_\parallel(2-r_\parallel)},\]
and from~\eqref{I0}
\begin{align*}
 & \partial_{\mathbf{x}_{p^1,i}^1} I_0\bigg(\frac{(1-r_\perp)^{1/2}\mathbf{v}_{p^1,3}^1v_{\perp}}{r_\perp T_w(\eta_{p^1}(\mathbf{x}_{p^1}^1))} \bigg) \\
  & =\pi^{-1} \int_0^{\pi}   e^{\frac{(1-r_\perp)^{1/2}\mathbf{v}_{p^1,3}^1v_{\perp}}{r_\perp T_w(\eta_{p^1}(\mathbf{x}_{p^1}^1))}\cos\phi} \partial_{\mathbf{x}_{p^1,i}^1} \Big( \frac{(1-r_\perp)^{1/2}\mathbf{v}_{p^1,3}^1v_{\perp}}{r_\perp T_w(\eta_{p^1}(\mathbf{x}_{p^1}^1))}\Big) \cos\phi \dd \phi \\
   & \lesssim \partial_{\mathbf{x}_{p^1,i}^1} \Big( \frac{(1-r_\perp)^{1/2}\mathbf{v}_{p^1,3}^1v_{\perp}}{r_\perp T_w(\eta_{p^1}(\mathbf{x}_{p^1}^1))}\Big)I_0\bigg(\frac{(1-r_\perp)^{1/2}\mathbf{v}_{p^1,3}^1v_{\perp}}{r_\perp T_w(\eta_{p^1}(\mathbf{x}_{p^1}^1))} \bigg),
\end{align*}
the extra term is
\[ \frac{\nabla_x T_w(\eta_{p^1}(\mathbf{x}_{p^1}^1)) \partial_i \eta_{p^1}(\mathbf{x}_{p^1}^1) (1-r_\perp)^{1/2} \mathbf{v}_{p^1,3}^1v_\perp}{r_\perp T^2_w(\eta_{p^1}(\mathbf{x}_{p^1}^1))}.\]
All the extra terms are bounded by
\[\Vert T_w\Vert_{C^1}\Vert \eta\Vert_{C^1}[\frac{1}{T_0}+\frac{1}{T_0^2}][1+|v|^2+|\mathbf{v}_{p^1}^1|^2]\lesssim 1+|v|^2+|\mathbf{v}_{p^1}^1|^2.\]
Thus
\begin{align}
  \eqref{bc4} & \lesssim \mathcal{A}\int_{\mathbf{v}_{p^1,3}>0} \Vert w_\vartheta f_s\Vert_\infty  [1+|v|^2+|\mathbf{v}_{p^1}^1|^2] \times \eqref{back term coordinate}  \notag\\
   & =  \mathcal{A}\int_{n(x_1)\cdot v_1>0} \Vert w_\vartheta f_s\Vert_\infty  [1+|v|^2+|v_1|^2] \times \eqref{back term}\lesssim \Vert w_\vartheta f_s\Vert_\infty .\label{bound of bc4}
\end{align}
In the second line we changed $\mathbf{v}_{p^1}$ integration back to $v_1$ integration and used $|v_1|^2 = |\mathbf{v}_{p^1}^1|^2$ from~\eqref{T}. In the last step we applied~\eqref{integrate back}.

Then we focus on~\eqref{bc3}, which reads
\begin{align}
\eqref{bc3}   &= \mathcal{A}\times \int_{\mathbf{v}_{p^1,3}^1>0} \eqref{back term coordinate} \notag\\
   &\times \bigg[\underbrace{\Big( \partial_{\mathbf{x}_{p^1,i}^1} T^t_{\mathbf{x}_{p^1}^1}\mathbf{v}_{p^1}^1 \Big)  \nabla_v f_s(\eta_{p^1}(\mathbf{x}_{p^1}^1),T^t_{\mathbf{x}_{p^1}^1}\mathbf{v}_{p^1}^1)}_{\eqref{deri in the bdr}_1} + \underbrace{\partial_i \eta_{p^1}(\mathbf{x}_{p^1}^1) \nabla_{x}  f_s(\eta_{p^1}(\mathbf{x}_{p^1}^1),T^t_{\mathbf{x}_{p^1}^1}\mathbf{v}_{p^1}^1)    }_{\eqref{deri in the bdr}_2} \bigg].\label{deri in the bdr}
\end{align}

First we estimate the contribution of$~\eqref{deri in the bdr}_1$. We change the $\mathbf{v}_{p^1}^1$-integration back to $v_1$ integration. The extra term $\partial_{\mathbf{x}_{p^1,i}^1} T^t_{\mathbf{x}_{p^1}^1}\mathbf{v}_{p^1} $ becomes
\[  \Big(\partial_{\mathbf{x}_{p^1,i}^1} T_{\mathbf{x}_{p^1}^1}\Big) T_{\mathbf{x}_{p^1}^1} v_1. \]
Thus such contribution is bounded as
\begin{align}
    & \mathcal{A}\times \int_{n(x_1)\cdot v_1>0} \Big(\partial_{\mathbf{x}_{p^1,i}^1} T_{\mathbf{x}_{p^1}^1}\Big) T_{\mathbf{x}_{p^1}^1} v_1   \nabla_v f_s(x_1,v_1) \times\eqref{back term} \dd v_1 \notag\\
    &  \lesssim  \Vert \eta\Vert_{C^2}\Vert \eta\Vert_{C^1} \mathcal{A}\times \int_{n(x_1)\cdot v_1>0} |\nabla_{v_1} [v_1 \times \eqref{back term}]| f_s(x_1,v_1) \dd v_1 \notag\\
    &\lesssim \Vert \eta\Vert_{C^2}\Vert \eta\Vert_{C^1}\Vert w_\vartheta f_s\Vert_\infty \mathcal{A}\times \int_{n(x_1)\cdot v_1>0} |\big[\eqref{back term} +|v_1|\nabla_{v_1} [\eqref{back term}] \big] | \dd v_1  \notag\\
    &\lesssim     \Vert \eta\Vert_{C^2}\Vert \eta\Vert_{C^1}\Vert w_\vartheta f_s\Vert_\infty  \lesssim \Vert w_\vartheta f_s\Vert_\infty.  \label{v contribution}
\end{align}
In the second line we used~\eqref{T} to get $ \Big(\partial_{\mathbf{x}_{p^1,i}^1} T_{\mathbf{x}_{p^1}^1}\Big) T_{\mathbf{x}^1_{p^1}} \lesssim \Vert \eta\Vert_{C^2}\Vert \eta\Vert_{C^1}$. In the third line we used $\nabla_v f_s(x_1,v_1) = \nabla_{v_1} [f_s(x_1,v_1)]$ and performed the integration by parts with respect to $\dd v_1$, and used $\eqref{back term} = 0$ for $n(x_1)\cdot v_1 = 0.$ In the last line we used~\eqref{integrate back} and~\eqref{integrate nablav back}.

Then we estimate the contribution of$~\eqref{deri in the bdr}_2$. We change the integration $\int_{\mathbf{v}^{1}_{p^{1},3}>0}$ back to $\int_{n(x_1)\cdot v_1>0}$, such contribution in~\eqref{bc3} reads
\begin{align}
   &\mathcal{A}\times \int_{n(x_1)\cdot v_1>0}\partial_i \eta_{p^1}(\mathbf{x}_{p^1}^1)\nabla_{x_1} f_s(x_1,v_1)\times [\eqref{back term}] \dd v_1 \notag \\
   &=\mathcal{A}\times \int_{n(x_1)\cdot v_1>0}|\partial_i \eta_{p^1}(\mathbf{x}_{p^1}^1)\frac{O(1)[\Vert w_\vartheta f_s\Vert_\infty+\Vert w_\vartheta f_s\Vert_\infty^2]+o(1)\Vert \alpha\nabla_x f_s\Vert_\infty}{\alpha(x_1,v_1)} |  \times [\eqref{back term}] \dd v_1 \label{deri bdr twice 1}\\
   &+\mathcal{A}\times \int_{n(x_1)\cdot v_1>0}\partial_i \eta_{p^1}(\mathbf{x}_{p^1}^1)\int^{t_1}_{\max\{0,\tb^1\}}e^{-\nu(v_1)(t_1-s)}\int_{\mathbb{R}^3} \mathbf{k}(v_1,u) \nabla_{x_1}f_s(x_1-(t_1-s)v_1,u) \dd u \dd s    \times [\eqref{back term}] \dd v_1  \label{deri bdr twice 2}\\
   &+\mathcal{A}\times \int_{n(x_1)\cdot v_1>0}\partial_i \eta_{p^1}(\mathbf{x}_{p^1}^1)\nabla_{x_1} f_s(\eta_{p^2}(\mathbf{x}_{p^2}^2),v_1)\times [\eqref{back term}] \dd v_1.\label{deri bdr twice 3}
\end{align}
Here we applied Lemma \ref{Lemma: prior estimate} to $\nabla_{x} f_s(x_1,v_1) = \nabla_{x_1}[f_s(x_1,v_1)]$. Then we estimate~\eqref{deri bdr twice 1}-\eqref{deri bdr twice 3}.

First we estimate~\eqref{deri bdr twice 1}. We use~\eqref{integrate back / alpha} to get
\begin{align}
  \eqref{deri bdr twice 1} & \lesssim \Vert \eta\Vert_{C^1}O(1)[\Vert w_\vartheta f_s\Vert_\infty+\Vert w_\vartheta f_s\Vert_\infty^2]+o(1)\Vert \alpha\nabla_x f_s\Vert_\infty.\label{bound of deri bdr twice 1}
\end{align}

Then we estimate~\eqref{deri bdr twice 2}. We split $\dd s$ integration into
\begin{equation}\label{split t1 integral}
\int_{\max\{0,\tb^1\}}^{t_1} = \underbrace{\int_{\max\{0,\tb^1\}}^{t_1-\e}}_{\eqref{split t1 integral}_1}+\underbrace{\int^{t_1}_{t_1-\e}}_{\eqref{split t1 integral}_2}.
\end{equation}
For$~\eqref{split t1 integral}_2$, we apply Lemma \ref{Lemma: NLN} to get such contribution in~\eqref{deri bdr twice 2} is bounded by
\begin{align}
   & \mathcal{A}\times \int_{n(x_1)\cdot v_1>0}\Vert \eta\Vert_{C^1}\int^{t_1}_{t_1-\e}e^{-\nu(v_1)(t_1-s)}\int_{\mathbb{R}^3}\frac{\mathbf{k}(v_1,u)\Vert \alpha \nabla_x f_s\Vert_\infty}{\alpha(x_1-(t_1-s)v_1,u)} \dd u \dd s    \times [\eqref{back term}] \dd v_1 \notag \\
   & \lesssim  O(\e)\mathcal{A}\times \int_{n(x_1)\cdot v_1>0}\Vert \eta\Vert_{C^1}\frac{\Vert \alpha \nabla_x f_s\Vert_\infty}{\alpha(x_1,v_1)}  \times [\eqref{back term}] \dd v_1  \notag\\
   &\lesssim        O(\e)\Vert \alpha \nabla_x f_s\Vert_\infty. \label{bound of split 1}
\end{align}
In the third line we used~\eqref{integrate back / alpha}.

For$~\eqref{split t1 integral}_1$ we exchange the $x_1$ derivative to $v_1$ derivative:
\begin{equation}\label{change x1 to v1}
\nabla_{x_1}f_s(x_1-(t_1-s)v_1,u) = -\frac{\nabla_{v_1}[f_s(x_1-(t_1-s)v_1,u)]}{t_1-s}.
\end{equation}
In this case $t_1-s\geq \e$. The contribution of$~\eqref{split t1 integral}_2$ in~\eqref{deri bdr twice 2} is
\begin{align}
   & \mathcal{A}\times \int_{n(x_1)\cdot v_1>0}\partial_i \eta\int^{t_1-\e}_{\max\{0,\tb^1\}}e^{-\nu(v_1)(t_1-s)}\int_{\mathbb{R}^3} \mathbf{k}(v_1,u)\frac{\nabla_{v_1}[f_s(x_1-(t_1-s)v_1,u)]}{t_1-s} \dd u \dd s    \times [\eqref{back term}] \dd v_1   \notag\\
   &\lesssim \mathcal{A}\times \int_{n(x_1)\cdot v_1>0}\Vert \eta\Vert_{C^1}\int^{t_1-\e}_{\max\{0,\tb^1\}}\int_{\mathbb{R}^3} \mathbf{k}(v_1,u)\frac{f_s(x_1-(t_1-s)v_1,u)}{t_1-s}| \nabla_{v_1} e^{-\nu(v_1)(t_1-s)} |\dd u \dd s    \times [\eqref{back term}] \dd v_1   \label{IBP v1 1} \\
   & +\mathcal{A}\times \int_{n(x_1)\cdot v_1>0}\Vert \eta\Vert_{C^1}\int^{t_1-\e}_{\max\{0,\tb^1\}}\int_{\mathbb{R}^3} \mathbf{k}(v_1,u)\frac{f_s(x_1-(t_1-s)v_1,u)}{t_1-s} e^{-\nu(v_1)(t_1-s)} \dd u \dd s    \times |\nabla_{v_1}[\eqref{back term}]| \dd v_1 \label{IBP v1 2}\\
   &+\mathcal{A}\times \int_{n(x_1)\cdot v_1>0}\Vert \eta\Vert_{C^1}\int_{\mathbb{R}^3} \mathbf{k}(v_1,u)\frac{f_s(x_1-(t_1-\tb^1)v_1,u)}{t_1-s}  e^{-\nu(v_1)(t_1-\tb^1)} |\nabla_{v_1}\tb^1| \dd u \times [\eqref{back term}] \dd v_1 \label{IBP v1 3}\\
   &+\mathcal{A}\times \int_{n(x_1)\cdot v_1>0}\Vert \eta\Vert_{C^1}\int^{t_1-\e}_{\max\{0,\tb^1\}}\int_{\mathbb{R}^3} |\nabla_{v_1}  \mathbf{k}(v_1,u)|\frac{f_s(x_1-(t_1-s)v_1,u)}{t_1-s} e^{-\nu(v_1)(t_1-s)} \dd u \dd s    \times \eqref{back term} \dd v_1 . \label{IBP v1 4}
\end{align}
Here we applied the integration by parts with respect to $\dd v_1$. And we used $\eqref{back term} = 0$ when $n(x_1)\cdot v_1 = 0$. We apply~\eqref{k_varrho L1} and~\eqref{nabla nu} to bound
\begin{align}
 \eqref{IBP v1 1}  &\lesssim      \mathcal{A}\times \int_{n(x_1)\cdot v_1>0}\Vert \eta\Vert_{C^1}\int^{t_1-\e}_{\max\{0,\tb^1\}}\int_{\mathbb{R}^3} \mathbf{k}(v_1,u)\frac{\Vert w_\vartheta f_s\Vert_\infty }{t_1-s} |\nabla_{v_1}\nu(v_1)|(t_1-s) e^{-\nu(v_1)(t_1-s)} \dd u \dd s    \times [\eqref{back term}] \dd v_1 \notag          \\
 &\lesssim \Vert \eta\Vert_{C^1}\Vert w_\vartheta f_s\Vert_\infty \mathcal{A}\times \int_{n(x_1)\cdot v_1>0}      \eqref{back term} \dd v_1\notag \\
   & \lesssim  O(1)     \Vert \eta\Vert_{C^1}\Vert w_\vartheta f_s\Vert_\infty , \label{bound of IBP v11}
\end{align}
where we have used~\eqref{integrate back / alpha} in the third line.

For~\eqref{IBP v1 2} we apply~\eqref{NLN epsilon} in Lemma \ref{Lemma: NLN} to bound
\begin{align}
  \eqref{IBP v1 2} & \lesssim     \mathcal{A}\times \int_{n(x_1)\cdot v_1>0}\Vert \eta\Vert_{C^1}\int^{t_1-\e}_{\max\{0,\tb^1\}}\int_{\mathbb{R}^3}\mathbf{k}(v_1,u)\frac{f_s(x_1-(t_1-s)v_1,u)}{\e} e^{-\nu(v_1)(t_1-s)} \dd u \dd s    \times |\nabla_{v_1}[\eqref{back term}]| \dd v_1 \notag \\
  &\lesssim    O(\e^{-1})\Vert \eta\Vert_{C^1}\Vert w_\vartheta f_s\Vert_\infty   \mathcal{A}\times \int_{n(x_1)\cdot v_1>0}    \nabla_{v_1}[\eqref{back term}] \dd v_1 \notag \\
  &\lesssim       O(\e^{-1})\Vert \eta\Vert_{C^1}\Vert w_\vartheta f_s\Vert_\infty, \label{bound of IBP v12}
\end{align}
where we have used~\eqref{integrate nablav back} in the third line.

For~\eqref{IBP v1 3} we apply~\eqref{k_varrho L1} to get
\begin{align}
  \eqref{IBP v1 3} & \lesssim   \mathcal{A}\times \int_{n(x_1)\cdot v_1>0}\Vert \eta\Vert_{C^1} \int_{\mathbb{R}^3}\mathbf{k}(v_1,u)\frac{\Vert w_\vartheta f_s\Vert_\infty}{\e}  e^{-\nu(v_1)(t_1-\tb^1)} \frac{1}{\alpha(x_1,v_1)}\dd u \times [\eqref{back term}] \dd v_1  \notag \\
  &\lesssim O(\e^{-1}) \Vert \eta\Vert_{C^1} \Vert w_\vartheta f_s\Vert_\infty \mathcal{A}\times \int_{n(x_1)\cdot v_1>0}  \frac{1}{\alpha(x_1,v_1)} \times [\eqref{back term}] \dd v_1  \notag\\
   & \lesssim   O(\e^{-1})\Vert \eta\Vert_{C^1}\Vert w_\vartheta f_s\Vert_\infty, \label{bound of IBP v13}
\end{align}
where we have used~\eqref{integrate back} in the third line.

Collecting~\eqref{bound of IBP v11}, \eqref{bound of IBP v12}, \eqref{bound of IBP v13} and~\eqref{bound of split 1} we conclude that
\begin{equation}\label{bound for deri bdr twice 2}
\eqref{deri bdr twice 2}\lesssim   O(\e)\Vert \alpha\nabla_x f_s\Vert_\infty + O(\e^{-1})\Vert w_\vartheta f_s\Vert_\infty.
\end{equation}

Last we estimate~\eqref{deri bdr twice 3}. Applying chain rule we have
\begin{align*}
  \partial_i \eta_{p^1}(\mathbf{x}_{p^1}^1)\nabla_{x_1}f_s(\eta_{p^2}(\mathbf{x}_{p^2}^2),v_1) &=\partial_i \eta_{p^1}(\mathbf{x}_{p^1}^1) \sum_{j=1,2}\nabla_{x_1}\mathbf{x}_{p^2,j}^2 \partial_{\mathbf{x}_{p^2,j}^2} f(\eta_{p^2}(\mathbf{x}_{p^2}^2),v_1)  \\
   & =\sum_{j=1,2} \partial_{\mathbf{x}_{p^1,i}^1}\mathbf{x}_{p^2,j}^2  \partial_{\mathbf{x}_{p^2,j}^2} f_s(\eta_{p^2}(\mathbf{x}_{p^2}^2),v_1).
\end{align*}

Note that
\begin{equation}\label{replace v2}
v_1 = (x_1-\eta_{p^2}(\mathbf{x}_{p^2}^2))/\tb^1.
\end{equation}
Applying Lemma \ref{Lemma: change of variable} we have
\begin{align*}
 \eqref{deri bdr twice 3}  &=\mathcal{A}\times \int_{n(x_1)\cdot v_1>0}\sum_{j=1,2} \partial_{\mathbf{x}_{p^1,i}^1}\mathbf{x}_{p^2,j}^2  \partial_{\mathbf{x}_{p^2,j}^2} f(\eta_{p^2}(\mathbf{x}_{p^2}^2),v) \sum_{p^2\in \mathcal{P}}\iota_{p^2}(x_2)[\eqref{back term}]  \\
   &=\mathcal{A}\times \sum_{p^2\in \mathcal{P}} \iint_{|\mathbf{x}_{p^2}^2|<\delta_1} \int_0^{t_1}e^{-\nu(v_1)\tb^1} \iota_{p^2}(x_2)  \times \sum_{j=1,2} \partial_{\mathbf{x}_{p^1,i}^1}\mathbf{x}_{p^2,j}^2  \partial_{\mathbf{x}_{p^2,j}^2} f_s(\eta_{p^2}(\mathbf{x}_{p^2}^2),v)   \\
   & \times    \sqrt{ g_{p^{2},11}(\mathbf{x}^{2}_{p^{2}})  g_{p^{2},22}(\mathbf{x}_{p^{2}}^{2}) }\frac{n(x_1)\cdot(x_1 -\eta_{p^2}(\mathbf{x}_{p^2}^2))}{\tb^1}
\frac{  |n(x_{2}) \cdot (x_1-\eta_{p^2}(\mathbf{x}_{p^2}^2))|}{|\tb^{1}|^4} \times         \frac{\eqref{back term}}{n(x_1)\cdot v_1}.
\end{align*}
In the last step we used $n(x_1)\cdot v_1=\frac{n(x_1)\cdot(x_1 -\eta_{p^2}(\mathbf{x}_{p^2}^2))}{\tb^1}$.

We apply the integration by parts with respect to $\partial_{\mathbf{x}_{p^2,j}^2}$ for $j=1,2$. For $\iota_p^2(\eta_{p^2}(\mathbf{x}_{p^2}^2))=0$ when $|\mathbf{x}_{p^2}^2|=\delta_1$ from~\eqref{O_p}, the contribution of $|\mathbf{x}_{p^2}^2|=\delta_1$ vanishes. Thus we derive
\begin{align*}
  \eqref{deri bdr twice 3} & \lesssim \mathcal{A}\Vert w_\vartheta f_s\Vert_\infty\times \sum_{p^2\in \mathcal{P}}\sum_{j=1,2}  \\
   &\times \bigg[\iint \int_0^{t_1} \partial_{\mathbf{x}_{p^2,j}^2} \big[\frac{n(x_1)\cdot(x_1 -\eta_{p^2}(\mathbf{x}_{p^2}^2))}{\tb^1}   \frac{  |n_{p^2}(\mathbf{x}_{p^2}^2) \cdot (x_1-\eta_{p^2}(\mathbf{x}_{p^2}^2))|}{|\tb^{1}|^4}     \big]\cdots   \\
   &+\iint \int_0^{t_1} \partial_{\mathbf{x}_{p^2,j}^2} \big[\sum_{j=1,2}\frac{\partial \mathbf{x}_{p^2,j}^2}{\partial \mathbf{x}_{p^1,i}^1}\sqrt{g_{p^2,11 g_{p^2,22}}} \big] \cdots \\
   & +\iint \int_0^{t_1} \partial_{\mathbf{x}_{p^2,j}^2}[\frac{\eqref{back term}}{n(x_1)\cdot v_1}] \cdots \bigg]  .
\end{align*}

Applying Lemma~\ref{Lemma: nv<v2} we have
\[\frac{n(x_1)\cdot(x_1 -\eta_{p^2}(\mathbf{x}_{p^2}^2))}{\tb^1} \lesssim \frac{|x_1-\eta_{p^2}(\mathbf{x}_{p^2}^2)|^2}{\tb^1},\quad  \frac{  |n_{p^2}(\mathbf{x}_{p^2}^2) \cdot (x_1-\eta_{p^2}(\mathbf{x}_{p^2}^2))|}{|\tb^{1}|^4}  \lesssim \frac{|x_1-\eta_{p^2}(\mathbf{x}_{p^2}^2)|^2}{|\tb^1|^4}.\]
\begin{align}
   & \partial_{\mathbf{x}_{p^2,j}^2} \big[\frac{n(x_1)\cdot(x_1 -\eta_{p^2}(\mathbf{x}_{p^2}^2))}{\tb^1}   \frac{  |n_{p^2}(\mathbf{x}_{p^2}^2) \cdot (x_1-\eta_{p^2}(\mathbf{x}_{p^2}^2))|}{|\tb^{1}|^4}     \big] \lesssim      \frac{|x_1-\eta_{p^2}(\mathbf{x}_{p^2}^2)|^3}{|\tb^1|^5}. \label{partial nv}
\end{align}

From~\eqref{xip deri xbp}, (\ref{nv<v2}) and (\ref{bound_vb_x}), we derive that
\Be\begin{split}\label{partial partial x}
&\bigg| \frac{\p }{\p {\mathbf{x}_{p^{2},j}^{2}}} \bigg(
\sum_{j=1,2}
\frac{\p \mathbf{x}^{2}_{p^{2},j}}{\p{\mathbf{x}^{1  }_{p^{1 },i}}}
\sqrt{g_{p^{2},11}g_{p^{2},22}  }
\bigg) \bigg|\\
 \lesssim& \  \| \eta \|_{C^2} \Big\{1+
\frac{|\mathbf{v}^{2}_{p^{2} ,\parallel}|}{|\mathbf{v}^{2}_{p^{2}, 3}|^2  } | \p_3 \eta_{p^{2}}(\mathbf{x}^{2  }_{p^{2 }})\cdot \p_i \eta_{p^{1}} (\mathbf{x}^{1}_{p^{1}})|\Big\}\\
  \leq & \ O(\| \eta \|_{C^2})
   \Big\{1+  \frac{|\mathbf{v}^{2}_{p^{2}  }|}{|\mathbf{v}^{2}_{p^{2}, 3}|^2  }
   |x_{1} - \eta_{p^{2}}(\mathbf{x}^{2}_{p^{2}})|\Big\}\\
   \leq & \  O(\| \eta\|_{C^2}) \frac{1}{|\mathbf{v}^{2}_{p^{2}, 3}|}=O(\Vert \eta\Vert_{C^2})\frac{1}{|n(x_2)\cdot v_1|} .
\end{split}
\Ee

Such term will be cancelled by $n(x_1)\cdot v_1$ as:
\[\frac{|n(x_1)\cdot v_1|}{|n(x_2)\cdot v_1|}\lesssim \frac{\alpha(x_1,v_1)}{\alpha(x_2,v_1)}\lesssim 1.\]

For the derivative to $\frac{\eqref{back term}}{n(x_1)\cdot v_1}$, we note that
\[v_{1,\perp} = v_1\cdot n(x_1),\quad   v_{1,\parallel}=v_1-(n(x_1)\cdot v_1) n(x_1).\]
Using~\eqref{replace v2} we get
\[|\partial_{\mathbf{x}_{p^2,j}^2} v_{1}| \lesssim \frac{1}{\tb^1} ,\quad |\partial_{\mathbf{x}_{p^2,j}^2} v_{1,\perp}| \lesssim \frac{1}{\tb^1}, \quad |\partial_{\mathbf{x}_{p^2,j}^2} v_{1,\parallel}| \lesssim
 \frac{1}{\tb^1}.\]

Then taking the derivative to $\frac{\eqref{back term}}{n(x_1)\cdot v_1}$ we have extra term
\[ [\frac{1}{4T_0}-\frac{1}{2T_w(x_1)}](\partial_{\mathbf{x}_{p^2,j}^2}v_1)v_1, \quad \frac{(1-r_\parallel)^2 \partial_{\mathbf{x}_{p^2,j}^2} v_{1,\parallel} (v_{1,\parallel}-(1-r_\parallel)v_\parallel)}{T_w(x_1)r_\parallel(2-r_\parallel)},\quad \frac{\partial_{\mathbf{x}_{p^2,j}^2} v_{1,\perp} v_{1,\perp}}{T_w(x_1)r_\perp}.\]
The extra term comes from $I_0$ is
\[\partial_{\mathbf{x}_{p^2,j}^2} \Big(\frac{(1-r_\perp)^{1/2}v_{1,\perp}v_\perp}{r_\perp T_w(x_1)} \Big)\lesssim    \frac{(1-r_\perp)^{1/2}v_\perp \partial_{\mathbf{x}_{p^2,j}^2} v_{1,\parallel} }{r_\perp}.\]
Thus all of them are bounded by
\[\frac{[1+|v|^2+|v_1|^2]}{\tb^1}.\]

Then for $\e \ll 1$ applying~\eqref{first bound of back} we get
\begin{align}
 \partial_{\mathbf{x}_{p^2,j}^2} [\frac{\eqref{back term}}{n(x_1)\cdot v_1}]   & \lesssim [1+|v|^2+|v_1|^2][\frac{\eqref{back term}}{n(x_1)\cdot v_1}] \notag \\
   & \lesssim [1+|v|^2+|v_1|^2]e^{-\frac{1}{4T_0}|v_1|^2}e^{\frac{O(\e)}{T_0}|v|^2}e^{\frac{O(\e)}{T_0}|v_1|^2}. \label{deri back}
\end{align}

Collecting~\eqref{partial nv}, \eqref{partial partial x} and \eqref{deri back} we obtain
\begin{align}
  \eqref{deri bdr twice 3} & \lesssim \mathcal{A}\times \Vert w_\vartheta f_s\Vert_\infty e^{O(\e) |v|^2} \notag\\
   & \times \iint \int_0^{t_1}e^{-\nu \tb^1}\Big[\frac{|x_1-\eta_{p^2}(\mathbf{x}_{p^2}^2)|^3}{\tb^5}+\frac{|x_1-\eta_{p^2}(\mathbf{x}_{p^2}^2)|^2}{\tb^4} +\frac{|x_1-\eta_{p^2}(\mathbf{x}_{p^2}^2)|^4}{\tb^6} \Big]e^{-\frac{1}{8T_0}\frac{|x_1-\eta_{p^2}(\mathbf{x}_{p^2}^2)|^2}{|\tb^1|^2}    } \notag\\
   &\lesssim  \Vert w_\vartheta f_s\Vert_\infty \int_0^\infty   \frac{e^{-\nu(v_1)\tb^1}}{|\tb^1|^{1/2}} \iint \frac{1}{|x_1-\eta_{p^2}(\mathbf{x}_{p^2}^2)|^{3/2}}\lesssim \Vert w_\vartheta f_s\Vert_\infty \label{bound for deri bdr twice 3}.
\end{align}
In the last line we have used the definition of $\mathcal{A}$ in~\eqref{front term} to have
\[\mathcal{A}\times e^{O(\e)|v|^2}\lesssim 1.\]
And we used
\Be
\begin{split}\notag
&\Big[
\frac{|x_{1} - \eta_{p^{2} } (\mathbf{x}^{2}_{p^{2}})|^3}{|\tb^{1}|^5}+ \frac{|x_{1} - \eta_{p^{2} } (\mathbf{x}^{2}_{p^{2}})|^2}{|\tb^{1}|^4}+\frac{|x_1-\eta_{p^2}(\mathbf{x}_{p^2}^2)|^4}{\tb^6}\Big]
e^{-\frac{|x_{1} - \eta_{p^{2} } (\mathbf{x}^{2}_{p^{2}})|^2}{8T_0|\tb^{1}|^2}}\\
&\leq \frac{1}{|\tb^{1}|^{1/2}} \frac{1}{|x_{1} - \eta_{p^{2} } (\mathbf{x}^{2}_{p^{2}})|^{3/2}}
\Big[\frac{|x_{1} - \eta_{p^{2} } (\mathbf{x}^{2}_{p^{2}})|^{9/2}}{|\tb^{1}|^{9/2}}+\frac{|x_{1} - \eta_{p^{2} } (\mathbf{x}^{2}_{p^{2}})|^{7/2}}{|\tb^{1}|^{7/2}}+\frac{|x_{1} - \eta_{p^{2} } (\mathbf{x}^{2}_{p^{2}})|^{11/2}}{|\tb^{1}|^{11/2}}\Big]
\\
&\times e^{-\frac{|x_{1} - \eta_{p^{2} } (\mathbf{x}^{2}_{p^{2}})|^2}{8T_0|\tb^{1}|^2}}\\
&\lesssim \frac{1}{|\tb^{1}|^{1/2}} \frac{1}{|x_{1} - \eta_{p^{2} } (\mathbf{x}^{2}_{p^{2}})|^{3/2}}.
\end{split}
\Ee

Then we combine~\eqref{bound of deri bdr twice 1}, \eqref{bound for deri bdr twice 2},~\eqref{bound for deri bdr twice 3} and \eqref{v contribution} to get
\begin{equation}\label{bound of bc3}
\eqref{bc3}\lesssim       o(1)\Vert \alpha\nabla_x f_s\Vert_\infty   O(\e^{-1})\Vert w_\vartheta f_s\Vert_\infty.
\end{equation}

Finally combining~\eqref{bound of bc1}, \eqref{bound of bc2}, \eqref{bound of bc3} and~\eqref{bound of bc4} we conclude that
\begin{equation*}
|\partial_{\mathbf{x}_{p^1,i}}[f_s(\eta_{p^1}(\mathbf{x}_{p^1},v))]|\lesssim   O(\e^{-1})[\Vert w_\vartheta f_s\Vert_\infty + \Vert w_\vartheta f_s\Vert_\infty^2]+o(1)\Vert \alpha\nabla_x f_s\Vert_\infty.
\end{equation*}
This, with~\eqref{partial bdr}, conclude that the boundary term is bounded by
\begin{equation}\label{bound for bdr}
\eqref{bdr term}\lesssim       \frac{O(\e^{-1})[\Vert w_\vartheta f_s\Vert_\infty + \Vert w_\vartheta f_s\Vert_\infty^2]+o(1)\Vert \alpha\nabla_x f_s\Vert_\infty}{\alpha(x,v)}.
\end{equation}

\textbf{Estimate of~\eqref{collision}}. For the collision term we apply Lemma \ref{Lemma: prior estimate} to $\partial_{[x]_j}f_s(x-(t-s)v,u)$ to get
\begin{align}
 \eqref{collision}  &=\int^t_{\max\{0,t-\tb\}}e^{-\nu(v)(t-s)}\int_{\mathbb{R}^3} \mathbf{k}(v,u) \frac{O(1)[\Vert w_\vartheta f_s\Vert_\infty + \Vert w_\vartheta
  f_s\Vert_\infty^2]+o(1)\Vert \alpha\nabla_x f_s\Vert_\infty}{\alpha(x-(t-s)v,u)} \label{kk1}  \\
   &+\int^t_{\max\{0,t-\tb\}}e^{-\nu(v)(t-s)}\int_{\mathbb{R}^3} \mathbf{k}(v,u) \int^{s}_{\max\{0,s-\tb(x-(t-s)v,u)\}}e^{-\nu(u)(s-s')}\notag\\
&   \times \int_{\mathbb{R}^3} \mathbf{k}(u,u')\partial_{[x]_j} f(x-(t-s)v-(s-s')u,u')\dd u' \dd s'\label{kk2}  \\
   & +\int^t_{\max\{0,t-\tb\}}e^{-\nu(v)(t-s)}\int_{\mathbb{R}^3} \mathbf{k}(v,u) \frac{\partial f_s(\xb(x-(t-s)v,u),u)}{\partial [x]_j}.\label{kk3}
\end{align}

Directly applying~\eqref{NLN} in Lemma \ref{Lemma: NLN} we bound
\begin{equation}\label{bound of kk1}
\eqref{kk1}\lesssim    \frac{O(1)[\Vert w_\vartheta f_s\Vert_\infty + \Vert w_\vartheta
  f_s\Vert_\infty^2]+o(1)\Vert \alpha\nabla_x f_s\Vert_\infty}{\alpha(x,v)}.
\end{equation}

For~\eqref{kk3}, let $y = x-(t-s)v$, then
\[\frac{\partial f_s(\xb(x-(t-s)v,u))}{\partial [x]_j} = \frac{\partial f_s(\xb(y,u))}{\partial [y]_j},\]
which is exactly the same as~\eqref{bdr term} with replacing $x$ by $y$, $v$ by $u$. Note that we already derive the upper bound for $\eqref{bdr term}$ in~\eqref{bound for bdr}, such estimate works for any $x\in \Omega$, $v\in \mathbb{R}^3$. Thus we can also bound $\frac{\partial f_s(\xb(x-(t-s)v,u))}{\partial [x]_j}$ by~\eqref{bound for bdr}. Therefore,
\begin{equation}\label{bound of kk3}
\begin{split}
   \eqref{kk3}\lesssim   &  \int^t_{\max\{0,t-\tb\}}e^{-\nu(v)(t-s)}\int_{\mathbb{R}^3} \mathbf{k}(v,u) \frac{O(\e^{-1})[\Vert w_\vartheta f_s\Vert_\infty+\Vert w_\vartheta f_s\Vert_\infty]+o(1)\Vert \alpha\nabla_x f_s  \Vert_\infty}{\alpha(y,u)}\dd u \dd s\\
     &\lesssim   \frac{O(\e^{-1})[\Vert w_\vartheta f_s\Vert_\infty+\Vert w_\vartheta f_s\Vert_\infty+o(1)\Vert \alpha\nabla_x f_s  \Vert_\infty]}{\alpha(x,v)},
\end{split}
\end{equation}
where we have used Lemma \ref{Lemma: NLN} in the second line.

Last we estimate~\eqref{kk2}. We split the $s'$-integration into two cases:
\begin{equation}\label{split s}
\int^s_{\max\{...\}} = \underbrace{\int^{s}_{s-\e}}_{\eqref{split s}_1}+\underbrace{\int^{s-\e}_{\max\{...\}}}_{\eqref{split s}_2}.
\end{equation}
For $\eqref{split s}_1$, we bound
\[|\partial_{[x]_j}f_s(x-(t-s)v-(s-s')u,u')|\lesssim \frac{\Vert \alpha\nabla_x f_s\Vert_\infty}{\alpha(x-(t-s)v-(s-s')u,u')}.\]

Then we apply~\eqref{NLN epsilon} in Lemma \ref{Lemma: NLN} to derive
\begin{align}
 \eqref{kk2}\mathbf{1}_{s-\e\leq  s'\leq s}  & \lesssim     \int^t_{\max\{0,t-\tb\}} e^{-\nu(v)(t-s)}\int_{\mathbb{R}^3}\mathbf{k}(v,u)\frac{O(\e)\Vert \alpha\nabla_x f_s\Vert_\infty}{\alpha(x-(t-s)v,u)}  \notag\\
   & \lesssim \frac{O(\e)\Vert \alpha\nabla_x f_s\Vert_\infty}{\alpha(x,v)}, \label{bound of s integration1}
\end{align}
where we have used~\eqref{NLN} in the second line.

For$~\eqref{split s}_2$ we exchange the $x$ derivative into $u$ derivative:
\[\partial_{[x]_j} f_s(x-(t-s)v-(s-s')u,u') =   \frac{\partial_{u_j}[f_s(x-(t-s)v-(s-s')u,u')]}{s'-s}. \]
In this case we have $s-s' \geq \e$. Applying the integration by parts with respect to $\dd u$ we get
\begin{align}
  \eqref{kk2}\mathbf{1}_{s'\leq s-\e} & \lesssim O(\e^{-1})\int^t_{\max\{0,t-\tb\}}e^{-\nu(v)(t-s)}\int_{\mathbb{R}^3}  \int^{s}_{\max\{0,s-\tb(x-(t-s)v,u)\}}e^{-\nu(u)(s-s')}\notag\\
&   \times \int_{\mathbb{R}^3} |\nabla_u [\mathbf{k}(v,u)\mathbf{k}(u,u')]| f_s(x-(t-s)v-(s-s')u,u')\dd u' \dd s'\label{kk2 1}  \\
  & + \int^t_{\max\{0,t-\tb\}}e^{-\nu(v)(t-s)}\int_{\mathbb{R}^3} \mathbf{k}(v,u)   \int^{s}_{\max\{0,s-\tb(x-(t-s)v,u)\}}   e^{-\nu(u)(s-s')}(s-s')|\nabla_u \nu(u)| \notag\\
  & \times \int_{\mathbb{R}^3} \mathbf{k}(u,u') \frac{f_s(x-(t-s)v-(s-s')u,u')}{s'-s}\dd u' \label{kk2 2}  \\
  & +  \int^t_{\max\{0,t-\tb\}}e^{-\nu(v)(t-s)}\int_{\mathbb{R}^3}  \mathbf{k}(v,u)    e^{-\nu(u)\tb(x-(t-s)v,u)}  |\nabla_u \tb(x-(t-s)v,u)| \notag\\
  & \times \int_{\mathbb{R}^3} \mathbf{k}(u,u') \frac{f_s(\xb(x-(t-s)v,u)u,u')}{s'-s}\dd u'. \label{kk2 3}
\end{align}

First we estimate~\eqref{kk2 1}. For some $c\ll \vartheta$ we have
\begin{align}
  \eqref{kk2 1} &  \lesssim O(\e^{-1})\int^t_{\max\{0,t-\tb\}}e^{-\nu(v)(t-s)}\int_{\mathbb{R}^3}  \int^{s}_{\max\{0,s-\tb(x-(t-s)v,u)\}}e^{-\nu(u)(s-s')} \notag\\
   & \times \int_{\mathbb{R}^3} |\nabla_u [\mathbf{k}(v,u)\mathbf{k}(u,u')]|\frac{1}{e^{c|v|^2}}\frac{e^{c|v|^2}}{e^{c|u|^2}}\frac{e^{c|u|^2}}{e^{c|u'|^2}} \Vert e^{c|v|^2}f_s\Vert_\infty \dd u' \dd s' \notag\\
   &\lesssim   \frac{O(\e^{-1})}{e^{c|v|^2}} \int^t_{\max\{0,t-\tb\}}e^{-\nu(v)(t-s)}\int_{\mathbb{R}^3}  \int^{s}_{\max\{0,s-\tb(x-(t-s)v,u)\}}e^{-\nu(u)(s-s')} \mathbf{k}_{\tilde{\varrho}}(v,u)[1+\frac{1}{|v-u|}] \notag\\
   & \times \int_{\mathbb{R}^3} \mathbf{k}_{\tilde{\varrho}}(u,u')[1+\frac{1}{|u-u'|}]\Vert w_\vartheta f_s\Vert_\infty \dd u' \dd s' \notag\\
   &     \lesssim  \frac{O(\e^{-1})\Vert w_\vartheta f_s\Vert_\infty }{e^{c|v|^2}} \times \frac{\alpha(x,v)}{\alpha(x,v)}\lesssim \frac{O(\e^{-1})\Vert w_\vartheta f_s\Vert_\infty }{\alpha(x,v)}.\label{bound of kk2 1}
\end{align}
In the third line we applied~\eqref{k_theta}. In the last line we applied Lemma \ref{Lemma: NLN} and used $\frac{\alpha(x,v)}{e^{c|v|^2}}\lesssim 1$.

Then we estimate~\eqref{kk2 2}. Similar to computation of~\eqref{bound of kk2 1}, we have
\begin{align}
  \eqref{kk2 2} &  \lesssim \int^t_{\max\{0,t-\tb\}}e^{-\nu(v)(t-s)}\int_{\mathbb{R}^3} \mathbf{k}(v,u) \int^{s}_{\max\{0,s-\tb(x-(t-s)v,u)\}}e^{-\nu(u)(s-s')} \notag\\
   & \times \int_{\mathbb{R}^3} \mathbf{k}(u,u')\frac{1}{e^{c|v|^2}}\frac{e^{c|v|^2}}{e^{c|u|^2}}\frac{e^{c|u|^2}}{e^{c|u'|^2}} \Vert e^{c|v|^2}f_s\Vert_\infty \dd u' \dd s' \notag\\
   &\lesssim   \frac{1}{e^{c|v|^2}} \int^t_{\max\{0,t-\tb\}}e^{-\nu(v)(t-s)}\int_{\mathbb{R}^3}  \int^{s}_{\max\{0,s-\tb(x-(t-s)v,u)\}}e^{-\nu(u)(s-s')} \mathbf{k}_{\tilde{\varrho}}(v,u)\notag\\
   & \times \int_{\mathbb{R}^3} \mathbf{k}_{\tilde{\varrho}}(u,u')\Vert w_\vartheta f_s\Vert_\infty \dd u' \dd s' \notag\\
   &     \lesssim  \frac{O(\e^{-1})\Vert w_\vartheta f_s\Vert_\infty }{e^{c|v|^2}}\lesssim \frac{O(\e^{-1})\Vert w_\vartheta f_s\Vert_\infty }{\alpha(x,v)}. \label{bound of kk2 2}
\end{align}

Last we estimate~\eqref{kk2 3}. Applying~\eqref{nabla_tbxb} we have
\begin{align}
  \eqref{kk2 3} &\lesssim O(\e^{-1})\Vert w_\vartheta f_s\Vert_\infty \int^t_{\max\{0,t-\tb\}}e^{-\nu(v)(t-s)}\int_{\mathbb{R}^3}  \mathbf{k}_\varrho(v,u)     \frac{1}{\alpha(x-(t-s)v,u)}  \notag\\
   & \lesssim \frac{O(\e^{-1})\Vert w_\vartheta f_s\Vert_\infty}{\alpha(x,v)}. \label{bound of kk2 3}
\end{align}
In the second line we used Lemma \ref{Lemma: NLN}.

Combining~\eqref{bound of kk2 1}, \eqref{bound of kk2 2} and~\eqref{bound of kk2 3} we conclude
\begin{equation}\label{kk2 bound 2}
\eqref{kk2}\mathbf{1}_{s-s'\geq \e}\lesssim \frac{O(\e^{-1})\Vert w_\vartheta f_s\Vert_\infty}{\alpha(x,v)}.
\end{equation}

Then we combine~\eqref{bound of kk1},\eqref{bound of kk3},\eqref{bound of s integration1},\eqref{kk2 bound 2} and conclude
\begin{equation}\label{bound of collision}
\eqref{collision}\lesssim \frac{o(1)\Vert \alpha\nabla_x f_s\Vert_\infty + O(\e^{-1})[\Vert w_\vartheta f_s\Vert_\infty +\Vert w_\vartheta f_s\Vert_\infty^2]}{\alpha(x,v)}.
\end{equation}

Finally from Lemma \ref{Lemma: prior estimate}, \eqref{bound for bdr} and~\eqref{bound of collision} we conclude Theorem \ref{thm: C1 steady}.

\end{proof}

\textbf{Acknowledgements.} The author thanks his advisors Chanwoo Kim and Qin Li for helpful discussion. This research is partly supported by NSF DMS-1501031, DMS-1900923, DMS 1750488 and UW-Madison Data Science Initiative.

\bibliography{CLre}
\bibliographystyle{amsplain}

\end{document}